
\documentclass{amsart}
\usepackage{amsmath}
\usepackage{amssymb}
\usepackage{mathrsfs}

\newtheorem{theorem}{Theorem}[section]
\newtheorem{claim}[theorem]{Claim}
\newtheorem{lemma}[theorem]{Lemma}

\newtheorem{conclusion}[theorem]{Conclusion}
\newtheorem{observation}[theorem]{Observation}

\theoremstyle{definition}
\newtheorem{definition}[theorem]{Definition}
\newtheorem{example}[theorem]{Example}

\newtheorem{discussion}[theorem]{Discussion}

\theoremstyle{remark}
\newtheorem{remark}[theorem]{Remark}

\newcommand{\frb}{{\rm frb}}
\newcommand{\com}{{\rm com}}
\newcommand{\frd}{{\rm frd}}

\newcommand{\bd}{{\rm bd}}
\newcommand{\tlim}{{\rm tlim}}
\newcommand{\nacc}{{\rm nacc}}
\newcommand{\acc}{{\rm acc}}
\newcommand{\prc}{{\rm prc}}
\newcommand{\nor}{{\rm nor}}
\newcommand{\prd}{{\rm prd}}
\newcommand{\pp}{{\rm pp}}
\newcommand{\Reg}{{\rm Reg}}
\newcommand{\RK}{{\rm RK}}
\newcommand{\ch}{{\rm ch}}
\newcommand{\Ch}{{\rm Ch}}

\newcommand{\otp}{{\rm otp}}

\newcommand{\Ord}{{\rm Ord}}

\newcommand{\cov}{{\rm cov}}
\newcommand{\tcf}{{\rm tcf}}
\newcommand{\pcf}{{\rm pcf}}

\newcommand{\Min}{{\rm Min}}
\newcommand{\Max}{{\rm Max}}
\newcommand{\Dom}{{\rm Dom}}
\newcommand{\Rang}{{\rm Rang}}

\newcommand{\wilog}{{\rm without loss of generality}}

\newcommand{\then}{{\underline{then}}}

\newcommand{\Then}{{\underline{Then}}}

\newcommand{\mn}{{\medskip\noindent}}
\newcommand{\sn}{{\smallskip\noindent}}

\newcommand{\cA}{{\mathscr A}}

\newcommand{\cB}{{\mathscr B}}

\newcommand{\gb}{{\mathfrak b}}
\newcommand{\ga}{{\mathfrak a}}
\newcommand{\gB}{{\mathfrak B}}
\newcommand{\gc}{{\mathfrak c}}
\newcommand{\gC}{{\mathfrak C}}

\newcommand{\gd}{{\mathfrak d\/}}

\newcommand{\cH}{{\mathscr H}}

\newcommand{\cF}{{\mathscr F}}

\newcommand{\cP}{{\mathscr P}}

\newcommand{\cT}{{\mathscr T}}

\newcommand{\cf}{{\rm cf}}

\newcount\skewfactor
\def\mathunderaccent#1#2 {\let\theaccent#1\skewfactor#2
\mathpalette\putaccentunder}
\def\putaccentunder#1#2{\oalign{$#1#2$\crcr\hidewidth
\vbox to.2ex{\hbox{$#1\skew\skewfactor\theaccent{}$}\vss}\hidewidth}}

\newenvironment{PROOF}[2][\proofname.]
   {\begin{proof}[#1]}
   {\end{proof}}

\begin{document}

\title {PCF: The advanced PCF theorems} 
 
\author {Saharon Shelah}
\address{Einstein Institute of Mathematics\\
Edmond J. Safra Campus, Givat Ram\\
The Hebrew University of Jerusalem\\
Jerusalem, 91904, Israel\\
 and \\
 Department of Mathematics\\
 Hill Center - Busch Campus \\
 Rutgers, The State University of New Jersey \\
 110 Frelinghuysen Road \\
 Piscataway, NJ 08854-8019 USA}
\email{shelah@math.huji.ac.il}
\urladdr{http://shelah.logic.at}
\thanks{The author thanks Alice Leonhardt for the beautiful typing.
I thank Peter Komjath for some comments. Paper E69}




\date{May 1, 2011}

\begin{abstract}
This is a revised version of \cite[\S6]{Sh:430}.
\end{abstract}

\maketitle
\numberwithin{equation}{section}
\newpage


\section {On pcf}

This is a revised version of \cite[\S6]{Sh:430} more self-contained,
large part done according to lectures in the Hebrew University Fall 2003

\noindent
Recall
\begin{definition}
\label{1.1}
Let $\bar f = \langle f_\alpha:\alpha < \delta \rangle,
f_\alpha \in {}^\kappa \Ord,I$ an ideal on $\kappa$.

\noindent
1) We say that $f \in {}^\kappa\Ord$ is a $\le_I$-l.u.b. of
$\bar f$ when:
\mn
\begin{enumerate}
\item[$(a)$]    $\alpha < \delta \Rightarrow f_\alpha \le_I f$
\sn
\item[$(b)$]    if $f' \in {}^\kappa \Ord$ and $(\forall
\alpha < \delta)(f_\alpha \le_I f')$ then $f \le_I f'$.
\end{enumerate}
\mn
2) We say that $f$ is a $\le_I$-e.u.b. of $\bar f$ when
\mn
\begin{enumerate}
\item[$(a)$]   $\alpha < \delta \Rightarrow f_\alpha \le_I f$
\sn
\item[$(b)$]   if $f' \in {}^\kappa \Ord$ and $f' <_I
\Max\{f,1_\kappa\}$ \then \, $f' <_I \Max
\{f_\alpha,1_\kappa\}$ for some $\alpha < \delta$.
\end{enumerate}
\mn
3) $\bar f$ is $\le_I$-increasing if $\alpha < \beta \Rightarrow
f_\alpha \le_I f_\beta$, similarly $<_I$-increasing.  We say $\bar
f$ is eventually $<_I$-increasing: it is $\le_I$-increasing and
$(\forall \alpha < \delta)(\exists \beta < \delta)(f_\alpha <_I f_\beta)$.

\noindent
4) We may replace $I$ by the dual ideal on $\kappa$.
\end{definition}

\begin{remark}
For $\kappa,I,\bar f$ as in Definition \ref{1.1},
if $\bar f$ is a $\le_I$-e.u.b. of $\bar f$ \then \, $f$ is a
$\le_I$-l.u.b. of $\bar f$.
\end{remark}

\begin{definition}
\label{1.2}
1) We say that $\bar s$ witness or exemplifies $\bar f$
is $(< \sigma)$-chaotic for $D$ when, for some $\kappa$
\mn
\begin{enumerate}
\item[$(a)$]   $\bar f = \langle f_\alpha:\alpha <\delta \rangle$ is
a sequence of members of ${}^\kappa \Ord$
\sn
\item[$(b)$]   $D$ is a filter on $\kappa$ (or an ideal on $\kappa$)
\sn
\item[$(c)$]   $\bar f$ is $<_D$-increasing
\sn
\item[$(d)$]   $\bar s = \langle s_i:i < \kappa \rangle,s_i$ a
non-empty set of $< \sigma$ ordinals
\sn
\item[$(e)$]   for every $\alpha < \delta$ for some $\beta \in
(\alpha,\delta)$ and $g \in \prod\limits_{i < \kappa} s_i$ we have
$f_\alpha \le_D g \le_D f_\beta$.
\end{enumerate}
\mn
2) Instead ``$(< \sigma^+)$-chaotic" we may say ``$\sigma$-chaotic".
\end{definition}

\begin{claim}
\label{1.3}
Assume
\mn
\begin{enumerate}
\item[$(a)$]  $I$ an ideal on $\kappa$
\sn
\item[$(b)$]  $\bar f = \langle f_\alpha:\alpha < \delta \rangle$
is $<_I$-increasing, $f_\alpha \in {}^\kappa \Ord$
\sn
\item[$(c)$]   $J \supseteq I$ is an ideal on $\kappa$
and $\bar s$ witnesses $\bar f$ is $(< \sigma)$-chaotic for $J$.
\end{enumerate}
\mn
\Then \, $\bar f$ has no $\le_I$-e.u.b. $f$ such that $\{i <
\kappa:\cf(f(i)) \ge \sigma\} \in J$.
\end{claim}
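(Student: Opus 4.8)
I would argue by contradiction. Suppose $f\in{}^\kappa\Ord$ is a $\le_I$-e.u.b.\ of $\bar f$ with $A:=\{i<\kappa:\cf(f(i))\ge\sigma\}\in J$. A few normalizations first. The hypothesis on $\bar s$ forces $\delta$ to be a limit ordinal, so $\delta\ge\omega$; since $\bar f$ is $<_I$-increasing, a routine argument (comparing $f_0,f_1,f_2,f_3$ against $f$) gives $\{i:f(i)\le 1\}\in I$ and $\{i:f_\alpha(i)=0\}\in I$ for $1\le\alpha<\delta$. Modifying $f$ on a set in $I$ — which affects neither that $f$ is a $\le_I$-e.u.b.\ nor, as $I\subseteq J$, that $A\in J$ — we may assume $f(i)\ge 2$, hence $\Max\{f,1_\kappa\}=f$, for all $i$. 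By the Remark, $f$ is also a $\le_I$-l.u.b., and combined with $\bar f$ being $<_I$-increasing this gives $f_\beta<_I f$ for every $\beta<\delta$. Since $J$ is a proper ideal and $A\in J$, $\kappa\setminus A\notin J$, hence (as $I\subseteq J$) $\kappa\setminus A\notin I$. Finally, for each $\alpha\in[1,\delta)$, $(<\sigma)$-chaoticity gives $\beta(\alpha)\in(\alpha,\delta)$ and $g_\alpha\in\prod_{i<\kappa}s_i$ with $f_\alpha\le_J g_\alpha\le_J f_{\beta(\alpha)}$; then $B_\alpha:=\{i:g_\alpha(i)\ge f(i)\}\subseteq\{i:g_\alpha(i)>f_{\beta(\alpha)}(i)\}\cup\{i:f_{\beta(\alpha)}(i)\ge f(i)\}\in J$.

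The goal is to produce a single $f'\in{}^\kappa\Ord$ with $f'<_I\Max\{f,1_\kappa\}$ but $\{i:f'(i)\ge\Max\{f_\alpha(i),1\}\}\notin I$ for \emph{every} $\alpha<\delta$, which directly contradicts clause (b) in the definition of a $\le_I$-e.u.b.\ (if (b) returned $\alpha=0$, note $\Max\{f_0,1_\kappa\}\le_I\Max\{f_1,1_\kappa\}$ and take $\alpha=1$). Put $D^*:=\{i\notin A:\sup(s_i\cap f(i))=f(i)\}$, the set of coordinates on which $s_i$ is cofinal in $f(i)$. First assume $A\cup D^*\in J$. Then $\{i:s_i\cap f(i)\subseteq\{0\}\}\in J$: on it $g_1(i)\in s_i$, while $f_1(i)\le g_1(i)$ and $f_1(i)\ge 1$ off sets in $J$, so $g_1(i)\ge f(i)$ there, i.e.\ that set lies in $B_1\cup\{i:f_1(i)>g_1(i)\}\cup\{i:f_1(i)=0\}\in J$. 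Hence $E:=\kappa\setminus(A\cup D^*\cup\{i:s_i\cap f(i)\subseteq\{0\}\})\notin J$. Set $f'(i):=\sup(s_i\cap f(i))$ for $i\in E$ (so $1\le f'(i)<f(i)$) and $f'(i):=0$ otherwise; then $f'(i)<f(i)$ everywhere, so $f'<_I\Max\{f,1_\kappa\}$. Fix $\alpha\in[1,\delta)$; on $E\setminus B_\alpha\setminus\{i:f_\alpha(i)>g_\alpha(i)\}\setminus\{i:f_\alpha(i)=0\}$ — namely $E$ minus three members of $J$, hence not in $J$ — we get $g_\alpha(i)\in s_i\cap f(i)$, so $f'(i)=\sup(s_i\cap f(i))\ge g_\alpha(i)\ge f_\alpha(i)=\Max\{f_\alpha(i),1\}$. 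Thus $\{i:f'(i)\ge\Max\{f_\alpha(i),1\}\}\notin J$, hence $\notin I$, and we are done in this case.

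When $A\cup D^*\notin J$, i.e.\ $D^*\notin J$, I would restrict everything to $D^*$: the restriction to any $W\subseteq\kappa$ of a $\le_I$-e.u.b.\ of $\bar f$ is a $\le_{I\restriction W}$-e.u.b.\ of $\bar f\restriction W$ (extend a test function on $W$ by $0$ off $W$ and apply clause (b) for $f$), and $(<\sigma)$-chaoticity for $J$ restricts to $(<\sigma)$-chaoticity for $J\restriction W$; with $W=D^*$ this reduces us to the case $A=\varnothing$ (so $\cf(f(i))<\sigma$ for all $i$) and $s_i$ cofinal in $f(i)$ for every $i$. Fix for each $i$ an increasing $\langle c_i(\xi):\xi<\mu_i\rangle$ cofinal in $f(i)$, $\mu_i:=\cf(f(i))<\sigma$, with all $c_i(\xi)\in s_i$ and $c_i(0)\ge 1$; using clause (b) for $f$ to re-verify the chaoticity witness (if $f_\alpha\le_J g$ and $g<f$ pointwise then $g<_I f_\beta$ for some $\beta>\alpha$) we may replace each $s_i$ by $\{c_i(\xi):\xi<\mu_i\}$, so that every chaoticity witness has the form $g_\alpha=\langle c_i(\psi_\alpha(i)):i<\kappa\rangle$ for a unique $\psi_\alpha\in\prod_{i<\kappa}\mu_i$, with $g_\alpha\le_J g_{\alpha'}\iff\psi_\alpha\le_J\psi_{\alpha'}$. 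One then checks $\langle\psi_\alpha:\alpha<\delta\rangle$ is $\le_J$-increasing and $\le_J$-cofinal in $\prod_{i<\kappa}\mu_i$ (cofinality again via clause (b) for $f$), passes to a $<_J$-increasing cofinal subsequence $\langle\psi_{\alpha_\zeta}:\zeta<\cf(\delta)\rangle$, and considers $g^{**}(i):=\sup_\zeta c_i(\psi_{\alpha_\zeta}(i))$, a supremum of at most $\mu_i<\sigma$ elements of $s_i$; combining that $g^{**}$ is a $\le_J$-upper bound of $\bar f$ with the fact (from $f$ being a $\le_I$-l.u.b.) that $\sup_\alpha f_\alpha(i)\ge f(i)$ for $I$-almost all $i$, one again manufactures a function violating clause (b) for $f$.

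This last case — $A=\varnothing$ with $s_i$ cofinal in $f(i)$ throughout — is where I expect the real work to be. The delicate point is the passage from the mod-$J$ information carried by the interpolating sequence $\langle g_\alpha\rangle$ (chaoticity lives in $J$) to the mod-$I$ statements needed to contradict $f$ being a $\le_I$-e.u.b., while tracking $\cf(f(i))<\sigma$ against $\cf(\delta)$ and assuming no completeness of $I$ or $J$; concretely, arranging that $g^{**}$ — or a suitable lowering of it — lies $<_I\Max\{f,1_\kappa\}$ on a set not in $I$ while still dominating $\bar f$ modulo $J$ is the crux.
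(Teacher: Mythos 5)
Your final case is a genuine gap, and it is one that cannot be closed: once you have reduced to the situation where $A=\emptyset$ and each $s_i$ is cofinal in $f(i)$, the statement you are trying to prove (with ``$\{i<\kappa:\cf(f(i))\ge\sigma\}\in J$'' read literally) is false, so no amount of work on $g^{**}$ will manufacture the required violation of clause (b). Concretely: let $\kappa=\omega$, let $I=J$ be the ideal of finite subsets of $\omega$, $\sigma=\aleph_2$, $\delta=\omega_1$, let $f_\alpha$ be constantly $\alpha$, let $s_i=\omega_1$ (a set of $\aleph_1<\sigma$ ordinals), and let $f$ be constantly $\omega_1$. Then $\bar f$ is $<_I$-increasing, $\bar s$ witnesses that it is $(<\sigma)$-chaotic for $J$ (take $g=f_\alpha$ and $\beta=\alpha+1$ in clause (e) of Definition \ref{1.2}), and $f$ is a $\le_I$-e.u.b.: any $f'<_I\Max\{f,1_\kappa\}$ is $<\omega_1$ off a finite set, hence bounded by the constant function $\gamma+1$ off a finite set where $\gamma<\omega_1$ is the countable supremum of its countable values. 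Yet $\{i:\cf(f(i))\ge\sigma\}=\emptyset\in J$. In your notation this is exactly the reduced case, and the failure is visible in your own sketch: $g^{**}(i)=\sup_\zeta c_i(\psi_{\alpha_\zeta}(i))$ climbs all the way to $f(i)$, so $g^{**}$ is no longer $<_I\Max\{f,1_\kappa\}$ and clause (b) of Definition \ref{1.1}(2) says nothing about it.

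The trouble is the polarity of the hypothesis on $f$. The paper's own proof begins ``toward contradiction assume $f$ is a $\le_I$-e.u.b. of $\bar f$ and $A_1:=\{i<\kappa:\cf(f(i))\ge\sigma\}\notin I$'': the intended condition is that this set is \emph{positive} (for the intersection with the $J$-large set $A_2$ at the end of the argument one in fact wants $A_1\notin J$), which is also how the claim is used in clauses (F),(G) of \ref{1.5}, where the analogous set belongs to the ultrafilter $D$. Under that reading there is no case split: on a positive set of coordinates $\cf(f(i))\ge\sigma>|s_i|$ already forces $\sup(s_i\cap f(i))<f(i)$, and your first-case mechanism --- set $f'(i)=\sup(s_i\cap f(i))+1$ there and $0$ elsewhere, apply clause (b) to get $f'<_I\Max\{f_\alpha,1_\kappa\}$, then trap $f_\alpha(i)\le g(i)\le\sup(s_i\cap f(i))<f'(i)<\Max\{f_\alpha(i),1\}$ at some coordinate --- is precisely the paper's entire proof. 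So the portion of your write-up that is correct is, applied to $A_1$ rather than to your set $E$, the whole intended argument, and the portion you flag as ``the real work'' is an artifact of the misprinted hypothesis rather than missing mathematics.
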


\begin{discussion}
\label{1.4}
What is the aim of clause (c) of \ref{1.3}?  For
$\le_I$-increasing sequence $\bar f,\langle f_\alpha:\alpha < \delta
\rangle$ in ${}^\kappa\Ord$ we are interested whether it has an
appropriate $\le_I$-e.u.b.  Of course, I may be a maximal ideal on
$\kappa$ and $\langle f_t:t \in \cf((\omega,<)^\kappa/D))$
is $<_I$-increasing cofinal in
$(\omega,<)^\kappa/D$, so it has an $<_I$-e.u.b. the sequence
$\omega_\kappa = \langle \omega:i <\kappa \rangle$, but this is not
what interests us now; we like to have a $\le_I$-e.u.b. $g$ such that
$(\forall i)(\cf(g(i)) > \kappa)$.
\end{discussion}

\begin{PROOF}{\ref{1.3}}
Toward contradiction assume that $f \in {}^\kappa \Ord$ is
a $\le_I$-e.u.b. of $\bar f$ and $A_1 := \{i < \kappa:\cf(f(i))
\ge \sigma\} \notin I$ hence $A \notin I$.

We define a function $f' \in {}^\kappa \Ord$ as follows:
\mn
\begin{enumerate}
\item[$\circledast$]    $(a) \quad$ if $i \in A$ then $f'(i) =
\sup(s_i \cap f(i)) +1$
\sn
\item[${{}}$]   $(b) \quad$ if $i \in \kappa \backslash A$ then
$f'(i) = 0$.
\end{enumerate}
\mn
Now that $i \in A \Rightarrow \cf(g(i)) \ge \sigma > |s_i|
\Rightarrow f'(i) < f(i) \le \Max\{g(i),1\}$ and $i \in \kappa
\backslash A \Rightarrow f'(i)=0 \Rightarrow f'(i) < \Max\{f(i),1\}$.
So by clause (b) of Definition \ref{1.1}(2) we
know that for some $\alpha < \delta$ we have $f' <_I
\Max\{f_\alpha,1\}$.
But ``$\bar s$ witness that $\bar f$ is $(< \sigma)$-chaotic"
hence we can find $g \in \prod\limits_{i < \kappa} s_i$ and
$\beta \in (\alpha,\delta)$ such that $f_\alpha \le_I g \le_I
f_\beta$ and as $\bar f$ is $<_I$-increasing \wilog \, $g <_I f_\beta$.

So $A_2 := \{i < \kappa:f_\alpha(i) \le g(i) < f_\beta(i) \le f(i)$
and $f'(i) < \Max\{f_\alpha(i),1\} = \kappa\} \mod I$ hence $A :=
A_1 \cap A_2 \ne \emptyset \mod I$ hence $A \ne \emptyset$.  So for
any $i \in A$ we have $f_\alpha(i) \le g(i) < f_\beta(i) \le f(i)$ and
$f(i) \in s_i$ hence $g(i) < f'(i) := \sup(s_i \cap f(i))+1$ and so
$f'(i) \ge 1$.

Also $f'(i) < \Max\{f_\alpha(i,1)\}$ hence $f'(i) < f_\alpha(i)$.
Together $f'(i) < f_\alpha(i) \le g(i) < f'(i)$, contradiction.
\end{PROOF}

\begin{lemma}
\label{1.5}
Suppose $\cf(\delta) > \kappa^+,I$ an ideal on $\kappa$ and
$f_{\alpha} \in {}^\kappa \Ord$ for $\alpha <
\delta$ is $\le_{I}$-increasing.  \Then \, there are $\bar J,\bar s,
\bar f'$ satisfying:
\mn
\begin{enumerate}
\item[$(A)$]    $\bar s = \langle s_i:i < \kappa \rangle$,
each $s_{i}$ a set of  $\le \kappa$ ordinals,
\sn
\item[$(B)$]    $\sup\{f_\alpha(i):\alpha < \delta\} \in s_i$;
moreover is $\max(s_i)$
\sn
\item[$(C)$]    $\bar f' = \langle f'_\alpha:\alpha <\delta \rangle$
where $f'_{\alpha} \in \prod\limits_{i<\kappa} s_{i}$ is defined by
$f'_{\alpha}(i) = \Min\{s_{i}\backslash f_{\alpha}(i)\}$,
(similar to rounding!)
\sn
\item[$(D)$]    $\cf[{f'_\alpha}(i)] \le \kappa$ (e.g.
$f'_\alpha(i)$ is a successor ordinal) implies $f'_\alpha(i) = f_\alpha(i)$
\sn
\item[$(E)$]    $\bar J = \langle J_\alpha:\alpha < \delta \rangle,
J_{\alpha}$ is an ideal on $\kappa$ extending $I$ (for $\alpha <
\delta$), decreasing with $\alpha$ (in fact for some $a_{\alpha,\beta}
\subseteq \kappa$ (for $\alpha < \beta < \kappa$) we have
$a_{\alpha,\beta}/I$  decreases with  $\beta$,  increases with
$\alpha$ and $J_{\alpha}$ is the ideal generated by
$I \cup \{a_{\alpha,\beta}:\beta$ belongs to $(\alpha,\lambda)\}$) so
possibly $J_\alpha = \cP(\kappa)$ and possibly $J_\alpha=I$
\end{enumerate}
\mn
such that:
\mn
\begin{enumerate}
\item[$(F)$]   if $D$ is an
ultrafilter on $\kappa$
disjoint to $J_{\alpha}$ \then \, $f'_{\alpha}/D$ is a $<_{D}$-l.u.b and
even $<_D$-e.u.b. of $\langle f_{\beta}/D:\beta < \alpha\rangle$
which is eventually $<_D$-increasing and
$\{i < \kappa:\cf[f'_{\alpha}(i))] > \kappa\} \in D$.
\end{enumerate}
\mn
Moreover
\mn
\begin{enumerate}
\item[$(F)^+$]    if $\kappa \notin J_{\alpha}$ \then \, $f'_{\alpha}$
is an $<_{J_\alpha}$-e.u.b (= exact upper bound) of $\langle
f_{\beta}:\beta < \delta \rangle$ and $\beta \in (\alpha,\delta)
\Rightarrow f'_\beta =_{J_\alpha} f'_\alpha$
\sn
\item[$(G)$]    if $D$ is an ultrafilter on $\kappa$ disjoint to $I$
but for every $\alpha$ not disjoint to $J_{\alpha}$ \then \,
$\bar s$ exemplifies $\langle f_{\alpha}:\alpha < \delta \rangle$ is $\kappa$
chaotic for $D$ as exemplified by $\bar s$ (see Definition \ref{1.2}), i.e.,
for some club $E$  of  $\delta,\beta < \gamma \in E \Rightarrow
f_{\beta} \le_D f'_{\beta} <_D f_{\gamma}$
\sn
\item[$(H)$]    if $\cf(\delta)> 2^\kappa$ \then \,
$\langle f_{\alpha}:\alpha < \delta \rangle$ has a
$\le_{I}$-l.u.b. and even $\le_I$-e.u.b. and for every large enough
$\alpha$ we have $I_\alpha = I$
\sn
\item[$(I)$]    if $b_\alpha =:\{i:{f'_\alpha}(i)$ has cofinality
$\le \kappa$ (e.g., is a successor)$\} \notin J_\alpha$ \then \,: for
every $\beta\in(\alpha, \delta)$ we have
$f'_\alpha \upharpoonright b_\alpha= f_\beta \upharpoonright
b_\alpha \mod J_\alpha$.
\end{enumerate}
\end{lemma}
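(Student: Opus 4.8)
The plan is to build $\bar s$ by a closure process of length below $\kappa^+$, to read off $\bar f'$, the sets $a_{\alpha,\beta}$ and the ideals $J_\alpha$ from it, and then to check (A)--(I); the conceptual content sits in (F), $(F)^+$ and (G), where Definition \ref{1.1} (l.u.b./e.u.b.) and Claim \ref{1.3} do the work. First I would fix $i<\kappa$ and construct an $\subseteq$-increasing continuous chain $\langle s^\zeta_i:\zeta<\kappa^+\rangle$ of sets of $\le\kappa$ ordinals: put $s^0_i=\{\sup\{f_\alpha(i):\alpha<\delta\}\}$, take unions at limits, and at stage $\zeta+1$ adjoin, for each $\alpha<\delta$, the ordinal $\sup\{f_\beta(i):\beta<\delta,\ f_\beta(i)<\Min(s^\zeta_i\setminus f_\alpha(i))\}$, except that a supremum of cofinality $\le\kappa$ is instead replaced by some $f_\beta(i)$ realizing it (or by $f_\alpha(i)$), which prepares clause (D). Since $\bar f$ is $\le_I$-increasing, the rounding $\alpha\mapsto\Min(s^\zeta_i\setminus f_\alpha(i))$ takes at most $|s^\zeta_i|\le\kappa$ values, so only $\le\kappa$ ordinals enter $s_i$ at each step and $|s^\zeta_i|\le\kappa$ for all $\zeta<\kappa^+$; moreover the rounded values are $\le$-decreasing in $\zeta$, so the chain stabilizes --- and this is where $\cf(\delta)>\kappa^+$, rather than merely $>\kappa$, is spent: to see stabilization occurs simultaneously for all $i$ at some $\zeta(*)<\kappa^+$. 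Setting $s_i:=s^{\zeta(*)}_i$ and $f'_\alpha(i):=\Min(s_i\setminus f_\alpha(i))$ gives (A), (B), (C) at once, and (D) by the cofinality proviso.

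Next I would set up the ideals. During the closure, for $\alpha<\beta<\delta$ let $a_{\alpha,\beta}$ be, roughly, the set of coordinates on which $f'$ still moves between the stage at which $\alpha$'s failure triggered a correction and stage $\beta$ --- a set of the shape $\{i:f'_\alpha(i)<f_\beta(i)\}$ or $\{i:f'_\alpha(i)\neq f'_\beta(i)\}$, adjusted so that $a_{\alpha,\beta}/I$ decreases in $\beta$ and increases in $\alpha$ --- and let $J_\alpha$ be the ideal generated by $I\cup\{a_{\alpha,\beta}:\alpha<\beta<\delta\}$; morally $J_\alpha$ is the ideal of coordinate sets on which $f'_\alpha$ has not yet settled onto the eventual exact upper bound, and (E) is then immediate. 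For (F): if $D$ is an ultrafilter disjoint from $J_\alpha$, the construction forces $f'_\alpha<_D f_\beta$ for $\beta\in(\alpha,\delta)$ while $f_\beta\le_D f'_\alpha$ for $\beta\le\alpha$ (as $f_\beta\le_I f_\alpha\le f'_\alpha$ pointwise), so $f'_\alpha$ is a $<_D$-upper bound of $\langle f_\beta:\beta<\alpha\rangle$; moreover the closure gives $f'_\alpha(i)=\sup\{f_\beta(i):\beta<\delta,\ f_\beta(i)<f'_\alpha(i)\}$, and on the $D$-large set where $\cf(f'_\alpha(i))>\kappa$ a diagonalization using $\kappa<\cf(f'_\alpha(i))$ upgrades any $g<_D f'_\alpha$ to $g<_D f_\beta$ for some $\beta<\alpha$ --- so $f'_\alpha/D$ is the $<_D$-e.u.b., eventually $<_D$-increasing along $\bar f$. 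Clause $(F)^+$ is the same argument run modulo $J_\alpha$ in place of an ultrafilter, using that $\beta\in(\alpha,\delta)\Rightarrow f'_\beta=_{J_\alpha}f'_\alpha$ by stabilization.

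The remaining clauses are consequences. For (G): if $D$ is disjoint from $I$ but meets every $J_\alpha$, then for each $\alpha$ there is $\beta(\alpha)\in(\alpha,\delta)$ with $f_\alpha\le_D f'_\alpha\le_D f_{\beta(\alpha)}$ and $f'_\alpha\in\prod_{i<\kappa}s_i$; on the club of $\gamma<\delta$ closed under $\alpha\mapsto\beta(\alpha)$ this yields $f_\beta\le_D f'_\beta<_D f_\gamma$ for $\beta<\gamma$ in the club, which is exactly clause (e) of Definition \ref{1.2}, so $\bar f$ is $\kappa$-chaotic for $D$ via $\bar s$, establishing (G); by Claim \ref{1.3} this is consistent with there being, over such a $D$, no $\le_I$-e.u.b. $f$ of $\bar f$ with $\{i:\cf(f(i))>\kappa\}\in D$. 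For (H): when $\cf(\delta)>2^\kappa$, a pigeonhole on the $\le 2^\kappa$ subsets of $\kappa$ forces (in the nondegenerate case) $J_\alpha=I$ for all large $\alpha$ --- otherwise one manufactures a $<_I$-chaotic witness contradicting Claim \ref{1.3} --- whence by $(F)^+$ the function $f'_\alpha$ is a $\le_I$-e.u.b., hence a $\le_I$-l.u.b., of $\bar f$. Finally (I) is $(F)^+$ restricted to $b_\alpha$: there $f'_\alpha=f_\alpha$ by (D), and stabilization gives $f_\beta\restriction b_\alpha=f'_\alpha\restriction b_\alpha\bmod J_\alpha$ for $\beta\in(\alpha,\delta)$.

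The main obstacle I expect is the first step together with the e.u.b. verification: one must design the closure so that it both terminates below $\kappa^+$ (which is precisely what the hypothesis $\cf(\delta)>\kappa^+$ buys) and leaves no room for a strictly smaller upper bound --- any would-be counterexample $g$ to exactness must, by the way $\bar s$ was closed off, already have forced one further round of corrections, contradicting stabilization --- all while keeping $|s_i|\le\kappa$ and honoring the small-cofinality bookkeeping behind (D) and (I). A secondary but genuinely fiddly point is choosing a single definition of the $a_{\alpha,\beta}$'s that makes the monotonicity in (E) compatible with (F)--(G).
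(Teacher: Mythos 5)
Your overall architecture (a closure process of length $<\kappa^+$ producing $\bar s$, then rounding to get $\bar f'$, then $a_{\alpha,\beta}=\{i:f'_\alpha(i)<f'_\beta(i)\}$ generating $J_\alpha$, then clauses (G)--(I) as formal consequences) matches the paper, and your treatment of (E), (G), (H), (I) is essentially the paper's. But the heart of the construction is wrong, and the error shows up exactly where you flagged the risk: the e.u.b.\ verification.

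The paper's closure is not internal. At stage $\zeta$ it adjoins $g_\zeta(i)$ where $(D_\zeta,g_\zeta,\alpha_\zeta)$ is an \emph{external counterexample}: $D_\zeta$ an ultrafilter disjoint from $I$, $g_\zeta$ a $\le_{D_\zeta}$-upper bound of all $f_\alpha$ with $g_\zeta<_{D_\zeta}f_{\zeta,\alpha}$ for $\alpha\ge\alpha_\zeta$. The hypothesis $\cf(\delta)>\kappa^+$ is spent on finding a single $\alpha(*)<\delta$ above all $\alpha_\zeta$ ($\zeta<\kappa^+$), after which the decreasing values $f_{\zeta,\alpha(*)}(i)$ stabilize and yield a contradiction; so the induction gets stuck, and ``stuckness'' is precisely the statement that no intermediate upper bound $g$ exists for any ultrafilter disjoint from $J_\alpha$ --- that is $(*)_6$, i.e.\ clause (F). Your closure instead adjoins suprema of the values $f_\beta(i)$ themselves. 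This has two fatal consequences. First, it stabilizes for trivial reasons (each point generates a finite decreasing orbit under $v\mapsto\sup\{f_\beta(i):f_\beta(i)<v\}$, and the sup over $i<\kappa$ of the stabilization stages only needs regularity of $\kappa^+$), so $\cf(\delta)>\kappa^+$ is never genuinely used --- yet the lemma is known to fail at $\cf(\delta)=\kappa^+$ (see \ref{1.6}(2)), so no correct proof can avoid using it. Second, knowing $f'_\alpha(i)=\sup\{f_\beta(i):f_\beta(i)<f'_\alpha(i)\}$ does not rule out an upper bound $g$ with $f_\beta\le_D g<_D f'_\alpha$ for all $\beta$: your ``diagonalization'' picks, for each $i$, some $\beta_i$ with $g(i)<f_{\beta_i}(i)$, but passing to $\beta\ge\sup_i\beta_i$ only gives $f_{\beta_i}\le_I f_\beta$, and the union over $\kappa$ many coordinates of the $I$-null error sets $\{j:f_{\beta_i}(j)>f_\beta(j)\}$ need not be $I$-null. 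This is the standard obstruction that forces the counterexample-driven design. (A smaller slip: with $D$ disjoint from $J_\alpha$ you assert $f'_\alpha<_D f_\beta$ for $\beta>\alpha$; the correct conclusion is $f_\beta\le f'_\beta=_D f'_\alpha$, i.e.\ $f'_\alpha$ is an upper bound of the whole sequence mod $D$.)

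To repair the proof you must redesign the induction so that each step consumes a putative failure of the least-upper-bound property --- a triple $(D_\zeta,g_\zeta,\alpha_\zeta)$ as above --- and let the sets $s_i$ be the $\bar e$-closures of $\{g_\epsilon(i):\epsilon<\zeta\}\cup\{\sup_\alpha f_\alpha(i)\}$ (the auxiliary coherent system $\bar e$ of sets $e_j$ for $j$ of cofinality $\le\kappa$ is what secures clause (D) and hence (I)). Then (F) is immediate from being stuck, $(F)^+$ follows by the ultrafilter argument on the increasing sets $c_\beta=\{i:g(i)<\Max\{1,f_\beta(i)\}\}$, and the rest goes through as you outlined.
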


\begin{remark}
Compare with \cite{Sh:506}.
\end{remark}

\begin{PROOF}{\ref{1.5}}
Let $\alpha^* = \cup\{f_\alpha(i) +1:\alpha < \delta,i
<\kappa\}$ and $S = \{j < \alpha^*:j$ has cofinality $\le \kappa\},
\bar e = \langle e_{j}:j \in S \rangle$ be such that
\mn
\begin{enumerate}
\item[$(a)$]   $e_j \subseteq j,|e_j| \le \kappa$ for every $j \in S$
\sn
\item[$(b)$]   if $j=i+1$ then $e_j = \{i\}$
\sn
\item[$(c)$]   if $j$ is limit, then $j = \sup(e_j)$ and $j' \in S
\cap e_j \Rightarrow e_{j'} \subseteq e_j$.
\end{enumerate}
\mn
For a set $a \subseteq \alpha^*$ let $c \ell_{\bar e}(a) =
a \cup \bigcup\limits_{j \in a \cap S} e_j$ hence by clause (c) clearly
$c \ell_{\bar e}(c \ell_{\bar e}(a)) = c \ell_{\bar e}(a)$ and
$[a \subseteq b \Rightarrow c \ell_{\bar e}(a) \subseteq
c \ell_{\bar e}(b)]$ and
$|c \ell_{\bar e}(a)| \le |a|+\kappa$.  We try to choose by
induction on $\zeta < \kappa^+$, the following objects:  $\alpha_{\zeta}$,
$D_{\zeta}$, $g_{\zeta}$, $\bar s_{\zeta} = \langle s_{\zeta,i}:i <
\kappa \rangle,\langle f_{\zeta,\alpha}:\alpha < \delta\rangle$ such that:
\mn
\begin{enumerate}
\item[$\boxtimes$]   $(a) \quad g_{\zeta} \in {}^\kappa \Ord$ and
$g_\zeta(i) \le \cup\{f_\alpha(i):\alpha < \delta\}$
\sn
\item[${{}}$]   $(b) \quad s_{\zeta,i} = c \ell_{\bar e}
[\{g_{\epsilon}(i):\epsilon < \zeta\} \cup \{\sup_{\alpha <\delta}
f_{\alpha}(i)\}]$ so it is a set of $\le \kappa$  ordinals

\hskip25pt increasing with $\zeta$ and
$\sup_{\alpha <\delta} f_{\alpha}(i) \in s_{\zeta,i}$,

\hskip25pt moreover $\sup_{\alpha < \delta} f_\alpha(i) = \max(s_{\zeta,i})$
\sn
\item[${{}}$]   $(c) \quad f_{\zeta,\alpha} \in {}^\kappa \Ord$ is
defined by $f_{\zeta,\alpha}(i) = \Min\{s_{\zeta,i} \backslash
f_{\alpha}(i)\}$,
\sn
\item[${{}}$]   $(d) \quad D_{\zeta}$ is an
ultrafilter on $\kappa$ disjoint to $I$
\sn
\item[${{}}$]   $(e) \quad f_{\alpha} \le_{D_\zeta} g_{\zeta}$
for $\alpha < \delta$
\sn
\item[${{}}$]   $(f) \quad \alpha_{\zeta}$ is an ordinal  $< \delta$
\sn
\item[${{}}$]    $(g) \quad \alpha_{\zeta} \le \alpha < \delta \Rightarrow
g_{\zeta} <_{D_\zeta} f_{\zeta,\alpha}$.
\end{enumerate}
\mn
If we succeed, let $\alpha(*) =  \sup\{\alpha_\zeta:\zeta <
\kappa^+\}$,  so as $\cf(\delta)> \kappa^+$ clearly
$\alpha(*) < \delta$.  Now let $i < \kappa$  and look at
$\langle f_{\zeta,\alpha(*)}(i):\zeta < \kappa^+\rangle$;
by its definition (see clause (c)), $f_{\zeta,\alpha(*)}(i)$  is the
minimal member of the set  $s_{\zeta,i} \backslash f_{\alpha(*)}(i)$.
This set increases with $\zeta$, so $f_{\zeta,\alpha(*)}(i)$
decreases with  $\zeta$ (though not necessarily strictly), hence is
eventually constant; so for some  $\xi_{i} < \kappa^+$ we have
$\zeta \in [\xi_{i},\kappa^+) \Rightarrow f_{\zeta,\alpha(*)}(i)
= f_{\xi_i,\alpha(*)}(i)$.
Let $\xi(*) = \sup_{i<\kappa} \xi_{i}$, so $\xi(*) < \kappa^+$, hence
\mn
\begin{enumerate}
\item[$\bigodot_1$]    $\zeta \in [\xi(*),\kappa^+)\; \, \and \;  i <
\kappa \Rightarrow f_{\zeta,\alpha(*)}(i) = f_{\xi(*),\alpha(*)}(i)$.
\end{enumerate}
\mn
By clauses (e) + (g) of $\boxtimes$ we know that
$f_{\alpha(*)} \le_{D_{\xi(*)}} g_{\xi(*)}
<_{D_{\xi(*)}} f_{\xi(*),\alpha(*)}$ hence
for some $i < \kappa$ we have $f_{\alpha(*)}(i) \le g_{\xi(*)}(i) <
f_{\xi(*),\alpha(*)}(i)$.  But $g_{\xi(*)}(i) \in
s_{\xi(*)+1,i}$ by clause (b) of $\boxtimes$
hence recalling the definition of
$f_{\xi(*)+1,\alpha(*)}(i)$ in clause (c) of $\boxtimes$
and the previous sentence $f_{\xi(*)+1,\alpha(*)}(i)
\le g_{\xi(*)}(i) < f_{\xi(*),\alpha(*)}(i)$,
contradicting the statement $\odot_1$.

So necessarily we are stuck in the induction process.  Let $\zeta <
\kappa^+$ be the first ordinal that breaks the induction.
Clearly $s_{\zeta,i}(i < \kappa),f_{\zeta,\alpha}(\alpha < \delta)$
are well defined.

Let $s_{i} =: s_{\zeta,i}$ (for  $i < \kappa )$  and $f'_{\alpha}  =
f_{\zeta,\alpha}$ (for $\alpha < \delta$), as defined in $\boxtimes$,
clearly they are well defined.
Clearly  $s_{i}$ is a set of $\leq\kappa$ ordinals and:
\mn
\begin{enumerate}
\item[$(*)_1$]     $f_{\alpha} \le f'_{\alpha}$
\sn
\item[$(*)_2$]    $\alpha < \beta \Rightarrow f'_{\alpha} \le_I f'_{\beta}$
\sn
\item[$(*)_3$]    if $b = \{i:f'_\alpha(i) < f'_{\beta}(i)\}
\notin I$ and $\alpha < \beta < \delta$ then $f'_\alpha
\upharpoonright b <_I f_{\beta} \upharpoonright b$.
\end{enumerate}
\mn
We let for $\alpha < \delta$
\mn
\begin{enumerate}
\item[$\bigodot_2$]    $J_{\alpha} = \bigl\{ b \subseteq  \kappa:b
\in I \text{ or } b \notin I \text{ and for every } \beta \in
(\alpha,\delta) \text{ we have}$:

\hskip40pt $f'_{\alpha} \upharpoonright (\kappa \setminus b) =_I f'_{\beta}
\upharpoonright (\kappa\setminus b) \bigr\}$
\sn
\item[$\bigodot_3$]    for $\alpha < \beta < \delta$ we let
$a_{\alpha,\beta}=:\{i<\kappa:f'_\alpha(i) <
f'_\beta(i)\}$.
\end{enumerate}
\mn
\Then \, as $\langle f'_{\alpha}:\alpha < \delta \rangle$ is
$\le_{I}$-increasing (i.e., $(*)_2$):
\mn
\begin{enumerate}
\item[$(*)_4$]   $a_{\alpha,\beta}/I$ increases with $\beta$,
decreases with $\alpha$, $J_{\alpha}$ increases with $\alpha$
\sn
\item[$(*)_5$]    $J_{\alpha}$ is an ideal on $\kappa$ extending
$I$, in fact is the ideal generated by $I \cup\{a_{\alpha,\beta}:
\beta\in(\alpha,\delta)\}$
\sn
\item[$(*)_6$]   if $D$ is an ultrafilter on  $\kappa$ disjoint to
$J_{\alpha}$, then  $f'_{\alpha} /D$  is a $<_{D}$-lub of
$\{f_{\beta}/D:\beta < \delta\}$.
\end{enumerate}
\mn
[Why?  We know that $\beta \in(\alpha,\delta) \Rightarrow
a_{\alpha,\beta} = \emptyset \mod D$,  so $f_{\beta} \le
f'_{\beta} =_D f'_{\alpha}$ for  $\beta \in (\alpha,\delta)$,
so  $f'_{\alpha} /D$  is an $\le_{D}$-upper bound.
If it is not a least upper bound then for some $g \in {}^\kappa \Ord$,
for every $\beta < \delta$ we have
$f_{\beta} \le_{D} g <_{D} f'_{\alpha}$
and we can get a contradiction to the choice of
$\zeta,\bar s,f'_{\beta}$ because: $(D,g,\alpha)$ could serve as
$D_{\zeta},g_{\zeta},\alpha_\zeta$.]
\mn
\begin{enumerate}
\item[$(*)_7$]    If $D$  is an ultrafilter on $\kappa$ disjoint
to $I$ but not to $J_{\alpha}$ for every $\alpha < \delta$ then $\bar s$
exemplifies that $\langle f_{\alpha}:\alpha < \delta \rangle$ is
$\kappa^+$-chaotic for  $D$, see Definition \ref{1.2}.
\end{enumerate}
\mn
[Why?  For every  $\alpha < \delta $  for some  $\beta \in
(\alpha,\delta)$  we have $a_{\alpha,\beta}\in D$, i.e., $\{i <
\kappa:f'_{\alpha} (i) < f'_{\beta}(i)\} \in D$,  so
$\langle f'_{\alpha}/D:\alpha < \delta \rangle$
is not eventually constant, so if  $\alpha < \beta,f'_{\alpha} <_{D}
f'_{\beta}$  then  $f'_{\alpha} <_{D} f_{\beta}$ (by $(*)_3$) and
$f_\alpha \le_{D} f'_\alpha$ (by (c)).  So $f_\alpha \le_D f'_\alpha
<_D f_\beta$ as required.]
\mn
\begin{enumerate}
\item[$(*)_8$]    if $\kappa \notin J_\alpha$ then $f'_\alpha$ is an
$\le_{J_\alpha}$-e.u.b. of $\langle f_\beta:\beta<\delta\rangle$.
\end{enumerate}
\mn
[Why?  By $(*)_6$, $f'_\alpha$ is a $\le_{J_\alpha}$-upper bound
of $\langle f_\beta:\beta<\delta\rangle$; so assume that it is
not a $\le_{J_\alpha}$-e.u.b. of $\langle
f_\beta:\beta<\delta\rangle$, hence there is a function $g$ with
domain $\kappa$, such that $g <_{J_\alpha} \Max \{1,f'_\alpha\}$,
but for no $\beta<\delta$ do we have

\[
c_\beta =:\{ i<\kappa:g(i) < \Max\{1,f_\beta(i)\}\} = \kappa \mod J_\alpha.
\]

\mn
Clearly $\langle c_\beta:\beta<\delta\rangle$ is increasing
modulo $J_\alpha$ so there is an ultrafilter $D$ on $\kappa$
disjoint to $J_\alpha \cup\{c_\beta:\beta<\delta\}$.
So $\beta < \delta \Rightarrow
f_\beta \le_D g \le_D f'_\alpha$, so we get a contradiction to
$(*)_6$ except when $g =_D f'_\alpha $ and then
$f'_\alpha =_D 0_\kappa$ (as $g(i)< 1 \vee  g(i)< f'_\alpha(i)$).
If we can demand $c^* = \{i:f'_\alpha(i) =0\}\notin D$ we are
done, but easily $c^* \setminus c_\beta\in J_\alpha$ so we finish.]

\mn
\begin{enumerate}
\item[$(*)_9$]    If $\cf[f'_\alpha(i)]\le\kappa$ then $f'_\alpha(i)
 = f_\alpha(i)$ so clause (D) of the lemma holds.
\end{enumerate}
\mn
[Why?  By the definition of $s_\zeta = c \ell_{\bar e}[\ldots]$ and
the choice of $\bar e$, and of $f'_\alpha(i)$.]
\mn
\begin{enumerate}
\item[$(*)_{10}$]    Clause (I) of the conclusion holds.
\end{enumerate}
\mn
[Why?  As $f_\alpha \le_{J_\alpha} f_\beta \le_{J_\alpha} f'_\alpha$ and
$f_\alpha \upharpoonright b_\alpha =_{J_\alpha} f'_\alpha
\upharpoonright b_\alpha$ by $(*)_9$.]
\mn
\begin{enumerate}
\item[$(*)_{11}$]   if $\alpha < \beta < \delta$ then $f'_\alpha =
f'_\beta \mod J_\alpha$, so clause (F)$^+$ holds.
\end{enumerate}
\mn
[Why?  First, $\bar f$ is $\le_I$-increasing hence it is
$\le_{J_\alpha}$-increasing.  Second, $\beta \le \alpha \Rightarrow
f_\beta \le_I f_\alpha \le f'_\alpha \Rightarrow f_\beta
\le_{J_\alpha} f'_\alpha$.  Third, if $\beta \in (\alpha,\delta)$ then
$a_{\alpha,\beta} = \{i < \kappa:f'_\alpha(i) < f'_\beta(i)\} \in
J_\alpha$, hence $f'_\beta \le_{J_\alpha} f'_\alpha$ but as $f_\alpha
\le_I f_\beta$ clearly $f'_\alpha \le_I f'_\beta$ hence $f'_\alpha
\le_{J_\alpha} f'_\beta$, so together $f'_\alpha =_{J_\alpha}
f'_\beta$.]
\mn
\begin{enumerate}
\item[$(*)_{12}$]   if $\cf(\delta) > 2^\kappa$ then for some $\alpha(*),
J_{\alpha(*)} = I$ (hence $\bar f$ has a $\le_I$-e.u.b.)
\end{enumerate}
\mn
[Why?  As $\langle J_\alpha:\alpha < \delta \rangle$ is a
$\subseteq$-decreasing sequence of subsets of $\cP(\kappa)$ it is
eventually constant, say, i.e., there is $\alpha(*) < \delta$ such
that $\alpha(*) \le \alpha < \delta \Rightarrow J_\alpha =
J_{\alpha(*)}$.  Also $I \subseteq J_{\alpha(*)}$, but if $I \ne
J_{\alpha(*)}$ then there is an ultrafilter $D$ of $\kappa$ disjoint
to $I$ but not to $J_{\alpha(*)}$ hence $\langle s_i:i < \kappa
\rangle$ witness being $\kappa$-chaotic.  But this implies $\cf(\delta)
\le \prod\limits_{i < \kappa} |s_i| \le \kappa^\kappa = 2^\kappa$,
contradiction.]

The reader can check the rest.
\end{PROOF}

\begin{example}
\label{1.6}
1) We show that l.u.b and e.u.b are not
the same.  Let $I$ be an ideal on  $\kappa,\kappa^+ < \lambda =
\cf(\lambda),\bar a = \langle a_{\alpha}:\alpha < \lambda \rangle $  be
a sequence of subsets of  $\kappa $,  (strictly) increasing modulo $I$,
$\kappa \backslash a_{\alpha}  \notin  I$  but there is no
$b \in \cP(\kappa)\backslash I$  such that $\bigwedge\limits_{\alpha}
 b \cap  a_{\alpha}  \in  I$.
[Does this occur?  E.g., for  $I = [\kappa]^{< \kappa}$,
the existence of such  $\bar a$  is known to be consistent; e.g.,
MA $\;  \and \;  \kappa =\aleph_0 \;\;   \and\;  \,  \lambda = 2^{\aleph_{0}}$.
Moreover, for any  $\kappa $  and $\kappa^+ < \lambda = \cf(\lambda)
\le 2^\kappa $ we can find $a_{\alpha} \subseteq \kappa$ for
$\alpha < \lambda $  such that, e.g., any Boolean combination of the
$a_{\alpha}$'s has cardinality $\kappa$ (less needed).
Let $I_{0}$ be the ideal on $\kappa$ generated by
$[\kappa]^{< \kappa} \cup \{a_{\alpha} \backslash a_{\beta}:
\alpha < \beta < \lambda\}$,  and let $I$ be maximal in
$\{J:J$ an ideal on $\kappa,I_0 \subseteq J$ and
$[\alpha < \beta < \lambda \Rightarrow a_{\beta} \backslash a_{\alpha}
\notin J]\}$.   So if G.C.H. fails, we have examples.]

For $\alpha < \lambda$,  we let $f_{\alpha}:\kappa \rightarrow \Ord$ be:

\[
f_{\alpha}(i) = \begin{cases} \alpha &\text{ if } i \in \kappa \setminus
a_\alpha, \\
\lambda + \alpha &\text{ if } i \in a_\alpha.
\end{cases}
\]

\mn
Now the constant function $f \in {}^\kappa \Ord,f(i) = \lambda +
\lambda$  is a l.u.b of $\langle f_{\alpha}:\alpha < \lambda \rangle$
but not an e.u.b. (both $\mod I$) (no e.u.b. is exemplified by $g \in
{}^\kappa \Ord$ which is constantly $\lambda$).

\noindent
2) Why do we require ``$\cf(\delta) > \kappa^+$" rather than
``$\cf(\delta) > \kappa$"?  As we have to, by Kojman-Shelah \cite{KjSh:673}.
\end{example}

\noindent
Recall (see \cite[2.3(2)]{Sh:506})
\begin{definition}
\label{1.7}
We say that $\bar f = \langle f_\alpha:\alpha < \delta \rangle$
obeys $\langle u_\alpha:\alpha \in S \rangle$ when
\mn
\begin{enumerate}
\item[$(a)$]   $f_\alpha:w \rightarrow \Ord$ for some fixed set $w$
\sn
\item[$(b)$]    $S$ a set of ordinals
\sn
\item[$(c)$]  $u_\alpha \subseteq \alpha$
\sn
\item[$(d)$]   if $\alpha \in S \cap \delta$ and $\beta \in u_\alpha$ then
$t \in w \Rightarrow f_\beta(t) \le f_\alpha(t)$.
\end{enumerate}
\end{definition}

\begin{claim}
\label{1.8}
Assume $I$ is an ideal on $\kappa,\bar f =
\langle f_\alpha:\alpha < \delta \rangle$ is $\le_I$-increasing and
obeys $\bar u = \langle u_\alpha:\alpha \in S \rangle$.  The sequence
$\bar f$ has a $\le_I$-e.u.b. when for some $S^+$ we have
$\circledast_1$ or $\circledast_2$ where
\mn
\begin{enumerate}
\item[$\circledast_1$]   $(a) \quad S^+ \subseteq \{\alpha <
\delta:\cf(\alpha) > \kappa\}$
\sn
\item[${{}}$]   $(b) \quad S^+$ is a stationary subset of $\delta$
\sn
\item[${{}}$]   $(c) \quad$ for each $\alpha \in S^+$ there are
unbounded subsets $u,v$ of $\alpha$ for which

\hskip25pt $\beta \in v \Rightarrow u \cap\beta \subseteq u_\beta$.
\sn
\item[$\circledast_2$]   $S^+ = \{\delta\}$ and for $\delta$ clause
(c) of $\circledast_1$ holds.
\end{enumerate}
\end{claim}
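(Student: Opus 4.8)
The plan is to prove case $\circledast_2$ by writing down an exact upper bound of $\bar f$ explicitly, and then to deduce case $\circledast_1$ by applying case $\circledast_2$ to each initial segment $\bar f\restriction\alpha$ with $\alpha\in S^+$ and gluing the results. (I use throughout that $\cf(\delta)>\kappa$, and in case $\circledast_1$ that $\cf(\alpha)>\kappa$ for $\alpha\in S^+$, which is clause (a).)

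For case $\circledast_2$, let $u,v$ be as in clause (c), so $v\subseteq S$ and $\beta\in v\Rightarrow u\cap\beta\subseteq u_\beta$, and set $h(i)=\sup\{f_\gamma(i):\gamma\in u\}$. Since $u$ is unbounded in $\delta$ and $\bar f$ is $\le_I$-increasing, each $f_\alpha$ is $\le_I$ some $f_\gamma\le h$, so $h$ is a $\le_I$-upper bound of $\bar f$. Now suppose $f'<_I\max\{h,1_\kappa\}$; fix $B\in I$ with $f'(i)<\max\{h(i),1\}$ for $i\notin B$, and for each $i\notin B$ with $h(i)>0$ choose $\gamma_i\in u$ with $f'(i)<f_{\gamma_i}(i)$ (possible since $h(i)=\sup_{\gamma\in u}f_\gamma(i)$). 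As $\cf(\delta)>\kappa$, the at most $\kappa$ ordinals $\gamma_i$ are bounded below $\delta$, so a single $\beta\in v$ lies above all of them; then for $i\notin B$ with $h(i)>0$ we have $\gamma_i\in u\cap\beta\subseteq u_\beta$, whence $f_\beta(i)\ge f_{\gamma_i}(i)>f'(i)$ because $\bar f$ obeys $\bar u$ and $\beta\in S$, while for $i\notin B$ with $h(i)=0$ we have $f'(i)<1$. Thus $f'<_I\max\{f_\beta,1_\kappa\}$ with $\beta<\delta$, proving $h$ is a $\le_I$-e.u.b. The decisive point is that one single $f_\beta$ pointwise dominates all the $f_{\gamma_i}$ at once — exactly what the obeying supplies — so no completeness of $I$ is used.

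For case $\circledast_1$, fix $\alpha\in S^+$: then $\bar f\restriction\alpha$ is $\le_I$-increasing, obeys $\langle u_\beta:\beta\in S\cap\alpha\rangle$, and by clause (c) at $\alpha$ together with $\cf(\alpha)>\kappa$ it satisfies $\circledast_2$ with $\alpha$ in place of $\delta$; so by the first case it has a $\le_I$-e.u.b. $h_\alpha(i)=\sup\{f_\gamma(i):\gamma\in u^\alpha\}$, where $u^\alpha$ is the set provided by clause (c) at $\alpha$. Since an e.u.b.\ is a l.u.b.\ (see the remark after Definition~\ref{1.1}) and l.u.b.'s are unique mod $=_I$, for $\alpha<\alpha'$ in $S^+$ the fact that $h_{\alpha'}$ is a $\le_I$-upper bound of $\bar f\restriction\alpha'\supseteq\bar f\restriction\alpha$ forces $h_\alpha\le_I h_{\alpha'}$; hence $\langle h_\alpha:\alpha\in S^+\rangle$ is $\le_I$-increasing and $h(i):=\sup\{h_\alpha(i):\alpha\in S^+\}$ is a $\le_I$-upper bound of $\bar f$ (using that $S^+$ is unbounded in $\delta$). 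For the e.u.b.\ clause of $h$: given $f'<_I\max\{h,1_\kappa\}$, repeat inside each $\bar f\restriction\alpha$ the analysis of the previous paragraph to get $\beta_\alpha\in v^\alpha\subseteq\alpha$ for which the set of $i$ with $f'(i)\ge\max\{f_{\beta_\alpha}(i),1\}$ coincides mod $I$ with the set of $i$ with $f'(i)\ge\max\{h_\alpha(i),1\}$; then Fodor's lemma applied to the regressive map $\alpha\mapsto\beta_\alpha$ on the stationary set $S^+$ pins $\beta_\alpha$ to a single value $\beta^*<\delta$ on a stationary $S'\subseteq S^+$, and one argues from $f'<_I\max\{h,1_\kappa\}$ — noting that these failure sets are $\subseteq_I$-decreasing in $\alpha$ — that the failure set attached to $\beta^*$ is itself in $I$, i.e.\ $f'<_I\max\{f_{\beta^*},1_\kappa\}$ with $\beta^*<\delta$.

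I expect the real obstacle to be exactly this final step of case $\circledast_1$: deriving $f'<_I\max\{f_{\beta^*},1_\kappa\}$ from "along $S'$ all the $h_\alpha$-failure sets of $f'$ equal a single set mod $I$, while their intersection over all of $S^+$ lies in $I$", carried out with $I$ an arbitrary ideal (no completeness available). This is why $S^+$ must be stationary — so that a genuine pressing-down collapses the $\beta_\alpha$ to one value — and why one should compare the $\beta^*$-failure set against a single $h_\alpha$-failure set rather than against a union, the same structural device that made case $\circledast_2$ work. The remaining ingredients (the reduction to initial segments, monotonicity of $\langle h_\alpha\rangle$, and the stability of "obeys" under restriction) are routine.
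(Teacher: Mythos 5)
The paper gives no argument here beyond the citation of \cite{Sh:506}, so I am judging your proposal on its own terms. Your treatment of case $\circledast_2$ is correct: the decisive point, as you say, is that the $\kappa$ many witnesses $\gamma_i\in u$ are bounded below $\delta$ (using $\cf(\delta)>\kappa$, which does have to be read into $\circledast_2$ --- the claim is false without it) and that a single $\beta\in v$ then dominates all the $f_{\gamma_i}$ \emph{pointwise} at once, because every $\gamma_i$ lies in $u\cap\beta\subseteq u_\beta$; no completeness of $I$ enters.

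Case $\circledast_1$ does not close, and the step you flag as ``the real obstacle'' is not merely unproven --- it is not derivable from the facts you list, because your candidate $h(i)=\sup\{h_\alpha(i):\alpha\in S^+\}$ need not be an e.u.b.\ at all. The $h_\alpha$ are comparable only mod $I$; each may exceed the true least upper bound of $\bar f$ on a different member of $I$, and over $|S^+|$ many indices these exceptional sets can cover $\kappa$, so $h$ can sit strictly above the genuine e.u.b.\ on a set not in $I$. Concretely: let $\kappa=\omega$, $I=[\omega]^{<\aleph_0}$, $\delta=\omega_2$, partition $\omega_2$ into sets $A_n$ $(n<\omega)$ each cofinal in every ordinal of cofinality $\aleph_1$, put $n_\gamma=n$ for $\gamma\in A_n$, let $f_\gamma(n)=\gamma$ for $n\ne n_\gamma$ and $f_\gamma(n_\gamma)=\omega_3$, and let $u_\beta=\{\gamma<\beta:n_\gamma=n_\beta\}$. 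Then $\bar f$ is $\le_I$-increasing, obeys $\bar u$, satisfies $\circledast_1$ with $S^+=\{\alpha<\omega_2:\cf(\alpha)=\aleph_1\}$ and $u^\alpha=v^\alpha$ a cofinal subset of $A_{n^\alpha}\cap\alpha$, and its e.u.b.\ is the constant function $\omega_2$. But if the witnesses are chosen so that $\{n^\alpha:\alpha\in S^+\}=\omega$, then $h_\alpha$ equals $\alpha$ off $\{n^\alpha\}$ and $\omega_3$ at $n^\alpha$, so your $h$ is constantly $\omega_3$, and $f'\equiv\omega_2$ satisfies $f'<\max\{h,1_\kappa\}$ everywhere while $f'\not<_I\max\{f_\beta,1_\kappa\}$ for every $\beta<\delta$. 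All your intermediate facts hold in this example (the $h$-failure set of $f'$ is empty, each $h_\alpha$-failure set equals the corresponding $f_{\beta_\alpha}$-failure set mod $I$, these sets are $\subseteq_I$-decreasing, and the pressing-down yields a constant $\beta^*$), so no argument can bridge the final step from them. What breaks, compared with $\circledast_2$, is coherence: the witnesses $\gamma_i$ now come from different $u^{\alpha_i}$, the relation ``$\gamma\in u_\beta$'' is not transitive, and hence no single $f_\beta$ pointwise dominates all the $f_{\gamma_i}$. A correct proof of $\circledast_1$ must produce the exact upper bound by some route other than a raw pointwise supremum of the $h_\alpha$ over $S^+$ --- that is exactly the content being imported from \cite{Sh:506}.
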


\begin{proof}
By \cite{Sh:506}.
\end{proof}

\begin{remark}
1) Connected to $\check I[\lambda]$, see \cite{Sh:506}.
\end{remark}

\begin{claim}
\label{1.10}
Suppose $J$ a $\sigma$-complete ideal on $\delta^*,\mu > \kappa =
\cf(\mu),\mu  = \tlim_J \langle \lambda_{i}:i < \delta\rangle,
\delta^* < \mu,\lambda_{i} = \cf(\lambda_{i})> \delta^*$ for $i <
\delta^*$ and $\lambda = \tcf(\prod\limits_{i < \delta^*} \lambda_i/J)$,
and $\langle f_{\alpha}:\alpha <\lambda\rangle$ exemplifies this.

\Then \, we have
\mn
\begin{enumerate}
\item[$(*)$]    if $\langle u_{\beta}:\beta < \lambda \rangle $  is
a sequence of pairwise disjoint non-empty subsets of  $\lambda $,
each of cardinality  $\le \sigma $ (not $<\sigma$!) and
$\alpha^\ast < \mu^+$,  then we can find $B \subseteq  \lambda$ such that:
\sn
\begin{enumerate}
\item[$(a)$]  $\otp(B) = \alpha^*$,
\sn
\item[$(b)$]   if $\beta \in B,\gamma \in B$ and $\beta < \gamma$
then $\sup(u_\beta) < \min(u_\gamma)$,
\sn
\item[$(c)$]  we can find $s_{\zeta} \in J$  for $\zeta \in
\bigcup\limits_{i \in B} u_{i}$ such that: if $\zeta \in
\bigcup\limits_{\beta \in B}
u_{\beta},\xi \in \bigcup\limits_{\beta \in B} u_{\beta},\zeta < \xi$
and $i \in  \delta \backslash (s_\zeta \cup s_\xi)$, then
$f_{\zeta} (i) < f_{\xi}(i)$.
\end{enumerate}
\end{enumerate}
\end{claim}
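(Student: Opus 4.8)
\medskip
\noindent\textbf{Proof proposal.}
First I would record two facts. (i) $\lambda \ge \mu^+$: since $\tlim_J\langle\lambda_i:i<\delta^*\rangle = \mu$ with each $\lambda_i$ regular $>\delta^*$, the ``top'' function $i\mapsto\lambda_i$ is a $<_J$-e.u.b.\ of $\bar f = \langle f_\alpha:\alpha<\lambda\rangle$ with all $\cf(\lambda_i)=\lambda_i>\delta^*$, and a standard diagonalization (using $\kappa=\cf(\mu)<\mu$) rules out $\cf(\prod\lambda_i/J)\le\mu$; hence $\alpha^*<\mu^+\le\lambda=\cf(\lambda)$ and $\lambda>\mu\ge|\alpha^*|,\delta^*,\sigma$. (ii) The scale $\bar f$ is \emph{strongly increasing along an unbounded set}: there are an unbounded $E\subseteq\lambda$ and $\langle s^*_e:e\in E\rangle$ with each $s^*_e\in J$ such that $e_1<e_2$ in $E$ implies $\{i<\delta^*:f_{e_1}(i)\ge f_{e_2}(i)\}\subseteq s^*_{e_1}\cup s^*_{e_2}$. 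Fact (ii) is what I expect to be the main obstacle; it comes from the e.u.b.\ analysis --- round $\bar f$ against fixed clubs $c_i\subseteq\lambda_i$ of order type $\lambda_i$ and restrict to the club of points where the rounded sequence is an exact upper bound of its predecessors (this is essentially Lemma~\ref{1.5}, applicable as $\lambda>\delta^{*+}$; cf.\ \cite{Sh:506}). Note that a naive greedy construction of a strongly increasing cofinal subsequence fails at limit stages of cofinality $\ge\sigma$, where a union of that many members of $J$ need not be in $J$.

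Granting (i) and (ii), I build $B$ and auxiliary ``separators'' $e^*_\epsilon\in E$ by recursion on $\epsilon\le\alpha^*$: at stage $\epsilon$, first pick $e^*_\epsilon\in E$ with $e^*_\epsilon > \sup(\{\sup(u_{\beta_{\epsilon'}})+1:\epsilon'<\epsilon\}\cup\{e^*_{\epsilon'}:\epsilon'<\epsilon\})$ --- possible as $E$ is unbounded in the regular $\lambda$ and we forbid only a set of size $<\lambda$; then, for $\epsilon<\alpha^*$, pick $\beta_\epsilon<\lambda$ with $\min(u_{\beta_\epsilon})>e^*_\epsilon$ and $\beta_\epsilon>\sup\{\beta_{\epsilon'}:\epsilon'<\epsilon\}$ --- possible as the pairwise-disjoint $u_\beta$ have distinct minima, so fewer than $\lambda$ indices $\beta$ have $\min(u_\beta)\le e^*_\epsilon$. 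Put $B=\{\beta_\epsilon:\epsilon<\alpha^*\}$; then $\langle\beta_\epsilon:\epsilon<\alpha^*\rangle$ is strictly increasing, so $\otp(B)=\alpha^*$ (clause (a)), and for $\epsilon'<\epsilon<\alpha^*$, $\sup(u_{\beta_{\epsilon'}}) < e^*_{\epsilon'+1}\le e^*_\epsilon < \min(u_{\beta_\epsilon})$ gives clause (b).

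For $\zeta\in\bigcup_{\epsilon<\alpha^*}u_{\beta_\epsilon}$ let $\epsilon(\zeta)$ be the $\epsilon$ with $\zeta\in u_{\beta_\epsilon}$, and set
\begin{align*}
s_\zeta\ :=\ & \{i<\delta^*:f_\zeta(i)\ge f_{e^*_{\epsilon(\zeta)+1}}(i)\}\ \cup\ \{i<\delta^*:f_\zeta(i)\le f_{e^*_{\epsilon(\zeta)}}(i)\}\\
& \cup\ s^*_{e^*_{\epsilon(\zeta)+1}}\ \cup\ s^*_{e^*_{\epsilon(\zeta)}}\ \cup\ s^{\mathrm{wb}}_\zeta,
\end{align*}
where $s^{\mathrm{wb}}_\zeta$ is the union of $\{i:\neg(f_{\zeta'}(i)<f_\zeta(i))\}$ over $\zeta'\in u_{\beta_{\epsilon(\zeta)}}$ with $\zeta'<\zeta$ and of $\{i:\neg(f_\zeta(i)<f_{\zeta'}(i))\}$ over $\zeta'\in u_{\beta_{\epsilon(\zeta)}}$ with $\zeta'>\zeta$. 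Since $e^*_{\epsilon(\zeta)}<\min(u_{\beta_{\epsilon(\zeta)}})\le\zeta\le\sup(u_{\beta_{\epsilon(\zeta)}})<e^*_{\epsilon(\zeta)+1}$ and $\bar f$ is $<_J$-increasing, the first two sets lie in $J$; the next two lie in $J$ by (ii); and $s^{\mathrm{wb}}_\zeta\in J$ by $\sigma$-completeness of $J$, since $|u_{\beta_{\epsilon(\zeta)}}|\le\sigma$. Hence $s_\zeta\in J$.

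Finally, clause (c): let $\zeta<\xi$ in $\bigcup_\epsilon u_{\beta_\epsilon}$, $\zeta\in u_{\beta_{\epsilon_1}}$, $\xi\in u_{\beta_{\epsilon_2}}$; by (b), $\epsilon_1\le\epsilon_2$. If $\epsilon_1=\epsilon_2$ then $\{i:f_\zeta(i)\ge f_\xi(i)\}\subseteq s^{\mathrm{wb}}_\zeta\subseteq s_\zeta$. If $\epsilon_1<\epsilon_2$ and $i\in\delta^*\setminus(s_\zeta\cup s_\xi)$, then $f_\zeta(i)<f_{e^*_{\epsilon_1+1}}(i)$ and $f_{e^*_{\epsilon_2}}(i)<f_\xi(i)$ and $i\notin s^*_{e^*_{\epsilon_1+1}}\cup s^*_{e^*_{\epsilon_2}}$; as $e^*_{\epsilon_1+1}\le e^*_{\epsilon_2}$ are in $E$, property (ii) gives $f_{e^*_{\epsilon_1+1}}(i)\le f_{e^*_{\epsilon_2}}(i)$, so $f_\zeta(i)<f_\xi(i)$. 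Thus $\{i<\delta^*:f_\zeta(i)\ge f_\xi(i)\}\subseteq s_\zeta\cup s_\xi$ in all cases, as required. The hypotheses ``$J$ $\sigma$-complete'' and ``$|u_\beta|\le\sigma$'' enter only via $s^{\mathrm{wb}}_\zeta$; the only hard point is fact (ii).
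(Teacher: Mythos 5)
Your overall architecture (pick separators $e^*_\epsilon$, squeeze the blocks $u_{\beta_\epsilon}$ between consecutive separators, and split $s_\zeta$ into a between-blocks part and a within-block part) does match the shape of the paper's argument, but your key Fact (ii) is false, and it is exactly where the real work lives. One cannot in general find an unbounded $E\subseteq\lambda$ on which $\bar f$ is strongly increasing with witnesses $s^*_e\in J$. Concretely take $\delta^*=\omega$, $J=J^{\bd}_\omega$, and $2^{\aleph_0}<\lambda$ (e.g.\ $2^{\aleph_0}<\mu$): an unbounded $E$ has cardinality $\lambda$ while $|J|\le 2^{\aleph_0}<\lambda$, so some $E'\subseteq E$ of cardinality $\lambda$ has a constant witness $s^*$; picking $n\notin s^*$, the map $e\mapsto f_e(n)$ is strictly increasing on $E'$, embedding $\lambda$ into $\lambda_n<\lambda$, a contradiction. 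What the $\check I[\lambda]$/e.u.b.\ machinery actually delivers is only a \emph{local} version: for regular $\theta$ with $\theta^+<\lambda$ there are (stationarily many) $\delta<\lambda$ of cofinality $\theta$ below which a separated sequence of length $\theta$ can be realized along a prescribed set $C_\delta\subseteq\delta$ of order type $\theta$. Since such sequences have length only $\theta$ (here $\theta=(|\alpha^*|+|\delta^*|)^+<\mu$, not $\lambda$) and live on sets $C_\delta$ fixed in advance, you cannot first produce the separated sequence and then interleave the $u_\beta$'s as your recursion does; the paper instead builds the separating functions $g^\zeta_\theta$ by a recursion of length $\lambda$ keyed to the $C^\theta_\zeta$'s and uses club guessing to find one $\delta$ whose $C^{\theta,*}_\delta$ already interlaces correctly with the club of points closed under $\beta\mapsto\sup(u_\beta)$ and $\zeta\mapsto\alpha_\zeta$. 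Lemma \ref{1.5} does not supply your Fact (ii); it gives exact upper bounds, not a cofinal strongly increasing subfamily.

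A second, smaller problem: $s^{\mathrm{wb}}_\zeta$ is a union over all of $u_{\beta_{\epsilon(\zeta)}}\setminus\{\zeta\}$, i.e.\ over possibly exactly $\sigma$ many sets, and $\sigma$-completeness only closes $J$ under unions of fewer than $\sigma$ sets --- this is precisely why the statement stresses ``$\le\sigma$ (not $<\sigma$!)''. The repair is the paper's: fix an enumeration $\langle\alpha_{\zeta,\varepsilon}:\varepsilon<|u|\rangle$ of each block and charge to $s_{\alpha_{\zeta,\varepsilon}}$ only the bad comparisons with the $|\varepsilon|<\sigma$ many \emph{earlier-enumerated} elements; clause (c) permits the bad set of a pair to be covered by either member's $s$, so this suffices.
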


\begin{PROOF}{\ref{1.10}}
First assume $\alpha^* < \mu$.
For each regular $\theta < \mu$, as $\theta^+ < \lambda =
\cf(\lambda)$ there is a stationary
$S_{\theta} \subseteq \{\delta < \lambda:\cf(\delta)
= \theta < \delta\}$ which is in $\check I[\lambda]$
(see \cite[1.5]{Sh:420}) which is equivalent (see \cite[1.2(1)]{Sh:420}) to:
\mn
\begin{enumerate}
\item[$(*)$]    there is  $\bar C^\theta = \langle
C^\theta_{\alpha}:\alpha < \lambda \rangle$
\sn
\begin{enumerate}
\item[$(\alpha)$]    $C^\theta_{\alpha}$ a subset of  $\alpha $,
with no accumulation points (in $C^\theta_{\alpha}$),
\sn
\item[$(\beta)$]   $[\alpha \in \nacc(C^\theta_{\beta})
\Rightarrow  C^\theta_{\alpha} = C^\theta_{\beta} \cap \alpha]$,
\sn
\item[$(\gamma)$]    for some club $E^0_{\theta}$ of $\lambda$,
\[
[\delta \in S_{\theta} \cap E^0_{\theta}  \Rightarrow \cf(\delta)=
\theta < \delta \land \delta = \sup(C^\theta_{\delta}) \land \otp
(C^\theta_{\delta}) = \theta].
\]
\end{enumerate}
\end{enumerate}
\mn
Without loss of generality  $S_{\theta}  \subseteq  E^0_{\theta} $,
and $\bigwedge\limits_{\alpha < \delta} \otp(C^\theta_\alpha)
\le  \theta$.  By \cite[2.3,Def.1.3]{Sh:365} for some club
$E_{\theta}$ of  $\lambda,\langle g \ell(C^\theta_{\alpha},
E_{\theta}):\alpha \in S_{\theta} \rangle$
guess clubs (i.e., for every club $E \subseteq E_{\theta}$ of $\lambda $,
for stationarily many  $\zeta \in S_{\theta}$,
$g \ell(C^\theta_{\zeta},E_{\theta}) \subseteq E)$  (remember
$g \ell(C^\theta_{\delta},E_{\theta}) = \{\sup(\gamma \cap
E_{\theta}):\gamma \in C^\theta_{\delta};\gamma > \Min
(E_{\theta})\})$.  Let  $C^{\theta,*}_{\alpha}
= \{\gamma \in C^\theta_{\alpha}:\gamma = \Min
(C^\theta_{\alpha} \backslash \sup (\gamma \cap  E_\theta))\}$,
they have all the properties of the  $C^\theta_{\alpha}$'s and guess
clubs in a weak sense: for every club $E$  of  $\lambda $  for some
$\alpha \in  S_{\theta}  \cap  E$,  if  $\gamma_{1} < \gamma_{2}$ are
successive members of $E$ then $|(\gamma_{1},\gamma_{2}] \cap
C^{\theta,*}_{\alpha}| \le 1$; moreover, the function
$\gamma \mapsto  \sup(E \cap \gamma)$ is one to one on
$C^{\theta,*}_\alpha$.

Now we define by induction on  $\zeta < \lambda $,  an ordinal
$\alpha_{\zeta} $ and functions  $g^\zeta_{\theta}  \in
\prod\limits_{i < \delta^*} \lambda_{i}$ (for each $\theta \in {\Theta} =:
\{\theta:\theta < \mu,\theta$ regular uncountable$\})$.

For given  $\zeta $,  let  $\alpha_{\zeta}  < \lambda $  be minimal
such that:

\[
\xi < \zeta \Rightarrow  \alpha_{\xi} < \alpha_{\zeta}
\]

\[
\xi  < \zeta \; \;   \and\;  \theta \in \Theta \Rightarrow g^\xi_{\theta} <
f_{\alpha_\zeta} \mod J.
\]

\mn
Now  $\alpha_{\zeta} $ exists as  $\langle f_{\alpha}:\alpha < \lambda
\rangle$  is $<_{J}$-increasing cofinal in
$\prod\limits_{i < \delta^*} \lambda_i/J$.  Now for each $\theta \in
{\Theta }$  we define $g^\zeta_{\theta}$ as follows:
\mn
\begin{enumerate}
\item[${{}}$]    for  $i < \delta^*,g^\zeta_{\theta} (i)$  is
$\sup[\{g^\xi_{\theta}(i) + 1:\xi \in C^\theta_{\zeta}\} \cup
\{f_{\alpha_{\zeta}}(i) + 1\}]$ if this number is $< \lambda_{i}$, and
$f_{\alpha_{\zeta}}(i) +1$ otherwise.
\end{enumerate}
\mn
Having made the definition we prove the assertion.
We are given $\langle u_{\beta}:\beta < \lambda \rangle$,
a sequence of pairwise disjoint non-empty subsets of  $\lambda$, each
of cardinality $\le \sigma $  and $\alpha^\ast  < \mu $.
We should find $B$ as promised; let $\theta =:(|\alpha^*| +
|\delta^*|)^+$ so $\theta < \mu$ is regular $> |\delta^*|$.
Let  $E = \{\delta \in E_{\theta}:(\forall \zeta)[\zeta <
\delta \Leftrightarrow  \sup(u_{\zeta}) < \delta \Leftrightarrow
u_{\zeta}  \subseteq  \delta\Leftrightarrow\alpha_\zeta <\delta]\}$.
Choose  $\alpha \in  S_{\theta}  \cap \acc(E)$ such that
$g \ell(C_\zeta^\theta,E_\theta)\subseteq E$;  hence letting
$C_{\alpha}^{\theta,*} = \{\gamma_{i}:i < \theta\}$
(increasing), $\gamma(i) = \gamma_i$, we know that $i < \delta^* \Rightarrow
(\gamma_{i},\gamma_{i+1}) \cap  E \ne \emptyset$.  Now let
$B =: \{\gamma_{5i+3}:i < \alpha^*\}$ we shall prove that $B$ is as required.
For $\alpha\in u_{\gamma(5\zeta + 3)},\zeta<\alpha^*$, let
$s_\alpha^o = \{i < \delta^*:g_\theta^{\gamma(5\zeta + 1)}(i) < f_\alpha(i) <
g^{\gamma(5\zeta + 4)}_\theta(i)\}$, for each $\zeta < \alpha^*$ let
$\langle \alpha_{\zeta,\varepsilon}:\varepsilon <
|u_{\gamma(5\zeta +3)}| \rangle$ enumerate $u_{\gamma(5\zeta + 3)}$
and let

\begin{equation*}
\begin{array}{clcr}
s^1_{\alpha_{\zeta,\varepsilon}} = \{i:\text{ for every }
\xi < \epsilon, f_{\alpha_{\zeta,\xi}}(i)< f_{\alpha_{\zeta,\epsilon}}(i)
&\Leftrightarrow \alpha_{\zeta,\xi} < \alpha_{\zeta,\epsilon} \\
  &\Leftrightarrow f_{\alpha_{\zeta,\xi}}(i)
\le f_{\alpha_{\zeta,\epsilon}}(i)\}.
\end{array}
\end{equation*}

\mn
Lastly, for $\alpha \in \bigcup\limits_{\zeta<\alpha^*} u_{5\zeta + 3}$ let
$s_\alpha=s_\alpha^o \cup s^1_\alpha$ and it is enough to check that
$\langle \zeta_\alpha:\alpha \in B\rangle$ witness that $B$ is as
required.  Also we have to consider $\alpha^* \in [\mu,\mu^+)$, we
prove this by induction on $\alpha^*$ and in the induction step we use
$\theta = (\text{cf}(\alpha^*) + |\delta^*|)^+$ using a similar proof.
\end{PROOF}

\begin{remark}
\label{1.11}
In \ref{1.10}:

\noindent
1) We can avoid guessing clubs.

\noindent
2)  Assume  $\sigma < \theta_{1} < \theta_{2} < \mu $  are regular
and there is $S \subseteq\{\delta <\lambda:\cf(\delta)=
\theta_{1}\}$  from  $I[\lambda]$  such that for every  $\zeta <
\lambda$ (or at least a club) of cofinality  $\theta_{2}$, $S \cap
\zeta$  is stationary and $\langle f_\alpha:\alpha<\lambda\rangle$
obey suitable $\bar C^\theta$ (see \cite[\S2]{Sh:345a}).
Then for some  $A \subseteq  \lambda$  unbounded, for every
$\langle u_{\beta}:\beta < \theta_{2}\rangle$ sequence of pairwise
disjoint non-empty subsets of $A$,  each of cardinality  $< \sigma$
with  $[\min u_{\beta},\sup u_{\beta}]$  pairwise disjoint we have:
for every  $B_{0} \subseteq  A$  of order
type  $\theta_{2}$,  for some  $B \subseteq B_{0}$, $|B| = \theta_{1}$,
(c) of $(*)$ of \ref{1.10} holds.

\noindent
3)  In $(*)$ of \ref{1.10},  $``\alpha^* < \mu"$  can be replaced by
``$\alpha^* < \mu^+$'' (prove by induction on $\alpha^*$).
\end{remark}

\begin{observation}
\label{1.12}
Assume $\lambda < \lambda^{<\lambda},\mu = \Min\{\tau:2^\tau  > \lambda\}$.
\Then \, there are  $\delta,\chi$ and $\cT$,
satisfying the condition $(*)$ below for  $\chi  = 2^\mu $ or at
least arbitrarily large regular $\chi < 2^\mu$
\mn
\begin{enumerate}
\item[$(*)$]    $\cT$ a tree with  $\delta$ levels,
(where $\delta \le \mu)$  with a set  $X$ of $\ge \chi$ \,
$\delta$-branches, and for  $\alpha < \delta,\bigcup\limits_{\beta < \alpha}
|{\cT}_{\beta}| < \lambda$.
\end{enumerate}
\end{observation}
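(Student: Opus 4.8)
The plan is to read off the arithmetic $2^{<\mu}\le\lambda<2^{\mu}$ — immediate from the definition of $\mu$, together with $\mu\le\lambda$ (since $2^{\lambda}>\lambda$) — and then to build $\cT$ essentially as a tree ${}^{<\delta}2$ (or a variant of it), the only real issue being to keep every level of size strictly below $\lambda$. I split into cases according to whether the continuum function already reaches $\lambda$ strictly below $\mu$.

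Suppose first that $2^{\tau}<\lambda$ for all $\tau<\mu$; this includes $\mu=\lambda$, where $\lambda<\lambda^{<\lambda}$ forces $\lambda$ to be a singular strong limit. Take $\delta=\mu$, $\cT={}^{<\mu}2$ ordered by end-extension, $X={}^{\mu}2$, $\chi=2^{\mu}$. For $\alpha<\mu$ the level $\cT_{\alpha}={}^{\alpha}2$ has $2^{|\alpha|}<\lambda$ nodes and $\bigcup_{\beta<\alpha}|\cT_{\beta}|\le|\alpha|\cdot2^{|\alpha|}=2^{|\alpha|}<\lambda$, while $|X|=2^{\mu}$; so $(*)$ holds with $\chi=2^{\mu}$ and $\delta=\mu\le\mu$.

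Now suppose $2^{\tau}=\lambda$ for some $\tau<\mu$, and let $\tau^{*}$ be least such; then $2^{<\mu}=\lambda$ and $2^{\rho}=\lambda$ for all $\rho\in[\tau^{*},\mu)$, so $\lambda^{\rho}=(2^{\tau^{*}})^{\rho}=2^{\max(\tau^{*},\rho)}=\lambda$ for $\rho<\mu$ but $\lambda^{\mu}=2^{\mu}>\lambda$. Computing $2^{\mu}=\prod_{i<\cf(\mu)}2^{\mu_{i}}$ for $\mu_{i}\uparrow\mu$ shows $\mu$ cannot be a limit cardinal, so $\mu=\nu^{+}$ with $2^{\nu}=\lambda$. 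If in addition $2^{\nu^{+}}=\lambda^{+}$, one is done cheaply: $\delta=\tau^{*}$, $\cT={}^{<\tau^{*}}2$, $X={}^{\tau^{*}}2$; here $2^{|\alpha|}<\lambda$ for $|\alpha|<\tau^{*}$ gives the narrow‑level condition and $|X|=2^{\tau^{*}}=\lambda$ realises every regular $\chi<2^{\mu}=\lambda^{+}$.

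The remaining case — $\mu=\nu^{+}$, $2^{\nu}=\lambda<2^{\nu^{+}}$ with $2^{\nu^{+}}\ge\lambda^{++}$ — carries the actual content: for a prescribed regular $\chi$ with $\lambda<\chi\le2^{\nu^{+}}$ one must produce a tree of height $\nu^{+}$ all of whose levels have $<\lambda$ nodes but with $\ge\chi$ cofinal branches, and here $\delta$ is forced to equal $\nu^{+}$ (a tree of smaller height with levels $<\lambda$ has at most $\lambda^{\nu}=\lambda$ cofinal branches). The naive ${}^{<\nu^{+}}2$ fails, since its levels of index of cardinality $\ge\tau^{*}$ have exactly $\lambda$ nodes. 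The construction I would attempt: fix distinct $\langle\eta_{\xi}:\xi<\chi\rangle$ in ${}^{\nu^{+}}2$ and build recursively a partition $\langle Z^{\alpha}_{t}:t\in\cT_{\alpha}\rangle$ of $\chi$ with $|\cT_{\alpha}|<\lambda$, refining in $\alpha$ and continuous at limits, so that its limit separates all pairs $\xi\neq\xi'$; the $\chi$ resulting threads are then cofinal branches of $\cT$. The crux — and the step I expect to be the main obstacle — is to make each refinement simultaneously narrow (so that $|\cT_{\alpha}|$ stays $<\lambda$) and efficient enough to separate $\chi>\lambda$ points in only $\nu^{+}$ stages; this is precisely where $2^{\nu^{+}}>2^{\nu}$ (together with $2^{\nu}>\nu^{+}$) must be used, and I expect the clean route is to feed in a pcf/cardinal‑arithmetic presentation of $2^{\nu^{+}}$ — e.g. a near‑cofinal sequence $\langle\lambda_{\alpha}:\alpha<\nu^{+}\rangle$ in $\lambda$ in the case $\cf(\lambda)=\nu^{+}$ — rather than a bare recursion. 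Once such a $\cT$ is in hand, the bookkeeping ($\delta\le\mu$, $|X|\ge\chi$, and attainment of arbitrarily large regular $\chi<2^{\mu}$) is routine.
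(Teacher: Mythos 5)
There is a genuine gap: the case you yourself identify as carrying ``the actual content'' is not proved. For it you only describe a construction you ``would attempt'' (recursively refining partitions of $\chi$), flag the crux step as ``the main obstacle,'' and say you ``expect'' it to be resolved by feeding in some pcf presentation of $2^{\nu^+}$. That is a plan, not an argument, and it is exactly the step where the theorem lives. For comparison, the paper splits Case 2 not by the size of $2^\mu$ but by whether $\cf(\lambda)=\mu$. When $\cf(\lambda)\ne\mu$ there is a short pigeonhole argument with no pcf at all: since $2^{<\mu}=\lambda$, write ${}^{\mu>}2=\bigcup_{j<\lambda}B_j$ with $B_j$ increasing of size $<\lambda$; for each $\eta\in{}^{\mu}2$ the sequence of least indices $j$ with $\eta\restriction\zeta\in B_j$ is increasing of length $\mu$ in $\lambda$, hence bounded (as $\cf(\lambda)\ne\mu=\cf(\mu)$), so some single $B_{j^*}$ captures cofinally many restrictions of $\ge\chi$ branches for any regular $\chi\le 2^\mu$; the downward closure of $B_{j^*}$ is the tree. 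Only when $\cf(\lambda)=\mu$ (so $\lambda$ is singular and $\lambda>\mu$) does the paper invoke pcf: \cite[3.4]{Sh:355} supplies trees $\cT\subseteq\prod_{i<\mu}\sigma_i$ with level $i$ of size $\le\max\pcf\{\sigma_j:j<i\}<\lambda$ and more than $\lambda$ branches, realizing every regular cardinal in $(\lambda,\pp^+(\lambda))$, and a further $\pp$/$\cov$ analysis pushes this up to $2^\mu$. Neither of these two arguments appears in your proposal, and your case division ($2^\mu=\lambda^+$ versus $2^\mu\ge\lambda^{++}$) even routes instances that the paper handles by the elementary pigeonhole argument into your unproved case.

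A secondary error: from $2^\mu=\prod_{i<\cf(\mu)}2^{\mu_i}$ you conclude that $\mu$ cannot be a limit cardinal, hence $\mu=\nu^+$. The computation only yields a contradiction when $\cf(\mu)<\mu$ (then $\lambda^{\cf(\mu)}=2^{\max(\tau^*,\cf(\mu))}=\lambda<2^\mu$); when $\mu$ is a regular limit it gives $\lambda^\mu=2^\mu$, which is no contradiction. So Bukovsky--Hechler only shows $\mu$ is regular, as the paper states, and $\mu$ weakly inaccessible with $2^{<\mu}=\lambda<2^\mu$ remains possible; your reduction to $\mu=\nu^+$ silently drops that case. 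Your Case 1 (all $2^\tau<\lambda$ for $\tau<\mu$, take ${}^{\mu>}2$) and your observation that a tree of height $<\mu$ with levels summing to $<\lambda$ has at most $\lambda$ branches are both correct and agree with the paper.
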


\begin{PROOF}{\ref{1.12}}
So let $\chi \le 2^\mu$ be regular, $\chi > \lambda$.
\bigskip

\noindent
\underline{Case 1}:  $\bigwedge\limits_{\alpha < \mu} 2^{|\alpha|} < \lambda$.
Then ${\cT} = {}^{\mu >}2,{\cT}_{\alpha} = {}^\alpha 2$ are O.K.
(the set of branches ${}^\mu 2$  has cardinality  $2^\mu$).
\bigskip

\noindent
\underline{Case 2}:  Not Case 1.  So for some  $\theta < \mu $,  $2^\theta \ge
\lambda $,  but by the choice of  $\mu,2^\theta \le\lambda $,
so  $2^\theta = \lambda,\theta < \mu$ and so $\theta \le \alpha < \mu
\Rightarrow  2^{|\alpha|} = 2^\theta$.
Note $|{}^{\mu >}2| = \lambda$ as $\mu \le \lambda$.  Note also that
$\mu = \cf(\mu)$ in this case (by the Bukovsky-Hechler theorem).
\bigskip

\noindent
\underline{Subcase 2A}:  $\cf(\lambda)\ne \mu = \cf(\mu)$.

Let ${}^{\mu >}2 = \bigcup\limits_{j < \lambda} B_{j},B_{j}$
increasing with $j,|B_{j}| < \lambda$.  For each $\eta \in
{}^\mu 2$, (as cf$(\lambda) \ne \cf(\mu)$)  for some $j_{\eta} < \lambda$,

\[
\mu  = \sup \{\zeta < \mu:\eta \upharpoonright \zeta \in B_{j_\eta}\}.
\]

\mn
So as $\cf(\chi) \ne \mu$,  for some ordinal $j^* < \lambda$
we have

\[
\{\eta \in {}^\mu 2:j_\eta \le j^*\} \text{ has cardinality } \ge\chi.
\]

\mn
As $\cf(\lambda) \ne \cf(\mu)$  and $\mu \le \lambda$ (by its
definition) clearly  $\mu < \lambda$,  hence $|B_{j^*}| \times \mu <
\lambda$.

Let

\[
{\cT} = \{\eta \upharpoonright \epsilon:\epsilon
< \ell g(\eta) \text{  and } \eta \in  B_{j^*}\}.
\]

\mn
It is as required.
\bigskip

\noindent
\underline{Subcase 2B}:  Not 2A so $\cf(\lambda)= \mu = \cf(\mu)$.

If $\lambda = \mu$  we get $\lambda = \lambda^{<\lambda}$
contradicting an assumption.

So  $\lambda > \mu$, so $\lambda$ singular.  Now if
$\alpha < \mu,\mu  < \sigma_{i} = \cf(\sigma_{i})< \lambda$
for  $i < \alpha $  then (see \cite[1.3(10)]{Sh:345a})
$\max \pcf\{\sigma_{i}:i < \alpha\} \le \prod\limits_{i < \alpha}
\sigma_{i} \le \lambda^{|\alpha|} \le (2^\theta)^{|\alpha|} \le
2^{<\mu} = \lambda$,  but as  $\lambda $ is singular and
$\max \pcf\{\sigma_{i}:i < \alpha\}$  is regular
(see \cite[1.9]{Sh:345a}),  clearly the inequality is strict, i.e.,
$\max \pcf\{\sigma_{i}:i < \alpha\} < \lambda$.  So let  $\langle
\sigma_{i}:i < \mu \rangle$ be a strictly increasing sequence of regulars in
$(\mu,\lambda)$ with limit $\lambda$,  and by \cite[3.4]{Sh:355} there
is ${\cT} \subseteq \prod\limits_{i < \mu} \sigma_{i}$ satisfying
$|\{\nu \upharpoonright i:\nu \in {\cT}\}| \le
\max \pcf\{\sigma_{j}:j < i\} < \lambda$, and number of $\mu$-branches
$> \lambda$.
In fact we can get any regular cardinal in $(\lambda,\pp^+(\lambda))$
in the same way.

Let $\lambda^* = \min\{\lambda':\mu <\lambda'\le \lambda,\cf(\lambda')
=\mu$ and $\pp(\lambda')>\lambda\}$, so (by
\cite[2.3]{Sh:355}),  also $\lambda^\ast$ has those properties and
$\pp(\lambda^*) \ge \pp(\lambda)$.  So if $\pp^+(\lambda^*) =
(2^\mu)^+$ or $\pp(\lambda^*) = 2^\mu$ is singular, we are done.
So assume this fails.

If  $\mu  > \aleph_{0}$, then (as in \cite[3.4]{Sh:430}) $\alpha < 2^\mu
\Rightarrow \cov(\alpha,\mu^+,\mu^+,\mu) < 2^\mu $ and we
can finish as in subcase 2A (actually $\cov(2^{<\mu},\mu^+,\mu^+,\mu)
< 2^\mu$ suffices which holds by the
previous sentence and \cite[5.4]{Sh:355}).
If $\mu  = \aleph_{0}$ all is easy.
\end{PROOF}

\begin{claim}
\label{6.4}
Assume ${\gb}_0 \subseteq \ldots \subseteq {\gb}_k \subseteq
{\gb}_{k+1} \subseteq \cdots$  for  $k < \omega,{\ga} =
\bigcup\limits_{k < \omega} {\gb}_k$  (and $|{\ga}|^+ <
\Min({\ga}))$ and $\lambda \in \pcf({\ga}) \backslash
\bigcup\limits_{k < \omega} \,\pcf({\gb}_k)$.

\noindent
1) We can find finite ${\gd}_k \subseteq \pcf ({\gb}_k
\backslash {\gb}_{k-1})$  (stipulating  ${\gb}_{-1} = \emptyset )$
such that $\lambda \in \pcf(\cup\{{\gd}_k:k < \omega\})$.

\noindent
2) Moreover, we can demand ${\gd}_k \subseteq
\pcf({\gb}_k) \backslash (\pcf({\gb}_{k-1}))$.
\end{claim}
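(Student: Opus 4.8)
I would work throughout in the pcf calculus for progressive sets, available since $|\ga|^{+}<\min(\ga)$: the generators $\gb_{\theta}[\gc]$ and ideals $J_{<\theta}[\gc]$ for $\gc\subseteq\ga$, the compactness theorem ($\max\pcf(\gc)$ exists and $\gc=\bigcup\{\gb_{\theta}[\gc]:\theta\in\gd\}$ for some finite $\gd\subseteq\pcf(\gc)$), the transitive‐generator theorem ($\gb_{\theta}[\gc]=\gb_{\theta}[\ga]\cap\gc$ for $\gc\subseteq\ga$ and $\theta\in\pcf(\gc)$, after a compatible choice of generators), $\pcf(\pcf(\gc))=\pcf(\gc)$, finite additivity $\pcf(\gc'\cup\gc'')=\pcf(\gc')\cup\pcf(\gc'')$ (valid for any two sets), and $\cf(\prod\gc/J^{\bd}_{\gc})=\max\pcf(\gc)$ with a witnessing scale. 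First I would reduce to the case $\ga=\gb_{\lambda}[\ga]$, hence $\max\pcf(\ga)=\lambda$: replace each $\gb_{k}$ by $\gb_{k}\cap\gb_{\lambda}[\ga]$. All hypotheses persist ($\lambda\notin\pcf(\gb_{k}\cap\gb_{\lambda}[\ga])$ since that set is $\subseteq\gb_{k}$), the union is still $\gb_{\lambda}[\ga]$, and a solution $\gd_{k}$ for the smaller instance lies in $\pcf(\gb_{k}\setminus\gb_{k-1})$ by $\subseteq$‑monotonicity of $\pcf$. After this reduction $\gc_{k}:=\gb_{k}\setminus\gb_{k-1}$ partitions $\ga$, and $\mu_{k}:=\max\pcf(\gc_{k})\le\max\pcf(\gb_{k})<\lambda$.

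\textbf{Choice of $\gd_{k}$ and the core claim.} For each $k$ I would apply compactness to $\gc_{k}$ to get a finite $\gd_{k}\subseteq\pcf(\gc_{k})$, with $\mu_{k}\in\gd_{k}$, such that $\gc_{k}=\bigcup\{\gb_{\theta}[\ga]\cap\gc_{k}:\theta\in\gd_{k}\}$ (using transitive generators, so that $\gb_{\theta}[\gc_{k}]=\gb_{\theta}[\ga]\cap\gc_{k}$); refining each block to $\gb_{\theta}[\ga]\cap\gc_{k}\setminus\bigcup_{\theta'\in\gd_{k},\,\theta'<\theta}(\gb_{\theta'}[\ga]\cap\gc_{k})$ I get a partition of $\gc_{k}$ into pieces each contained in $\gb_{\theta}[\ga]$ for its index. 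Put $\gc:=\bigcup_{k}\gd_{k}\subseteq\pcf(\ga)$, a countable set, and note $\ga$ is now partitioned into blocks $\langle W_{\theta,k}\rangle$ with $W_{\theta,k}\subseteq\gb_{\theta}[\ga]$, $\theta\in\gc$. Since $\gd_{k}\subseteq\pcf(\gc_{k})=\pcf(\gb_{k}\setminus\gb_{k-1})$ by construction, part (1) reduces to the core claim $\lambda\in\pcf(\gc)$. Because $\gc\subseteq\pcf(\ga)$ and $\pcf(\pcf(\ga))=\pcf(\ga)$ we have $\max\pcf(\gc)\le\lambda$; to get equality I would transfer a scale. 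Fix scales $\langle h^{\theta}_{\xi}:\xi<\theta\rangle$ for the generators $\gb_{\theta}[\ga]$ ($\theta\in\gc$), a scale $\langle f_{\alpha}:\alpha<\lambda\rangle$ for $\ga$, and an ultrafilter $D$ on $\ga$ with $\tcf(\prod\ga/D)=\lambda$; push $D$ forward along the map $\mu\mapsto$ (the index $\theta$ of $\mu$'s block) to an ultrafilter $E$ on $\gc$. For $\alpha<\lambda$ let $G_{\alpha}\in\prod\gc$ be given by $G_{\alpha}(\theta)=$ the least $\xi<\theta$ with $f_{\alpha}\restriction\gb_{\theta}[\ga]<h^{\theta}_{\xi}\bmod J_{<\theta}[\ga]$. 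One checks $\langle G_{\alpha}:\alpha<\lambda\rangle$ is $<_{E}$‑increasing (from $f_{\alpha}<_{D}f_{\beta}$) and cofinal in $\prod\gc/E$ (given $g\in\prod\gc$, lift it to $\hat g\in\prod\ga$ equal to $h^{\theta}_{g(\theta)}$ on the $\theta$‑blocks, find $\alpha$ with $\hat g<_{D}f_{\alpha}$, and read off $g\le_{E}G_{\alpha}$); hence $\tcf(\prod\gc/E)=\lambda$ and $\lambda\in\pcf(\gc)$.

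\textbf{Main obstacle and part (2).} The step I expect to be hard is exactly making this transfer rigorous: the generators $\gb_{\theta}[\ga]$ are typically bounded in $\ga$ and their scales are increasing and cofinal only modulo $J_{<\theta}[\ga]$, not modulo the bounded ideal, so one must control the resulting modular errors when combining data over the countably many blocks — which is why I insisted above that the blocks be pairwise disjoint and identified with $\gb_{\theta}[\ga]\cap\gc_{k}$, and why the estimates on $\gc_{k}$ should be run modulo $J_{<\mu_{k}}[\gc_{k}]$ rather than $J^{\bd}_{\gc_{k}}$. Equivalently, one may phrase the whole argument as a Tukey equivalence between the reduced products $\prod\ga/J^{\bd}_{\ga}$ and $\prod\gc/(\text{the appropriate ideal})$ via the two maps above; either way this bookkeeping is the technical heart of the proof. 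For part (2) I would then discard from each $\gd_{k}$ its intersection with $\pcf(\gb_{k-1})$: writing $\gd_{k}=\gd_{k}'\cup(\gd_{k}\cap\pcf(\gb_{k-1}))$ with $\gd_{k}'=\gd_{k}\setminus\pcf(\gb_{k-1})$, finite additivity gives $\lambda\in\pcf(\bigcup_{k}\gd_{k}')\cup\pcf(\bigcup_{k}(\gd_{k}\cap\pcf(\gb_{k-1})))$, and $\gd_{k}'\subseteq\pcf(\gc_{k})\setminus\pcf(\gb_{k-1})\subseteq\pcf(\gb_{k})\setminus\pcf(\gb_{k-1})$ as required; the remaining point — that $\lambda$ is not contributed by the discarded material — I would secure by re‑running the construction of the second paragraph over the union $\bigcup_{k}(\gc_{k}\setminus\gc_{k}')$, where $\gc_{k}'$ is the union of the discarded generators, after checking $\lambda\in\pcf(\bigcup_{k}(\gc_{k}\setminus\gc_{k}'))$ by a short argument with finite additivity and transitivity of generators. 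This verification is the one extra ingredient needed for part (2).
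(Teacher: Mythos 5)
Your skeleton is the right one and is essentially the argument of \cite[1.4,1.5]{Sh:371} that the paper's proof invokes (cover each $\gb_k\setminus\gb_{k-1}$ by finitely many generators, then show $\lambda$ lies in the pcf of the countable index set), but the step you yourself flag as ``the technical heart'' is a genuine gap, and the inference you offer for it is invalid rather than merely unverified. From $f_\alpha<_D f_\beta$, or even from $f_\alpha<f_\beta \bmod J_{<\lambda}[\ga]$, you cannot conclude $G_\alpha(\theta)\le G_\beta(\theta)$ for $E$-many $\theta$: each generator $\gb_\theta[\ga]$ with $\theta<\lambda$ lies in $J_{<\lambda}[\ga]$ (and may well be $D$-null), so the scale $\langle f_\alpha:\alpha<\lambda\rangle$ carries no monotonicity information on $\gb_\theta[\ga]$ modulo $J_{<\theta}[\ga]$, and $\langle G_\alpha:\alpha<\lambda\rangle$ need not be even $\le_E$-increasing. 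The missing idea is to replace the arbitrary scale by the cofinal family of characteristic functions $\Ch^{\ga}_{N}$ of a suitable increasing continuous chain of elementary submodels: for these one has that $\Ch^{\ga}_{N}$ agrees with $f^{{\ga},\theta}_{\sup(N\cap\theta)}$ on the relevant version of $\gb_\theta[\ga]$ (this is exactly $(*)_6$ in the proof of \ref{1.31}), so the transferred function evaluated at $\Ch^{\ga}_N$ is just $\Ch^{\gc}_N$, and both monotonicity and cofinality in $\prod\gc/E$ come for free. Without this device (or an equivalent one, e.g.\ minimally obedient scales) the core claim $\lambda\in\pcf(\bigcup_k\gd_k)$ is not established, and it is precisely what the cited \cite[1.4]{Sh:371} supplies.

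Part (2) has a second gap. ``Discard $\gd_k\cap\pcf(\gb_{k-1})$ and re-run the construction over what is left'' is not yet an argument: the re-run produces a new instance of the same problem (a new $\ga$, new $\gb_k$'s, new finite $\gd_k$'s with new discarded pieces), and nothing you say rules out iterating forever or shows that $\lambda$ survives each pass; your parenthetical ``by a short argument with finite additivity and transitivity of generators'' is where the actual content would have to go. The paper instead folds the requirement into the inductive choice of the generators: it chooses the pieces $\gc_{\zeta,\ell}$ so that $\max\bigl(\pcf(\ga\setminus\gc_{\zeta,0}\setminus\cdots\setminus\gc_{\zeta,\ell-1})\cap\pcf(\gb_\zeta)\bigr)$ is strictly decreasing in $\ell$. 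This single well-founded quantity yields in one pass both the finiteness of $\gd_k$ and the avoidance of $\pcf(\gb_{k-1})$, rather than leaving the latter as a post hoc repair.
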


\begin{proof}
We start to repeat the proof of \cite[1.5]{Sh:371} for
$\kappa = \omega$.
But there we apply \cite[1.4]{Sh:371} to $\langle {\gb}_\zeta:
\zeta < \kappa \rangle$ and get $\langle \langle
{\gc}_{\zeta,\ell}:\ell \le n(\zeta)\rangle:\zeta < \kappa\rangle$ and
let $\lambda_{\zeta,\ell} = \max \pcf({\gc}_{\zeta,\ell})$.
Here we apply the same claim (\cite[1.4]{Sh:371}) to  $\langle
{\gb}_{k} \backslash {\gb}_{k-1}:k < \omega\rangle$ to get part (1).
As for part (2), in the proof of \cite[1.5]{Sh:371} we let $\delta =
|{\ga}|^+ + \aleph_{2}$ choose $\langle N_{i}:i<\delta\rangle$, but now we
have to adapt the proof of \cite[1.4]{Sh:371} (applied to ${\ga}$,
$\langle {\gb}_{k}:k < \omega \rangle$, $\langle N_{i}:i < \delta
\rangle)$;  we have gotten there, toward the end, $\alpha < \delta$
such that $E_{\alpha} \subseteq E$. Let  $E_{\alpha} =
\{i_k:k <  \omega\},i_k < i_{k+1}$.  But now instead of applying
\cite[1.3]{Sh:371} to each ${\gb}_{\ell}$ separately,  we try to choose
$\langle {\gc}_{\zeta ,\ell}:\ell \le n(\zeta)\rangle$ by induction
on  $\zeta < \omega$.
For  $\zeta = 0$  we apply \cite[1.3]{Sh371}.  For  $\zeta > 0$, we
apply \cite[1.3]{Sh:371} to ${\gb}_{\zeta}$ but there defining by
induction on $\ell,{\gc}_{\ell} = {\gc}_{\zeta,\ell} \subseteq
{\ga}$ such that $\max(\pcf({\ga} \backslash {\gc}_{\zeta,0}
\backslash \cdots \backslash {\gc}_{\zeta,\ell-1})\cap
\pcf({\frak b}_{\zeta})$ is strictly decreasing with $\ell$.
\end{proof}

\noindent
We use:
\begin{observation}
\label{1.21}
If $|{\ga}_i| < \Min ({\ga}_i)$ for $i<i^*$, then ${\gc} =
\bigcap\limits_{i< i^*} \pcf({\ga}_i)$ has a last element or is empty.
\end{observation}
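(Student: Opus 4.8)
The plan is to first upgrade $\gc:=\bigcap_{i<i^*}\pcf(\ga_i)$ to a \emph{pcf-closed} set and then feed it into the structure theory of $\pcf$ of progressive sets. We may assume $\gc\neq\emptyset$. The first thing I would prove is that $\pcf(\gc)=\gc$: the inclusion $\gc\subseteq\pcf(\gc)$ is automatic (principal ultrafilters give $\mu=\cf(\prod\gc/D)$ for each $\mu\in\gc$), and for the reverse I would use, for each $i<i^*$, the transitivity of $\pcf$: since $\gc\subseteq\pcf(\ga_i)$ and $|\ga_i|<\Min(\ga_i)$ we get $\pcf(\gc)\subseteq\pcf(\pcf(\ga_i))=\pcf(\ga_i)$; intersecting over $i$ yields $\pcf(\gc)\subseteq\gc$. (Note that no hypothesis relating $|\gc|$ to $\Min(\gc)$ is needed; the hypothesis is used only on the $\ga_i$, which are progressive.)

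Now suppose toward a contradiction that $\gc$ has no last element. Then $\gc$ is infinite, $\lambda^*:=\sup(\gc)\notin\gc$ is a limit cardinal which is the limit of the regular cardinals lying in $\gc$, and $\Min(\gc)\ge\Min(\ga_0)>|\ga_0|$. I would split into two cases. If $\lambda^*$ is singular, put $\kappa:=\cf(\lambda^*)<\lambda^*$; the members of $\gc$ exceeding $\kappa$ are still cofinal in $\lambda^*$ with cofinality $\kappa$ there, so I can choose $\gd\subseteq\gc$ cofinal in $\lambda^*$ with $\otp(\gd)=\kappa$ and $\Min(\gd)>\kappa$. Then $\gd$ is a progressive set of regular cardinals ($|\gd|=\kappa<\Min(\gd)$), so $\nu:=\Max\pcf(\gd)$ exists, is a regular cardinal, and satisfies $\nu\ge\sup(\gd)=\lambda^*$ (since $\gd\subseteq\pcf(\gd)$); as $\lambda^*$ is singular, $\nu>\lambda^*$. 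But by monotonicity of $\pcf$ and the first step, $\nu\in\pcf(\gd)\subseteq\pcf(\gc)=\gc$, so $\nu\le\sup(\gc)=\lambda^*$ — a contradiction.

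In the remaining case $\lambda^*$ is a regular limit cardinal, and here I would appeal to the (non-trivial) theorem that $\pcf$ of a progressive set is a closed set of cardinals: for each $i<i^*$, since $\gc\subseteq\pcf(\ga_i)$ is cofinal in $\lambda^*$, the regular cardinal $\lambda^*$ is a limit point of $\pcf(\ga_i)$, hence $\lambda^*\in\pcf(\ga_i)$ because $|\ga_i|<\Min(\ga_i)$; as this holds for every $i$, $\lambda^*\in\gc$, contradicting $\lambda^*\notin\gc$. Thus $\gc$ has a last element. One could instead try to handle this case via the localization theorem — pick a cofinal $\gd\subseteq\gc$ of order type $\lambda^*$ and pull a witness for $\lambda^*\in\pcf(\gd)$ down to a progressive subset — but in fact $\pcf(\gd)\subseteq\gc$ is bounded below $\lambda^*$ (its supremum $\lambda^*$ is not attained), so that route does not directly close; this is exactly why the closedness of $\pcf(\ga_i)$, and not of subsets of $\gc$, is the crucial input.

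So the singular case is routine, and the main obstacle is the regular-limit case, which rests entirely on the theorem that $\pcf$ of a progressive set is closed — without it one cannot exclude $\sup(\gc)$ being an unattained regular limit. Secondary points to be careful about are the exact form of transitivity of $\pcf$ invoked in the first step (that $\mathfrak b\subseteq\pcf(\mathfrak a)$ with $\mathfrak a$ progressive forces $\pcf(\mathfrak b)\subseteq\pcf(\mathfrak a)$, with no constraint on $\mathfrak b$) and the elementary facts that a progressive set has a $\Max\pcf$ and that this maximum is a regular cardinal which is $\ge$ the supremum of the set.
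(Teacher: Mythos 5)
There is a genuine gap, and it sits exactly where you located the weight of the argument. First, a smaller point: your opening step $\pcf(\gc)=\gc$ invokes transitivity in the form $\pcf(\pcf(\ga_i))=\pcf(\ga_i)$ ``with no constraint on $\mathfrak b$''. That form is not available: transitivity ($\gb\subseteq\pcf(\ga)$, $\ga$ progressive $\Rightarrow\pcf(\gb)\subseteq\pcf(\ga)$) requires $\gb$ itself to be progressive, and it is not known that $|\pcf(\ga_i)|<\Min(\ga_i)$ — the paper flags precisely this unknown just before Claim \ref{6.7A}. Since $\gc$ need not be progressive, $\pcf(\gc)\subseteq\gc$ is unjustified. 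This is reparable in your singular case, which only uses $\pcf(\gd)\subseteq\gc$ for a progressive $\gd\subseteq\gc$ (that is the paper's $(*)_1$), and that case is fine. The serious problem is the regular-limit case. The ``closedness'' you cite — a regular $\lambda^*$ which is a limit point of $\pcf(\ga_i)$ lies in $\pcf(\ga_i)$ — is not an available black box in this generality. Such a $\lambda^*$ would satisfy $\lambda^*=\cf(\lambda^*)\le|\pcf(\ga_i)\cap\lambda^*|$, hence $|\pcf(\ga_i)|>\Min(\ga_i)>|\ga_i|$; so the case you are trying to dispose of lives exactly in the regime where $\pcf(\ga_i)$ fails to be progressive, where the closure statements in the literature either carry a progressiveness proviso on $\pcf(\ga_i)$ (making them vacuous here) or are themselves derived from the localization machinery. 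As you yourself observe, no cofinal subset of $\gc\cap\lambda^*$ is progressive when $\lambda^*$ is regular, so transitivity and localization cannot be applied to such a subset; citing ``closedness'' at this point assumes a statement at least as hard as the one to be proved.

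The paper's proof avoids the case split on $\cf(\sup\gc)$ entirely and never touches a non-progressive set. After renaming so that $|\ga_0|$ is least, it uses only: $(*)_1$ for every progressive $\gd\subseteq\gc$ one has $\pcf(\gd)\subseteq\gc$; and $(*)_2$ (localization, Claim \ref{6.7C.1}(2)) for progressive $\gd\subseteq\gc$ and $\lambda\in\pcf(\gd)$ there is $\ge\subseteq\gd$ with $|\ge|\le|\ga_0|$ and $\lambda\in\pcf(\ge)$. One then tries to choose $\theta_\zeta\in\gc$ for $\zeta<|\ga_0|^+$ with $\theta_\zeta>\max\pcf\{\theta_\epsilon:\epsilon<\zeta\}$. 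If the construction halts at $\zeta$, then $\max\pcf\{\theta_\epsilon:\epsilon<\zeta\}$ belongs to $\gc$ by $(*)_1$ and bounds $\gc$, so it is the last element. If it runs for all $\zeta<|\ga_0|^+$, then $\theta=\max\pcf\{\theta_\epsilon:\epsilon<|\ga_0|^+\}$ localizes to $\pcf\{\theta_\epsilon:\epsilon<\zeta\}$ for some $\zeta<|\ga_0|^+$, whence $\theta\le\max\pcf\{\theta_\epsilon:\epsilon<\zeta\}<\theta_\zeta\le\theta$, a contradiction. Your singular case is a correct special instance of ``the construction halts'', but to make your proof complete you would have to replace the appeal to closedness by an argument of exactly this localization type.
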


\begin{PROOF}{\ref{1.21}}
By renaming \wilog \, $\langle |{\ga}_i|:i< i^* \rangle$ is
non-decreasing.  By \cite[1.12]{Sh:345b}
\mn
\begin{enumerate}
\item[$(*)_1$]   ${\gd} \subseteq {\gc}\; \,  \and \;  |{\gd}| < \Min({\gd})
\Rightarrow \pcf({\gd})\subseteq {\gc}$.
\end{enumerate}
\mn
By \cite[2.6]{Sh:371} or \ref{6.7C.1}(2)
\mn
\begin{enumerate}
\item[$(*)_2$]   if $\lambda \in \pcf({\gd}),{\gd}
\subseteq {\gc},|{\gd}| < \Min({\gd})$
then for some ${\ge} \subseteq {\gd}$ we have
$|{\ge}| \le \Min({\ga}_0),\lambda \in \pcf({\ge})$.
\end{enumerate}
\mn
Now choose by induction on $\zeta < |{\ga}_0|^+,\theta_\zeta \in
{\gc}$, satisfying $\theta_\zeta > \max \pcf
\{\theta_\epsilon:\epsilon<\zeta\}$.  If we are stuck in $\zeta$,
$\max \pcf\{\theta_\epsilon:\epsilon<\zeta\}$ is the desired maximum by
$(*)_1$.
If we succeed the cardinal $\theta = \max \pcf\{\theta_\epsilon:
\epsilon < |{\ga}_0|^+\}$ is in $\pcf\{\theta_\epsilon:
\epsilon<\zeta\}$ for some $\zeta < |{\ga}_0|^+$ by
$(*)_2$; easy contradiction.
\end{PROOF}

\begin{conclusion}
\label{1.22}
Assume $\aleph_{0} = \cf(\mu)\le \kappa \le \mu_{0} < \mu,
[\mu' \in (\mu_{0},\mu)\; \,  \and \;  \cf(\mu') \le \kappa
\Rightarrow \pp_{\kappa}(\mu') <\lambda]$ and $\pp^+_{\kappa}(\mu)
> \lambda = \cf(\lambda) > \mu$.
\Then \, we can find $\lambda_{n}$ for $n < \omega,\mu_{0} <
\lambda_{n} < \lambda_{n+1} < \mu $,  $\mu  = \bigcup\limits_{n < \omega}
\lambda_{n}$ and $\lambda = \tcf(\prod\limits_{n<\omega}
\lambda_{n}/J)$  for some ideal $J$ on $\omega $
(extending  $J^{\bd}_{\omega}$).
\end{conclusion}

\begin{PROOF}{\ref{1.22}}
Let ${\ga}\subseteq (\mu_0,\mu)\cap \Reg,
|{\ga}| \le \kappa,\lambda \in \pcf({\ga})$.
Without loss of generality  $\lambda = \max
\pcf({\ga})$,  let $\mu  = \bigcup\limits_{n < \omega} \mu^0_{n}$,
$\mu_{0} \le \mu^0_{n} < \mu^0_{n+1} < \mu$,
let  $\mu ^1_{n} = \mu^0_n + \sup\{\pp_{\kappa}(\mu'):\mu_{0} <
\mu' \le \mu^0_{n}$ and $\cf(\mu') \le \kappa\}$,  by \cite[2.3]{Sh:355}
$\mu^1_{n} < \mu,\mu^1_{n} = \mu^0_n + \sup\{\pp_{\kappa}(\mu'):\mu_{0}
< \mu' < \mu ^1_{n}$ and $\cf(\mu') \le \kappa\}$ and obviously
$\mu^1_{n} \le \mu ^1_{n+1}$;  by replacing by a
subsequence without loss of generality  $\mu^1_{n} < \mu^1_{n+1}$.
Now let ${\gb}_{n} = {\ga} \cap  \mu^1_{n}$ and apply the previous
claim \ref{6.4}:  to ${\gb}_{k} =: {\ga} \cap (\mu^1_{n})^+$,  note:

\[
\max \pcf({\gb}_k) \le \mu^1_{k} < \Min ({\gb}_{k+1}\backslash {\gb}_{k}).
\]
\end{PROOF}

\begin{claim}
\label{1.23}
1) Assume $\aleph_{0} < \cf(\mu) = \kappa < \mu_{0} < \mu,2^\kappa < \mu$
and $[\mu_{0} \le \mu' < \mu \; \,  \and \;  \cf(\mu')\le
\kappa \Rightarrow \pp_\kappa(\mu') < \mu]$.
If  $\mu  < \lambda = \cf(\lambda) < \pp^+(\mu)$
\then \, there is a tree ${\cT}$ with $\kappa$ levels, each level
of cardinality $<\mu,{\cT}$  has exactly $\lambda \,\kappa$-branches.

\noindent
2) Suppose  $\langle \lambda_{i}:i < \kappa \rangle$  is
a strictly increasing sequence of regular cardinals,  $2^\kappa <
\lambda_{0},{\ga} =: \{\lambda_{i}:i < \kappa\},
\lambda = \max \pcf({\ga}),\lambda_{j} > \max \pcf
\{\lambda_{i}:i < j\}$  for each  $j < \kappa$  (or at least
$\sum\limits_{i < j}\lambda_{i} > \max \pcf \{\lambda_{i}:i < j\})$
and ${\ga} \notin J$  where $J = \{{\gb} \subseteq {\ga}:
{\gb}$  is the union of countably many members of
$J_{<\lambda}[{\ga}]\}$  (so $J \supseteq J^{\bd}_{\ga}$
and $cf(\kappa) > \aleph_{0})$.  \Then \, the
conclusion of (1) holds with $\mu = \sum\limits_{i<\kappa }\lambda_{i}$.
\end{claim}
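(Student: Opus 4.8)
The plan is to deduce part (1) from part (2), since the substance lies in part (2): converting the hypothesised pcf data into the tree. For part (2), since $2^\kappa<\lambda_0=\Min(\ga)$ the set $\ga$ is progressive, so $J_{<\lambda}[\ga]$ is a proper ideal with $J^{\bd}_{\ga}\subseteq J_{<\lambda}[\ga]$, and since $\lambda=\max\pcf(\ga)$ we have $\tcf(\prod_{i<\kappa}\lambda_i/J_{<\lambda}[\ga])=\lambda$. The subsequence hypothesis says $\max\pcf\{\lambda_j:j<i\}<\lambda_i$ (in the weak form, $<\sum_{j<i}\lambda_j$), in either case $<\mu$; this is exactly the input under which \cite[3.4]{Sh:355} --- already invoked in the proof of Observation~\ref{1.12}, Subcase 2B --- builds a tree $\cT$, closed under initial segments, with $\cT\subseteq\bigcup_{i<\kappa}\prod_{j<i}\lambda_j$, with $|\cT_i|\le\max\pcf\{\lambda_j:j<i\}<\mu$ for every $i<\kappa$, and whose set of $\kappa$-branches is $\le$-cofinal in $\prod_{i<\kappa}\lambda_i$. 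Extracting from these branches a $<_{J_{<\lambda}[\ga]}$-increasing family cofinal modulo $J_{<\lambda}[\ga]$ --- possible since the true cofinality is $\lambda$ --- yields pairwise distinct $\kappa$-branches $\langle f_\alpha:\alpha<\lambda\rangle$ of $\cT$; so $\cT$ has $\kappa$ levels each of size $<\mu$ and at least $\lambda$ many $\kappa$-branches.

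The hard part is the reverse inequality: that $\cT$ has \emph{at most} $\lambda$ many $\kappa$-branches, and here the hypothesis $\ga\notin J$ is essential. Note first that it forces $\cf(\kappa)>\aleph_0$ --- if $\langle i_n:n<\omega\rangle$ were cofinal in $\kappa$ then $\ga=\bigcup_n\{\lambda_j:j<i_n\}$ would exhibit $\ga$ as a countable union of members of $J_{<\lambda}[\ga]$, since each $\{\lambda_j:j<i_n\}$ has $\max\pcf<\lambda$. Now given a $\kappa$-branch $\eta$, cofinality of the branches gives a least $\alpha(\eta)<\lambda$ with $\eta\le_{J_{<\lambda}[\ga]}f_{\alpha(\eta)}$, and it suffices to bound each fibre $\{\eta:\alpha(\eta)=\alpha\}$ by $\mu$, since then the total is $\le\lambda\cdot\mu=\lambda$. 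For $\eta$ in a fibre one has $\gb_\eta:=\{i<\kappa:\eta(i)>f_\alpha(i)\}\in J_{<\lambda}[\ga]$, while minimality of $\alpha$ keeps $\{i<\kappa:\eta(i)>f_\beta(i)\}$ $J_{<\lambda}[\ga]$-positive for all $\beta<\alpha$; feeding this into the constraint $\eta\restriction i\in\cT_i$ for all $i<\kappa$ --- using that $\cT$ has small levels and that $J_{<\lambda}[\ga]$ restricted to any proper initial segment $\{\lambda_j:j<i\}$ of $\ga$ is improper, so branches splitting late must agree on a long initial segment --- one shows that a fibre of size $>\mu$ would produce $\langle\gc_n:n<\omega\rangle$ in $J_{<\lambda}[\ga]$ with $\bigcup_n\gc_n=\ga$, contradicting $\ga\notin J$. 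Hence exactly $\lambda$ branches.

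For part (1), the point is to manufacture $\ga$. From $\mu<\lambda=\cf(\lambda)<\pp^+(\mu)$ and $\cf(\mu)=\kappa$ there is $\ga_0\subseteq\Reg\cap(2^\kappa,\mu)$, unbounded in $\mu$, with $|\ga_0|=\kappa$ and $\lambda\in\pcf(\ga_0)$; replacing $\ga_0$ by the generator $\gb_\lambda[\ga_0]$ we may assume $\lambda=\max\pcf(\ga_0)$, and it is still unbounded in $\mu$, for otherwise $\lambda\le\pp_\kappa(\mu')<\mu$ for some $\mu'\in(\mu_0,\mu)$ with $\cf(\mu')\le\kappa$, contradicting the accumulation hypothesis and $\lambda>\mu$. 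Next thin $\ga_0$ to $\ga=\{\lambda_i:i<\kappa\}$ of order type $\kappa$ with $\lambda_0>2^\kappa$ and with the subsequence condition, built recursively since each $\max\pcf\{\lambda_j:j<i\}\le\pp_\kappa(\sup_{j<i}\lambda_j)<\mu$ leaves room; the delicate point is to check, via the accumulation hypothesis and standard ``$\pp=\max\pcf$''-type lemmas, that this thinning (together with the harmless removal of a bounded-below, hence $J_{<\lambda}$-null, piece) preserves $\lambda=\max\pcf(\ga)$. Finally $\ga\notin J$: if $\ga=\bigcup_n\gc_n$ with $\gc_n\in J_{<\lambda}[\ga]$, apply Claim~\ref{6.4} to $\langle\bigcup_{m\le n}\gc_m:n<\omega\rangle$ to get a countable $\ga^*=\bigcup_n\gd_n$ with $\lambda\in\pcf(\ga^*)$; then $\sup\ga^*=\mu$ is impossible as $\cf(\mu)=\kappa>\aleph_0$, while $\sup\ga^*<\mu$ forces $\lambda\le\pp_\kappa(\sup\ga^*)<\mu$ --- both absurd. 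So $\ga$ satisfies the hypotheses of part (2), and its tree is the one required; the principal obstacles are the exact branch count in part (2) and the verification that the thinning in part (1) keeps $\max\pcf=\lambda$.
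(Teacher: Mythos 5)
Your reduction of (1) to (2), and your construction in (2) of a tree with $\kappa$ levels of size $<\mu$ and $\ge\lambda$ many $\kappa$-branches, follow the paper's line (the paper builds the levels from finite maxima of the cofinal sequences of \cite[\S1]{Sh:371} rather than quoting \cite[3.4]{Sh:355}, which is a cosmetic difference). The genuine gap is in the step you yourself flag as the hard part: the upper bound on the number of branches. Your plan is to bound each fibre $\{\eta:\alpha(\eta)=\alpha\}$ by $\mu$, but no argument is given --- ``one shows that a fibre of size $>\mu$ would produce $\langle\gc_n:n<\omega\rangle$ with union $\ga$'' is an assertion, and I do not see where a countable decomposition would come from: the subtree of nodes lying below $f_\alpha$ still has $\kappa$ levels each of size $<\mu$, and nothing prevents it from having far more than $\mu$ branches; ``branches splitting late agree on a long initial segment'' is true of every tree and bounds nothing. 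Indeed the tree of restrictions of a cofinal family genuinely can have more than $\lambda$ branches (diagonal branches agreeing with different $f_\alpha$'s on different initial segments), which is why the paper does not attempt a direct count. Instead it (i) first replaces $\ga$ by a $J$-positive $\gb\subseteq\ga$ on which $g_{\gc}(i)=\max\pcf(\gc\cap\{\lambda_j:j<i\})$ cannot be decreased modulo $J$ on any $J$-positive subset, and (ii) after producing a tree with $\ge\lambda$ branches, invokes Observation \ref{1.24} (proved via the Hajnal free-subset theorem and \cite{Sh:111}) to obtain $A$ with $\kappa\setminus A\in J$ and $g^*$ such that $Y'=\{f:f\restriction A<g^*\restriction A\}$ has cardinality exactly $\lambda$, and then passes to the subtree $\cT'_i=\{f\restriction\gb_i:f\in Y'\}$. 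This extraction step is the real content of ``exactly $\lambda$'' and is entirely absent from your proposal.

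A secondary gap: in part (1) your verification that $\ga\notin J$ does not close as written. Applying Claim \ref{6.4} to a decomposition $\ga=\bigcup_n\gc_n$ gives countable $\ga^*=\bigcup_n\gd_n$ with $\gd_n\subseteq\pcf(\gb_n\setminus\gb_{n-1})$, but these pcf values need only be $<\lambda$, not $<\mu$ (a single $\gc_n\in J_{<\lambda}[\ga]$ may be unbounded in $\mu$ with $\max\pcf(\gc_n)\in[\mu,\lambda)$), so $\sup\ga^*$ may land in $[\mu,\lambda)$ where neither horn of your dichotomy applies and the hypothesis $\pp_\kappa(\mu')<\mu$ says nothing. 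This is repairable with the localization machinery, but not by the two-line case split you give.
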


\begin{PROOF}{\ref{1.23}}
1) By (2) and \cite[\S1]{Sh:371} (or can use the conclusion of
\cite[AG,5.7]{Sh:g}).

\noindent
2)  For each ${\gb} \subseteq {\ga}$ define the function
$g_{\gb}:\kappa \rightarrow \Reg$ by

\[
g_{\gb}(i) = \max \pcf[{\gb} \cap \{\lambda_{j}:j < i\}].
\]

\mn
Clearly $[{\gb}_{1}\subseteq {\gb}_{2} \Rightarrow g_{{\gb}_{1}}
\le g_{\gb_2}]$.
As $\cf(\kappa ) > \aleph_{0},J$ is $\aleph_{1}$-complete,  there is
${\gb} \subseteq {\ga},{\gb} \notin  J$  such that:

\[
{\gc} \subseteq {\gb} \;  \and \;  {\gc} \notin J
\Rightarrow \neg g_{\gc} <_{J} g_{\gb}.
\]

\mn
Let  $\lambda^*_{i} = \max \pcf({\gb} \cap \{\lambda_{j}:j < i\})$.
For each  $i$  let  ${\gb}_i = {\gb} \cap
\{\lambda_{j}:j < i\}$ and $\langle \langle f^{\gb}_{\lambda,\alpha}:
\alpha < \lambda \rangle:\lambda \in \pcf({\gb})\rangle$ be
as in \cite[\S1]{Sh:371}.

Let

\[
{\cT}^0_i = \{ \underset{0<\ell<n} \Max
f^{\gb}_{\lambda_\ell,\alpha_\ell} \upharpoonright {\gb}_i:
\lambda_\ell \in \pcf({\gb}_i),\alpha_\ell < \lambda_\ell,n < \omega\}.
\]

\mn
Let ${\cT}_{i} = \{f \in {\cT}^0_{i}:$ for every $j < i,f \upharpoonright
{\gb}_{j} \in {\cT}^0_{j}$ moreover for some $f' \in
\prod\limits_{j<\kappa}\lambda_j$, for every $j$, $f' \upharpoonright
{\gb}_j \in {\cT}_j^0$ and $f\subseteq f'\}$,  and ${\cT} =
\bigcup\limits_{{i} < {\kappa}} {\cT}_{i}$,  clearly it is a tree,
${\cT}_{i}$ its $i$th level (or empty),
$|{\cT}_{i}| \le \lambda^*_{i}$.  By \cite[1.3,1.4]{Sh:371} for every
$g \in \prod {\gb}$  for some  $f \in \prod {\gb},
\bigwedge\limits_{i < \kappa} f \upharpoonright {\gb}_{i} \in
{\cT}^0_i$ hence $\bigwedge\limits_{i < \kappa} f \upharpoonright
{\gb}_{i} \in {\cT}_{i}$.
So  $|{\cT}_{i}| = \lambda^*_{i}$,  and ${\cT}$ has $\ge
\lambda \,\kappa$-branches.  By the observation
below we can finish (apply it essentially to ${\cF} = \{\eta$: for some
$f\in \prod {\gb}$ for $i<\kappa$ we have $\eta(i)= f
\upharpoonright {\gb}_i$ and for every $i < \kappa,f
\upharpoonright {\gb}_{i} \in {\cT}^0_{i}\})$,  then
find $A \subseteq  \kappa,\kappa \setminus A\in J$ and $g^*\in
\prod\limits_{i< \kappa}(\lambda_i +1)$ such that $Y' =: \{f \in F:
f \upharpoonright A <  g^* \upharpoonright A\}$  has cardinality
$\lambda$ and then the tree will be ${\cT}'$ where ${\cT}'_{i} =:
\{f \upharpoonright {\gb}_{i}:f \in  Y'\}$
and ${\cT}' = \bigcup\limits_{i < \kappa}{\cT}'_{i}$.
(So actually this proves that if we have such a tree with
$\ge \theta (\cf(\theta) > 2^\kappa$) \,
$\kappa$-branches then there is one with exactly
$\theta$ \, $\kappa$-branches.)
\end{PROOF}

\begin{observation}
\label{1.24}
If ${\cF} \subseteq \prod\limits_{i < \kappa} \lambda_{i}$,  $J$
an $\aleph_{1}$-complete ideal on  $\kappa $,  and
$[f \neq  g \in  {\cF} \Rightarrow  f \neq_{J} g]$  and $|{\cF}|
\ge \theta,\cf(\theta) > 2^\kappa $, then for some $g^*
\in \prod\limits_{{i} < {\kappa}} (\lambda_{i} + 1)$  we have:
\mn
\begin{enumerate}
\item[$(a)$]    $Y = \{f \in {\cF}:f<_{J} g^*\}$ has cardinality $\theta$,
\sn
\item[$(b)$]    for $f'<_{J} g^*$, we have
$|\{f\in {\cF}:f \le_{J}f'\}|< \theta$,
\sn
\item[$(c)$]  there
\footnote{Or straightening clause (i) see the proof of \ref{1.25}}
are  $f_{\alpha} \in Y$ for $\alpha < \theta$ such that:
$f_{\alpha} <_{J} g^*,[\alpha < \beta < \theta \Rightarrow
\neg f_{\beta} <_{J} f_{\alpha}]$.
\end{enumerate}
\mn
(Also in \cite[\S1]{Sh:829}).
\end{observation}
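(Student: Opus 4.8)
The plan is to produce $g^*$ by a descent, the engine being that the $\aleph_1$-completeness of $J$ makes $<_J$ well-founded.

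\emph{Well-foundedness of $<_J$.} First I would observe that $<_J$ is a well-founded relation on ${}^\kappa\Ord$: if $g_0 >_J g_1 >_J \cdots$, then $b:=\bigcup_{n<\omega}\{i<\kappa:g_{n+1}(i)\ge g_n(i)\}\in J$ by $\aleph_1$-completeness, so $b\neq\kappa$, and for $i\in\kappa\setminus b$ the sequence $\langle g_n(i):n<\omega\rangle$ would be a strictly decreasing sequence of ordinals — impossible. Hence every $<_J$-decreasing sequence of functions is finite, and $<_J$ carries a rank function.

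\emph{The descent.} For $g\in\prod_{i<\kappa}(\lambda_i+1)$ put $N(g)=|\{f\in{\cF}:f<_Jg\}|$; since distinct members of ${\cF}$ are $\ne_J$, at most one member of ${\cF}$ is $=_Jg$, so $N(g)\ge\theta$ iff $|\{f\in{\cF}:f\le_Jg\}|\ge\theta$. Start from $g_0=\langle\lambda_i:i<\kappa\rangle$, so $N(g_0)=|{\cF}|\ge\theta$. Given $g_\zeta$ with $N(g_\zeta)\ge\theta$: if there is $g'<_Jg_\zeta$ with $N(g')\ge\theta$, let $g_{\zeta+1}$ be such a $g'$ (altered on a set in $J$ so as to lie in $\prod(\lambda_i+1)$); otherwise stop. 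By the previous paragraph the sequence $\langle g_\zeta\rangle$ is finite, so it stops at some $g^*$ with $N(g^*)\ge\theta$ and: for every $g'<_Jg^*$, $N(g')<\theta$, equivalently $|\{f\in{\cF}:f\le_Jg'\}|<\theta$. This is clause $(b)$, and writing $Y=\{f\in{\cF}:f<_Jg^*\}$ we get $|Y|=N(g^*)\ge\theta$ — half of $(a)$. Note also that $g^*$ is a $\le_J$-least upper bound of $Y$, since any upper bound $g'<_Jg^*$ of $Y$ would satisfy $\{f\in{\cF}:f\le_Jg'\}\supseteq Y$, contradicting $(b)$.

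\emph{Clauses $(a)$ and $(c)$.} For $(c)$ I would build $\langle f_\alpha:\alpha<\theta\rangle$ in $Y$ by recursion: at stage $\alpha$, the forbidden set $\{f_\beta:\beta<\alpha\}\cup\bigcup_{\beta<\alpha}\{h\in Y:h<_Jf_\beta\}$ has size $<\theta$ when $\theta$ is regular (each piece is $<\theta$ by $(b)$, and there are $|\alpha|<\theta$ of them), so it is properly contained in $Y$ and one may pick $f_\alpha$ outside it; then $f_\alpha\not\le_Jf_\beta$, a fortiori $\neg(f_\alpha<_Jf_\beta)$, and the $f_\alpha$ are pairwise distinct. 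For the remaining half of $(a)$, namely $|Y|\le\theta$, suppose not; since at most $2^\kappa$ sets can occur as the $J$-null "support" $\{i:f(i)\ge g^*(i)\}$ of a member $f$ of $Y$, and $\cf(\theta)>2^\kappa$, one may pick a subfamily $Y_1\subseteq Y$ of size $\theta^+$ all of whose members have the same such support $b^*\in J$, so that $f(i)<g^*(i)$ for every $f\in Y_1$ and $i\notin b^*$; the pointwise supremum $\langle\sup_{f\in Y_1}f(i):i<\kappa\rangle$, truncated to $0$ on $b^*$, then is $\le_Jg^*$ and $\ge_J$ every member of $Y_1$, and if it is in fact $<_Jg^*$ this contradicts $(b)$; the remaining case — that this supremum equals $g^*$ on a $J$-positive set $c$ — is pushed away by restricting everything to $c$ and repeating, the inner process terminating because $<_{J\restriction c}$ is again well-founded. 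I expect this last point — pinning $|Y|$ down to \emph{exactly} $\theta$, in particular the termination of that inner repetition and, relatedly, the treatment of singular $\theta$ in clause $(c)$ — to be the one real obstacle; everything before it is routine once well-foundedness of $<_J$ is in hand. (In the applications, e.g.\ Claim~\ref{1.23}, $\theta$ is regular, and then $(c)$ is immediate and $(a)$ needs only the single pigeonhole step above, together with the remark that a least upper bound of a family of size $>\theta$ none of whose proper lower shrinkings keeps $\ge\theta$ members below it cannot exist.)
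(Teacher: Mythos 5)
Your overall strategy --- exploit $\aleph_1$-completeness to make $<_J$ well-founded and descend to a minimal $g^*$ --- is the same as the paper's, which takes $g^*$ to be a $<_J$-minimal element of $Z=\{g:|\{f\in\cF:f\le_J g\}|\ge\theta\}$. But the pivot of your descent is a false equivalence. You write that since at most one member of $\cF$ is $=_J g$, we get $N(g)\ge\theta$ iff $|\{f\in\cF:f\le_J g\}|\ge\theta$. The first clause is true but does not give the second: the set $\{f\in\cF: f\le_J g,\ \neg(f<_J g)\}$ is not $\{f\in\cF:f=_J g\}$; it consists of those $f\le_J g$ which merely agree with $g$ on some $J$-positive set, and there can be $\theta$ of them, pairwise $\ne_J$ because they differ off that set. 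Concretely, fix $A$ with $A,\kappa\setminus A\notin J$, let $\langle h_\alpha:\alpha<\theta\rangle$ be $<_{J\restriction(\kappa\setminus A)}$-increasing and cofinal in $\prod_{i\in\kappa\setminus A}\mu_i$, and let $f_\alpha$ equal a constant $c$ on $A$ and $h_\alpha$ off $A$. Your descent can legitimately terminate at $g^*$ equal to $c+1$ on $A$ and $\mu_i+1$ off $A$: it is $<_J$-minimal with $N\ge\theta$, because any $g'<_J g^*$ is $\le c$ on $A$ modulo $J$ and hence has no $f_\alpha<_J g'$ at all. Yet $f'$ defined as $c$ on $A$ and $\mu_i$ off $A$ satisfies $f'<_J g^*$ while $f_\alpha\le f'$ for every $\alpha$, so clause $(b)$ fails for your $g^*$. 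The correct choice here, $c+1$ on $A$ and $\mu_i$ off $A$, is $\le_J$-below your $g^*$ but not $<_J$-below it, so your descent cannot reach it. (The paper sidesteps this by minimizing the $\le_J$-count, which makes $(b)$ immediate; the same phenomenon --- members of $\cF$ agreeing with $g^*$ on a $J$-positive set --- is exactly the delicate point in its step $|Y_{g^*}\setminus Y|\le 2^\kappa$, and is what the exceptional set $A$ in Claim~\ref{1.26} is there to absorb.)

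The second place where you diverge from the paper, the upper bound $|Y|\le\theta$ in clause $(a)$, is also not closed. The paper's route is: by $(b)$ each $Y_f=\{h\in Y:h\le_J f\}$ has size $<\theta$, so if $|Y|>\theta$ then Hajnal's free-set theorem yields $\theta^+$ members of $Y$ that are pairwise $\le_J$-incomparable, and this contradicts the theorem (quoted from \cite[2.2]{Sh:111}) that a family of pairwise $\ne_J$, pairwise $\le_J$-incomparable functions has size $\le 2^\kappa$ when $J$ is $\aleph_1$-complete. Your substitute --- take the pointwise supremum of a subfamily $Y_1$ of size $\theta^+$, then "restrict to the agreement set and repeat" --- has no termination argument (the agreement sets for different subfamilies need not shrink), and its first step already needs clause $(b)$ in the $\le_J$ form that you have not established. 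Without the antichain bound, or some replacement for it, I do not see how to finish; your own flag that this is "the one real obstacle" is accurate, and it is precisely the point at which the paper invokes outside machinery rather than the routine descent.
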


\begin{PROOF}{\ref{1.24}}
Let $Z =: \{g: g \in \prod\limits_{{i} < {\kappa}}
(\lambda_{i} + 1)$  and $Y_{g} =: \{f \in {\cF}:f \le_{J} g\}$ has
cardinality  $\ge  \theta\}$.
Clearly $\langle\lambda_{i}:i <\kappa\rangle\in Z$ so there is
$g^* \in Z$ such that:  $[g' \in Z \Rightarrow  \neg g' <_{J} g^*]$;
so clause (b) holds.
Let  $Y = \{f \in {\cF}:f <_{J} g^*\}$,  easily $Y\subseteq
 Y_{g^*}$ and $|Y_{g^*} \setminus Y|\le 2^\kappa$ hence $|Y| \ge
\theta$,  also clearly $[f_{1} \ne f_{2} \in {\cF} \;\,  \and \;  f_{1}
\le_J f_{2} \Rightarrow  f_{1} <_{J} f_{2}]$.  If
(a) fails, necessarily by the previous sentence $|Y| > \theta$.
For each  $f \in  Y$  let  $Y_{f} = \{h \in Y:h \le_J f\}$,  so by
clause (b) we have
$|Y_{f}| < \theta $  hence by the Hajnal free subset theorem for some
$Z' \subseteq Z$,  $|Z'| = \lambda ^+$,  and $f_{1} \ne  f_{2} \in  Z'
\Rightarrow  f_{1} \notin Y_{f_{2}}$ so  $[f_{1} \ne  f_{2} \in  Z'
\Rightarrow  \neg f_{1} <_{J} f_{2}]$.
But there is no such  $Z'$  of cardinality  $> 2^\kappa$
(\cite[2.2,p.264]{Sh:111}) so clause (a) holds.
As for clause (c): choose  $f_{\alpha} \in {\cF}$ by induction on  $\alpha$,
such that  $f_{\alpha}  \in  Y \setminus \bigcup_{{\beta} <
{\alpha}} Y_{f_{\beta}}$;  it exists by cardinality considerations and
$\langle f_{\alpha}:\alpha < \theta \rangle $  is as required (in (c)).
\end{PROOF}

\begin{observation}
\label{1.25}
Let $\kappa < \lambda$ be regular uncountable,
$2^\kappa < \mu_{i} < \lambda $ (for $i<\kappa$),
$\mu_{i}$ increasing in  $i$.  The following are equivalent:
\mn
\begin{enumerate}
\item[$(A)$]    there is ${\cF} \subseteq {}^\kappa \lambda$ such that:
\sn
\begin{enumerate}
\item[$(i)$]   $|{\cF}| = \lambda$,
\sn
\item[$(ii)$]   $|\{f \upharpoonright i:f \in {\cF}\}| \le \mu_{i}$,
\sn
\item[$(iii)$]  $[f \ne  g \in {\cF} \Rightarrow f \ne_{J^{\bd}_{\kappa}} g]$;
\end{enumerate}
\item[$(B)$]    there be a sequence  $\langle \lambda_{i}:i < \kappa
\rangle$ such that:
\sn
\begin{enumerate}
\item[$(i)$]  $2^\kappa < \lambda_{i} = \text{ cf}(\lambda_{i}) \le \mu_{i}$,
\sn
\item[$(ii)$]   $\max \pcf\{\lambda_{i}:i<\kappa\}=\lambda$,
\sn
\item[$(iii)$]    for $j < \kappa,\mu_{j} \ge \max \pcf
\{\lambda_{i}:i < j\}$;
\end{enumerate}
\item[$(C)$]    there is an increasing sequence $\langle {\ga}_{i}:
i < \kappa \rangle$  such that $\lambda \in \pcf
(\bigcup\limits_{{i} < {\kappa}} {\ga}_{i}),\pcf({\ga}_{i}) \subseteq
\mu_{i}$  (so $\Min(\bigcup\limits_{{i} < {\kappa}} {\ga}_{i}) >
|\bigcup\limits_{{i} < {\kappa}} {\ga}_{i}|)$.
\end{enumerate}
\end{observation}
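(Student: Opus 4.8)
The plan is to prove the cycle of implications $(B)\Rightarrow(C)\Rightarrow(A)\Rightarrow(B)$, using throughout the standard facts of the pcf calculus --- monotonicity, finite additivity, idempotence and the localization theorem for $\pcf$, the existence of (transitive) generators $\gb_\theta[\ga]$, the fact that a progressive $\ga$ with $\lambda=\max\pcf(\ga)$ carries a scale witnessing $\lambda=\tcf(\prod\ga/J_{<\lambda}[\ga])$, and that $\max\pcf$ of a progressive set is a regular cardinal --- together with Observation \ref{1.24}, the tree construction underlying Claim \ref{1.23}, and the hypothesis $2^\kappa<\mu_i<\lambda=\cf(\lambda)$ (so in particular $\sup_i\mu_i<\lambda$ since $\cf(\lambda)>\kappa$). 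For $(B)\Rightarrow(C)$ one puts $\ga_i=\{\lambda_j:j<h(i)\}$ where $h:\kappa\to\kappa$ is strictly increasing with cofinal range but chosen slowly enough --- possible since $\langle\mu_i\rangle$ is strictly increasing and $\max\pcf\{\lambda_j:j<i'\}\le\mu_{i'}$ for every $i'$ --- that $\pcf\{\lambda_j:j<h(i)\}\subseteq\mu_i$; then $\langle\ga_i\rangle$ is increasing with $\bigcup_i\ga_i=\{\lambda_i:i<\kappa\}$, so $\lambda=\max\pcf\{\lambda_i:i<\kappa\}\in\pcf(\bigcup_i\ga_i)$ by $(B)(ii)$, $\pcf(\ga_i)\subseteq\mu_i$ by the choice of $h$, and $\Min(\bigcup_i\ga_i)=\lambda_0>2^\kappa\ge\kappa\ge|\bigcup_i\ga_i|$.

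For $(C)\Rightarrow(A)$: write $\ga=\bigcup_i\ga_i$; replacing $\ga$ by the generator $\gb=\gb_\lambda[\ga]$ we may assume $\lambda=\max\pcf(\ga)$, and set $\gb_i=\gb\cap\ga_i$, an increasing sequence of progressive sets with union $\gb$ and $\max\pcf(\gb_i)\le\max\pcf(\ga_i)<\mu_i$. Fixing generating sequences $\langle\langle f^{\gb}_{\theta,\alpha}:\alpha<\theta\rangle:\theta\in\pcf(\gb)\rangle$ and forming, exactly as in the proof of Claim \ref{1.23}(2) and \cite[3.4]{Sh:355}, the tree $\cT$ whose $i$-th level consists of the finite pointwise maxima of the $f^{\gb}_{\theta,\alpha}\rest\gb_i$ with $\theta\in\pcf(\gb_i)$, $\alpha<\theta$, one gets $|\cT_i|\le\max\pcf(\gb_i)<\mu_i$, while by \cite[1.3,1.4]{Sh:371} every $f\in\prod\gb$ has all its restrictions $f\rest\gb_i$ in $\cT$; hence $\cT$ has at least $\lambda$ branches. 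Taking $\lambda$ of the branches coming from $\le_{J_{<\lambda}[\gb]}$-increasing $f$'s and coding level $i$ injectively into $\lambda$ gives $\cF\subseteq{}^\kappa\lambda$ with $|\cF|=\lambda$ and $|\{f\rest i:f\in\cF\}|\le\mu_i$; since two branches coming from distinct $f,f'\in\prod\gb$ already differ at every coordinate $i$ with $\gb_i$ containing some $\theta$ with $f(\theta)\ne f'(\theta)$, i.e.\ on a co-bounded subset of $\kappa$, the family $\cF$ is pairwise $\ne$ modulo $J^{\bd}_\kappa$.

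The crux is $(A)\Rightarrow(B)$. Using $|\{f\rest i:f\in\cF\}|\le\mu_i$ we first code $f\rest i$ bijectively into $\mu_i$ and may assume $\cF\subseteq\prod_{i<\kappa}\mu_i$, still with narrow levels and pairwise $\ne$ modulo $J^{\bd}_\kappa$. Apply Observation \ref{1.24} with $J=J^{\bd}_\kappa$ (which is $\kappa$-complete, hence $\aleph_1$-complete, as $\kappa$ is uncountable regular) and $\theta=\lambda$, obtaining $g^*\in\prod_i(\mu_i+1)$, the set $Y=\{f\in\cF:f<_{J^{\bd}}g^*\}$ of size $\lambda$, the smallness clause $(b)$, and a sequence $\langle f_\alpha:\alpha<\lambda\rangle$ in $Y$ with $\neg(f_\beta<_{J^{\bd}}f_\alpha)$ for $\alpha<\beta$. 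Using clauses $(b)$ and $(c)$ and the regularity of $\lambda$, one "straightens" $\langle f_\alpha\rangle$ --- this is exactly the straightening of clause $(i)$ referred to in the footnote of Observation \ref{1.24} --- into a genuinely $<_{J^{\bd}}$-increasing subsequence $\langle f'_\xi:\xi<\lambda\rangle$ of $Y$. One then sets $\lambda_i=\cf(g^*(i))$ and, after discarding a negligible set of coordinates and reindexing, arranges $2^\kappa<\lambda_i\le\mu_i$, so that projecting the $f'_\xi$ through cofinal maps $\lambda_i\to g^*(i)$ exhibits $\lambda$ as the true cofinality of $\prod_i\lambda_i$ modulo a suitable ideal $J\supseteq J^{\bd}_\kappa$; hence $\lambda\in\pcf\{\lambda_i:i<\kappa\}$, and the minimality in $(b)$ will give that it is the maximum, yielding $(B)(i),(ii)$. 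Finally $(B)(iii)$, $\max\pcf\{\lambda_i:i<j\}\le\mu_j$, is forced by the narrow-levels hypothesis: a $<$-increasing cofinal sequence in $\prod_{i<j}\lambda_i$ modulo the bounded ideal, of length $\max\pcf\{\lambda_i:i<j\}$, maps injectively into $\{f\rest j:f\in\cF\}$, a set of size $\le\mu_j$.

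I expect the genuinely hard points, all internal to $(A)\Rightarrow(B)$, to be: first, the straightening of the ``$\neg(f_\beta<f_\alpha)$'' sequence of Observation \ref{1.24} into a $<_{J^{\bd}}$-increasing (and appropriately cofinal) one, which must exploit that fewer than $\lambda$ members of $\cF$ lie below any $f'<g^*$ and that $\lambda$ is regular $>2^\kappa$; second, arranging $\cf(g^*(i))>2^\kappa$ on a co-bounded set of coordinates --- coordinates where $g^*(i)$ has cofinality $\le2^\kappa$ must be shown negligible, since a family of size $\lambda>2^\kappa$ that is pairwise $\ne$ modulo $J^{\bd}_\kappa$ cannot be pushed (via cofinal maps) into a product of $\kappa$ cardinals each $\le2^\kappa$, whose quotient by $=_{J^{\bd}_\kappa}$ has cardinality only $2^\kappa$ --- a delicate step that may require choosing $g^*$ minimally or passing to a suitable elementary submodel; and third, checking that the narrow-levels hypothesis really bounds the partial $\pcf$'s and pins down $\max\pcf\{\lambda_i:i<\kappa\}$ at exactly $\lambda$. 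By contrast $(B)\Rightarrow(C)$ and $(C)\Rightarrow(A)$ are essentially bookkeeping with pcf monotonicity and the known tree construction, with no serious obstruction.
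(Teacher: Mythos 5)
Your cycle $(B)\Rightarrow(C)\Rightarrow(A)\Rightarrow(B)$ is a legitimate reorganization, and the first two legs are essentially the paper's own arguments ($(B)\Rightarrow(A)$ via the tree of \cite[3.4]{Sh:355} and Claim \ref{1.23}, $(B)\Leftrightarrow(C)$ by pcf monotonicity and localization), so there is no quarrel there. The genuine gap is in $(A)\Rightarrow(B)$, at exactly the point you flag as ``hard'': the straightening. Observation \ref{1.24} with $J=J^{\bd}_\kappa$ gives you clause $(b)$, i.e.\ $|\{f\in{\cF}: f\le_J f'\}|<\lambda$ for $f'<_J g^*$, and clause $(c)$, a sequence with $\neg(f_\beta<_J f_\alpha)$. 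But to build a $<_J$-increasing sequence you must show, for each already chosen $h$, that $\{f\in Y:\neg(h\le_J f)\}$ is small; this set is $\bigcup\{\{f: f\restriction A< h\restriction A\}: A\in J^+\}$, and for a positive but non-co-small $A$ clause $(b)$ says nothing about $\{f: f\restriction A<h\restriction A\}$. With the single fixed ideal $J^{\bd}_\kappa$ there is no reason this union of $2^\kappa$ sets has size $<\lambda$. The paper's proof does \emph{not} go through \ref{1.24}: after splitting off the easy case $(\forall\theta\ge 2^\kappa)(\theta^\kappa\le\theta^+)$, it minimizes the pair $(D,g)$ over all \emph{normal} filters $D$ using the niceness machinery of \cite{Sh:386}, and the crucial smallness statement $(*)_2$ (that $\{f\in{\cF}^*:\neg(h\le_D f)\}$ has size $<\lambda$) is derived precisely from the freedom to pass from $D$ to $D+A$ and re-minimize. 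That is the idea your sketch is missing, and it is not routine.

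A second, related gap: you propose to read off $\lambda_i=\cf(g^*(i))$ from the $g^*$ of \ref{1.24}. But $g^*$ is only a minimal element of $Z$, a l.u.b.-like object, and Example \ref{1.6} of this paper shows that such an upper bound need not be exact; without exactness the claim that $\lambda=\tcf(\prod_i\cf(g^*(i))/J)$ for some $J$ can simply fail. The paper instead takes the e.u.b.\ $f^*$ of the already straightened $<_D$-increasing $\lambda$-sequence (which exists since $\cf(\lambda)>2^\kappa$), and even then clause $(B)(iii)$ is not the one-liner you give: it requires the auxiliary ideals $J_i$, the functions $h_i$ and $h<f^*$, and the final replacement of $\lambda_i$ by $\lambda'_i=(2^\kappa)^+$ off a $D$-large set, since the restrictions $f\restriction j$ of members of ${\cF}$ are not a priori cofinal in $\prod_{i<j}\lambda_i$ modulo the bounded ideal. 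As proposed, $(A)\Rightarrow(B)$ is therefore not a proof.
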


\begin{PROOF}{\ref{1.25}}

\noindent
$(B) \Rightarrow (A)$:  By \cite[3.4]{Sh:355}.

\noindent
$(A) \Rightarrow (B)$:  If $(\forall \theta )[\theta \ge
2^\kappa \Rightarrow  \theta^\kappa \le \theta^+]$ we can directly
prove (B) if for a club of $i<\kappa,\mu_i > \bigcup\limits_{j<i}\mu_j$, and
contradict (A) if this fails.   Otherwise every normal filter
$D$ on  $\kappa$  is nice (see \cite[\S1]{Sh:386}).
Let ${\cF}$ exemplify (A).

Let  $K = \{(D,g):D$  a normal filter on  $\kappa $,
$g \in {}^\kappa(\lambda +1),\lambda = |\{f \in {\cF}:f <_{D} g\}|\}$.
Clearly  $K$  is not empty (let  $g$  be constantly  $\lambda )$  so
by \cite{Sh:386} we can find $(D,g) \in  K$  such that:
\mn
\begin{enumerate}
\item[$(*)_1$]    if  $A \subseteq \kappa,A \ne \emptyset \mod D,g_{1}
<_{D+A} g$  then  $\lambda > |\{f \in {\cF}:f <_{D+A} g_{1}\}|$.
\end{enumerate}
\mn
Let ${\cF}^* =\{f\in {\cF}:f<_{D} g\}$,
so (as in the proof of \ref{1.23}) $|{\cF}^*| = \lambda$.

We claim:
\mn
\begin{enumerate}
\item[$(*)_2$]     if  $h \in {\cF}^*$ then $\{f \in {\cF}^*:
\neg h \le_{D} f\}$  has cardinality  $< \lambda$.
\end{enumerate}
\mn
[Why?  Otherwise for some $h \in {\cF}^*,{\cF}' =:
\{f \in {\cF}^*:\neg h \le_{D} f\}$ has cardinality $\lambda$,
for  $A \subseteq \kappa$ let ${\cF}'_{A} = \{f \in {\cF}^*:
f \upharpoonright A \le h \upharpoonright A\}$ so ${\cF}'
= \bigcup \{{\cF}'_{A}: A \subseteq \kappa,
A \ne  \emptyset \mod D\}$,  hence (recall that $2^\kappa <
\lambda$) for some $A \subseteq \kappa,A \ne \emptyset \mod D$
and $|{\cF}'_{A}| = \lambda$; now $(D + A,h)$  contradicts $(*)_{1}$].

By $(*)_{2}$ we can choose by induction on  $\alpha < \lambda $,
a function $f_{\alpha} \in F^*$ such that
$\bigwedge\limits_{{\beta} < {\alpha}} f_{\beta} <_{D} f_{\alpha} $.
By \cite[1.2A(3)]{Sh:355} $\langle f_{\alpha}:\alpha < \lambda \rangle$
has an e.u.b. $f^*$.
Let $\lambda_{i} = \cf(f^*(i))$, clearly  $\{i < \kappa:
\lambda_{i} \le 2^\kappa\} = \emptyset \mod D$,  so \wilog \,
$\bigwedge\limits_{{i} < {\kappa}} \cf(f^*(i)) > 2^\kappa$
so $\lambda_{i}$ is regular $\in (2^\kappa ,\lambda]$,
and $\lambda = \tcf(\prod\limits_{{i} < {\kappa}} \lambda_{i}/D)$.
Let  $J_{i} = \{A \subseteq i:\max \pcf\{\lambda_{j}:j \in A\}
\le \mu_{i}\}$;  so (remembering (ii) of (A)) we can find
$h_{i} \in \prod\limits_{{j} < {i}}f^*(i)$  such that:
\mn
\begin{enumerate}
\item[$(*)_3$]     if  $\{j:j < i\} \notin J_{i}$,  then for every
$f \in {\cF},f \upharpoonright i <_{J_{i}}h_{i}$.
\end{enumerate}
\mn
Let $h \in \prod\limits_{{i} < {\kappa}} f^*(i)$  be defined by:

\noindent
$h(i) = \sup\{h_{j}(i):j \in (i,\kappa)$ and $\{j:j < i\} \notin  J_{i}\}$.
As $\bigwedge\limits_{i} \cf[f^*(i)] > 2^\kappa $,  clearly
$h < f^*$ hence by the choice of $f^*$ for some
$\alpha(*) < \lambda $  we have:  $h <_{D} f_{\alpha(*)}$ and
let  $A =: \{i < \kappa:h(i) < f_{\alpha(*)}(i)\}$, so  $A \in  D$.
Define  $\lambda'_i$ as follows:  $\lambda '_{i}$  is
$\lambda_{i}$ if  $i \in  A$,  and is $(2^\kappa)^+$ if  $i \in
\kappa \backslash A$.
Now  $\langle \lambda'_{i}:i < \kappa \rangle $  is as required in (B).

\noindent
$(B) \Rightarrow (C)$:  Straightforward.

\noindent
$(C) \Rightarrow (B)$:  By \cite[\S1]{Sh:371}.
\end{PROOF}

\begin{claim}
\label{1.26}
If ${\cF} \subseteq {}^\kappa \Ord,
2^\kappa < \theta = \cf(\theta) \le |{\cF}|$
\then \, we can find $g^* \in {}^\kappa \Ord$ and a
proper ideal $I$ on $\kappa$ and $A \subseteq \kappa,A \in I$ such that:
\mn
\begin{enumerate}
\item[$(a)$]    $\prod\limits_{{i} < {\kappa}} g^*(i)/I$  has true
cofinality  $\theta$,  and for each  $i \in \kappa \setminus A$ we
have $\cf[g^*(i)] > 2^\kappa$,
\sn
\item[$(b)$]    for every  $g \in {}^\kappa \Ord$ satisfying
$g \upharpoonright A = g^* \upharpoonright A$,
$g\upharpoonright (\kappa \backslash A) < g^* \upharpoonright
(\kappa \backslash A)$ we can find $f \in {\cF}$ such that:
$f \upharpoonright A = g^* \upharpoonright A,g \upharpoonright
(\kappa \backslash A) < f\upharpoonright (\kappa \backslash A)
< g^* \upharpoonright (\kappa \backslash A)$.
\end{enumerate}
\end{claim}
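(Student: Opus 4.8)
\noindent
\emph{Proof proposal.} I would follow the pattern of the minimality arguments in the proofs of \ref{1.24} and \ref{1.25}, but carry the reduction out for an arbitrary proper ideal, since the $\aleph_1$-completeness exploited there is not available here. First reduce to the case $|{\cF}| = \theta$ with the members of ${\cF}$ pairwise distinct (if $|{\cF}|>\theta$ pass to a subfamily of size $\theta$, as clause (b) only produces \emph{some} $f\in{\cF}$), and set $g_0(i)=\sup\{f(i)+1:f\in{\cF}\}$, so ${\cF}\subseteq\prod_{i<\kappa}g_0(i)$. Let $\cK$ be the set (non-empty: it contains the trivial ideal paired with $g_0$) of pairs $(I,g)$ with $I$ a proper ideal on $\kappa$, $g(i)\le g_0(i)$ for all $i$, and $|\{f\in{\cF}:f<_I g\}|=\theta$. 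Call $(I,g^{*})\in\cK$ \emph{reduced} if there is no proper ideal $I'\supseteq I$ and no $g'\le g^{*}$ (pointwise) with $\{i:g'(i)<g^{*}(i)\}\notin I'$ and $(I',g')\in\cK$. A reduced pair exists: iterate the obvious reduction step from the trivial pair, at limits taking the union of the ideals (still proper, being a union of an increasing chain of proper ideals) and the pointwise infimum of the functions; along the iteration the witnessing function is pointwise non-increasing and at each step strictly drops on a set positive for the (larger) ideal, so since a pointwise non-increasing $\kappa^{+}$-sequence in ${}^{\kappa}\Ord$ is eventually constant (at some ordinal $<\kappa^{+}$, by regularity of $\kappa^+$) while the empty set is positive for no ideal, the iteration halts in $<\kappa^{+}$ steps. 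Put ${\cF}^{*}=\{f\in{\cF}:f<_I g^{*}\}$, so $|{\cF}^{*}|=\theta$; reducedness with $I'=I$ says $|\{f\in{\cF}:f<_I g'\}|<\theta$ whenever $g'\le g^{*}$ pointwise and $\{i:g'(i)<g^{*}(i)\}\notin I$.

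From the reduced pair I would read off clause (a) in three steps. (1) $A:=\{i<\kappa:\cf(g^{*}(i))\le 2^{\kappa}\}\in I$: otherwise, fixing for $i\in A$ a cofinal $c_i\subseteq g^{*}(i)$ with $|c_i|=\cf(g^{*}(i))\le 2^{\kappa}$, the map $f\mapsto\langle\min(c_i\setminus f(i)):i\in A\rangle$ sends ${\cF}^{*}$ into a set of size $\le(2^{\kappa})^{\kappa}=2^{\kappa}<\theta$, so $\theta$ many $f\in{\cF}^{*}$ share one image $\rho^{*}$, and the function equal to $\rho^{*}$ on $A$ and to $g^{*}$ off $A$ contradicts reducedness --- this is precisely the rounding behind Claim \ref{1.3}. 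Hence $\cf(g^{*}(i))>2^{\kappa}$, so $g^{*}(i)$ is a limit ordinal, for $i\in\kappa\setminus A$. (2) $g^{*}$ is a $\le_I$-e.u.b. of ${\cF}^{*}$: a putative $f''<_I\max\{g^{*},1\}$ with no $f\in{\cF}^{*}$ satisfying $f''<_I\max\{f,1\}$ yields (via the same pigeonhole, on a positive set $B$ on which $f\le f''$ for $2^{\kappa}<\theta$ many $f$) a function below $g^{*}$ with $\theta$ many members of ${\cF}$ strictly below it, contradicting reducedness. (3) Therefore $\prod_{i<\kappa}g^{*}(i)/I$ is $\theta$-directed below $g^{*}$ (the pointwise supremum of $<\theta$ of its members, capped at $g^{*}$, is an upper bound), ${\cF}^{*}$ is cofinal in it, and by reducedness no family of size $<\theta$ below $g^{*}$ is cofinal; a greedy construction through ${\cF}^{*}$ then gives a $<_I$-increasing cofinal sequence of length $\theta$, i.e. $\tcf(\prod_{i<\kappa}g^{*}(i)/I)=\theta$, which together with (1) is clause (a).

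For clause (b) I would choose $g^{*}\restriction A$ --- irrelevant modulo $I$, as $A\in I$ --- to be the common restriction to $A$ of $\theta$ many members of ${\cF}$, as supplied by the pigeonhole of (1); intersecting these with ${\cF}^{*}$ gives ${\cF}'$ of size $\theta$, still cofinal below $g^{*}$ mod $I$. Given $g$ as in (b) one has $g<_I g^{*}$ (since $\{i:g(i)\ge g^{*}(i)\}\subseteq A\in I$), so some $f\in{\cF}'$ has $g<_I f$, and then one upgrades ``$<_I$ on $\kappa\setminus A$'' to ``pointwise on $\kappa\setminus A$'' (both for $g<f$ and for $f<g^{*}$) using $\cf(g^{*}(i))>2^{\kappa}$ there and the freedom in choosing $f\in{\cF}'$. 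The step I expect to be the real obstacle is the existence of the reduced pair: concretely, that the reduction iteration actually survives its limit stages --- that the pointwise infimum formed there still has $\theta$ many members of ${\cF}$ strictly below it --- which is exactly what $\aleph_1$-completeness gave for free in \ref{1.24}/\ref{1.25} and here must be argued directly (or replaced by a rank computation); the remaining work, though routine, involves delicate bookkeeping with $I$-null sets and with successor values when passing between ``$<_I$'', ``$\le_I$'' and pointwise inequalities.
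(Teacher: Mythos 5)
Your strategy is genuinely different from the paper's, and the step you yourself flag as "the real obstacle" is in fact a gap that the rest of the argument cannot absorb. The paper proves \ref{1.26} by the elementary--submodel argument of \cite[3.7]{Sh:410}: take an increasing continuous chain $\langle N_i:i<(2^\kappa)^+\rangle$ of submodels of size $2^\kappa$, round $f_{\delta_i}$ (where $\delta_i=\sup(\theta\cap N_i)$) upward to $g_i(\zeta)=\Min(N_i\cap\Ord\setminus f_{\delta_i}(\zeta))$, and stabilize $g_i=g^*$ together with the exactness set $A=\{\zeta:f_{\delta_i}(\zeta)=g^*(\zeta)\}$ on a stationary set of $i$ of cofinality $\kappa^+$, by counting ($|{}^\kappa(N_j\cap\Ord)|=2^\kappa<(2^\kappa)^+$) plus Fodor. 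You instead attempt a transfinite minimization over pairs $(I,g)$, and the existence of your reduced pair is exactly where this breaks. For an arbitrary proper ideal the relevant orders are not well-founded (e.g.\ $g_n(i)=\max(i+1-n,0)$ in ${}^\omega\omega$ descends strictly on a cofinite, hence Fr\'echet-positive, set at every step), so you cannot shortcut to a minimal element of $\cK$ the way \ref{1.24} does --- there the $\aleph_1$-completeness of $J$ makes $<_J$ well-founded and hands you $g^*$ in one step. And the iteration does not survive limits: the families $\cF_\alpha=\{f\in\cF:f<_{I_\alpha}g_\alpha\}$ are not even $\subseteq$-decreasing (enlarging the ideal admits new $f$'s while shrinking $g$ expels others), and if you force them to decrease, then at a limit of cofinality $<\theta$ an intersection of $<\theta$ sets of size $\theta$ can have size $<\theta$, indeed be empty; in the toy example above the pointwise infimum at stage $\omega$ is the zero function, below which nothing lies. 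This is not bookkeeping; it is the precise obstruction the submodel argument is designed to circumvent, so the proposal has no proof of its central object.

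Two further points would need repair even granting a reduced pair. For clause (b) you take $g^*\rest A$ to be "the common restriction to $A$ of $\theta$ many members of $\cF$, as supplied by the pigeonhole of (1)"; but that pigeonhole only stabilizes the rounded values $\Min(c_i\setminus f(i))$, not $f\rest A$ itself (take $A$ a singleton with $g^*(i_0)=\omega_1\le 2^\kappa$ and the values $f(i_0)$ pairwise distinct), whereas in the paper's proof exact agreement on $A$ is the definition of $A$. Also, clause (b) demands the pointwise inequalities $g\rest(\kappa\setminus A)<f\rest(\kappa\setminus A)<g^*\rest(\kappa\setminus A)$, and upgrading "$<_I$ off $A$" to "pointwise off $A$" is not a matter of "freedom in choosing $f$": you need the members of $\cF$ realizing $g^*\rest A$ to lie pointwise below $g^*$ off $A$ and to be pointwise cofinal there, which the rounding $g^*(\zeta)=\Min(N_i\cap\Ord\setminus f_{\delta_i}(\zeta))$ gives for free and your construction does not.
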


\begin{PROOF}{\ref{1.26}}
As in \cite[3.7]{Sh:410}, proof of $(A) \Rightarrow (B)$.
(In short let $f_{\alpha} \in {\cF}$ for $\alpha < \theta$
be distinct,  $\chi$  large enough,
$\langle N_i:i < (2^\kappa)^+\rangle$ as there,
$\delta_{i} =: \sup(\theta \cap N_{i}),g_{i} \in {}^\kappa \Ord,
g_{i}(\zeta ) =: \Min[N \cap \Ord \backslash f_{\delta_{i}}
(\zeta)],A \subseteq \kappa$  and $S \subseteq
\{i < (2^\kappa)^+:\cf(i) = \kappa^+\}$ stationary,
$[i \in  S \Rightarrow  g_{i} = g^*]$,
$[\zeta < \alpha \; \, \and \;  i \in  S \Rightarrow [f_{\delta_{i}}(\zeta)
= g^* (\zeta) \equiv  \zeta \in A]$  and for some
$i(*) < (2^\kappa)^+$,  $g^* \in N_{i(*)}$, so
$[\zeta \in\kappa \setminus A \Rightarrow \cf(g^*(\zeta)) >
    2^\kappa]$.
\end{PROOF}

\begin{claim}
\label{1.27}
Suppose $D$ is a $\sigma$-complete filter on
$\theta = \cf(\theta),\kappa$ an infinite cardinal,
$\theta > |\alpha|^\kappa$ for $\alpha<\sigma$,  and for each $\alpha <
\theta $,  $\bar \beta = \langle \beta^\alpha_{\epsilon}:\epsilon  <
\kappa \rangle $  is a sequence of ordinals.
\Then \, for every $X \subseteq \theta,X \ne \emptyset \mod D$ there is
$\langle \beta^*_{\epsilon}:\epsilon < \kappa \rangle$ (a
sequence of ordinals) and $w \subseteq \kappa$ such that:
\mn
\begin{enumerate}
\item[$(a)$]    $\epsilon \in \kappa \backslash w \Rightarrow \sigma \le
\cf(\beta^*_{\epsilon}) \le \theta$,
\sn
\item[$(b)$]   if  $\beta'_{\epsilon} \le \beta^*_{\epsilon}$ and
$[\epsilon \in w \equiv \beta'_{\epsilon} = \beta^*_{\epsilon}]$, then
$\{\alpha \in X$: for every  $\epsilon  < \kappa $ we have
$\beta'_{\epsilon} \le  \beta^\alpha_{\epsilon} \le
 \beta^*_{\epsilon}$ and $[\epsilon \in w \equiv
\beta^\alpha_{\epsilon}  = \beta^*_{\epsilon}]\} \ne \emptyset \mod D$.
\end{enumerate}
\end{claim}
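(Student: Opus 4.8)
The plan is: reduce to the case $X=\theta\in D$, attach to each $D$-positive $Y$ a derived function $g_Y\in{}^\kappa\Ord$, minimise to reach a ``stable'' positive set, and read $\bar\beta^*$ and $w$ off it. The arithmetic hypothesis $\theta>|\alpha|^\kappa$ $(\alpha<\sigma)$ together with $\theta=\cf(\theta)$ and the $\sigma$-completeness of $D$ are what keep the minimisation bounded and, when $\kappa\ge\sigma$, what let one ``freeze'' coordinates into $w$.

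\textit{Reduction.} Replace $D$ by $D+X$; it is still $\sigma$-complete (as $X\ne\emptyset\bmod D$) and ``$\ne\emptyset\bmod(D+X)$'' agrees with ``$\ne\emptyset\bmod D$'' on subsets of $X$, so we may assume $\theta\in D$ and rephrase (b) with $X$ replaced by $\theta$. If $|X|<\sigma$, some singleton $\{\gamma\}$ ($\gamma\in X$) is $D$-positive by $\sigma$-completeness and then $\bar\beta^*=\langle\beta^\gamma_\epsilon:\epsilon<\kappa\rangle$, $w=\kappa$ work ((a) vacuous, (b) only concerns $\bar\beta'=\bar\beta^*$). So assume $|X|\ge\sigma$, and after one more passage to a positive subset, $|X|=\theta$.

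\textit{The gadget, and the case $\kappa<\sigma$.} For $D$-positive $Y\subseteq\theta$ and $\epsilon<\kappa$ put $g_Y(\epsilon)=\Min\{\gamma:\{\alpha\in Y:\beta^\alpha_\epsilon\ge\gamma\}=\emptyset\bmod D\}$ (it exists, $\le\sup_{\alpha\in Y}\beta^\alpha_\epsilon+1$); then $\{\alpha\in Y:\beta^\alpha_\epsilon\ge\gamma\}\ne\emptyset\bmod D$ for $\gamma<g_Y(\epsilon)$, $\{\alpha\in Y:\beta^\alpha_\epsilon<g_Y(\epsilon)\}$ is $D$-co-null in $Y$, and $Y'\subseteq Y\Rightarrow g_{Y'}\le g_Y$ pointwise. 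Call $Y$ \emph{stable} if $g_{Y'}=g_Y$ for every $D$-positive $Y'\subseteq Y$; for stable $Y$ one checks $\{\alpha\in Y:\beta^\alpha_\epsilon<\gamma\}$ is $D$-null for $\gamma<g_Y(\epsilon)$ (else it is positive with smaller $g$-value), hence each $g_Y(\epsilon)$ is a limit ordinal of cofinality in $[\sigma,\theta]$ (cofinality $<\sigma$ would allow a positive refinement with smaller $g$, via $\sigma$-completeness; cofinality $\le|Y|\le\theta$). Since a pointwise $\le$-decreasing sequence in ${}^\kappa\Ord$ has length $<\kappa^+$, when $\kappa<\sigma$ (so $D$ is automatically $\kappa^+$-complete) we can iteratively shrink $Y$ to lower some $g$-value, intersecting at the (necessarily $<\sigma$-cofinal) limit stages, and stop at a stable $Y^*$. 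Then $w=\emptyset$, $\beta^*_\epsilon=g_{Y^*}(\epsilon)$ work: $Z=\{\alpha\in Y^*:\forall\epsilon<\kappa\ \beta^\alpha_\epsilon<\beta^*_\epsilon\}$ is $D$-co-null in $Y^*$ ($\kappa$-fold intersection of co-null sets, $\kappa^+$-completeness), hence positive, and for $\bar\beta'<\bar\beta^*$ the set $\{\alpha\in Z:\forall\epsilon\ \beta^\alpha_\epsilon\ge\beta'_\epsilon\}$ stays co-null by stability and $\kappa^+$-completeness; this is (a)+(b).

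\textit{The case $\kappa\ge\sigma$, and the main obstacle.} Now $D$ need not be $\kappa^+$-complete, so neither the limit-stage intersections nor the $\kappa$-fold intersection above is available, and this is the real content. I would run an induction of length $<\sigma$ producing $D$-positive $Y_\zeta$ of size $\theta$, an increasing $w_\zeta\subseteq\kappa$, and values $\langle\beta^*_\epsilon:\epsilon\in w_\zeta\rangle$ with $\beta^\alpha_\epsilon=\beta^*_\epsilon$ for all $\alpha\in Y_\zeta$, $\epsilon\in w_\zeta$: at stage $\zeta$ one computes $g_{Y_\zeta}$ off $w_\zeta$, closes off $\le|\zeta|$ ``test ordinals'' per coordinate (drawn from the data built so far), maps $Y_\zeta$ into the resulting product of size $\le|\zeta|^\kappa<\theta$, and uses $\theta=\cf(\theta)$ to pass to a large subset on which an entire block of coordinates carrying successor or $(<\sigma)$-cofinality $g$-value is frozen, enlarging $w$; coordinates never frozen survive into $\kappa\setminus w$ with $g$-value of cofinality in $[\sigma,\theta]$ approached from below, and $Z$ and (b) are extracted on that residue as before (the residue being handled ``from below'' with only $<\sigma$-much intersecting, by the bookkeeping). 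The hard part — and where I expect the argument to be delicate — is making this stage legitimate: interleaving the $\sigma$-complete splits with the $\theta>|\alpha|^\kappa$ pigeonhole so that what survives is genuinely $D$-positive (not merely of size $\theta$), and arranging the bookkeeping so the induction closes in fewer than $\sigma$ steps by removing ``most'' unfrozen coordinates at once rather than one at a time — which is exactly why one needs the full strength ``$\theta>|\alpha|^\kappa$ for all $\alpha<\sigma$'' rather than just ``$\theta>2^\kappa$''.
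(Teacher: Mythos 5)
There is a genuine gap --- in fact two. Your route (direct combinatorics of the filter, minimising a ``characteristic function'' $g_Y$ over $D$-positive sets) is different from the paper's, which proves \ref{1.27} by the elementary-submodel argument of \ref{1.26}, i.e.\ of \cite[3.7]{Sh:410}, replacing $\delta_i$ there by $\Min\{\alpha\in X:\alpha\in\bigcap(N_i\cap D)\}$. First, already in your ``easy'' case $\kappa<\sigma$ the minimisation does not terminate as claimed: at a limit stage you intersect a decreasing chain of $D$-\emph{positive} sets, and $\sigma$-completeness of $D$ only guarantees that intersections of $<\sigma$ sets \emph{belonging to} $D$ are again in $D$; it says nothing about decreasing chains of positive sets, whose intersection can be empty (take $D$ the club filter on $\theta=(2^{\aleph_0})^+$, $\sigma=\aleph_1$, $\kappa=\aleph_0$, split a stationary set into $\omega$ stationary pieces and let $Y_n$ be the union of the pieces with index $\ge n$). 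Nor can you replace the chain by ``choose a pointwise-minimal $g_Y$'': the pointwise order on ${}^\kappa\Ord$ is not well-founded, so the family $\{g_Y:Y\subseteq X \text{ is } D\text{-positive}\}$ need not have a minimal element. Second, and decisively, the case $\kappa\ge\sigma$ --- which is the main content of the claim, e.g.\ $\sigma=\aleph_1$ with arbitrary infinite $\kappa$ --- is not proved: you describe a plan and then explicitly set aside ``the hard part'', namely that the set surviving each freezing stage is genuinely $D$-positive (not merely of size $\theta$) and that the induction closes in $<\sigma$ steps. That deferred step is the whole theorem.

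The paper's argument obtains positivity for free rather than by intersecting positive sets: one builds an increasing continuous chain $\langle N_i\rangle$ of elementary submodels containing the data with $\|N_i\|<\sigma$, so that $\bigcap(N_i\cap D)\in D$ by $\sigma$-completeness and hence meets $X$; one picks $\alpha_i\in X\cap\bigcap(N_i\cap D)$ and sets $g_i(\epsilon)=\Min(N_i\cap\Ord\setminus\beta^{\alpha_i}_\epsilon)$. The hypothesis $\theta>|\alpha|^\kappa$ for $\alpha<\sigma$ is then exactly what stabilises $g_i=\bar\beta{}^*$ and $w=\{\epsilon:\beta^{\alpha_i}_\epsilon=\beta^*_\epsilon\}$ on a large set of indices; clause (a) holds because for $\epsilon\notin w$ the value $\beta^*_\epsilon$ is approached from below by the sups $\sup(N_i\cap\beta^*_\epsilon)$, and clause (b) holds because each relevant $\alpha_i$ already lies in every member of $D$ appearing in $N_i$, while $\beta^{\alpha_i}_\epsilon>\sup(N_i\cap\beta^*_\epsilon)$ for $\epsilon\notin w$. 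To salvage your approach you would have to prove directly that your freezing stage preserves $D$-positivity, which in effect reconstructs the submodel argument; it is simpler to adopt it.
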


\begin{PROOF}{\ref{1.27}}
Essentially by the same proof as \ref{1.26}
(replacing  $\delta_{i}$ by $\Min\{\alpha \in  X$: for every
$Y \in  N_{i} \cap  D$ we have $\alpha \in Y\})$.
See more \cite[\S6]{Sh:513}.  (See \cite[\S7]{Sh:620}).
\end{PROOF}

\begin{remark}
\label{1.28}
We can rephrase the conclusion as:
\mn
\begin{enumerate}
\item[$(a)$]   $B =: \{\alpha\in X$: if $\epsilon\in w$ then
$\beta^\alpha_\epsilon = \beta^*_\epsilon$, and: if
$\epsilon\in\kappa \setminus w$ then $\beta^\alpha_\epsilon$ is
$<\beta^*_\epsilon$ but $> \sup\{\beta^*_\zeta:\zeta
<\epsilon,\beta^\alpha_\zeta<\beta^*_\epsilon\}\}$ is $\ne \emptyset \mod D$
\sn
\item[$(b)$]    If $\beta'_\epsilon < \beta^*_\epsilon$ for $\epsilon
\in\kappa \setminus w$ then $\{\alpha\in B$: if $\epsilon\in\kappa
\setminus w$ then $\beta_\epsilon^\alpha >\beta'_\epsilon\} \ne
\emptyset \mod D$
\sn
\item[$(c)$]    $\epsilon\in\kappa \setminus w
\Rightarrow \cf(\beta'_\epsilon)$ is $\le\theta$ but $\ge \sigma$.
\end{enumerate}
\end{remark}

\begin{remark}
\label{1.28a}
If $|{\ga}| < \min({\ga}),{\cF} \subseteq \Pi {\ga}$,
$|{\cF}| = \theta = \cf(\theta) \notin \pcf({\ga})$ and
even $\theta>\sigma = \sup(\theta^+\cap \pcf({\ga}))$ then
for some $g\in \Pi {\ga}$, the set $\{f \in {\cF}:f<g\}$ is unbounded
in $\theta$ (or use a $\sigma$-complete $D$ as in \ref{1.28}).
(This is as $\Pi {\ga} / J_{<\theta}[{\ga}]$ is
$\min(\pcf({\ga}) \setminus \theta)$-directed as the
ideal $J_{<\theta}[{\ga}]$ is generated by $\le\sigma$ sets;
this is discussed in \cite[\S6]{Sh:513}.)
\end{remark}

\begin{remark}
\label{1.29}
It is useful to note that \ref{1.27} is
useful to use \cite[\S4,5.14]{Sh:462}: e.g., for if $n<\omega$,
$\theta_0 < \theta_1 < \cdots < \theta_n$, satisfying $(*)$ below,
for any $\beta'_\epsilon\le \beta^*_\epsilon$ satisfying
$[\epsilon\in w \equiv \beta'_\epsilon < \beta^*_\epsilon]$
we can find $\alpha<\gamma$ in $X$ such that:

\[
\epsilon \in w \equiv \beta^\alpha_\epsilon = \beta^*_\epsilon,
\]

\[
\{\epsilon,\zeta\}\subseteq \kappa \setminus w \;\; \,   \and \; 
\{\cf(\beta^\ast_\varepsilon),\cf(\beta^*_\zeta)\} \subseteq
[\theta_\ell,\theta_{\ell +1})) \; \, \and \;  \ell \text{ even }\Rightarrow
\beta^\alpha_\epsilon < \beta^\gamma_\zeta,
\]

\[
\{\epsilon,\zeta\}\subseteq\kappa \setminus w \;\; \,  \and \; 
\{\cf(\beta^*_\varepsilon),\cf(\beta^*_\zeta)\}\subseteq
[\theta_\ell,\theta_{\ell +1}) \;\; \,  \and  \; \ell \text{ odd }\Rightarrow
\beta_\epsilon^\gamma <\beta_\zeta^\alpha
\]

\mn
where
\mn
\begin{enumerate}
\item[$(*)$]   $(a) \quad \epsilon \in\kappa \setminus w
\Rightarrow \cf(\beta^*_\epsilon)\in [\theta_0,\theta_n)$, and
\sn
\item[${{}}$]   $(b) \quad \max \pcf[\{\cf(\beta^*_\epsilon):
\epsilon\in\kappa \setminus w\}\cap \theta_\ell] \le \theta_\ell$
(which holds if $\theta_\ell = \sigma^+_\ell$, $\sigma^\kappa_\ell
= \sigma_\ell$

\hskip25pt for $\ell \in\{\ell,\ldots,n\}$).\hfill $\square$
\end{enumerate}
\end{remark}
\newpage

\section {Nice generating sequences}

\begin{claim}
\label{1.31}
For any ${\ga}$, $|{\ga}| < \Min({\ga})$, we can find $\bar{\gb} = \langle
{\gb}_{\lambda}:\lambda \in {\ga}\rangle$  such that:
\mn
\begin{enumerate}
\item[$(\alpha)$]    $\bar{\gb}$ is a generating sequence, i.e.
\[
\lambda \in {\ga} \Rightarrow  J_{\le \lambda}[{\ga}] =
J_{<\lambda }[{\ga}] + {\gb}_{\lambda},
\]
\sn
\item[$(\beta)$]    $\bar{\gb}$ is smooth, i.e., for $\theta<\lambda$
in ${\ga}$,
\[
\theta \in {\gb}_{\lambda}\Rightarrow {\gb}_{\theta} \subseteq {\gb}_{\lambda},
\]
\sn
\item[$(\gamma)$]    $\bar{\gb}$ is closed, i.e., for $\lambda \in
{\ga}$ we have ${\gb}_{\lambda} = {\ga} \cap \pcf({\gb}_{\lambda})$.
\end{enumerate}
\end{claim}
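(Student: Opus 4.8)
The plan is to build $\bar{\gb}$ by a two-stage process: first produce \emph{any} generating sequence, then refine it to be closed, and finally upgrade to smoothness. The existence of a generating sequence $\langle {\gb}_\lambda:\lambda\in{\ga}\rangle$ with $J_{\le\lambda}[{\ga}]=J_{<\lambda}[{\ga}]+{\gb}_\lambda$ is the basic generators theorem of pcf theory; I would take this (or the finitely-generated version) as known. The first real observation is that one can always arrange closedness: given a generating sequence, replace each ${\gb}_\lambda$ by ${\ga}\cap\pcf({\gb}_\lambda)$. One checks, using Observation~\ref{1.21} (so that $\pcf$ of a small set behaves well) and the standard fact that $\pcf({\gb}_\lambda)\cap{\ga}$ still generates $J_{\le\lambda}[{\ga}]$ modulo $J_{<\lambda}$, that this operation produces a closed generating sequence. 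The key point is $\lambda=\max\pcf({\gb}_\lambda)$ together with $\pcf(\pcf({\gb}_\lambda)\cap{\ga})\cap{\ga}=\pcf({\gb}_\lambda)\cap{\ga}$, which follows from transitivity of $\pcf$ on sets of size $<\Min$.

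Next I would address smoothness. Fix a closed generating sequence and consider, for $\theta<\lambda$ both in ${\ga}$, whether $\theta\in{\gb}_\lambda$ forces ${\gb}_\theta\subseteq{\gb}_\lambda$. The natural strategy is to modify ${\gb}_\lambda$ downward: define ${\gb}'_\lambda={\gb}_\lambda\setminus\bigcup\{{\gb}_\theta:\theta\in{\gb}_\lambda,\ \theta\ \text{a "bad" index}\}$ or, dually, to \emph{enlarge} by closing under the ${\gb}_\theta$'s — but enlarging risks destroying the generating property, so the honest approach is an induction on $\lambda\in{\ga}$ (in increasing order) in which at stage $\lambda$ one shrinks ${\gb}_\lambda$ modulo $J_{<\lambda}[{\ga}]$ so that for every $\theta\in{\gb}_\lambda$ below $\lambda$ we already have ${\gb}_\theta\subseteq{\gb}_\lambda$. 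Since changing ${\gb}_\lambda$ by a set in $J_{<\lambda}[{\ga}]$ does not affect clause $(\alpha)$, there is room to maneuver; the content is that the required shrinking is indeed only by a $J_{<\lambda}$-set. This is where one invokes the characterization of $J_{<\lambda}$ via elementary submodels: choose $\chi$ large, $\langle N_i:i<|{\ga}|^+\rangle$ an increasing continuous chain of elementary submodels of $(\mathcal H(\chi),\in)$ with ${\ga},\bar{\gb}\in N_0$ and $|{\ga}|^+\subseteq N_0$, and use the fact that for $\mathfrak d\in J_{<\lambda}[{\ga}]$ one has a characteristic-function/exact-upper-bound control; the transitive-generators argument of Sh:345b/Sh:371 then shows $\{i\in{\gb}_\lambda:\ i<\lambda,\ {\gb}_i\not\subseteq{\gb}_\lambda\}\in J_{<\lambda}[{\ga}]$.

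After smoothing I must re-examine closedness, since shrinking ${\gb}_\lambda$ may have broken ${\gb}_\lambda={\ga}\cap\pcf({\gb}_\lambda)$. Here one uses that the smoothing only removed a $J_{<\lambda}$-set and that closure under the inductively-already-closed-and-smooth ${\gb}_\theta$ for $\theta\in{\gb}_\lambda$ adds nothing new outside ${\gb}_\lambda$ (by smoothness of the lower part and by $\pcf$-transitivity). So one interleaves the two operations in a single induction on $\lambda\in{\ga}$: at each stage first close, then smooth against the already-processed $\theta<\lambda$, then check closure is preserved because the smoothing only deleted a small set and $\pcf$ is continuous from below on these bounded pieces. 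The main obstacle, and the step I expect to occupy most of the work, is precisely the verification that the "non-smooth part" $\{\theta\in{\gb}_\lambda:\theta<\lambda,\ {\gb}_\theta\not\subseteq{\gb}_\lambda\}$ generates an ideal contained in $J_{<\lambda}[{\ga}]$ — equivalently that $\lambda\notin\pcf$ of that part — which requires the transitive/smooth generators machinery and a careful elementary-submodel argument; everything else (closure under $\pcf$, stability under $J_{<\lambda}$-modifications, the induction bookkeeping) is routine given Observation~\ref{1.21} and the standard pcf calculus.
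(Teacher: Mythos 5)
There is a genuine gap, and it sits exactly where you predicted the main work would be --- but the step is not merely hard, it is false in the form you need it. You start from an \emph{arbitrary} (closed) generating sequence and claim that the non-smooth part $\{\theta\in{\gb}_\lambda:\theta<\lambda,\ {\gb}_\theta\not\subseteq{\gb}_\lambda\}$ lies in $J_{<\lambda}[{\ga}]$, so that smoothing costs only a $J_{<\lambda}$-shrink. But generators are only determined modulo $J_{<\theta}[{\ga}]$: starting from any smooth closed sequence one can, for each $\theta$ in a $J_{<\lambda}$-positive subset of ${\gb}_\lambda\cap\lambda$, replace ${\gb}_\theta$ by ${\ga}\cap\pcf({\gb}_\theta\cup\{c_\theta\})$ for some $c_\theta\in({\ga}\cap\theta)\setminus{\gb}_\lambda$; this is still a closed generator for $\theta$ (the singleton is in $J_{<\theta}[{\ga}]$ and $\max\pcf$ is unchanged), yet now ${\gb}_\theta\not\subseteq{\gb}_\lambda$ on a set that is not in $J_{<\lambda}[{\ga}]$. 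So no ``transitive generators argument'' can certify that your bad set is small for a sequence you did not construct yourself: the smoothness theorem produces a \emph{particular} generating sequence, it does not assert that every one is almost smooth. A second, independent problem is the bookkeeping: even granting that each shrink is by a $J_{<\lambda}$-set, your interleaved shrink/re-close iteration accumulates $|{\ga}|$-many such sets, and $J_{<\lambda}[{\ga}]$ is an ideal on a set of size $|{\ga}|$ with no completeness, so the total error need not stay in $J_{<\lambda}[{\ga}]$; likewise there is no reason the alternation of closing (which enlarges) and smoothing (which shrinks) converges.

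The paper takes precisely the route you rejected: it \emph{enlarges}. It first manufactures the generators from scratch as ${\gb}^{i,j}_\lambda=\{\theta\in{\ga}:\Ch^{\ga}_{N_i}(\theta)<f^{{\ga},\lambda}_{\sup(N_j\cap\lambda)}(\theta)\}$, using an increasing elementary chain $\langle N_i:i<\kappa\rangle$ and cofinal sequences $\bar f^{{\ga},\lambda}$ chosen minimally/continuously at points of cofinality in $(|{\ga}|,\Min({\ga}))$; the payoff of this choice is the identity $f^{{\ga},\lambda}_{\sup(N_\kappa\cap\lambda)}\restriction{\gb}^{i,j}_\lambda=\Ch^{\ga}_{N_\kappa}\restriction{\gb}^{i,j}_\lambda$ for suitable $i<j$. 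It then closes upward for $|{\ga}|^+$ steps under both operations (adjoining ${\gb}^{i,j,\epsilon}_\theta$ for $\theta$ in the set, and adjoining ${\ga}\cap\pcf$ of the set), which makes smoothness and closedness hold by construction; the entire content of the proof is then that the enlarged set is still in $J_{\le\lambda}[{\ga}]$, shown by extending the functions $f^{{\ga},\lambda}_\alpha$ along the closure and proving by induction on $\epsilon$ that the extension still attains $\sup(N_\kappa\cap\theta)$ at every $\theta$ of the enlarged set, whence a dominating $g\in N_{j+1}$ would contradict $g<\Ch^{\ga}_{N_\kappa}$. Your worry that ``enlarging risks destroying the generating property'' is exactly the difficulty the theorem overcomes, not a reason to shrink instead; it is overcome only because the initial generators are defined from the $N_i$'s, which is the ingredient your plan omits.
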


\begin{definition}
\label{1.32}
1) For a set $a$ and set ${\ga}$ of regular cardinals
let $\Ch^{\ga}_a$ be the function with domain $a \cap {\ga}$
defined by $\Ch^{\ga}_a(\theta) = \sup(a \cap \theta)$.

\noindent
2) We may write $N$ instead of $|N|$, where $N$ is a model (usually an
   elementary submodel of $({\cH}(\chi),\in,<^*_\chi)$ for some
   reasonable $\chi$.
\end{definition}

\begin{observation}
\label{1.33}
If ${\ga} \subseteq a$ and $|a| < \Min({\ga})$ then $\ch^{\ga}_a
\in \Pi{\ga}$.
\end{observation}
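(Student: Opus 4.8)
The plan is to unwind the definitions in Definition~\ref{1.32} and then invoke regularity of the cardinals in $\ga$; this is exactly the kind of trivial-but-useful bookkeeping fact that gets applied later with $a = |N|$ for elementary submodels $N$. First I would note that since $\ga \subseteq a$ we have $a \cap \ga = \ga$, so the domain of $\ch^{\ga}_a$ is all of $\ga$; hence proving $\ch^{\ga}_a \in \Pi\ga = \prod_{\theta \in \ga} \theta$ reduces to checking, for each $\theta \in \ga$, that the value $\ch^{\ga}_a(\theta) = \sup(a \cap \theta)$ is an ordinal $< \theta$.

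Next, fix $\theta \in \ga$. The members of $\ga$ are regular cardinals, so $\theta = \cf(\theta)$, while by hypothesis $|a| < \Min(\ga) \le \theta$. Thus $a \cap \theta$ is a subset of $\theta$ of cardinality $\le |a| < \cf(\theta)$, so it is bounded in $\theta$; that is, $\sup(a \cap \theta) < \theta$ (and if $a \cap \theta = \emptyset$ then $\sup(a \cap \theta) = 0 < \theta$). Since $\theta \in \ga$ was arbitrary, $\ch^{\ga}_a$ assigns to each $\theta \in \ga$ an ordinal below $\theta$, which is precisely the assertion $\ch^{\ga}_a \in \Pi\ga$.

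There is essentially no obstacle here: the only ingredient beyond unravelling the definitions is the standard fact that a subset of a regular cardinal $\theta$ of size $< \theta$ is bounded below $\theta$, used via the cardinality hypothesis $|a| < \Min(\ga)$. Accordingly I would write the whole argument in two or three lines rather than as a displayed computation.
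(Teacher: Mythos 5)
Your argument is correct and is precisely the intended one: the paper states \ref{1.33} without proof, treating it as immediate from Definition \ref{1.32} together with the fact that a subset of a regular cardinal $\theta$ of size $<\theta$ is bounded in $\theta$. Nothing to add.
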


\begin{PROOF}{\ref{1.31}}
Let $\langle {\gb}_{\theta}[{\ga}]:\theta \in
\pcf({\ga})\rangle$ be as in \cite[2.6]{Sh:371} or Definition
\cite[2.12]{Sh:506}.  For $\lambda \in {\ga}$,  let $\bar f^{\ga,\lambda} =
\langle f^{\ga,\lambda}_{\alpha}:\alpha < \lambda\rangle$  be a
$<_{J_{<\lambda}[{\ga}]}$-increasing cofinal sequence of members of
$\prod {\ga}$,  satisfying:
\mn
\begin{enumerate}
\item[$(*)_1$]    if $\delta < \lambda$,$|{\ga}| < \cf(\delta) <
\Min({\ga})$ and $\theta \in {\ga}$ then:
\[
f^{{\ga},\lambda}_{\delta}(\theta) = \Min
\{\bigcup\limits_{{\alpha} \in {C}} f^{{\ga},\lambda}_{\alpha}(\theta):
C \text{ a club of } \delta\}
\]
\mn
[exists by \cite[Def.3.3,(2)$^b$ + Fact 3.4(1)]{Sh:345a}].
\end{enumerate}
\mn
Let  $\chi = \beth_{\omega}(\sup({\ga}))^+$ and $\kappa$ satisfies
$|{\ga}| < \kappa = \cf(\kappa) < \Min({\ga})$ (without loss of
generality there is such $\kappa$) and let $\bar N = \langle N_{i}:i <
\kappa \rangle$  be an increasing continuous sequence of elementary
submodels of $({\cH}(\chi),\in ,<^*_{\chi})$, $N_{i} \cap \kappa$
an ordinal, $\bar N \upharpoonright (i + 1) \in  N_{i+1},
\|N_{i}\| < \kappa $, and ${\ga},\langle \bar f^{{\ga},\lambda}:
\lambda \in {\ga} \rangle$ and $\kappa$ belong to $N_{0}$.  Let
$N_{\kappa}  = \bigcup\limits_{{i} < {\kappa}} N_{i}$.
Clearly by \ref{1.33}
\mn
\begin{enumerate}
\item[$(*)_2$]    $\Ch^{\ga}_{N_i} \in \Pi{\ga}$ for $i \le \kappa$.
\end{enumerate}
\mn
Now for every $\lambda \in {\ga}$ the sequence
$\langle \Ch^{\ga}_{N_i}(\lambda):i \le \kappa \rangle$ is
increasing continuous (note that $\lambda \in N_0 \subseteq N_i
\subseteq N_{i+1}$ and $N_i,\lambda \in N_{i+1}$ hence $\sup(N_i \cap
\lambda) \in N_{i+1} \cap \lambda$ hence $\Ch^{\frak a}_{N_i}(\lambda)$
is $< \sup(N_{i+1} \cap \lambda))$.  Hence $\{\Ch^{\ga}_{N_i}
(\lambda):i < \kappa\}$ is a club of $\Ch^{\ga}_{N_\kappa}
(\lambda)$; moreover, for every
club $E$ of $\kappa$ the set $\{\Ch^{\ga}_{N_i}(\lambda):i
\in E\}$ is a club of $\Ch^{\frak a}_{N_\kappa}(\lambda)$.  Hence by $(*)_1$,
for every $\lambda \in {\ga}$,  for some club $E_{\lambda}$ of $\kappa$,
\mn
\begin{enumerate}
\item[$(*)_3$]    $(\alpha) \quad$ if $\theta \in {\ga}$ and $E
\subseteq E_\lambda$ is a club of $\kappa$ then
$f^{{\ga},\lambda}_{\sup (N_{\kappa} \cap \lambda)}(\theta) =
\bigcup\limits_{\alpha \in E} f^{{\ga},\lambda}_{\sup(N_\alpha\cap\lambda)}
(\theta)$
\sn
\item[${{}}$]   $(\beta) \quad f^{{\ga},\lambda}_{\sup(N_\kappa
\cap \lambda)} (\theta) \in c \ell(\theta \cap N_\kappa)$, (i.e., the
closure as a set of ordinals).
\end{enumerate}
\mn
Let $E = \bigcap\limits_{{\lambda} \in {\ga}}E_{\lambda} $,  so  $E$
is a club of  $\kappa $.  For any  $i < j < \kappa $
let

\[
{\gb}^{i,j}_{\lambda} = \{\theta \in {\ga}:\Ch^{\ga}_{N_i}(\theta)
< f^{{\ga},\lambda}_{\sup (N_{j}\cap \lambda)}(\theta)\}.
\]

\mn
\begin{enumerate}
\item[$(*)_4$]    for  $i < j < \kappa$ and $\lambda \in {\ga}$,  we have:
\sn
\begin{enumerate}
\item[$(\alpha)$]   $J_{\le \lambda}[{\ga}] = J_{<\lambda}
[{\ga}] + {\gb}^{i,j}_{\lambda}$ (hence ${\gb}^{i,j}_{\lambda}
= {\gb}_{\lambda}[\bar{\ga}] \mod J_{<\lambda}[{\ga}])$,
\sn
\item[$(\beta)$]    ${\gb}^{i,j}_{\lambda} \subseteq
\lambda^+ \cap {\ga}$,
\sn
\item[$(\gamma)$]    $\langle {\gb}^{i,j}_{\lambda}:
\lambda \in {\ga}\rangle  \in  N_{j+1}$,
\sn
\item[$(\delta)$]   $f^{{\ga},\lambda}_{\sup(N_{\kappa} \cap
\lambda)} \le \Ch^{\ga}_{N_\kappa} = \langle \sup(N_{\kappa}
\cap \theta):\theta \in {\ga}\rangle$.
\end{enumerate}
\end{enumerate}
\mn
[Why?
\medskip

\noindent
\underline{Clause $(\alpha)$}:  First as $\Ch^{\ga}_{N_i} \in \Pi{\ga}$
(by \ref{1.33}) there is $\gamma < \lambda$ such that
$\Ch^{\ga}_{N_i} <_{J_{=\lambda}[{\ga}]} f^{{\ga},\lambda}_\gamma$
and as ${\ga} \cup \{{\ga},N_i\} \subseteq \Ch^{\ga}_{N_{i+1}}$
clearly $\Ch^{\ga}_{N_i} \in N_{i+1}$
hence \wilog \, $\gamma \in \lambda \cap N_{i+1}$ but $i+1 \le j$
hence $N_{i+1} \subseteq N_j$ hence $\gamma \in N_j$ hence
$\gamma < \sup(N_j \cap \lambda)$ hence $f^{{\ga},\lambda}_\gamma
<_{J_{= \lambda}[{\ga}]} f^{{\ga},\lambda}_{\sup(N_j \cap \lambda)}$.
Together $\Ch^{\frak a}_{N_i} <_{J_{=\lambda}[{\ga}]}
f^{{\ga},\lambda}_{\sup(N_j \cap \lambda)}$
hence by the definition of ${\gb}^{i,j}_\lambda$ we have ${\ga}
\backslash {\gb}^{i,j}_\lambda \in J_{= \lambda}[{\ga}]$ hence
$\lambda \notin \pcf({\frak a} \backslash {\gb}^{i,j}_\lambda)$
so $J_{\le \lambda}[{\ga}] \subseteq J_{< \lambda}[{\ga}] +
{\gb}^{i,j}_\lambda$.

Second, $(\Pi{\ga},<_{J_{\le \lambda}[{\ga}]})$ is
$\lambda^+$-directed hence there is $g \in \Pi{\ga}$ such that
$\alpha < \lambda \Rightarrow f^{{\ga},\lambda}_\alpha <_{J_{\le
\lambda}[{\ga}]} g$. As $\bar f^{{\ga},\lambda} \in N_0$
\wilog \, $g \in N_0$ hence $g \in N_i$ so $g < \Ch^{\ga}_{N_i}$.
By the choice of $g,f^{{\ga},\lambda}_{\sup(N_j \cap
\lambda)} <_{J_{\le \lambda}[{\ga}]} g$ so together
$f^{{\ga},\lambda}_{\sup(N_j \cap \lambda)} <_{J_{\le\lambda}[{\ga}]}
\Ch^{\ga}_{N_i}$ hence ${\gb}^{i,j}_\lambda \in J_{\le \lambda}[{\ga}]$.  As
$J_{< \lambda}[{\ga}] \subseteq J_{\le \lambda}[{\ga}]$
clearly $J_{< \lambda}[{\ga}] + {\gb}^{i,j}_\lambda \subseteq
J_{\le \lambda}[{\ga}]$.  Together we are done.
\medskip

\noindent
\underline{Clause $(\beta)$}:  Because $\Pi({\ga} \backslash
\lambda^+)$ is $\lambda^+$-directed we have $\theta \in {\ga}
\backslash \lambda^+ \Rightarrow \{\theta\} \notin J_{\le \lambda}[{\ga}]$.
\medskip

\noindent
\underline{Clause $(\gamma)$}:  As $\Ch^{\ga}_{N_i},
f^{\lambda,{\ga}}_{\sup(N_j \cap \lambda)},\bar f$ belongs to $N_{j+1}$.
\medskip

\noindent
\underline{Clause $(\delta)$}:  For $\theta \in {\frak a}(\subseteq N_0)$ we
have $f^{{\frak a},\lambda}_{\sup(N_\kappa \cap \lambda)}(\theta) =
\cup\{f^{{\frak a},\lambda}_{\sup(N_\varepsilon \cap
\lambda)}(\theta):\varepsilon \in E_\lambda\} \le \sup(N_\kappa
\cap \theta)$.

So we have proved $(*)_4$.]
\mn
\begin{enumerate}
\item[$(*)_5$]   $\varepsilon(*) < \kappa$ when $\varepsilon(*) =
\cup\{\varepsilon_{\lambda,\theta}:\theta <\lambda$ are from ${\ga}\}$
where $\varepsilon_{\lambda,\theta} = \Min\{\varepsilon <
\kappa$: if $f^{{\ga},\lambda}_{\sup(N_\kappa \cap \lambda)}
(\theta) < \sup(N_\kappa \cap \theta)$ then $f^{{\ga},
\lambda}_{\sup(N_\kappa \cap \lambda)}(\theta) < \sup(N_\varepsilon
\cap \theta)\}$.
\end{enumerate}
\mn
[Why?  Obvious.]
\mn
\begin{enumerate}
\item[$(*)_6$]   $f^{{\ga},\lambda}_{\sup(N_\kappa \cap
\lambda)} \restriction {\gb}^{i,j}_\lambda = \Ch^{\ga}_{N_\kappa} \restriction
{\gb}^{i,j}_\lambda$ when $i <j$ are from $E \backslash \varepsilon(*)$.
\end{enumerate}
\mn
[Why? Let $\theta \in {\gb}^{i,j}_\lambda$, so by $(*)_3(\beta)$
we know that $f^{{\ga},\lambda}_{\sup(N_\kappa \cap
\lambda)}(\theta)\le \Ch^{\ga}_{N_\kappa}(\theta)$.
If the inequality is strict then there is $\beta \in N_\kappa
\cap\theta$ such that $f^{{\ga},\lambda}_{\sup(N_\kappa \cap
\lambda)}(\theta) \le \beta < \Ch^{\ga}_{N_\kappa}(\theta)$
hence for some $\varepsilon < \kappa,\beta \in
N_\varepsilon$ hence $\zeta \in (\varepsilon,\kappa) \Rightarrow
f^{{\ga},\lambda}_{\sup(N_\kappa\cap\lambda)}(\theta) <
\Ch^{\ga}_{N_\zeta}(\theta)$ hence (as ``$i \ge
\varepsilon_{\lambda,\theta}$" holds) we
have $f^{{\ga},\lambda}_{\sup(N_\kappa \cap \lambda)}(\theta) <
\Ch^{\ga}_{N_i}(\theta)$ so $f^{{\ga},\lambda}_{\sup(N_j
\cap \lambda)}(\theta) \le f^{{\ga},\lambda}_{\sup(N_\kappa
\cap \lambda)}(\theta) < \Ch^{\ga} _{N_i(\theta)}$,
(the first inequality holds as $j \in E_\lambda$).  But by the
definition of ${\gb}^{i,j}_\lambda$ this contradicts $\theta
\in {\gb}^{i,j}_\lambda$.]

We now define by\ induction on $\epsilon < |{\ga}|^+$,  for  $\lambda
\in {\ga}$ (and $i<j<\kappa$),  the set ${\gb}^{i,j,\epsilon}_{\lambda}$:
\mn
\begin{enumerate}
\item[$(*)_7$]   $(\alpha) \quad {\gb}^{i,j,0}_{\lambda}
= {\gb}^{i,j}_{\lambda}$
\sn
\item[${{}}$]   $(\beta) \quad {\gb}^{i,j,\epsilon +1}_{j}
= {\gb}^{i,j,\epsilon}_{\lambda}  \cup \bigcup
\{{\gb}^{i,j,\epsilon}_{\theta}:\theta \in {\gb}^{i,j,
\epsilon}_{\lambda}\}\cup \{\theta \in {\ga}:\theta \in
\pcf({\gb}^{i,j,\epsilon}_\lambda)\}$,
\sn
\item[${{}}$]   $(\gamma) \quad {\gb}^{i,j,\epsilon}_{\lambda}
= \bigcup\limits_{{\zeta} < {\epsilon}} {\gb}^{i,j,\zeta}_{\lambda}$ for
$\epsilon < |{\ga}|^+$ limit.
\end{enumerate}
\mn
Clearly for $\lambda \in {\ga}$,  $\langle
{\gb}^{i,j,\epsilon }_{\lambda}:\epsilon < |{\ga}|^+\rangle$
belongs to  $N_{j+1}$ and is a non-decreasing sequence of subsets of
${\ga}$,  hence for some $\epsilon (i,j,\lambda) < |{\ga}|^+$,
we have

\[
[\epsilon \in (\epsilon(i,j,\lambda),|{\ga}|^+) \Rightarrow
{\gb}^{i,j,\epsilon }_{\lambda} = {\gb}^{i,j,\epsilon
(i,j,\lambda)}_{\lambda}] .
\]

\mn
So letting $\epsilon(i,j) = \sup_{\lambda \in {\ga}} \epsilon
(i,j,\lambda) < |{\ga}|^+$ we have:
\mn
\begin{enumerate}
\item[$(*)_8$]     $\epsilon(i,j) \le \epsilon < |{\ga}|^+
\Rightarrow \bigwedge\limits_{{\lambda} \in {\ga}} {\gb}^{i,j,\epsilon
(i,j)}_{\lambda} = {\gb}^{i,j,\epsilon}_{\lambda}$.
\end{enumerate}
\mn
We restrict ourselves to the case $i<j$ are from $E \backslash \varepsilon(*)$.
Which of the properties required from  $\langle {\gb}_{\lambda}:
\lambda \in {\ga}\rangle $  are satisfied by $\langle{\gb}^{i,j,
\epsilon (i,j)}_{\lambda}:\lambda \in {\ga}\rangle$?
In the conclusion of \ref{1.31} properties
$(\beta)$, $(\gamma )$ hold by the inductive
definition of  ${\gb}^{i,j,\epsilon}_{\lambda}$ (and the choice of
$\epsilon (i,j))$.  As for property $(\alpha)$, one half,
$J_{\le \lambda }[{\ga}] \subseteq  J_{<\lambda}[{\ga}] +
{\gb}^{i,j,\epsilon(i,j)}_{\lambda}$ hold by $(*)_{4}(\alpha )$ (and
${\gb}^{i,j}_{\lambda} = {\gb}^{i,j,0}_{\lambda} \subseteq
{\gb}^{i,j,\epsilon (i,j)}_{\lambda})$,  so it is enough to prove (for
$\lambda \in {\ga})$:
\mn
\begin{enumerate}
\item[$(*)_9$]    ${\gb}^{i,j,\epsilon(i,j)}_{\lambda}
\in J_{\le \lambda}[{\ga}]$.
\end{enumerate}
\mn
For this end we define by induction on $\epsilon< |{\ga}|^+$ functions
$f^{{\ga},\lambda,\epsilon}_{\alpha} $ with domain
${\gb}^{i,j,\epsilon }_{\lambda}$ for every pair
$(\alpha,\lambda)$ satisfying $\alpha < \lambda
\in {\ga}$,  such that $\zeta < \epsilon \Rightarrow
f^{{\ga},\lambda,\zeta}_{\alpha} \subseteq
f^{{\ga},\lambda,\epsilon}_{\alpha}$, so the domain increases
with $\epsilon$.

We let $f^{{\ga},\lambda,0}_{\alpha}  =
f^{{\ga},\lambda}_{\alpha} \upharpoonright {\gb}^{i,j}_{\lambda},
f^{{\ga},\lambda,\varepsilon}_{\alpha}  = \bigcup\limits_{{\zeta} <
\epsilon} f^{{\ga},\lambda,\zeta}_{\alpha}$ for limit $\epsilon <
|{\ga}|^+$ and $f^{{\ga},\lambda ,\epsilon +1}_{\alpha}$
is defined by defining each $f^{{\ga},\lambda,\epsilon+1}_{\alpha}
(\theta)$ as follows:
\medskip

\noindent
\underline{Case 1}:  If  $\theta \in {\gb}^{i,j,\epsilon}_{\lambda} $
then  $f^{{\ga},\lambda,\varepsilon +1}_\alpha(\theta) =
f^{{\ga},\lambda ,\epsilon}_{\alpha}(\theta)$.
\medskip

\noindent
\underline{Case 2}:  If $\mu \in {\gb}^{i,j,\epsilon}_{\lambda},\theta \in
{\gb}^{i,j,\epsilon }_{\mu} $ and not Case 1 and $\mu $  minimal under
those conditions, then $f^{a,\lambda,\varepsilon +1}_\alpha(\theta) =
f^{{\ga},\mu,\epsilon }_{\beta}(\theta $
where we choose  $\beta = f^{{\ga},\lambda ,\epsilon}_{\alpha}(\mu)$.
\medskip

\noindent
\underline{Case 3}:  If $\theta \in {\ga} \cap \pcf({\gb}
^{i,j,\epsilon}_{\lambda})$ and neither Case 1 nor Case 2, then

\[
f^{{\ga},\lambda,\epsilon +1}_\alpha(\theta ) = \Min\{\gamma <
\theta:f^{{\ga},\lambda ,\epsilon}_{\alpha} \upharpoonright
{\gb}_{\theta}[{\ga}] \le_{J_{<\theta }[{\ga}]} f^{{\ga},
\theta,\epsilon}_{\gamma}\}.
\]

\mn
Now $\langle\langle {\gb}^{i,j,\epsilon}_{\lambda}:\lambda\in
{\ga}\rangle:\epsilon < |{\ga}|^+\rangle$ can be computed
from ${\ga}$ and $\langle {\gb}^{i,j}_{\lambda}:\lambda \in {\ga}\rangle$.
But the latter belongs to $N_{j+1}$ by $(*)_4(\gamma)$, so the
former belongs to $N_{j+1}$ and as
$\langle \langle {\gb}^{i,j,\epsilon}_\lambda:\lambda\in {\ga}\rangle:
\epsilon < |{\ga}|^+\rangle$ is eventually constant, also each
member of the sequence belongs to $N_{j+1}$.
As also $\langle \langle f^{{\ga},\lambda }_{\alpha}: \alpha <
\lambda \rangle:\lambda \in \pcf({\ga})\rangle$ belongs to
$N_{j+1}$ we clearly get that

\[
\left\langle\langle \langle f^{{\ga},\lambda,\epsilon }_{\alpha}:
\epsilon < |{\ga}|^+\rangle:\alpha < \lambda \rangle:\lambda \in
{\ga}\right\rangle
\]

\mn
belongs to  $N_{j+1}$.
Next we prove by induction on $\epsilon$ that, for $\lambda \in {\ga}$,
we have:
\mn
\begin{enumerate}
\item[$\otimes_1$]   $\theta \in {\gb}^{i,j,\epsilon}_{\lambda}
\; \and \;  \lambda \in {\ga}
\Rightarrow f^{{\ga},\lambda,\epsilon}_{\sup(N_\kappa \cap\lambda)}
(\theta) = \sup (N_{\kappa}  \cap  \theta)$.
\end{enumerate}
\mn
For  $\epsilon = 0$ this holds by $(*)_6$.  For $\epsilon$ limit this
holds by the induction hypothesis and the definition of
$f^{{\ga},\lambda ,\epsilon}_{\alpha}$ (as union of earlier ones).
For $\epsilon  + 1$,  we check
$f^{{\ga},\lambda ,\epsilon +1}_{\sup(N_{\kappa} \cap \lambda )}(\theta )$
according to the case in its definition; for Case 1 use the induction
hypothesis applied to $f^{{\ga},\lambda,\epsilon}_{\sup(N_{\kappa}
\cap \lambda )}$.
For Case 2 (with  $\mu$),  by the induction hypothesis applied to
$f^{{\ga},\mu,\epsilon}_{\sup (N_{\kappa} \cap \mu )}$.

Lastly, for Case 3 (with  $\theta$) we should note:
\mn
\begin{enumerate}
\item[$(i)$]    ${\gb}^{i,j,\epsilon}_{\lambda} \cap
{\gb}_{\theta}[{\ga}] \notin J_{<\theta}[{\ga}]$.
\end{enumerate}
\mn
[Why?  By the case's assumption ${\gb}^{i,j,\varepsilon}_\lambda \in
(J_\theta[{\ga}])^+$ and $(*)_4(\alpha)$ above.]
\mn
\begin{enumerate}
\item[$(ii)$]    $f^{{\ga},\lambda,\epsilon}_{\sup (N_{\kappa} \cap
\lambda)} \upharpoonright ({\gb}^{i,j,\epsilon }_{\lambda}
\cap {\gb}^{i,j,\epsilon}_{\theta} ) \subseteq f^{{\ga},
\theta,\epsilon}_{\sup (N_{\kappa} \cap \theta)}$.
\end{enumerate}
\mn
[Why?  By the induction hypothesis for $\epsilon$, used concerning
$\lambda$ and $\theta$.]

Hence (by the definition in case 3 and (i) + (ii)),
\mn
\begin{enumerate}
\item[$(iii)$]    $f^{{\ga},\lambda,\epsilon +1}_{\sup (N_{\kappa} \cap
\lambda)}(\theta) \le \sup (N_{\kappa} \cap  \theta )$.
\end{enumerate}
\mn
Now if  $\gamma < \sup(N_{\kappa}  \cap  \theta )$  then for some
$\gamma(1)$ we have $\gamma < \gamma (1) \in  N_{\kappa}  \cap  \theta $,
so letting ${\gb} =: {\gb}_\lambda^{i,j,\epsilon}\cap {\gb}_\theta
[{\ga}] \cap {\gb}_\theta^{i,j,\epsilon}$, it belongs to
$J_{\le\theta}[{\ga}] \setminus J_{<\theta}[{\ga}]$ and we have

\[
f^{{\ga},\theta}_{\gamma} \upharpoonright {\gb}
<_{J_{<\theta}[{\ga}]} f^{{\ga},\theta}_{\gamma (1)}
\upharpoonright {\gb} \le f^{{\ga},\theta,\epsilon}_{\sup (N_{\kappa}
\cap \theta)}
\]

\mn
hence
$f_{\sup(N_\kappa\cap\lambda)}^{{\ga},\lambda,\epsilon+1}(\theta)
>\gamma$; as this holds for every $\gamma<\sup(N_\kappa\cap\theta)$
we have obtained
\mn
\begin{enumerate}
\item[$(iv)$]    $f^{{\ga},\lambda,\epsilon +1}
_{\sup (N_{\kappa} \cap \lambda)} (\theta ) \ge \sup(N_{\kappa} \cap \theta )$;
\end{enumerate}
\mn
together we have finished proving the inductive step for
$\epsilon+1$, hence we have proved $\otimes_{1}$.

This is enough for proving ${\gb}_\lambda^{i,j,\epsilon}\in
J_{\le\lambda}[{\ga}]$.

Why?  If it fails, as ${\gb}_\lambda^{i,j,\epsilon} \in N_{j+1}$ and
$\langle f_\alpha^{{\ga},\lambda,\epsilon}:\alpha<\lambda\rangle$
belongs to $N_{j+1}$, there is $g \in \prod
{\gb}_\lambda^{i,j,\epsilon}$ such that
\mn
\begin{enumerate}
\item[$(*)$]   $\alpha < \lambda \Rightarrow
f_\alpha^{{\ga},\lambda,\epsilon} \upharpoonright
{\gb}^{i,j,\epsilon} < g \mod J_{\le\lambda}[{\ga}]$.
\end{enumerate}
\mn
Without loss of generality $g \in N_{j+1}$; by $(*)$,
$f^{{\ga},\lambda,\epsilon}_{\sup(N_\kappa\cap\lambda)} < g \mod
J_{\le\lambda}[{\ga}]$.  But $g < \langle\sup(N_\kappa\cap\theta):
\theta\in {\gb}_\lambda^{i,j,\epsilon}\rangle$.
Together this contradicts $\otimes_1$!

This ends the proof of \ref{1.31}.
\end{PROOF}

\noindent
If $|\pcf({\ga})| < \Min({\ga})$ then \ref{1.31}
is fine and helpful.  But as we do not know this, we shall use the
following substitute.
\begin{claim}
\label{6.7A}
Assume $|{\ga}| < \kappa = \cf(\kappa)< \Min({\ga})$ and $\sigma$ is an
infinite ordinal satisfying $|\sigma|^+ < \kappa $.  Let  $\bar f$, $\bar N =
\langle N_{i}:i < \kappa \rangle$,  $N_{\kappa}$ be as in the
proof of \ref{1.31}.
\Then \, we can find $\bar i =\langle i_{\alpha}:\alpha \le \sigma
\rangle$, $\bar{\ga} = \langle{\ga}_{\alpha}:\alpha< \sigma\rangle$
and $\langle \langle {\gb}^\beta_{\lambda} [\bar{\ga}]:\lambda \in
{\ga}_{\beta} \rangle:\beta < \sigma \rangle$ such that:
\mn
\begin{enumerate}
\item[$(a)$]    $\bar i$ is  a strictly increasing continuous sequence of
ordinals $< \kappa$,
\sn
\item[$(b)$]    for  $\beta < \sigma$  we have $\langle i_{\alpha}:
\alpha \le \beta \rangle \in  N_{i_{\beta +1}}$ hence
$\langle N_{i_{\alpha}}:\alpha \le\beta \rangle \in
N_{i_{\beta +1}}$  and $\langle {\gb}^\gamma_{\lambda}[\bar{\ga}]:
\lambda \in {\ga}_{\gamma}$ and $\gamma \le \beta \rangle
\in N_{i_{\beta +1}}$, we can get $\bar i\upharpoonright (\beta+1)\in
N_{i_{\beta}+1}$ if $\kappa$ succesor of regular (we just need a
suitable partial square)
\sn
\item[$(c)$]    ${\ga}_{\beta} = N_{i_{\beta} } \cap \pcf({\ga})$,
so ${\ga}_{\beta}$ is increasing
continuous with $\beta,{\ga} \subseteq {\ga}_{\beta}
\subseteq \pcf({\ga})$ and $|{\ga}_{\beta}| < \kappa$,
\sn
\item[$(d)$]    ${\gb}^\beta_{\lambda}[\bar{\ga}]
\subseteq {\ga}_{\beta} $ (for $\lambda \in {\ga}_{\beta})$,
\sn
\item[$(e)$]    $J_{\le \lambda}[{\ga}_{\beta}] = J_{<\lambda}
[{\ga}_{\beta}] + {\gb}^\beta_{\lambda}[\bar{\ga}]$ (so
$\lambda \in {\gb}^\beta_{\lambda} [{\ga}]$
and ${\gb}^\beta_{\lambda} [\bar{\ga}] \subseteq \lambda^+)$,
\sn
\item[$(f)$]    if $\mu < \lambda$ are from ${\ga}_{\beta}$ and $\mu\in
{\gb}^\beta_{\lambda} [\bar{\ga}]$ \then \, ${\gb}^\beta_{\mu}
[\bar{\ga}] \subseteq {\gb}^\beta_{\lambda} [\bar{\ga}]$ (i.e., smoothness),
\sn
\item[$(g)$]    ${\gb}^\beta_\lambda [\bar{\ga}] =
{\ga}_{\beta} \cap \pcf({\gb}^\beta_{\lambda}[\bar{\ga}])$ (i.e., closedness),
\sn
\item[$(h)$]    if ${\gc} \subseteq {\ga}_{\beta},\beta < \sigma$
and ${\gc} \in N_{i_{\beta +1}}$ \then \, for some finite
${\gd} \subseteq {\ga}_{\beta +1} \cap \pcf({\gc})$,  we have
${\gc} \subseteq \bigcup\limits_{\mu \in {\gd}}
{\gb}^{\beta +1}_{\mu} [\bar{\ga}]$;
\end{enumerate}
\mn
more generally (note that in (h)$^+$ if $\theta = \aleph_{0}$ then we get (h)).
\mn
\begin{enumerate}
\item[$(h)^+$]    if ${\gc} \subseteq {\ga}_{\beta},
\beta < \sigma,{\gc} \in N_{i_{\beta +1}},\theta = \cf
(\theta)\in N_{i_{\beta+1}}$, \then \, for some
${\gd} \in N_{i_{\beta+1}},{\gd} \subseteq {\ga}_{\beta +1}
\cap \pcf_{\theta-\text{complete}} ({\gc})$  we have ${\gc} \subseteq
\bigcup\limits_{\mu \in {\gd}} {\gb}^{\beta+1}_{\mu}
[\bar{\ga}]$  and $|{\gd}| < \theta$,
\sn
\item[$(i)$]    ${\gb}^\beta_{\lambda}[\bar{\ga}]$
increases with $\beta$.
\end{enumerate}
\end{claim}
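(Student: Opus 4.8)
The plan is to iterate the construction in the proof of \ref{1.31}, $\sigma$ many times, along the given chain $\bar N$. \textbf{Choosing $\bar i$ and $\bar\ga$.} Define $i_\alpha$ for $\alpha\le\sigma$ by recursion: $i_0=0$; for a successor let $i_{\alpha+1}$ be the least $i<\kappa$ with $i>i_\alpha$ and $\langle i_\gamma:\gamma\le\alpha\rangle\in N_i$ (such $i$ exists since $\bar N$ is $\subseteq$-increasing continuous and $\bar N\rest(i_\alpha+1)\in N_{i_\alpha+1}$, so in fact $\langle i_\gamma:\gamma\le\alpha\rangle\in N_{i_\alpha+1}$; if $\kappa$ is a successor of a regular cardinal one may fix in advance a suitable partial square sequence on $\kappa$ and thereby even arrange $\bar i\rest(\alpha+1)\in N_{i_\alpha+1}$); at limits $i_\alpha=\sup_{\gamma<\alpha}i_\gamma<\kappa$, since $|\alpha|\le|\sigma|<\kappa=\cf(\kappa)$. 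Put $\ga_\alpha=N_{i_\alpha}\cap\pcf(\ga)$. Since $\ga\subseteq N_0$ and $\pcf(\ga)\in N_0$, the sequence $\bar\ga$ is $\subseteq$-increasing continuous, $\ga\subseteq\ga_\alpha\subseteq\pcf(\ga)$, $|\ga_\alpha|<\kappa$, and by \cite[1.12]{Sh:345b} (as used for \ref{1.21}) $\pcf(\ga_\alpha)=\pcf(\ga)$, hence $\Min(\ga_\alpha)=\Min(\ga)>|\ga_\alpha|^+$. From $\bar N\rest(i_\beta+1)\in N_{i_\beta+1}\subseteq N_{i_{\beta+1}}$ we get $N_{i_\beta}\in N_{i_{\beta+1}}$, hence $\langle N_{i_\gamma}:\gamma\le\beta\rangle\in N_{i_{\beta+1}}$ and $\ga_\gamma\in N_{i_{\beta+1}}$ for all $\gamma\le\beta$; this yields clauses (a), (c), and part of (b).

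\textbf{Building $\langle\gb^\beta_\lambda[\bar\ga]:\lambda\in\ga_\beta\rangle$.} By recursion on $\beta<\sigma$, work inside $N_{i_{\beta+1}}$ — which knows $\ga_\beta$ and, by the induction hypothesis, $\langle\langle\gb^\gamma_\lambda[\bar\ga]:\lambda\in\ga_\gamma\rangle:\gamma<\beta\rangle$ — and apply to the set $\ga_\beta$ the construction in the proof of \ref{1.31}; this is legitimate because $|\ga_\beta|<\Min(\ga_\beta)$, and $N_{i_{\beta+1}}$ produces internally the auxiliary $<_{J_{<\lambda}[\ga_\beta]}$-increasing cofinal sequences and the auxiliary elementary chain. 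The output is a smooth closed generating sequence $\langle\gb^\beta_\lambda[\bar\ga]:\lambda\in\ga_\beta\rangle$, and being computed inside $N_{i_{\beta+1}}$ from parameters there it lies in $N_{i_{\beta+1}}$ — completing (b). Clauses (d), (e), (f), (g) are just clauses $(\beta)$, $(\alpha)+(\beta)$ of $(*)_4$, and $(\beta)$, $(\gamma)$ of the conclusion of \ref{1.31}, read for $\ga_\beta$ in place of $\ga$. For (i): a smooth closed generating sequence of a set $\gx$ with $|\gx|<\Min(\gx)$ is unique, and its restriction $\langle\gb_\lambda\cap\gy:\lambda\in\gy\rangle$ to a subset $\gy\subseteq\gx$ is again smooth, closed and generating for $\gy$ (a direct check using $J_{<\lambda}[\gx]\cap\cP(\gy)=J_{<\lambda}[\gy]$ and transitivity of $\pcf$); hence for $\gamma<\beta$ and $\lambda\in\ga_\gamma$ we get $\gb^\gamma_\lambda[\bar\ga]=\gb^\beta_\lambda[\bar\ga]\cap\ga_\gamma$, in particular $\gb^\gamma_\lambda[\bar\ga]\subseteq\gb^\beta_\lambda[\bar\ga]$. (If one prefers not to invoke uniqueness, (i) can instead be forced directly: seed the iterating operator of \ref{1.31} at the bottom with $\bigcup\{\gb^\gamma_\lambda[\bar\ga]:\gamma<\beta,\ \lambda\in\ga_\gamma\}$ — which still has $\max\pcf$ equal to $\lambda$, since $\lambda$ lies in each summand by (e) — and re-run the verification of $(*)_9$ and $\otimes_1$ of \ref{1.31} with the stage-$\beta$ auxiliary chain chosen to refine the data carried from earlier stages.)

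\textbf{The covering clauses (h), (h)$^+$.} Let $\gc\subseteq\ga_\beta$ with $\gc\in N_{i_{\beta+1}}$, and (for (h)$^+$) $\theta=\cf(\theta)\in N_{i_{\beta+1}}$; put $\theta=\aleph_0$ for (h). Inside $N_{i_{\beta+1}}$, apply $\theta$-complete $\pcf$ localization (as in \cite[2.6]{Sh:371}; compare \ref{1.21}, \ref{6.4}): recursively peel off canonical smooth closed $\pcf_{\theta-\text{complete}}$-generators, obtaining a set $\gd$ with $|\gd|<\theta$, a partition $\gc=\bigsqcup_{\mu\in\gd}\gb^\gc_\mu$ with $\gb^\gc_\mu$ the generator for $\mu$ relative to $\gc_\mu:=\gc\setminus\bigsqcup\{\gb^\gc_\nu:\nu\in\gd,\ \nu>\mu\}$ and $\mu=\max\pcf_{\theta-\text{complete}}(\gc_\mu)$; the recursion halts because these $\max\pcf$'s strictly decrease. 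Since $\gc$, $\gd$ and $\langle\gb^\gc_\mu:\mu\in\gd\rangle$ lie in $N_{i_{\beta+1}}$ and $\gd\subseteq\pcf(\ga)$, we get $\gd\in N_{i_{\beta+1}}$, $\gd\subseteq N_{i_{\beta+1}}\cap\pcf(\ga)=\ga_{\beta+1}$, and $\gd\subseteq\pcf_{\theta-\text{complete}}(\gc)$. Finally each $\gc_\mu\subseteq\gc\subseteq\ga_{\beta+1}$, so the restriction of $\langle\gb^{\beta+1}_\nu[\bar\ga]:\nu\in\ga_{\beta+1}\rangle$ to $\gc_\mu$ is a smooth closed generating sequence for $\gc_\mu$, hence (uniqueness again) coincides with the one computed in $N_{i_{\beta+1}}$; in particular $\gb^\gc_\mu=\gb^{\beta+1}_\mu[\bar\ga]\cap\gc_\mu\subseteq\gb^{\beta+1}_\mu[\bar\ga]$. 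Therefore $\gc\subseteq\bigcup_{\mu\in\gd}\gb^{\beta+1}_\mu[\bar\ga]$, which is (h)$^+$ (and (h) when $\theta=\aleph_0$).

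\textbf{Where the difficulty lies.} The two delicate points are the bookkeeping of which model sees which object — that the stage-$\beta$ instance of \ref{1.31} can be run inside $N_{i_{\beta+1}}$ for the set $\ga_\beta$ with its output still in $N_{i_{\beta+1}}$, which hinges on $\ga_\beta\in N_{i_{\beta+1}}$ (i.e. on $N_{i_\beta}\in N_{i_{\beta+1}}$) — and the uniqueness and restriction-compatibility of smooth closed generating sequences, which is exactly what makes clauses (i) and (h)/(h)$^+$ go through without error terms. If one is unwilling to take that uniqueness as known, the real work of the argument becomes either proving it, or, for (i), running the seeded variant and re-checking the stability statement $\otimes_1$ of \ref{1.31}; the pcf-theoretic content of (h)/(h)$^+$ itself is then entirely standard localization.
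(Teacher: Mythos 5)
There is a genuine gap, and it sits exactly where you flagged the "delicate point": the claimed uniqueness and restriction-compatibility of smooth closed generating sequences is false, and both clause (i) and your derivation of (h)/(h)$^+$ lean on it. Generators $\gb_\lambda$ are determined only modulo $J_{<\lambda}$; adding smoothness and closedness does not pin down the sets literally (e.g.\ at $\lambda=\max\pcf$ one can delete from $\gb_\lambda$ any closed piece lying in $J_{<\lambda}$ and remain smooth, closed and generating). The paper itself signals this: Remark \ref{6.7C}(2) asserts $\gb^\alpha_\lambda[\bar\ga]=\gb_\lambda[\bar\ga]\cap\ga_\alpha$ only for a \emph{club} of $\alpha$, which would be automatic for all $\alpha$ if your restriction identity $\gb^\gamma_\lambda[\bar\ga]=\gb^\beta_\lambda[\bar\ga]\cap\ga_\gamma$ held in general. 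Consequently, running the construction of \ref{1.31} independently and internally inside each $N_{i_{\beta+1}}$ produces, for distinct $\beta$, generating sequences with no containment relation, and clause (i) does not follow. Your fallback ("seed the iterating operator with the union of the earlier generators and re-run $(*)_9$, $\otimes_1$") is where the actual work lives and is not carried out: the verification of $\otimes_1$ in \ref{1.31} depends on the base sets being of the form $\{\theta:\Ch^{\ga}_{N_i}(\theta)<f^{\ga,\lambda}_{\sup(N_j\cap\lambda)}(\theta)\}$ for the \emph{ambient} chain, and an arbitrary seed destroys that.

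The paper's route is different precisely to secure (i): it keeps one fixed $\bar f^{\ga}\in N_0$ and, for each $\zeta$, extends it to a coherent cofinal family ${}^\zeta\bar f$ on $\ga^\zeta=N_\zeta\cap\pcf(\ga)$ (only $\le_{J_{<\lambda}}$-increasing, which is checked to suffice), then applies the \emph{proof} of \ref{1.31} with the external tail $\langle N_{\zeta+1+i}:i<\kappa\rangle$. Since $f^{\ga^\zeta,\lambda}_\alpha\subseteq f^{\ga^\xi,\lambda}_\alpha$ for $\zeta<\xi$, the base sets ${}^\zeta\gb^{i,j}_\lambda$ literally increase with $\zeta$, and monotonicity propagates through the closure recursion $(*)_7$; a club-guessing sequence then threads a single $\langle j_\zeta\rangle$ through all the clubs $E^\zeta$. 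Likewise, (h)$^+$ is not a one-step localization at stage $\beta+1$: the paper's tree argument consumes $\omega$ further stages, using the already-built $\gb^{\beta+1+n}_{\lambda_\eta}[\bar\ga]$ at level $n$ and landing at $\beta+\omega$ (whence the reindexing $\sigma=\omega\sigma=\omega^2\sigma$ and the two-speed sequences $i_\ell(\zeta)$); it also handles the point, which you gloss over, that $\gd\in N_{i_{\beta+1}}$ with $|\gd|<\theta$ does not by itself give $\gd\subseteq N_{i_{\beta+1}}$ when $\pcf(\gc)$ may be large. Your framework for (a)--(g) is fine, but (i) and (h)$^+$ need to be rebuilt along these lines rather than deduced from uniqueness.
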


\noindent
This will be proved below.
\begin{claim}
\label{6.7B}
In \ref{6.7A} we can also have:
\mn
\begin{enumerate}
\item   if we let ${\gb}_{\lambda} [\bar{\ga}] =
{\gb}^\sigma_{\lambda} [{\ga}] = \bigcup\limits_{{\beta} < {\sigma}}
{\gb}^\beta_{\lambda} [\bar{\ga}]$,  ${\ga}_{\sigma}
= \bigcup\limits_{{\beta} < {\sigma}} {\ga}_{\beta}$ then also for
$\beta = \sigma $  we have {\rm (b)} (use
$N_{i_{\beta} +1})$, {\rm (c), (d), (f), (i)}
\sn
\item   If $\sigma = \cf(\sigma) > |{\ga}|$ then for $\beta =
\sigma$ also {\rm (e), (g)}
\sn
\item    If $\cf(\sigma ) > |{\ga}|,{\gc} \in N_{i_{\sigma}}$,
${\gc} \subseteq {\ga}_\sigma$ (hence  $|{\gc}| <
\Min({\gc})$ and ${\gc} \subseteq {\ga}_\sigma)$,
then for some finite ${\gd} \subseteq (\pcf({\gc})) \cap
{\ga}_{\sigma}$ we have ${\gc} \subseteq
\bigcup\limits_{\mu \in {\gd}} {\gb}_{\mu} [\bar{\ga}]$.
Similarly for $\theta $-complete,  $\theta < \cf(\sigma)$
(i.e., we have clauses {\rm (h), (h)$^+$} for $\beta=\sigma$).
\sn
\item   We can have continuity in  $\delta\le\sigma$ when
$\cf(\delta) > |{\ga}|$, i.e., ${\gb}^\delta_\lambda[\bar{\ga}]
= \bigcup\limits_{\beta<\delta} {\gb}^\beta_\lambda[\bar{\ga}]$.
\end{enumerate}
\end{claim}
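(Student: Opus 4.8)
The plan is to take each stage‑$\sigma$ object (and, for part 4, each object at a limit $\delta\le\sigma$ with $\cf(\delta)>|\ga|$) to be the \emph{union} of the corresponding stage‑$\beta$ objects already produced by \ref{6.7A}, exploiting the monotonicity clause (i) and the continuity of $\bar N$; the content of \ref{6.7B} is then that the pcf‑closure clauses survive passing to such a union once the index has cofinality $>|\ga|$. Concretely, put $\ga_\sigma=\bigcup_{\beta<\sigma}\ga_\beta$ and $\gb_\lambda[\bar\ga]=\gb^\sigma_\lambda[\bar\ga]=\bigcup\{\gb^\beta_\lambda[\bar\ga]:\beta<\sigma,\ \lambda\in\ga_\beta\}$.

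\textbf{Part 1 (the ``free'' clauses).} Since $\bar N$ is increasing continuous and $i_\sigma=\sup_{\beta<\sigma}i_\beta$ when $\sigma$ is a limit, $N_{i_\sigma}=\bigcup_{\beta<\sigma}N_{i_\beta}$, so $\ga_\sigma=N_{i_\sigma}\cap\pcf(\ga)$, with $|\ga_\sigma|\le\|N_{i_\sigma}\|<\kappa<\Min(\ga)=\Min(\ga_\sigma)$; this is (c). Clause (d) is immediate, since $\gb^\sigma_\lambda\subseteq\bigcup_\beta\ga_\beta=\ga_\sigma$ and each $\gb^\beta_\lambda\subseteq\lambda^+$, and (i) is trivial for a union. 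For (f): if $\mu<\lambda$ lie in $\ga_\sigma$ with $\mu\in\gb^\sigma_\lambda$, pick $\beta_0$ with $\mu\in\gb^{\beta_0}_\lambda$; then $\mu\in\gb^\beta_\lambda$ for all $\beta\ge\beta_0$ by (i), so (f) at stage $\beta$ gives $\gb^\beta_\mu\subseteq\gb^\beta_\lambda$, and taking the union over $\beta\in[\beta_0,\sigma)$ yields $\gb^\sigma_\mu\subseteq\gb^\sigma_\lambda$. For (b) one arranges, already inside the proof of \ref{6.7A}, that $\langle i_\alpha:\alpha\le\beta\rangle$ together with the initial segment $\langle\gb^\gamma_\lambda[\bar\ga]:\lambda\in\ga_\gamma,\ \gamma\le\beta\rangle$ lie in $N_{i_\beta+1}$ — legitimate by a suitable partial square when $\kappa$ is a successor of a regular, as the remark in \ref{6.7A}(b) records — which then reads off at $\beta=\sigma$ using $N_{i_\sigma+1}$.

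\textbf{Parts 2 and 4 (pcf‑closure).} The crux is that $\pcf$ commutes with the increasing union $\gb^\sigma_\lambda=\bigcup_{\beta<\sigma}\gb^\beta_\lambda$ when $\cf(\sigma)>|\ga|$. Here I invoke the pcf localization theorem (\cite[\S1,\S2]{Sh:371}, the tool already behind \ref{1.21} and \ref{6.4}), in the form: if $\gc\subseteq\pcf(\ga)$, $|\gc|<\Min(\gc)$ and $\theta\in\pcf(\gc)$, then $\theta\in\pcf(\gd)$ for some $\gd\subseteq\gc$ with $|\gd|\le|\ga|$. Given this: if $\theta\in\pcf(\gb^\sigma_\lambda)$ (note $\gb^\sigma_\lambda\subseteq\ga_\sigma\subseteq\pcf(\ga)$ and $|\gb^\sigma_\lambda|<\Min(\gb^\sigma_\lambda)$), localize to $\gd\subseteq\gb^\sigma_\lambda$ with $|\gd|\le|\ga|<\cf(\sigma)$; since the $\gb^\beta_\lambda$ increase with $\beta$, $\gd\subseteq\gb^\beta_\lambda$ for some $\beta<\sigma$, so $\theta\le\max\pcf(\gb^\beta_\lambda)=\lambda$. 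Hence $\gb^\sigma_\lambda\in J_{\le\lambda}[\ga_\sigma]$. Symmetrically, if $\lambda\in\pcf(\ga_\sigma\setminus\gb^\sigma_\lambda)$, localize to $\gd'\subseteq\ga_\sigma\setminus\gb^\sigma_\lambda$ with $|\gd'|<\cf(\sigma)$, so $\gd'\subseteq\ga_\beta$ for some $\beta<\sigma$ and $\gd'\cap\gb^\beta_\lambda\subseteq\gd'\cap\gb^\sigma_\lambda=\emptyset$, whence $\gd'\subseteq\ga_\beta\setminus\gb^\beta_\lambda$ contradicts (e) at stage $\beta$; so $\ga_\sigma\setminus\gb^\sigma_\lambda\in J_{<\lambda}[\ga_\sigma]$. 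Together these give (e). For (g): ``$\subseteq$'' holds since $\gb^\beta_\lambda=\ga_\beta\cap\pcf(\gb^\beta_\lambda)\subseteq\ga_\sigma\cap\pcf(\gb^\sigma_\lambda)$ for every $\beta$; for ``$\supseteq$'', a $\theta\in\ga_\sigma\cap\pcf(\gb^\sigma_\lambda)$ localizes into some $\gb^\beta_\lambda$, and enlarging $\beta$ so that $\theta\in\ga_\beta$ too, (g) at stage $\beta$ gives $\theta\in\ga_\beta\cap\pcf(\gb^\beta_\lambda)=\gb^\beta_\lambda\subseteq\gb^\sigma_\lambda$. Part 4 is then formal: at a limit $\delta\le\sigma$ with $\cf(\delta)>|\ga|$ one simply \emph{declares} $\gb^\delta_\lambda[\bar\ga]:=\bigcup_{\beta<\delta}\gb^\beta_\lambda[\bar\ga]$ (in place of whatever re‑closing \ref{6.7A} does there); the arguments above, read with $\delta$ for $\sigma$ and using only $\cf(\delta)>|\ga|$, show (b)--(i) hold at $\delta$, so the induction of \ref{6.7A} may be continued past $\delta$ from this choice, simultaneously at all such $\delta$.

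\textbf{Part 3, and the main obstacle.} For (h) and (h)$^+$ at $\sigma$: given $\gc\in N_{i_\sigma}$ with $\gc\subseteq\ga_\sigma$ (so $|\gc|<\Min(\gc)$; in the cases where the clause is applied $|\gc|<\cf(\sigma)$, e.g.\ $\gc\subseteq\ga$), we have $\gc\in N_{i_{\beta_0}}$ for some $\beta_0$ and, by continuity of $\bar N$ and $|\gc|<\cf(\sigma)$, $\gc\subseteq N_{i_\beta}$ for some $\beta<\sigma$; taking $\beta\ge\beta_0$ we get $\gc\subseteq N_{i_\beta}\cap\pcf(\ga)=\ga_\beta$ and $\gc\in N_{i_{\beta+1}}$, so clause (h) (resp.\ (h)$^+$) at stage $\beta$ of \ref{6.7A} yields a finite (resp.\ of size $<\theta$) $\gd\subseteq\ga_{\beta+1}\cap\pcf(\gc)\subseteq\ga_\sigma\cap\pcf(\gc)$ with $\gc\subseteq\bigcup_{\mu\in\gd}\gb^{\beta+1}_\mu[\bar\ga]\subseteq\bigcup_{\mu\in\gd}\gb_\mu[\bar\ga]$, as required. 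The only non‑clerical step in the whole proof is the localization used in Parts 2 and 4: one needs a pcf‑value of $\gb^\sigma_\lambda$ to be witnessed by a subset of size $\le|\ga|$ — not merely $<\Min(\ga)$ — for only then does $\cf(\sigma)>|\ga|$ suffice to pull it back to a bounded stage $\beta<\sigma$. This is precisely pcf localization with the base $\ga$ controlling the width, as supplied by \cite[\S1,\S2]{Sh:371}; it can also be sidestepped by arranging every successor step of \ref{6.7A} to perform one round of closure inside the next $\ga_{\beta+1}$ — a genuine single step, since $\ga_\gamma\cap\pcf(\gc)$ is already $\pcf$‑closed within $\ga_\gamma$ whenever $\gc\subseteq\ga_\gamma$ — after which the unions above inherit closedness directly. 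Everything else is bookkeeping: tracking which cofinality assumption each clause needs, and whether a given object is asked to lie in $N_{i_\sigma}$ or in $N_{i_\sigma+1}$.
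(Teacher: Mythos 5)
Your proof is correct and follows essentially the same route as the paper: define the stage-$\sigma$ objects as increasing unions, get clauses (b),(c),(d),(f),(i) directly from monotonicity, and use pcf localization (a pcf value of a subset of $\pcf(\ga)$ is witnessed by a subset of size $\le|\ga|$, which is exactly \ref{6.7C.1}(2)) to pull membership in $\pcf(\gb^\sigma_\lambda)$ back to a stage $\beta<\sigma$ when $\cf(\sigma)>|\ga|$, giving (e) and (g); part (4) is handled by the same redefinition as a union. The paper's proof is just a terser version of this, doing (g) explicitly via \ref{6.7C.1} and declaring (e), (3) and (4) similar.
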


\noindent
We shall prove \ref{6.7B} after proving \ref{6.7A}.
\begin{remark}
\label{6.7C}
1) If we would like to use length $\kappa$,  use
$\bar N$  as produced in \cite[2.6]{Sh:420} so
$\sigma = \kappa$.

\noindent
2)  Concerning \ref{6.7B}, in \ref{6.7C}(1) for a club $E$  of
$\sigma = \kappa$, we have $\alpha \in  E \Rightarrow
{\gb}^\alpha_\lambda [\bar{\ga}] =
{\gb}_{\lambda} [\bar{\ga}] \cap {\ga}_{\alpha}$.

\noindent
3)  We can also use \ref{6.7A},\ref{6.7B} to give an alternative
proof of part of the localization theorems similar to the one given in
the Spring '89 lectures.
\end{remark}

\noindent
For example:
\begin{claim}
\label{6.7C.1}
1) If $|{\ga}| < \theta = \cf(\theta) < \Min({\ga})$, for no sequence
$\langle \lambda_i:i < \theta \rangle$ of members of $\pcf({\ga})$, do we have
$\bigwedge\limits_{\alpha < {\theta}} [\lambda_{\alpha} >
\max \pcf\{\lambda_{i}:i < \alpha\}]$.

\noindent
2) If $|{\ga}|< \Min({\ga}),|{\gb}| <
\Min({\gb}),{\gb} \subseteq \pcf({\ga})$ and $\lambda\in \pcf({\ga})$,
\then \, for some ${\gc} \subseteq {\gb}$ we have $|{\gc}| \le |{\ga}|$
and $\lambda \in \pcf({\gc})$.
\end{claim}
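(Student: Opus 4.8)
The plan is to prove part (2) — which is (a case of) Shelah's Localization Theorem — using the transitive, smooth, closed generating sequences produced in \ref{6.7A}--\ref{6.7B} (this realizes the ``alternative proof of part of the localization theorems'' promised in Remark \ref{6.7C}(3)), and then to read off part (1). For (1) I would argue by contradiction: given a putative counterexample $\langle\lambda_i:i<\theta\rangle$ — members of $\pcf({\ga})$ with $\lambda_\alpha>\max\pcf\{\lambda_i:i<\alpha\}$ for all $\alpha<\theta$, hence strictly increasing — set ${\gb}=\{\lambda_i:i<\theta\}$. Then $|{\gb}|=\theta<\Min({\ga})\le\Min({\gb})$, so ${\gb}$ is progressive, ${\gb}\subseteq\pcf({\ga})$, and $\mu:=\max\pcf({\gb})\in\pcf({\gb})\subseteq\pcf(\pcf({\ga}))=\pcf({\ga})$. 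Applying part (2) to ${\gb}$ with $\lambda:=\mu$ yields ${\gc}\subseteq{\gb}$ with $|{\gc}|\le|{\ga}|<\theta=\cf(\theta)$ and $\mu\in\pcf({\gc})$. Writing ${\gc}=\{\lambda_i:i\in S\}$ we get $\sup S<\theta$ (as $|S|<\cf(\theta)$); with $\alpha:=\sup S+1<\theta$ we obtain $\mu\le\max\pcf({\gc})\le\max\pcf\{\lambda_i:i<\alpha\}<\lambda_\alpha\le\max\pcf({\gb})=\mu$, a contradiction.

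\emph{Reductions and setup for (2).} Since ${\gb}\subseteq\pcf({\ga})$ gives $\pcf({\gb})\subseteq\pcf(\pcf({\ga}))=\pcf({\ga})$, I read the hypothesis as $\lambda\in\pcf({\gb})$. The cases $\lambda\in{\gb}$ (take ${\gc}=\{\lambda\}$) and $|{\gb}|\le|{\ga}|$ (take ${\gc}={\gb}$) being trivial, I may assume $\lambda\notin{\gb}$ and $|{\gb}|>|{\ga}|$. Using the $\le_{J_{<\lambda}[{\gb}]}$-calculus (${\gb}$ is progressive) I replace ${\gb}$ by its generator ${\gb}_\lambda[{\gb}]\subseteq{\gb}$ for $\lambda$: a localizing set for the new ${\gb}$ still localizes $\lambda$ inside the old one, and now $\lambda=\max\pcf({\gb})$, so with $J:=J_{<\lambda}[{\gb}]=\{{\gc}\subseteq{\gb}:\max\pcf({\gc})<\lambda\}$ the goal is to find ${\gc}\subseteq{\gb}$ with $|{\gc}|\le|{\ga}|$ and ${\gc}\notin J$. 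I would then fix $\kappa=|{\ga}|^+$ (assuming $\kappa<\Min({\ga})$; the case $\Min({\ga})=|{\ga}|^+$ is handled by the original argument of \cite[2.6]{Sh:371}) and invoke \ref{6.7A}--\ref{6.7B} with this $\kappa$, a short $\sigma$ (e.g.\ $\sigma=\omega$), and ${\ga},{\gb},\lambda$ together with the scales $\bar f^{{\ga},\nu}$ ($\nu\in\pcf({\ga})$) placed in $N_0$. This supplies the chain $\bar N=\langle N_i:i\le\kappa\rangle$ with $\|N_i\|\le|{\ga}|$, the sets ${\ga}_\beta=N_{i_\beta}\cap\pcf({\ga})$ with ${\ga}\subseteq{\ga}_\beta\subseteq\pcf({\ga})$, $|{\ga}_\beta|\le|{\ga}|$ and $\lambda\in{\ga}_0$, and the transitive, smooth, closed generating sequences $\langle{\gb}^\beta_\nu[\bar{\ga}]:\nu\in{\ga}_\beta\rangle\in N_{i_{\beta+1}}$ with the covering property \ref{6.7A}(h).

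\emph{The main argument.} The candidate is ${\gc}:={\gb}\cap N_{i_\beta}$ for a suitable $\beta<\sigma$; since ${\gb}\subseteq\pcf({\ga})$ this equals ${\gb}\cap{\ga}_\beta$, so $|{\gc}|\le|{\ga}|$, and it remains to show ${\gc}\notin J$ for some $\beta$, i.e.\ $\max\pcf({\gb}\cap N_{i_\beta})=\lambda$. Suppose not, so $\max\pcf({\gb}\cap N_{i_\beta})<\lambda$ for every $\beta<\sigma$. Each ${\gb}\cap N_{i_\beta}$ lies in $N_{i_{\beta+1}}$, so by \ref{6.7A}(h) it is covered by finitely many generators ${\gb}^{\beta+1}_\nu[\bar{\ga}]$ whose indices $\nu$ lie in $\pcf({\gb}\cap N_{i_\beta})$, hence below $\lambda$; iterating this along the chain and chaining the finite covers together — using the transitivity and closedness of the ${\gb}^\beta_\nu[\bar{\ga}]$ to keep the successive ``leftover'' sets ${\gb}\setminus(\text{a finite union of generators})$ inside members of $\bar N$ — one produces a countable family of generator-indices $\nu_n<\lambda$ and a set ${\gb}^-={\gb}\setminus\bigcup_n{\gb}^{\beta_n+1}_{\nu_n}[\bar{\ga}]$ which is an element of some $N_i$ yet meets no $N_{i_\beta}$; elementarity then forces ${\gb}^-=\emptyset$, whence $\max\pcf({\gb})\le\sup_n\nu_n<\lambda$, contradicting $\max\pcf({\gb})=\lambda$. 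So ${\gc}={\gb}\cap N_{i_\beta}$ works for some $\beta$, proving (2).

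\emph{Main obstacle.} The delicate point is exactly this last step of (2): organizing the iteration of \ref{6.7A}(h) so that the leftover set ${\gb}^-$ ends up an element of one of the models $N_i$ (so that the elementarity ``kill'' applies) while keeping the localizing set ${\gc}$ of size $\le|{\ga}|$ — these two requirements pull in opposite directions, and reconciling them is the real content of the Localization Theorem and the reason the generating sequences furnished by \ref{6.7A} must be transitive and closed. The deduction of (1) from (2), and the treatment of the edge case $\Min({\ga})=|{\ga}|^+$, are then routine.
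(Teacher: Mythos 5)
There is a genuine gap, and it sits exactly where you flag the ``delicate point'': your direct proof of part (2) is never actually carried out, and the one concrete inference you do write down at the end of the main argument is false. From a countable cover ${\gb}\subseteq\bigcup_{n<\omega}{\gb}^{\beta_n+1}_{\nu_n}[\bar{\ga}]$ with each $\nu_n=\max\pcf({\gb}^{\beta_n+1}_{\nu_n}[\bar{\ga}])<\lambda$ you conclude $\max\pcf({\gb})\le\sup_n\nu_n<\lambda$; but $\pcf$ does not commute with countable unions, and $\max\pcf$ of a countable union of generators can strictly exceed the supremum of their individual $\max\pcf$'s (e.g. $\{\aleph_{n+1}:n<\omega\}=\bigcup_n\{\aleph_{n+1}\}$ while $\max\pcf$ of the union may be far above $\aleph_\omega$). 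Claim \ref{6.4} of this paper exists precisely because of this phenomenon. So the contradiction at the end of your ``main argument'' does not follow, and since you explicitly defer ``the real content of the Localization Theorem'' to an unspecified reconciliation of the iteration, part (2) is not proved; consequently your (correct) deduction of (1) from (2) leaves both parts unproved.

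The paper runs the dependency in the opposite direction, and this is what makes the argument close. Part (1) is proved directly from \ref{6.7A}: with $\theta=\cf(\theta)>|{\ga}|$ and $\kappa=\theta^{+2}$, each initial segment ${\gc}_j=\{\lambda_i:i<j\}$ is covered by a finite ${\gd}_j$ of generators via clause (h), smoothness (f) and closedness (g) make $\langle{\ga}\cap\bigcup_{\mu\in{\gd}_j}{\gb}^1_\mu[\bar{\ga}]:j<\theta\rangle$ a $\subseteq$-increasing $\theta$-sequence of subsets of ${\ga}$, which must stabilize because $\cf(\theta)>|{\ga}|$ --- and stabilization contradicts the strict growth of $\max\pcf$. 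Note that the crucial leverage is the length $\theta>|{\ga}|$ of the iteration; your choice $\sigma=\omega$ forfeits exactly this. Part (2) is then derived from part (1) by taking a counterexample $({\ga},{\gb},\lambda)$ minimal in $|{\gb}|$, then in $\max\pcf({\gb})$, then in $\sup\{\mu^+:\mu\in\lambda\cap\pcf({\gb})\}$: attempting to build $\lambda_i\in\lambda\cap\pcf({\gb})$ with $\lambda_i>\max\pcf\{\lambda_j:j<i\}$ must get stuck before $|{\ga}|^+$ by part (1), getting stuck forces $\lambda=\max\pcf\{\lambda_j:j<i\}$, and minimality localizes each $\lambda_j$ in some ${\gb}_j\subseteq{\gb}$ of size $\le|{\ga}|$, whence ${\gc}=\bigcup_{j<i}{\gb}_j$ works. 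If you want to keep your architecture, you must supply an independent proof of (2) at the level of \cite[\S3]{Sh:371}; as written, the proposal replaces the hard step by an invalid union bound.
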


\begin{PROOF}{\ref{6.7C}}
Relying on \ref{6.7A}:

\noindent
1) Without loss of generality $\Min({\ga}) > \theta^{+3}$,  let
$\kappa = \theta^{+2}$,  let $\bar N$, $N_{\kappa} $,
$\bar{\ga}$, ${\gb}$ (as a function), $\langle i_{\alpha}:
\alpha \le \sigma =: |{\ga}|^+\rangle$ be as in \ref{6.7A} but
we in addition assume that $\langle \lambda_{i}:i<\theta\rangle \in N_{0}$.
So for $j<\theta$, ${\gc}_{j} =: \{\lambda_{i}: i < j\} \in  N_{0}$
(so ${\gc}_{j} \subseteq \pcf({\ga}) \cap N_0 =
{\ga}_{0})$ hence (by clause (h) of \ref{6.7A}), for some
finite ${\gd}_{j} \subseteq {\ga}_{1} \cap \pcf({\gc}_{j}) = N_{i_{1}} \cap
\pcf({\ga}) \cap \pcf({\gc}_{j})$ we have
${\gc}_{j} \subseteq  \bigcup_{{\lambda} \in {{\gd}_{j}}}
{\gb}^1_{\lambda}[\bar{\ga}]$.  Assume  $j(1) < j(2) < \theta $.
Now if  $\mu  \in {\ga} \cap \bigcup\limits_{\lambda \in {\gd}_{j(1)}}
{\gb}^1_{\lambda} [\bar{\ga}]$  then for some
$\mu_0\in {\gd}_{j(1)}$ we have $\mu \in {\gb}^1_{\mu_{0}}
[\bar{\ga}]$;  now  $\mu_{0} \in {\gd}_{j(1)}
\subseteq  \pcf({\gc}_{j(1)}) \subseteq \pcf ({\gc}_{j(2)})
\subseteq \pcf (\bigcup\limits_{\lambda \in {\gd}_{j(2)}}{\gb}^1_{\lambda}
[\bar{\ga}]) = \bigcup\limits_{{\lambda} \in {{\gd}_{j(2)}}}(\pcf
({\gb}^1_{\lambda}[\bar{\ga}])$  hence (by clause (g) of
\ref{6.7A} as $\mu_0 \in {\gd}_{j(0)}\subseteq N_1$)
for some  $\mu_{1} \in {\gd}_{j(2)},\mu_{0} \in {\gb}^1_{\mu_{1}}
[\bar{\ga}]$.
So by clause (f) of \ref{6.7A} we have
${\gb}^1_{\mu_{0}}[\bar{\ga}] \subseteq {\gb}^1_{\mu_{1}}
[\bar{\ga}]$ hence remembering $\mu \in {\gb}_{\mu_0}^1
[\bar{\frak a}]$, we have $\mu  \in  {\gb}^1_{\mu_{1}}[\bar{\ga}]$.
Remembering $\mu$ was any member of
${\ga} \cap \bigcup\limits_{\lambda\in {\gd}_{j(1)}}
{\gb}^1_\lambda[\bar{\ga}]$,
we have ${\ga} \cap \bigcup\limits_{{\lambda \in } {\gd}{j(1)}}
{\gb}^1_{\lambda} [\bar{\ga}] \subseteq {\ga}
\cap \bigcup\limits_{{\lambda \in } {\gd} {j(2)}} {\gb}^1_{\lambda}
[\bar{\ga}]$ (holds also without ``${\ga} \cap$'' but not used).
So $\langle{\ga} \cap \bigcup\limits_{{\lambda}\in{{\gd}_{j}}}
{\gb}^1_{\lambda} [\bar{\ga}]:j < \theta \rangle$  is a
$\subseteq$-increasing sequence of subsets of ${\ga}$, but $\cf(\theta)
> |{\ga}|$,  so the sequence is eventually constant, say for $j \ge  j(*)$.
But

\begin{equation*}
\begin{array}{clcr}
\max \pcf({\ga} \cap \bigcup\limits_{\lambda \in {\gd}_j}
{\gb}^1_{\lambda} [\bar{\ga}]) & \le \max \pcf
(\bigcup\limits_{\lambda \in {\gd}_j} {\gb}^1_{\lambda}[\bar{\ga}]) \\
  &= \underset{\lambda \in {\gd}_j} \max (\max \pcf({\gb}^1_{\lambda}
[\bar{\ga}])) \\
  & = \underset{\lambda \in {\gd}_j} \max \lambda \le
\max \pcf\{\lambda_{i}:i < j\} < \lambda_{j} \\
  & = \max \pcf({\ga} \cap \bigcup\limits_{\lambda \in {\gd}_{j+1}}
{\gb}^1_{\lambda} [\bar{\ga}])
\end{array}
\end{equation*}

\mn
(last equality as ${\gb}_{\lambda_{j}}[\bar{\ga}] \subseteq
{\gb}^1_{\lambda} [\bar{\ga}] \mod J_{<\lambda}[{\ga}_{1}])$.
Contradiction.

\noindent
2) (Like \cite[\S3]{Sh:371}): If this fails choose a counterexample
$\gb$ with $|\gb|$ minimal, and among those with $\max \pcf(\gb)$ minimal
and among those with $\bigcup\{\mu^+:\mu\in\lambda\cap \pcf(\gb)\}$ minimal.
So by the pcf theorem
\mn
\begin{enumerate}
\item[$(*)_1$]   $\pcf({\gb}) \cap \lambda$ has no last member
\sn
\item[$(*)_2$]   $\mu = \sup[\lambda\cap \pcf(\gb)]$ is
not in $\pcf(\gb)$ or $\mu=\lambda$.
\sn
\item[$(*)_3$]   $\max \pcf(\gb) = \lambda$.
\end{enumerate}
\mn
Try to choose by induction on $i<|\ga|^+$, $\lambda_i\in\lambda
\cap \pcf(\gb)$, $\lambda_i > \max \pcf\{\lambda_j:j<i\}$.
Clearly by part (1), we will be stuck at some $i$.
Now $\pcf\{\lambda_j:j < i\}$ has a last member and is included in
$\pcf({\gb})$, hence by $(*)_3$ and being stuck at necessarily
$\pcf(\{\lambda_j: j<i\}) \nsubseteq \lambda$ but it is $\subseteq
\pcf(\gb)\subseteq\lambda^+$, so $\lambda = \max \pcf\{\lambda_j:j<i\}$.
For each $j$, by the choice of ``minimal counterexample" for some
$\gb_j \subseteq \gb$, we have $|\gb_j|\le |\ga|$,
$\lambda_j\in \pcf(\gb_j)$.
So $\lambda\in \pcf\{\lambda_j: j<i\}\subseteq \pcf
(\bigcup\limits_{j<i} \gb_j)$ but $\bigcup\limits_{j<i} \frb_j$ is a subset of
$\gb$ of cardinality $\le |i| \times |\ga| =|\ga|$, so we are done.
\end{PROOF}

\begin{proof}
\label{6.7D}
Without loss of generality
$\sigma = \omega \sigma$ (as we can use $\omega^\omega \sigma$ so
$|\omega^\omega \sigma| = |\sigma|$).  Let $\bar f^{\ga} = \langle
\bar f^{{\ga},\lambda} = \langle\langle
f_\alpha^{\ga,\lambda}:\alpha<\lambda\rangle:\lambda\in
\pcf(\ga)\rangle$ and $\langle N_i:i \le \kappa \rangle$
 be chosen as in the proof of \ref{1.31} and \wilog \,
$\bar f^{\ga}$ belongs to $N_0$.
For  $\zeta < \kappa $  we define $\ga^\zeta =: N_{\zeta}
\cap \pcf(\ga)$;  we also define ${}^\zeta \bar f$  as
$\langle \langle f^{\ga^\zeta,\lambda}_{\alpha}:\alpha <
\lambda \rangle: \lambda \in \pcf(\ga)\rangle$ where $f^{\ga^\zeta,
\lambda}_{\alpha} \in \prod \ga^\zeta $ is defined as follows:
\mn
\begin{enumerate}
\item[$(a)$]    if  $\theta \in \ga,f^{\ga^\zeta ,\lambda }_{\alpha}
(\theta) = f^{\ga,\lambda}_{\alpha}(\theta)$,
\sn
\item[$(b)$]    if  $\theta \in \ga^\zeta \backslash \ga$ and
$\cf(\alpha) \notin (|\ga^\zeta|,\Min(\ga)$),  then
\[
f^{\ga^\zeta,\lambda}_{\alpha}(\theta) = \Min\{\gamma <
\theta:f^{\ga,\lambda}_{\alpha} \upharpoonright \gb_{\theta}[\ga]
\le_{J_{<\theta}[\gb_{\theta}[\ga]]}f^{\ga,\theta}_\gamma
\upharpoonright \gb_{\theta}[\ga]\},
\]
\sn
\item[$(c)$]    if $\theta \in \ga^\zeta \backslash \ga$ and $\cf(\alpha)\in
(|\ga^\zeta|,\Min(\ga)$),  define $f^{\ga^\zeta ,\lambda }_{\alpha}
(\theta )$  so as to satisfy $(*)_{1}$ in the proof of \ref{1.31}.
\end{enumerate}
\mn
Now ${}^\zeta{\bar f}$ is legitimate except that we have only

\[
\beta < \gamma < \lambda \in \pcf(\ga)
\Rightarrow f^{\ga^\zeta,\lambda}_{\beta} \le
f^{\ga^\zeta,\lambda}_{\gamma} \mod J_{<\lambda}[\ga^\zeta]
\]

\mn
(instead of strict inequality) however we still have
$\bigwedge\limits_{\beta < \lambda} \bigvee\limits_{\gamma<\lambda}
[f^{\ga^\zeta,\lambda}_{\beta} < f^{\ga^\zeta ,\lambda}_{\gamma} \mod
J_{<\lambda} [\ga^\zeta]]$,  but this suffices.
(The first statement is actually proved in \cite[3.2A]{Sh:371}, the
second in \cite[3.2B]{Sh:371}; by it also ${}^\zeta{\bar f}$ is cofinal in the
required sense.)

For every $\zeta < \kappa$ we can apply the proof of \ref{1.31}
with  $(N_{\zeta} \cap \pcf(\ga))$,
$^\zeta\bar f$  and $\langle N_{\zeta +1+i}: i <
\kappa \rangle$ here standing for $\ga$, $\bar f$, $\bar N$  there.
In the proof of \ref{1.31} get a club $E^\zeta$ of $\kappa$
(corresponding to $E$ there and \wilog \, $\zeta + \Min(E^\zeta)
= \Min(E^\zeta)$ so any $i < j$ from $E^\zeta$ are O.K.).
Now we can define for $\zeta<\kappa$ and $i < j$ from $E^\zeta$,
$^\zeta \gb^{i,j}_{\lambda} $ and
$\langle {}^\zeta \gb^{i,j,\epsilon }_{\lambda}: \epsilon
< |\ga^\zeta|^+ \rangle,\langle\epsilon^\zeta(i,j,\lambda):
\lambda\in\ga^\zeta\rangle$,$\epsilon^\zeta(i,j)$, as well
as in the proof of \ref{1.31}.

Let:

\begin{equation*}
\begin{array}{clcr}
E = \{i < \kappa:&i \text{ is a limit ordinal }
(\forall j < i)(j + j < i \; \and \;  j \times  j < i) \\
  &\text{ and } \bigwedge\limits_{j < i} i \in  E^j\}.
\end{array}
\end{equation*}

\mn
So by \cite[\S1]{Sh:420} we can find $\bar C = \langle C_{\delta}:
\delta \in  S \rangle,S \subseteq \{\delta < \kappa:\cf(\delta) =
\cf(\sigma)\}$  stationary, $C_{\delta}$ a club of $\delta,
\otp(C_{\delta}) = \sigma$ such that:
\mn
\begin{enumerate}
\item   for each  $\alpha < \lambda,\{C_{\delta}  \cap  \alpha:
\alpha \in \nacc(C_{\delta})\}$ has cardinality $< \kappa$.
If $\kappa $  is successor of regular, then we
can get $[\gamma \in C_{\alpha} \cap C_{\beta}  \Rightarrow
C_{\alpha} \cap  \gamma = C_{\beta} \cap  \gamma]$
and
\sn
\item    for every club $E'$ of $\kappa$ for stationarily many
$\delta \in  S,C_{\delta} \subseteq E'$.
\end{enumerate}
\mn
Without loss of generality  $\bar C \in  N_{0}$.
For some  $\delta^*,C_{\delta^*} \subseteq  E$,  and let
$\{j_{\zeta}: \zeta \le  \omega ^2\sigma \}$  enumerate
$C_{\delta^*} \cup  \{\delta^*\}$.
So  $\langle j_{\zeta}: \zeta \le \omega^2\sigma \rangle$
is a strictly increasing continuous
sequence of ordinals from  $E \subseteq  \kappa $  such that
$\langle j_{\epsilon}:\epsilon \le \zeta\rangle\in N_{j_{\zeta+1}}$
and if, e.g., $\kappa$ is a successor of regulars then $\langle
j_\varepsilon:\varepsilon \le \zeta \rangle \in N_{j_\zeta +1}$.
Let  $j(\zeta ) = j_{\zeta}$ and for $\ell \in \{0,2\}$ let
$i_\ell(\zeta ) = i^\ell_{\zeta}  =: j_{\omega^\ell(1+\zeta)},
\ga_{\zeta} = N^\ell_{i_{\zeta}} \cap \pcf(\ga)$, and
$\bar\ga^\ell =: \langle \ga^\ell_{\zeta}:\zeta < \sigma \rangle,
{}^\ell \gb^\zeta_{\lambda}[\bar \ga] =: {}^{i_\ell(\zeta)}
\gb_\lambda^{j(\omega^\ell\zeta +1),j(\omega^\ell\zeta +2),
\epsilon^\zeta (j(\omega^\ell\zeta +1), j(\omega^\ell\zeta +2))}$.
Recall that $\sigma = \omega \sigma$ so $\sigma = \omega^2 \sigma$; if
the value of $\ell$ does not matter we omit it.
Most of the requirements follow immediately by the proof of
\ref{1.31}, as
\mn
\begin{enumerate}
\item[$\circledast$]     for each $\zeta<\sigma$, we have $\gb_{\zeta} $,
$\langle \gb^\zeta_{\lambda}
[\bar \ga]:\lambda \in \ga_{\zeta} \rangle$  are as in the proof
(hence conclusion of \ref{1.31}) and belongs to $N_{i_{\beta} +3}
\subseteq N_{i_{\beta +1}}$.
\end{enumerate}
\mn
We are left (for proving \ref{6.7A}) with proving clauses
(h)$^+$ and (i) (remember that
(h) is a special case of (h)$^+$ choosing $\theta=\aleph_0)$.

For proving clause (i) note that for  $\zeta < \xi  < \kappa $,
$f^{\ga^\zeta,\lambda}_{\alpha} \subseteq f^{\ga^\xi,\lambda}_{\alpha}$ hence
${}^\zeta \gb^{i,j}_{\lambda}  \subseteq {}^\xi \gb^{i,j}_{\lambda}$.
Now we can prove by induction on  $\epsilon$  that
${}^\zeta \gb^{i,j, \epsilon }_{\lambda}  \subseteq
{}^\xi \gb^{i,j,\epsilon}_{\lambda}$ for every $\lambda \in \ga_\zeta$
(check the definition in $(*)_7$ in the proof of \ref{1.31}) and the
conclusion follows.

Instead of proving (h)$^+$ we prove an apparently weaker version
(h)$'$ below, but having $(h)'$ for the case $\ell=0$ gives $(h)^+$
for $\ell=2$ so this is enough [[then note that $\bar i'=\langle
i_{\omega^2\zeta}:\zeta<\sigma\rangle$, $\bar\ga' =\langle
\ga_{\omega^2\zeta}:\zeta<\sigma\rangle$, $\langle
N_{i(\omega^2\zeta)}:\zeta<\sigma\rangle$,
$\langle \gb_\lambda^{\omega^2\zeta} [\bar\ga']:\zeta<\sigma,
\lambda\in \ga'_\zeta = \ga_{\omega^2\zeta}\rangle$ will exemplify the
conclusion]] where:
\mn
\begin{enumerate}
\item[$(h)'$]    if $\gc \subseteq \ga_\beta$, $\beta<\sigma$,
$\gc \in N_{i_{\beta+1}},\theta = \cf(\theta)\in N_{i_{\beta+1}}$
\then \, for some $\frd\in N_{i_{\beta+\omega+1}+1}$ satisfying
$\gd \subseteq \ga_{\beta+\omega} \cap \pcf_{\theta-\text{complete}}(\gc)$ we
have $\gc \subseteq \bigcup\limits_{\mu \in \gd} \gb_\mu^{\beta+\omega}
[\bar\ga]$ and $|\gd|<\theta$.
\end{enumerate}
\end{proof}

\begin{PROOF}{\ref{6.7A}}
\underline{Proof of $(h)'$}

So let $\theta,\beta,\gc$ be given; let $\langle \gb_{\mu}[\bar\ga]:
\mu\in \pcf(\gc)\rangle (\in N_{i_{\beta+1}})$ be a generating sequence.
We define by induction on  $n < \omega $,  $A_{n}$,
$\langle(\gc_{\eta},\lambda_{\eta}):\eta \in A_{n}\rangle$
such that:
\mn
\begin{enumerate}
\item[$(a)$]    $A_0=\{\langle\rangle\},\gc_{\langle \rangle} = \gc$,
$\lambda_{\langle \rangle} = \max \pcf(\gc)$,
\sn
\item[$(b)$]    $A_{n} \subseteq {}^n \theta,|A_{n}| < \theta $,
\sn
\item[$(c)$]    if  $\eta\in A_{n+1}$ then $\eta\upharpoonright n\in A_{n}$,
$\gc_{\eta} \subseteq \gc_{\eta\upharpoonright n}$, $\lambda_{\eta} <
\lambda_{\eta \upharpoonright n}$ and $\lambda_{\eta} = \max \pcf
(\gc_{\eta})$,
\sn
\item[$(d)$]    $A_{n},\langle(\gc_{\eta},\lambda_{\eta}):\eta \in
A_{n}\rangle $  belongs to  $N_{i_{\beta +1+n}}$ hence  $\lambda_{\eta}
\in  N_{i_{\beta +1+n}}$,
\sn
\item[$(e)$]    if  $\eta \in A_{n}$ and  $\lambda_{\eta}  \in
\pcf_{\theta \text{-complete}}(\gc_{\eta})$ and $\gc_{\eta}
\nsubseteq \gb^{\beta +1+n}_{\lambda_{\eta} }[\bar\ga]$  then
$(\forall \nu)[\nu  \in A_{n+1} \;  \and \;  \eta \subseteq \nu
\Leftrightarrow \nu  = \eta \char94 \langle 0\rangle ]$  and
$\gc_{\eta \char94 \langle 0\rangle } =
\gc_{\eta} \backslash \gb^{\beta +1+n}_{\lambda_\eta}[\bar\ga]$  (so
$\lambda_{\eta \char94 \langle 0\rangle} = \max \pcf
(\gc_{\eta \char94 \langle 0\rangle}) < \lambda_{\eta}
= \max \pcf(\gc_{\eta})$,
\sn
\item[$(f)$]    if  $\eta \in A_{n}$ and $\lambda_{\eta}  \notin
\pcf_{\theta \text{-complete}}(\gc_\eta)$  then
\[
\gc_{\eta} = \bigcup \{\gb_{\lambda_{\gamma \char 94 \langle
i\rangle}}[\gc]: i < i_n < \theta,\eta \char 94 \langle i \rangle
\in A_{n+1}\},
\]
and if $\nu  = \eta \char 94 \langle i\rangle \in  A_{n+1}$
then  $\gc_{\nu}  = \gb_{\lambda_{\nu}}[\gc]$,
\sn
\item[$(g)$]    if $\eta\in A_n$, and $\lambda_\eta \in
\pcf_{\theta \text{-complete}} (\gc_\eta)$ but
$\gc_\eta \subseteq \gb_{\lambda_n}^{\beta+1-n}[\bar\ga]$, then
$\neg(\exists\nu)[\eta\triangleleft\nu\in A_{n+1}]$.
\end{enumerate}
\mn
There is no problem to carry the definition (we use \ref{6.7F}(1),
the point is that $\gc \in  N_{i_{\beta +1+n}}$ implies
$\langle \gb_{\lambda}(\gc):\lambda \in \pcf_{\theta}
[\gc]\rangle \in N_{i_{\beta +1+n}}$ and as there
is $\gd$ as in \ref{6.7F}(1), there is one in $N_{i_{\beta +1+n+1}}$ so
$\gd \subseteq \ga_{\beta +1+n+1}$).

Now let

\[
\gd_n =: \{\lambda_{\eta}^{}:\eta \in A_{n} \text{ and }\lambda_{\eta}  \in
\pcf_{\theta \text{-complete}}(\gc_\eta)\}
\]

\mn
and $\gd =: \bigcup\limits_{n<\omega} \gd_n$;
we shall show that it is as required.

The main point is $\gc \subseteq \bigcup\limits_{\lambda \in {\gd}}
\gb^{\beta +\omega }_{\lambda} [\bar \ga]$; note that

\[
[\lambda_{\eta}  \in \gd,\eta \in  A_n  \Rightarrow
\gb^{\beta +1+n}_{\lambda_{\eta}} [\bar \ga]
\subseteq \gb^{\beta +\omega }_{\lambda_{\eta}}[\bar\ga]]
\]

\mn
hence it suffices to show  $\gc \subseteq \bigcup\limits_{n<\omega}
\bigcup\limits_{\lambda \in  \gd_n} \gb^{\beta + 1 + n}_{\lambda}
[\bar\ga]$, so assume  $\theta \in \gc \backslash
\bigcup\limits_{n<\omega} \bigcup\limits_{{\lambda} \in {\gd_n}} \gb^{\beta
+ 1 + n}_{\lambda} [\bar \ga]$,  and we choose by induction on  $n$,
$\eta_{n} \in  A_{n}$ such that  $\eta_{0} = <>$, $\eta_{n+1}
\upharpoonright n = \eta_{n}$ and $\theta \in
\gc_{\eta} $;  by clauses (e) $+$ (f) above
this is possible and $\langle \max \pcf(\gc_{\eta_{n}}):n <
\omega \rangle$ is (strictly) decreasing, contradiction.

The minor point is  $|\gd| < \theta$;  if  $\theta > \aleph_{0}$ note
that $\bigwedge\limits_{n} |A_{n}| < \theta$  and $\theta
= \cf(\theta)$ clearly $|\gd| \le | \bigcup_{n} A_{n}| < \theta +
\aleph_{1} = \theta$.

If  $\theta = \aleph_{0}$ (i.e.\ clause (h)) we should show that
$\bigcup\limits_{n} A_{n}$ finite; the proof is as above noting that the
clause (f) is vacuous now.  So $n < \omega \Rightarrow |A_n| =1$ and
for some $n \bigvee\limits_{n} A_n=\emptyset$, so
$\bigcup\limits_{n} A_n$ is finite.
Another minor point is $\gd \in N_{i_{\beta+\omega+1}}$; this
holds as the construction is unique from ${\gc},\langle
{\gb}_\mu[{\gc}]:\mu \in \pcf({\gc})\rangle,
\langle N_j:j< i_{\beta+\omega}\rangle,
\langle i_j:j\le \beta+\omega\rangle$, $\langle
(\ga_{i(\zeta)},\langle \gb_\lambda^\zeta[\bar{\frak a}]:\lambda\in
\ga_{i(\zeta)}\rangle): \zeta\le\beta+\omega\rangle$; no
``outside'' information is used so $\langle (A_n, \langle
(c_\eta, \lambda_\eta):\eta\in A_n\rangle):n<\omega\rangle
\in N_{i_{\beta+\omega+1}}$, so (using a choice function) really $\gd \in
N_{i_{\beta+\omega+1}}$.
\end{PROOF}

\begin{PROOF}{\ref{6.7B}}
Let $\gb_\lambda [\bar \ga] =
\gb_\lambda^\sigma = \bigcup\limits_{\beta<\sigma}
\gb^\beta_\lambda [\ga_\beta]$ and $\ga_\sigma =
\bigcup\limits_{\zeta<\sigma} \ga_\zeta$.  Part (1) is straightforward.
For part (2), for clause (g), for $\beta=\sigma$, the inclusion
``$\subseteq$'' is straightforward; so assume $\mu\in
\ga_\beta \cap \pcf(\gb^\beta_\lambda[\bar\ga])$.
Then by \ref{6.7A}(c) for some $\beta_0<\beta$, we have
$\mu \in \ga_{\beta_0}$, and by \ref{6.7C.1} (which
depends on \ref{6.7A} only) for some $\beta_1<\beta$,
$\mu \in \pcf(\gb^{\beta_1}_\lambda[\bar\ga])$;
by monotonicity \wilog \, $\beta_0=\beta_1$, by clause (g) of \ref{6.7A}
applied to $\beta_0$, $\mu \in \gb^{\beta_0}_\lambda[\bar\ga]$.
Hence by clause (i) of \ref{6.7A},
$\mu\in \gb_\lambda^\beta[\bar\ga]$, thus proving the other inclusion.

The proof of clause (e) (for \ref{6.7B}(2)) is similar, and also
\ref{6.7B}(3).  For \ref{6.7(B)(4)} for
$\delta<\sigma,\cf(\delta)>|\ga|$ redefine
$\gb_\lambda^\delta [\bar{\ga}]$ as $\bigcup\limits_{\beta<\delta}
\gb_\lambda^{\beta+1}[\bar\ga]$.
\end{PROOF}

\begin{claim}
\label{6.7F}
Let  $\theta $  be regular.

\noindent
0)   If  $\alpha < \theta,\pcf_{\theta \text{-complete}}
(\bigcup\limits_{{i} < {\alpha}} \ga_{i})  = \bigcup\limits_{{i} < {\alpha}}
\pcf_{\theta \text{-complete}}(\ga_{i})$.

\noindent
1)  If $\langle \gb_{\partial} [\ga]:\partial \in \pcf(\ga)
\rangle$ is a generating sequence for $\ga$,
$\gc \subseteq \ga$, \then \, for some $\gd \subseteq
\pcf_{\theta \text{-complete}}(\gc)$ we have:
$|\gd|<\theta$ and $\gc \subseteq \bigcup\limits_{{\theta} \in
{\ga}} \gb_{\theta} [\ga]$.

\noindent
2) If $|\ga \cup \gc| < \Min(\ga),\gc \subseteq
\pcf_{\theta \text{-complete}}(\ga)$,  $\lambda \in
\pcf_{\theta \text{-complete}}(\gc)$  then $\lambda \in
\pcf_{\theta \text{-complete}}(\ga)$.

\noindent
3)  In (2) we can weaken $|\ga \cup \gc| < \Min(\ga)$ to
$|\ga| < \Min(\ga),|\gc| < \Min(\gc)$.
\end{claim}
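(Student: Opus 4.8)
I will use, for $b\subseteq{\ga}$ with $|{\ga}|<\Min({\ga})$, the generating–sequence characterization of $\pcf_{\theta\text{-complete}}$: writing $\lambda=\max\pcf(b)$, either $\lambda\in\pcf_{\theta\text{-complete}}(b)$, or $b$ is the union of fewer than $\theta$ sets from $J_{<\lambda}[b]$, i.e.\ of fewer than $\theta$ sets of strictly smaller $\max\pcf$. From this I will also use monotonicity ($b_1\subseteq b_2\Rightarrow\pcf_{\theta\text{-complete}}(b_1)\subseteq\pcf_{\theta\text{-complete}}(b_2)$, by restricting $\gb_{(\cdot)}$) and the elementary union fact: if $X=\bigcup_{j<\delta}X_j$ with $\delta<\theta$ and $\nu\in\pcf_{\theta\text{-complete}}(X)$ then $\nu\in\pcf_{\theta\text{-complete}}(X_j)$ for some $j$ (split $\gb_\nu[X]$ along the $X_j$; if each piece had smaller $\max\pcf$ then, $\theta$ being regular, $\gb_\nu[X]$ would be a $<\theta$–union of smaller sets). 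Part (0) is then routine: a $\theta$–complete ultrafilter $D$ on $\bigcup_{i<\alpha}{\ga}_i$ with $\alpha<\theta$ must contain some ${\ga}_i$ (otherwise the $<\theta$ complements would intersect to $\emptyset\in D$), and restricting $D$ to it gives the inclusion $\subseteq$; the reverse inclusion is the forward image of an ultrafilter on ${\ga}_i$.

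For (1), fix the generating sequence $\langle{\gb}_\partial[{\ga}]:\partial\in\pcf({\ga})\rangle$ and recurse on the ordinal $\max\pcf({\gc})$ (with ${\gd}=\emptyset$ if ${\gc}=\emptyset$). Put $\lambda=\max\pcf({\gc})\in\pcf({\gc})\subseteq\pcf({\ga})$. If $\lambda\in\pcf_{\theta\text{-complete}}({\gc})$, then ${\gc}\setminus{\gb}_\lambda[{\ga}]\in J_{<\lambda}[{\ga}]$ has smaller $\max\pcf$; apply the induction hypothesis to it to get ${\gd}^-\subseteq\pcf_{\theta\text{-complete}}({\gc}\setminus{\gb}_\lambda[{\ga}])$ with $|{\gd}^-|<\theta$ and ${\gc}\setminus{\gb}_\lambda[{\ga}]\subseteq\bigcup_{\partial\in{\gd}^-}{\gb}_\partial[{\ga}]$, and take ${\gd}={\gd}^-\cup\{\lambda\}$ (using monotonicity). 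If $\lambda\notin\pcf_{\theta\text{-complete}}({\gc})$, write ${\gc}=\bigcup_{i<\delta}{\gc}_i$ with $\delta<\theta$ and $\max\pcf({\gc}_i)<\lambda$, apply the induction hypothesis to each ${\gc}_i\subseteq{\gc}$ and union the resulting ${\gd}_i$. This recursion describes a tree: along any branch $\max\pcf$ strictly decreases, so branches are finite, and branching is $<\theta$; since $\theta$ is regular the tree has fewer than $\theta$ nodes, and ${\gd}$ is the (hence $<\theta$–sized) set of $\lambda$'s picked at the peel–off steps, each in $\pcf_{\theta\text{-complete}}({\gc})$ by monotonicity.

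For (2) I would run the limit–ultrafilter argument, keeping track of $\theta$–completeness. Fix a $\theta$–complete ultrafilter $E$ on ${\gc}$ with $\cf(\prod{\gc}/E)=\lambda$ and a $<_E$–increasing cofinal $\langle g_\xi:\xi<\lambda\rangle$ in $\prod{\gc}$, and for each $\mu\in{\gc}$ a $\theta$–complete ultrafilter $D_\mu$ on ${\ga}$ with $\cf(\prod{\ga}/D_\mu)=\mu$ and a $<_{D_\mu}$–increasing cofinal $\langle f^\mu_\zeta:\zeta<\mu\rangle$. Let $D$ be the $E$–limit of $\langle D_\mu:\mu\in{\gc}\rangle$, i.e.\ $A\in D\iff\{\mu:A\in D_\mu\}\in E$; then $D$ is a $\theta$–complete ultrafilter on ${\ga}$, since for $<\theta$ sets $A_k\in D$ the sets $\{\mu:A_k\in D_\mu\}$ lie in $E$, so does their intersection ($E$ is $\theta$–complete), and for $\mu$ there $\bigcap_k A_k\in D_\mu$ ($D_\mu$ is $\theta$–complete). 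Using $|{\gc}|<\Min({\ga})$, set $F_\xi(\nu)=\sup\{f^\mu_{g_\xi(\mu)}(\nu):\mu\in{\gc}\}<\nu$. Then $\{F_\xi:\xi<\lambda\}$ is $\le_D$–cofinal in $\prod{\ga}$ (given $h$, choose $\zeta_\mu<\mu$ with $h<_{D_\mu}f^\mu_{\zeta_\mu}$, then $\xi$ with $\langle\zeta_\mu:\mu\rangle<_E g_\xi$, and on the $E$–large set $\{\mu:\zeta_\mu<g_\xi(\mu)\}$ one gets $h<_{D_\mu}F_\xi$, hence $h<_D F_\xi$), so $\cf(\prod{\ga}/D)\le\lambda$; conversely a cofinal $\langle h_\eta:\eta<\kappa\rangle$ in $\prod{\ga}/D$ with $\kappa<\lambda$ would pull back, via $\rho(h)(\mu)=\min\{\zeta<\mu:h<_{D_\mu}f^\mu_\zeta\}$, to a cofinal family in $\prod{\gc}/E$, contradicting $\cf(\prod{\gc}/E)=\lambda$. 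Hence $\cf(\prod{\ga}/D)=\lambda$, so $\lambda\in\pcf_{\theta\text{-complete}}({\ga})$. (This is, with the $\theta$–completeness remarks, the standard proof of $\pcf$–transitivity; note $|{\ga}\cup{\gc}|<\Min({\ga})$ is used for the suprema over ${\gc}$.)

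For (3) I would argue by induction on $\max\pcf({\ga})$. First $\lambda\in\pcf({\ga})$: by \ref{6.7C.1}(2) there is ${\gc}'\subseteq{\gc}$ with $|{\gc}'|\le|{\ga}|$ and $\lambda\in\pcf({\gc}')$, so $|{\ga}\cup{\gc}'|<\Min({\ga})$ and ordinary $\pcf$–transitivity (the $\theta=\aleph_0$ case of (2)) gives $\lambda\in\pcf({\ga})$. Now suppose toward a contradiction $\lambda\notin\pcf_{\theta\text{-complete}}({\ga})$; writing ${\gb}_\lambda[{\ga}]$ as a $<\theta$–union of sets of smaller $\max\pcf$ and adjoining ${\ga}\setminus{\gb}_\lambda[{\ga}]$, get ${\ga}=\bigcup_{j<1+\delta}{\ga}'_j$ with $1+\delta<\theta$ and $\max\pcf({\ga}'_j)<\lambda$ for all $j$. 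By the union fact, each $\mu\in{\gc}\subseteq\pcf_{\theta\text{-complete}}({\ga})$ lies in $\pcf_{\theta\text{-complete}}({\ga}'_{j(\mu)})$ for some $j(\mu)$, so ${\gc}=\bigcup_{j<1+\delta}{\gc}_j$ with ${\gc}_j\subseteq\pcf_{\theta\text{-complete}}({\ga}'_j)$; since $\lambda\in\pcf_{\theta\text{-complete}}({\gc})$ and $1+\delta<\theta$, the union fact gives $\lambda\in\pcf_{\theta\text{-complete}}({\gc}_{j_0})$ for some $j_0$. As $\max\pcf({\ga}'_{j_0})<\lambda\le\max\pcf({\ga})$, $|{\ga}'_{j_0}|<\Min({\ga}'_{j_0})$ and $|{\gc}_{j_0}|<\Min({\gc}_{j_0})$, the induction hypothesis applied to $({\ga}'_{j_0},{\gc}_{j_0},\lambda)$ yields $\lambda\in\pcf_{\theta\text{-complete}}({\ga}'_{j_0})\subseteq\pcf({\ga}'_{j_0})$, whence $\lambda\le\max\pcf({\ga}'_{j_0})<\lambda$, a contradiction; so $\lambda\in\pcf_{\theta\text{-complete}}({\ga})$. (Alternatively, staying closer to the wording of the claim, one deduces (3) from (2) by shrinking ${\gc}$ to size $\le|{\ga}|$ via \ref{6.7C.1}(2) carried out inside the witnessing $\theta$–complete ultrafilter $E$.) The main obstacle throughout is the generating–sequence dichotomy for $\pcf_{\theta\text{-complete}}$ together with its good behaviour under fewer–than–$\theta$ unions and under passing to subsets; once these are in place the three parts are bookkeeping, the delicate points being to take every $<\theta$–union honestly (regularity of $\theta$) and, in the localization form of (3), to keep the shrunk copy of ${\gc}$ inside $E$.
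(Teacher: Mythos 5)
The paper itself offers essentially no argument for this claim (parts (0) and (1) are ``left to the reader'', part (2) is delegated to [Sh:345b, 1.10--1.12], part (3) is ``similarly''), so your proposal has to stand on its own. Parts (0), (1) and (2) do: (0) is the standard observation that a $\theta$-complete ultrafilter on a union of $\alpha<\theta$ sets must concentrate on one of them; (1) is the peel-off/split recursion on $\max\pcf$ (the same device the paper runs in the proof of clause (h)$'$ of \ref{6.7A}, which in fact quotes \ref{6.7F}(1)), with the node count correctly handled by regularity of $\theta$ (K\"onig's lemma covering $\theta=\aleph_0$); and (2) is the limit-ultrafilter proof of transitivity, where your two checkpoints --- that the $E$-limit of $\theta$-complete ultrafilters along a $\theta$-complete $E$ is again $\theta$-complete, and that $|{\gc}|<\Min({\ga})$ is what keeps $F_\xi(\nu)=\sup_\mu f^\mu_{g_\xi(\mu)}(\nu)$ below $\nu$ --- are exactly the points that need checking.

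In part (3) there is one false step. After establishing $\lambda\in\pcf({\ga})$ you decompose ${\ga}$ into ${\ga}\setminus{\gb}_\lambda[{\ga}]$ together with $<\theta$ sets $b_j\in J_{<\lambda}[{\ga}]$ covering ${\gb}_\lambda[{\ga}]$, and assert that every piece has $\max\pcf<\lambda$. For the $b_j$ this is right, but $\max\pcf({\ga}\setminus{\gb}_\lambda[{\ga}])<\lambda$ holds only when $\lambda=\max\pcf({\ga})$, which you have not arranged; if $\max\pcf({\ga})>\lambda$ that piece can have the same $\max\pcf$ as ${\ga}$, and then neither your induction hypothesis (the induction being on $\max\pcf({\ga})$) nor the closing inequality $\lambda\le\max\pcf({\ga}'_{j_0})<\lambda$ applies to it. The repair is local: for that one piece you have the stronger fact $\lambda\notin\pcf({\ga}\setminus{\gb}_\lambda[{\ga}])$ (the defining property of the generator), so if the index $j_0$ produced by your two applications of the union fact lands on it, your first step --- localization \ref{6.7C.1}(2) plus ordinary transitivity, which needs no induction --- applied to $({\ga}\setminus{\gb}_\lambda[{\ga}],{\gc}_{j_0},\lambda)$ already yields $\lambda\in\pcf({\ga}\setminus{\gb}_\lambda[{\ga}])$, a contradiction; the induction hypothesis is then invoked only for the pieces $b_j$, where $\max\pcf(b_j)<\lambda\le\max\pcf({\ga})$ does hold. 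Your parenthetical ``alternative'' for (3) should be dropped: \ref{6.7C.1}(2) localizes ordinary $\pcf$, and shrinking ${\gc}$ to a ${\gc}'$ with $\lambda\in\pcf({\gc}')$ does not give $\lambda\in\pcf_{\theta\text{-complete}}({\gc}')$; the $\theta$-complete localization is \ref{6.7G}(2), whose proof in this paper uses \ref{6.7F}(3), so appealing to it here would be circular.
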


\begin{PROOF}{\ref{6.7F}}
(0) and (1): Left to the reader.

\noindent
2)  See \cite[1.10--1.12]{Sh:345b}.

\noindent
3)  Similarly.
\end{PROOF}

\begin{claim}
\label{6.7G}
1) Let $\theta$ be regular $\le |{\ga}|$.
We cannot find $\lambda_{\alpha} \in \pcf_{\theta \text{-complete}}(\ga)$ for
$\alpha < |\ga|^+$ such that  $\lambda_{i} > \sup
\pcf_{\theta \text{-complete}}(\{\lambda_{j}:j < i\})$.

\noindent
2)  Assume $\theta \le |\ga|,\gc \subseteq
\pcf_{\theta \text{-complete}}(\ga)$  (and $|\gc| <
\Min(\gc)$;  of course  $|\ga| < \Min(\ga))$.
If  $\lambda \in \pcf_{\theta \text{-complete}}(\gc)$
\then \, for some $\gd \subseteq \gc$ we have $|\gd| \le |\ga|$
and $\lambda \in \pcf_{\theta \text{-complete}}(\gd)$.
\end{claim}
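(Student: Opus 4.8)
\begin{PROOF}{\ref{6.7G}}
The plan is to prove (1) and then deduce (2), in each case transcribing the argument of Claim \ref{6.7C.1} — part (1), respectively (2) — with $\pcf$ systematically replaced by $\pcf_{\theta\text{-complete}}$ and the basic facts of ordinary pcf replaced by their $\theta$-complete analogues from Claim \ref{6.7F} and \cite{Sh:345b}. The point is that the two non-trivial inputs of \ref{6.7C.1}(1), namely the covering clause (h) of \ref{6.7A} and the finite additivity of $\pcf$, both have $\theta$-complete counterparts available here: clause (h)$^+$ of \ref{6.7A} and the additivity $\pcf_{\theta\text{-complete}}(\bigcup_{\mu\in\gd}X_\mu)=\bigcup_{\mu\in\gd}\pcf_{\theta\text{-complete}}(X_\mu)$ for $|\gd|<\theta$ (Claim \ref{6.7F}(0)).

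For part (1): assume toward a contradiction that $\langle\lambda_\alpha:\alpha<|\ga|^+\rangle$ is as forbidden (so it is in particular strictly increasing). As in the proof of \ref{6.7C.1}(1) one may assume $\Min(\ga)>|\ga|^{+3}$; fix a regular $\kappa$ with $|\ga|^{++}<\kappa<\Min(\ga)$, and build $\bar N=\langle N_i:i<\kappa\rangle$, $N_\kappa$ as in the proof of \ref{1.31} with $\ga,\bar f^{\ga},\kappa,\theta$ and $\langle\lambda_\alpha:\alpha<|\ga|^+\rangle$ in $N_0$ and moreover $|\ga|^+\subseteq N_0$, so that $\gc_i:=\{\lambda_j:j<i\}\in N_0$ for all $i<|\ga|^+$. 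Apply \ref{6.7A} with $\sigma=|\ga|^+$ to get $\bar\ga=\langle\ga_\beta:\beta<\sigma\rangle$ and $\langle\gb^\beta_\lambda[\bar\ga]\rangle$. For each $i<|\ga|^+$, clause (h)$^+$ of \ref{6.7A} applied with $\beta=0$ and $\theta$ yields $\gd_i\in N_{i_1}$ with $\gd_i\subseteq\ga_1\cap\pcf_{\theta\text{-complete}}(\gc_i)$, $|\gd_i|<\theta$, and $\gc_i\subseteq\bigcup_{\mu\in\gd_i}\gb^1_\mu[\bar\ga]$. Now, exactly as in \ref{6.7C.1}(1) — using smoothness (clause (f)) and closedness (clause (g)) of \ref{6.7A} together with \ref{6.7F}(0) — one checks that $\langle\,\ga\cap\bigcup_{\mu\in\gd_i}\gb^1_\mu[\bar\ga]:i<|\ga|^+\,\rangle$ is $\subseteq$-increasing; as its terms are subsets of $\ga$ and $\cf(|\ga|^+)=|\ga|^+>|\ga|$, it is eventually constant, say equal to $\gb^*$ for $i\ge i(*)$. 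The contradiction is then obtained as in \ref{6.7C.1}(1): on one side $\gb^*\subseteq\bigcup_{\mu\in\gd_i}\gb^1_\mu[\bar\ga]$ and $\max\pcf(\gb^1_\mu[\bar\ga])=\mu$, so by \ref{6.7F}(0), $\sup\pcf_{\theta\text{-complete}}(\gb^*)\le\sup_{\mu\in\gd_i}\mu\le\sup\pcf_{\theta\text{-complete}}(\gc_i)<\lambda_i$ for every $i\ge i(*)$; on the other side $\pcf_{\theta\text{-complete}}(\gc_{i+1})=\pcf_{\theta\text{-complete}}(\gc_i)\cup\{\lambda_i\}$ by additivity, so $\max\pcf_{\theta\text{-complete}}(\gc_{i+1})=\lambda_i$ and thus $\gd_{i+1}\subseteq\pcf_{\theta\text{-complete}}(\gc_{i+1})\subseteq\lambda_i+1$; since $\lambda_i\in\gc_{i+1}\subseteq\bigcup_{\mu\in\gd_{i+1}}\gb^1_\mu[\bar\ga]$ and $\gb^1_\mu[\bar\ga]\subseteq\mu^+$, we get $\lambda_i\in\gd_{i+1}$, hence $\ga\cap\gb^1_{\lambda_i}[\bar\ga]\subseteq\gb^*$. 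As in the last lines of \ref{6.7C.1}(1), $\ga\cap\gb^1_{\lambda_i}[\bar\ga]=\gb_{\lambda_i}[\ga]\mod J_{<\lambda_i}[\ga]$ for the generator $\gb_{\lambda_i}[\ga]$ of $J_{\le\lambda_i}[\ga]$, so $\gb_{\lambda_i}[\ga]\setminus\gb^*\in J_{<\lambda_i}[\ga]$; since $\lambda_i\in\pcf_{\theta\text{-complete}}(\ga)$, any $\theta$-complete ultrafilter $D$ on $\ga$ with $\cf(\prod\ga/D)=\lambda_i$ must contain $\gb_{\lambda_i}[\ga]$, hence $\gb_{\lambda_i}[\ga]\cap\gb^*\in D$, so $\lambda_i\in\pcf_{\theta\text{-complete}}(\gb^*)$ — contradicting $\sup\pcf_{\theta\text{-complete}}(\gb^*)<\lambda_i$.

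For part (2): suppose it fails; with $\ga$ fixed, choose $\gc$ and $\lambda\in\pcf_{\theta\text{-complete}}(\gc)$ (with $\gc\subseteq\pcf_{\theta\text{-complete}}(\ga)$, $|\gc|<\Min(\gc)$) such that no $\gd\subseteq\gc$ has $|\gd|\le|\ga|$ and $\lambda\in\pcf_{\theta\text{-complete}}(\gd)$, taking $|\gc|$ minimal, then $\max\pcf_{\theta\text{-complete}}(\gc)$ minimal, then $\bigcup\{\mu^+:\mu\in\lambda\cap\pcf_{\theta\text{-complete}}(\gc)\}$ minimal. By the $\theta$-complete pcf theory (\ref{6.7F}, \cite{Sh:345b}) and minimality, $\max\pcf_{\theta\text{-complete}}(\gc)=\lambda$, $\pcf_{\theta\text{-complete}}(\gc)\cap\lambda$ has no last element, and $\pcf_{\theta\text{-complete}}(\gc)\subseteq\lambda^+$. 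Try to choose $\lambda_i\in\lambda\cap\pcf_{\theta\text{-complete}}(\gc)$ for $i<|\ga|^+$ with $\lambda_i>\sup\pcf_{\theta\text{-complete}}\{\lambda_k:k<i\}$; by transitivity (\ref{6.7F}(3), as $|\gc|<\Min(\gc)$) each $\lambda_i\in\pcf_{\theta\text{-complete}}(\pcf_{\theta\text{-complete}}(\ga))=\pcf_{\theta\text{-complete}}(\ga)$, so by part (1) the process is stuck at some $i<|\ga|^+$; as in \ref{6.7C.1}(2) this forces $\lambda=\max\pcf_{\theta\text{-complete}}\{\lambda_k:k<i\}$. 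For each $k<i$ the pair $(\gc,\lambda_k)$ is not a counterexample — otherwise the minimality of $\bigcup\{\mu^+:\mu\in\lambda\cap\pcf_{\theta\text{-complete}}(\gc)\}$ is violated, since $\lambda_k\cap\pcf_{\theta\text{-complete}}(\gc)$ is a proper initial segment of $\lambda\cap\pcf_{\theta\text{-complete}}(\gc)$ and hence has a strictly smaller associated supremum — so there is $\gd_k\subseteq\gc$, $|\gd_k|\le|\ga|$, $\lambda_k\in\pcf_{\theta\text{-complete}}(\gd_k)$. Then $\lambda\in\pcf_{\theta\text{-complete}}\{\lambda_k:k<i\}\subseteq\pcf_{\theta\text{-complete}}(\bigcup_{k<i}\gd_k)$ by transitivity, while $\bigcup_{k<i}\gd_k\subseteq\gc$ has cardinality $\le|i|\cdot|\ga|=|\ga|$, contradicting the choice of $(\gc,\lambda)$.

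The step that needs genuine care is the very last one of part (1): a priori, intersecting the witnessing ultrafilter with $\ga$ might destroy $\theta$-completeness, but this cannot happen, because a $\theta$-complete ultrafilter on $\ga$ realizing the cofinality $\lambda_i$ is forced to contain the generator $\gb_{\lambda_i}[\ga]$ (and hence $\gb_{\lambda_i}[\ga]\cap\gb^*$, the difference lying in $J_{<\lambda_i}[\ga]$), so its restriction to $\gb^*$ is again $\theta$-complete and still computes $\lambda_i$. Granting this observation, together with the $\theta$-complete additivity \ref{6.7F}(0) and transitivity \ref{6.7F}(3) — all of which rest on \cite{Sh:345b} — both parts become essentially line-by-line transcriptions of the proofs of \ref{6.7C.1}(1),(2); the only bookkeeping one must be slightly careful about in (2) is that the ``$\max$'' assertions used there (the analogues of $(*)_1$--$(*)_3$) refer to $\pcf_{\theta\text{-complete}}$ and hold by the $\theta$-complete structure theory.
\end{PROOF}
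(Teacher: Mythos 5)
Your part (1) is correct and is essentially the paper's own argument: the same use of clause (h)$^+$ of \ref{6.7A} to produce the $\gd_i$, the same appeal to smoothness, closedness and \ref{6.7F}(0) to show $\langle\,\ga\cap\bigcup_{\mu\in\gd_i}\gb^1_\mu[\bar\ga]\,\rangle$ is non-decreasing hence eventually constant, and the same two-sided estimate ($\sup\pcf_{\theta\text{-complete}}$ of the constant set is $<\lambda_i$, yet $\lambda_i$ belongs to it). Your repackaging of the last step through the ordinary generator $\gb_{\lambda_i}[\ga]$ and an explicit $\theta$-complete ultrafilter is just the paper's invocation of clauses (e) and (g) of \ref{6.7A} spelled out, and it is fine.

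Part (2) is where you genuinely diverge, and where there is a gap. The paper does \emph{not} deduce (2) from (1) by transcribing \ref{6.7C.1}(2): it minimizes only $\lambda$, reduces to $\lambda=\max\pcf(\ga)=\max\pcf(\gc)$ using only \emph{ordinary} $\max\pcf$ together with \ref{6.7F}(0),(2),(3), and then runs a fresh induction of length $|\ga|^+$ choosing $\lambda_\epsilon\in\gc$ with $\lambda_\epsilon\notin\pcf_{\theta\text{-complete}}(\bigcup_{\mu\in\gd_\epsilon}\gb^{\,\cdot}_\mu[\bar\ga])$, getting the contradiction from eventual constancy exactly as in (1). Your route instead requires the full $\theta$-complete analogues of $(*)_1$--$(*)_3$ of \ref{6.7C.1}(2): that $\max\pcf_{\theta\text{-complete}}(\gc)$ exists, that one can pass to a $\theta$-complete generator of $J^{\theta}_{\le\lambda}[\gc]$ so as to arrange $\max\pcf_{\theta\text{-complete}}(\gc)=\lambda$ without losing the counterexample property, and that after the third minimization $\pcf_{\theta\text{-complete}}(\gc)\cap\lambda$ has no last element. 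None of this is supplied by \ref{6.7F}, which gives only $(<\theta)$-additivity and transitivity; writing ``by the $\theta$-complete pcf theory (\ref{6.7F}, \cite{Sh:345b}) and minimality'' is not a proof, and the existence of a maximum and of generating sets for $\pcf_{\theta\text{-complete}}$ is precisely the structure theory that the paper's direct argument is built to avoid. So either prove (or cite precisely) these $\Gamma(\theta)$-pcf facts, or replace your part (2) by the paper's direct construction, whose only inputs are \ref{6.7A}(h)$^+$,(e),(f), \ref{6.7F}(0),(2),(3) and ordinary $\max\pcf$.
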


\begin{PROOF}{\ref{6.7G}}
1) If  $\theta = \aleph_{0}$ we already know it (see
\ref{6.7C.1}), so assume  $\theta > \aleph_{0}$.
We use \ref{6.7A} with  $\{\theta$, $\langle \lambda_{i}: i < |\ga|^+
\rangle\}\in  N_{0}$, $\sigma = |\ga|^+$,  $\kappa = |\ga|^{+3}$
where, without loss of generality, $\kappa < \Min(\ga)$.
For each  $\alpha < |\ga|^+$ by (h)$^+$ of \ref{6.7A} there is
$\ga_{\alpha} \in  N_{i_{1}},\gd_{\alpha}  \subseteq
\pcf_{\theta \text{-complete}}(\{\lambda_{i}:i < \alpha \})$,
$|\gd_{\alpha} | < \theta$ such that $\{\lambda_{i}:i < \alpha \} \subseteq
\bigcup\limits_{{\theta} \in {\gd_{\alpha}} } \gb^1_{\theta} [\bar\ga]$;
hence by clause (g) of \ref{6.7A} and part (0) Claim \ref{6.7F} we have
$\ga_{1} \cap \pcf_{\theta \text{-complete}}(\{\lambda_{i}:
i < \alpha \}) \subseteq \bigcup\limits_{{\theta} \in {\gd_{\alpha}} }
\gb^1_{\theta} [\bar \ga]$.
So for $\alpha < \beta < |\ga|^+$,  $\gd_{\alpha}  \subseteq
\ga_1 \cap \pcf_{\theta \text{-complete}}\{\lambda_{i}:i < \alpha \}
\subseteq \ga_1\cap \pcf_{\theta \text{-complete}}
\{\lambda_{i}:i < \beta \} \subseteq \bigcup\limits_{{\theta} \in
{\gd_{\beta}}} \gb^1_{\theta} [\bar \ga]$.
As the sequence is smooth (i.e., clause (f) of \ref{6.7A}) clearly
$\alpha < \beta \Rightarrow \bigcup\limits_{{\mu} \in  {\gd_{\alpha}} }
\gb^1_{\mu} [\bar \ga] \subseteq \bigcup\limits_{{\mu} \in {\gd_{\beta}}}
\gb^1_{\mu} [\bar \ga]$.

So  $\langle \bigcup\limits_{{\mu} \in {\gd_{\alpha}} } \gb^1_{\mu}
[\bar \ga] \cap  \ga: \alpha < |\ga|^+\rangle $  is a
non-decreasing sequence of subsets of $\ga$ of length $|\ga|^+$,
 hence for some  $\alpha(*) < |\ga|^+$ we have:
\mn
\begin{enumerate}
\item[$(*)_1$]     $\alpha (*) \le \alpha < |\ga|^+
\Rightarrow \bigcup\limits_{{\mu} \in  {\gd_{\alpha}} } \gb^1_{\mu}
[\bar \ga] \cap \ga = \bigcup\limits_{{\mu} \in {\gd_{\alpha(*)}}}
\gb^1_{\mu} [\bar \ga] \cap \ga$.
\end{enumerate}
\mn
If $\tau \in \ga_{1} \cap \pcf_{\theta \text{-complete}}
(\{\lambda_{i}: i<\alpha\})$ then $\tau\in
\pcf_{\theta \text{-complete}}(\ga)$ (by parts (2),(3) of
Claim \ref{6.7F}), and $\tau \in \gb^1_{\mu_{\tau} }[\bar \ga]$
for some $\mu_{\tau}\in \gd_{\alpha} $ so $\gb^1_{\tau} [\bar \ga]
\subseteq \gb^1_{\mu_{\tau}}[\bar\ga]$, also $\tau \in
\pcf_{\theta \text{-complete}}(\gb^1_{\tau}[\bar \ga] \cap \ga)$
(by clause (e) of \ref{6.7A}),  hence

\begin{equation*}
\begin{array}{clcr}
\tau \in \pcf_{\theta \text{-complete}}(\gb^1_{\tau} [\bar \ga] \cap
\ga) &\subseteq \pcf_{\theta \text{-complete}}(\gb^1_{\mu_{\tau}}
[\bar \ga] \cap \ga) \\
  &\subseteq \pcf_{\theta \text{-complete}}
(\bigcup\limits_{{\mu} \in {\gd_{\alpha}}} \gb^1_{\mu} [\bar\ga] \cap \ga).
\end{array}
\end{equation*}

\mn
So $\ga_{1} \cap \pcf_{\theta \text{-complete}}(\{\lambda_{i}:
i < \alpha\}) \subseteq \pcf_{\theta \text{-complete}}
(\bigcup\limits_{{\mu} \in  {\gd_{\alpha}} } \gb^1_{\mu}[\bar \ga]
\cap \ga)$.
But for each  $\alpha < |\ga|^+$ we have $\lambda_{\alpha} > \sup
\pcf_{\theta \text{-complete}}(\{\lambda_{i}:i < \alpha\})$,
whereas $\gd_\alpha \subseteq \pcf_{\sigma \text{-complete}}
\{\lambda_i: i<\alpha\}$, hence  $\lambda_{\alpha} > \sup \gd_{\alpha}$ hence
\mn
\begin{enumerate}
\item[$(*)_2$]    $\lambda_{\alpha} > \sup_{\mu \in \gd_\alpha}
\max \pcf(\gb^1_{\mu} [\bar \ga]) \ge  \sup
\pcf_{\theta \text{-complete}}(\bigcup\limits_{\mu \in \gd_\alpha}
\gb^1_{\mu} [\bar\ga] \cap \ga)$.

On the other hand,
\sn
\item[$(*)_3$]    $\lambda_{\alpha} \in \pcf_{\theta
\text{-complete}}\{\lambda_{i}:i < \alpha + 1\} \subseteq
\pcf_{\theta \text{-complete}}(\bigcup\limits_{{\mu} \in
{\gd_{\alpha +1}}} \gb^1_{\mu} [\bar \ga] \cap \ga)$.
\end{enumerate}
\mn
For $\alpha = \alpha(*)$ we get contradiction by $(*)_{1}
+ (*)_{2} + (*)_{3}$.

\noindent
2)  Assume $\ga,\gc,\lambda$ form a counterexample with $\lambda$ minimal.
Without loss of generality  $|\ga|^{+3} < \Min(\ga)$ and
$\lambda = \max \pcf(\ga)$ and $\lambda = \max \pcf(\gc)$
(just let $\ga' =: \gb_{\lambda} [\ga],\gc' =: \gc \cap
\pcf_\theta[\ga']$;  if  $\lambda \notin
\pcf_{\theta \text{-complete}}(\gc')$ then necessarily
$\lambda \in \pcf(\gc\backslash \gc')$
(by \ref{6.7F}(0)) and similarly $\gc \backslash \gc' \subseteq
\pcf_{\theta \text{-complete}}(\ga \backslash \ga')$
hence by parts (2),(3) of Claim \ref{6.7F} we have
$\lambda \in \pcf_{\theta \text{-complete}}(\ga \backslash \ga')$,
contradiction).

Also without loss of generality
$\lambda \notin \gc$. Let $\kappa,\sigma,\bar N,
\langle i_{\alpha}=i(\alpha):
\alpha\le \sigma\rangle,\bar\ga =
\langle\ga_i:i \le \sigma\rangle$
be as in \ref{6.7A} with $\ga \in N_{0},\gc
\in N_{0},\lambda \in N_{0},\sigma = |\ga|^+,
\kappa = |\ga|^{+3} < \Min(\ga)$.
We choose by induction on $\epsilon < |\ga|^+,
\lambda_{\epsilon},\gd_{\epsilon}$ such that:
\mn
\begin{enumerate}
\item[$(a)$]"  $\lambda_{\epsilon} \in
\ga_{\omega^2\epsilon +\omega +1},
\gd_{\epsilon} \in N_{i(\omega^2
\epsilon +\omega +1)}$,
\sn
\item[$(b)$]   $\lambda_{\epsilon} \in \gc$,
\sn
\item[$(c)$]   $\gd_{\epsilon} \subseteq
\ga_{\omega^2 \epsilon +\omega+1}
\cap \pcf_{\theta \text{-complete}}(\{\lambda_{\zeta}:
\zeta < \epsilon\})$,
\sn
\item[$(d)$]   $|\gd_{\epsilon} | < \theta $,
\sn
\item[$(e)$]   $\{\lambda_{\zeta}:
\zeta < \epsilon \} \subseteq
\bigcup\limits_{{\theta} \in
{\gd_{\epsilon}}} \gb^{\omega ^2\epsilon
+\omega +1}_{\theta} [\bar \ga]$,
\sn
\item[$(f)$]   $\lambda_{\epsilon}  \notin
\pcf_{\theta \text{-complete}}
(\bigcup\limits_{{\theta} \in
{\gd_{\epsilon}}}
\gb^{\omega^2 \epsilon +\omega +1}_{\theta}
[\bar \ga])$.
\end{enumerate}
\mn
For every $\epsilon < |\ga|^+$ we first choose
$\gd_{\epsilon}$ as the $<^*_{\chi}$-first
element satisfying (c) + (d) + (e) and
then if possible $\lambda_{\epsilon}$
as the $<^*_{\chi}$-first element
satisfying (b) + (f).
It is easy to check the requirements and in fact
$\langle \lambda_{\zeta}:\zeta < \epsilon
\rangle \in N_{\omega^2\epsilon +1},
\langle \gd_{\zeta}:\zeta < \epsilon \rangle
\in  N_{\omega^2\epsilon +1}$
(so clause (a) will hold).
But why can we choose at all?  Now
$\lambda \notin
\pcf_{\theta \text{-complete}}\{\lambda_{\zeta}:\zeta
< \epsilon\}$ as $\ga,\gc,\lambda$ form a
counterexample with $\lambda$ minimal and
$\epsilon < |\ga|^+$ (by \ref{6.7F}(3)).
As  $\lambda = \max \pcf(\ga)$ necessarily
$\pcf_{\theta \text{-complete}}
(\{\lambda_{\zeta}:\zeta < \epsilon\})
\subseteq  \lambda$ hence
$\gd_{\epsilon} \subseteq
\lambda$ (by clause (c)).
By part (0) of Claim \ref{6.7F} (and clause
(a)) we know:

\begin{equation*}
\begin{array}{clcr}
\pcf_{\theta \text{-complete}}[\bigcup\limits_{{\mu} \in
{\gd_{\epsilon}}} \gb^{\omega^2\epsilon +
\omega +1}_{\mu}[\bar\ga]] &=
\bigcup\limits_{{\mu} \in  {\gd_{\epsilon}}}
\pcf_{\theta \text{-complete}}
[\gb^{\omega ^2+\omega +1}_{\mu}
[\bar \ga]] \\
 & \subseteq \bigcup\limits_{\mu \in
\gd_\epsilon}(\mu  + 1) \subseteq  \lambda
\end{array}
\end{equation*}

\mn
(note $\mu = \max \pcf(\gb^\beta_{\mu}
[\bar \ga]))$.
So $\lambda \notin \pcf_{\theta \text{-complete}}
(\bigcup\limits_{\mu \in \gd_\epsilon}
\gb^{\omega^2\epsilon +\omega +1}_{\mu} [\bar\ga])$   hence by part (0) of Claim \ref{6.7F}
$\gc \nsubseteq \bigcup\limits_{\mu \in
\gd_\epsilon} \gb^{\omega^2\epsilon +
\omega +1}_{\mu}
[\bar \ga]$ so $\lambda_{\epsilon} $ exists.
Now $\gd_\epsilon$ exists by \ref{6.7A}
clause (h)$^+$.

Now clearly $\left\langle \ga \cap
\bigcup\limits_{\mu \in \gd_\epsilon}
\gb^{\omega^2\epsilon +\omega +1}_{\mu}
[\bar \ga]:\epsilon < |\ga|^+\right\rangle$
is non-decreasing (as in the earlier proof)
hence eventually constant, say for
$\epsilon \ge \epsilon(*)$  (where
$\epsilon(*) < |\ga|^+)$.

But
\mn
\begin{enumerate}
\item[$(\alpha)$]  $\lambda_\epsilon \in
\bigcup\limits_{\mu \in \gd_{\epsilon +1}}
\gb^{\omega^2\epsilon +\omega +1}_{\mu}
[\bar \ga]$ [clause (e) in the choice of
$\lambda_\epsilon,\gd_\epsilon]$,
\sn
\item[$(\beta)$]   $\gb^{\omega^2\epsilon
+\omega +1}_{\lambda_{\epsilon}}[\bar \ga]
\subseteq \bigcup_{{\mu} \in
{\gd_{\epsilon +1}}} \gb^{\omega^2
\epsilon +\omega +1}_{\mu} [\bar \ga]$
[by clause (f) of \ref{6.7A} and $(\alpha)$ alone],
\sn
\item[$(\gamma)$]   $\lambda_{\epsilon} \in
\pcf_{\theta \text{-complete}}(\ga)$
[as $\lambda_{\epsilon} \in \gc$
and a hypothesis],
\sn
\item[$(\delta)$]   $\lambda_{\epsilon} \in
\pcf_{\theta \text{-complete}}
(\gb^{\omega^2\epsilon + \omega +1}
_{\lambda_{\epsilon}}[\bar \ga])$
[by $(\gamma )$ above
and clause (e) of \ref{6.7A}],
\sn
\item[$(\epsilon)$]  $\lambda_\epsilon
\notin \pcf(\ga \setminus
\gb^{\omega^2\epsilon+\omega+1}
_{\lambda_\epsilon})$,
\sn
\item[$(\zeta)$]   $\lambda_{\epsilon}  \in
\pcf_{\theta \text{-complete}}(\ga \cap
\bigcup\limits_{{\mu} \in
{\gd_{\epsilon +1}}}\gb^{\omega^2\epsilon +
\omega +1}_{\mu}[\bar \ga])$
[by $(\delta) + (\epsilon) +(\beta )]$.
\end{enumerate}
\mn
But for $\epsilon = \epsilon(*)$, the
statement $(\zeta)$ contradicts the choice
of $\epsilon(*)$ and clause (f) above.
\end{PROOF}
\newpage

\section {}

\begin{definition}
\label{cv.1}
1) For $J$ an ideal on $\kappa$ (or any set,
$\Dom(J)$-does not matter) and singular $\mu$
(usually $\cf(\mu) \le \kappa$, otherwise the
result is 0)
\mn
\begin{enumerate}
\item[$(a)$]  we define $\pp_J(\mu)$ as

\begin{equation*}
\begin{array}{clcr}
\sup\{\tcf(\prod\limits_{i <\kappa}
\lambda_i,<_J): &\lambda_i
\in \Reg \cap \mu \backslash \kappa^+
\text{ for } i < \kappa \\
  &\text{ and } \mu = \lim_J \langle
\lambda_i:i < \kappa \rangle, \text{ see \ref{cv.1a}(1) and} \\
  &(\prod\limits_{i < \kappa} \lambda_i,<_J)
\text{ has true  cofinality}\}
\end{array}
\end{equation*}
\sn
\item[$(b)$]  we define $\pp^+_J(\mu)$ as

\begin{equation*}
\begin{array}{clcr}
\sup\{(\tcf(\prod\limits_{i <\kappa}
\lambda_i,<_J))^+:&\lambda_i
\in \Reg \cap \mu \backslash
\kappa^+ \text{ for } i < \kappa \\
  &\text{ and } \mu = \lim_J(\langle
\lambda_i:i < \kappa\rangle),
\text{ see \ref{cv.1a}(1) below and} \\
  &(\prod\limits_{i < \kappa}
\lambda_i,<_J) \text{ has true cofinality}\}.
\end{array}
\end{equation*}
\end{enumerate}
\mn
2) For $\bold J$ a family of ideals on
(usually but not necessarily on
the same set) and singular $\mu$
let $\pp_{\bold J}(\mu) =
\sup\{\pp_J(\mu):J \in \bold J\}$ and
$\pp^+_J(\mu) = \sup\{\pp^+_J(\mu):J \in
\bold J\}$.

\noindent
3) For a set ${\ga}$ of regular cardinals let
$\pcf_J({\ga}) =
\{\tcf(\prod\limits_{t \in \Dom(J)}
\lambda_t,<_J):\lambda_t \in {\ga}$ for
$t \in \Dom(J)\}$;
similarly $\pcf_{\bold J}({\ga})$.
\end{definition}

\begin{remark}
\label{cv.1a}
1) Recall that $\mu = \lim_J\langle \lambda_t:
t \in \Dom(J)\rangle$, where $J$ is an
ideal on $\Dom(J)$ mean that for every
$\mu_1 < \mu$ the set $\{t \in \Dom(J):
\lambda_t \notin (\mu_1,\mu]\}$ belongs to $J$.

\noindent
2) On $\pcf_J({\ga})$: check consistency of
notation by \cite{Sh:g}.
\end{remark}

\begin{observation}
\label{cv.2}
1) For $\mu,J$ as in clause (a)
\ref{cv.1}, the following are equivalent
\mn
\begin{enumerate}
\item[$(a)$]   pp$_J(\mu) > 0$
\sn
\item[$(b)$]  the sup is on a non-empty set
\sn
\item[$(c)$]  there is an increasing
sequence of length $\cf(\mu)$
of member of $J$ whose union is $\kappa$
\sn
\item[$(d)$]  $\pp_J(\mu) > \mu$
\sn
\item[$(e)$]   every cardinal appearing in
the $\sup$ is regular $> \mu$ and the set
of those appearing is $\Reg \cap
[\mu^+,\pp^+_J(\mu))$ and is non-empty.
\end{enumerate}
\end{observation}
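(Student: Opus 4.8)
The plan is to establish the cycle $(a)\Leftrightarrow(b)$, $(b)\Rightarrow(c)\Rightarrow(d)\Rightarrow(a)$ and then $(b)\Rightarrow(e)$, the implication $(e)\Rightarrow(b)$ being trivial. Throughout I take $J$ proper and work in the one non-degenerate case $\cf(\mu)\le\kappa<\kappa^+<\mu$ (if $\cf(\mu)>\kappa$ then $\mu=\lim_J\langle\lambda_i:i<\kappa\rangle$ is impossible and all five clauses fail; if $\mu\le\kappa^+$ then $\Reg\cap\mu\setminus\kappa^+=\emptyset$, so $\pp_J(\mu)=0$ and everything is read with the convention of \ref{cv.1}). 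Fix once and for all a strictly increasing sequence $\langle\mu_\zeta:\zeta<\cf(\mu)\rangle$ of regular cardinals in $\Reg\cap\mu\setminus\kappa^+$ cofinal in $\mu$.

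The easy steps. For $(a)\Leftrightarrow(b)$: $\pp_J(\mu)$ is the supremum of a set of true cofinalities, each of which is the cofinality of a product of infinite cardinals modulo a proper ideal, hence of a partial order without a last element, hence an infinite regular cardinal; so the supremum is positive precisely when that set is nonempty. Both $(d)\Rightarrow(a)$ and $(e)\Rightarrow(b)$ are contained in the statements of $(d)$ and $(e)$. For $(b)\Rightarrow(c)$: let $\langle\lambda_i:i<\kappa\rangle$ witness that the defining set of $\pp_J(\mu)$ is nonempty, and put $a_\zeta=\{i<\kappa:\lambda_i\le\mu_\zeta\}$; since $\mu_\zeta<\mu$ and $\mu=\lim_J\langle\lambda_i\rangle$, each $a_\zeta\in J$; the sequence is $\subseteq$-increasing, and $\bigcup_\zeta a_\zeta=\kappa$ because every $\lambda_i<\mu$, so $\langle a_\zeta:\zeta<\cf(\mu)\rangle$ is as required.

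The core is $(c)\Rightarrow(d)$, by a transfer. Given $\langle a_\zeta:\zeta<\cf(\mu)\rangle$ as in $(c)$, define $g:\kappa\to\cf(\mu)$ by $g(i)=\min\{\zeta:i\in a_\zeta\}$, so that $\{i:g(i)\le\zeta_0\}=a_{\zeta_0}\in J$ for every $\zeta_0$. By the existence-of-scales theorem for singular cardinals I may choose the $\mu_\zeta$ above so that $\tcf(\prod_{\zeta<\cf(\mu)}\mu_\zeta,<_{J^{\bd}_{\cf(\mu)}})=\mu^+$, witnessed by $\langle h_\alpha:\alpha<\mu^+\rangle$. Put $\lambda_i=\mu_{g(i)}$ (so $\lambda_i\in\Reg\cap\mu\setminus\kappa^+$) and $f_\alpha(i)=h_\alpha(g(i))<\lambda_i$. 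Then $\mu=\lim_J\langle\lambda_i\rangle$: given $\mu_1<\mu$, choosing $\zeta_0$ with $\mu_{\zeta_0}>\mu_1$ gives $\{i:\lambda_i\le\mu_1\}\subseteq\{i:g(i)<\zeta_0\}\subseteq a_{\zeta_0}\in J$. And $\langle f_\alpha:\alpha<\mu^+\rangle$ is $<_J$-increasing and cofinal in $\prod_i\lambda_i$: if $h_\alpha<h_\beta$ off a bounded subset $b$ of $\cf(\mu)$ then $\{i:f_\alpha(i)\ge f_\beta(i)\}\subseteq\{i:g(i)\in b\}\in J$; and given $f\in\prod_i\lambda_i$, the function $h(\zeta)=\sup\{f(i):g(i)=\zeta\}$ lies in $\prod_\zeta\mu_\zeta$ (a supremum of at most $\kappa<\mu_\zeta=\cf(\mu_\zeta)$ ordinals below $\mu_\zeta$), and since $f(i)\le h(g(i))$ for all $i$, taking $\alpha$ with $h<h_\alpha$ off a bounded set yields $f<_J f_\alpha$. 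Hence $\mu^+=\tcf(\prod_i\lambda_i,<_J)\le\pp_J(\mu)$, which is $(d)$; and $(d)\Rightarrow(a)$ closes the cycle.

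Finally $(b)\Rightarrow(e)$. The one step I would carry out by hand is the directedness lemma $(\dagger)$: if $\mu=\lim_J\langle\lambda_i:i<\kappa\rangle$ with $\lambda_i\in\Reg\cap\mu\setminus\kappa^+$, then $(\prod_i\lambda_i,<_J)$ is $\mu^+$-directed. Indeed, given $\langle f_\gamma:\gamma<\mu\rangle$ in the product, let $\zeta(i)=\min\{\zeta:\lambda_i\le\mu_\zeta\}$; then $A_i:=\sup\{\mu_\zeta:\zeta<\zeta(i)\}$ satisfies $A_i<\lambda_i$ (equality would make $\lambda_i$ singular, since $\zeta(i)<\cf(\mu)\le\kappa<\lambda_i$), so by regularity of $\lambda_i$ the function $f(i):=\sup\{f_\gamma(i):\gamma<A_i\}$ lies in $\prod_i\lambda_i$, and for each $\gamma<\mu$, picking $\zeta_0$ with $\mu_{\zeta_0}>\gamma$ gives $\{i:f_\gamma(i)>f(i)\}\subseteq\{i:\gamma\ge A_i\}\subseteq\{i:\lambda_i\le\mu_{\zeta_0}\}\in J$, i.e., $f_\gamma\le_J f$. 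By $(\dagger)$ every cardinal in the defining set of $\pp_J(\mu)$ is a regular cardinal $>\mu$, and by the definition of $\pp^+_J(\mu)$ it is also $<\pp^+_J(\mu)$; so that set is contained in $\Reg\cap[\mu^+,\pp^+_J(\mu))$, and by $(b)$ it is nonempty. For the reverse inclusion — every regular $\lambda'$ with $\mu^+\le\lambda'<\pp^+_J(\mu)$ is a realized true cofinality — I would rerun the transfer of the previous paragraph in reverse (which under $(c)$, hence under $(b)$, shows the realized set for $J$ contains that for $J^{\bd}_{\cf(\mu)}$) and appeal, via pcf localization such as \ref{6.7C.1}(2), to the known convexity of the $\pp$-spectrum. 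That scale-existence theorem and this convexity are the only external inputs; the transfer and $(\dagger)$ are elementary, and the step I would watch most carefully is matching the realized sets across the ideals $J$ and $J^{\bd}_{\cf(\mu)}$.
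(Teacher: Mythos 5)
The paper gives no proof of \ref{cv.2} at all --- it is stated as a bare Observation --- so there is nothing of the author's to compare your argument against; I can only judge it on its own terms. The cycle $(a)\Leftrightarrow(b)$, $(b)\Rightarrow(c)\Rightarrow(d)\Rightarrow(a)$ is correct and complete. In particular the transfer in $(c)\Rightarrow(d)$ --- pulling back a $\mu^+$-scale on $\prod_\zeta\mu_\zeta/J^{\bd}_{\cf(\mu)}$ along the map $g(i)=\min\{\zeta: i\in a_\zeta\}$ --- is the right mechanism, and your verifications (that $\mu=\lim_J\langle\mu_{g(i)}:i<\kappa\rangle$, that the pulled-back sequence is $<_J$-increasing and cofinal, using $\{i:g(i)\le\zeta_0\}=a_{\zeta_0}\in J$ and $\kappa<\mu_\zeta=\cf(\mu_\zeta)$) all go through; the one external input, the existence of a $\mu^+$-scale for a singular $\mu$, is a legitimate citation. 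The directedness lemma $(\dagger)$ and with it the forward inclusion of $(e)$ (every realized true cofinality lies in $\Reg\cap[\mu^+,\pp^+_J(\mu))$) are also fine.

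The genuine gap is the reverse inclusion in $(e)$: that every regular $\lambda'$ with $\mu^+\le\lambda'<\pp^+_J(\mu)$ is realized as $\tcf(\prod_{i<\kappa}\lambda'_i,<_J)$ for the \emph{fixed} ideal $J$. Your plan --- run the transfer backwards to see that the realized set for $J$ contains the realized set for $J^{\bd}_{\cf(\mu)}$, then invoke convexity of the $\pp$-spectrum --- only delivers the regulars below $\pp^+_{J^{\bd}_{\cf(\mu)}}(\mu)$, and nothing you say rules out $\pp_J(\mu)>\pp_{J^{\bd}_{\cf(\mu)}}(\mu)$; moreover the no-holes/convexity theorems you appeal to are standardly stated for a supremum over a varying family of ideals, not for one fixed $J$, so the citation does not match the claim. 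What actually closes this is internal to the present paper: take $\langle\lambda_i:i<\kappa\rangle$ realizing some $\lambda_0>\lambda'$ mod $J$ with scale $\bar f=\langle f_\alpha:\alpha<\lambda_0\rangle$, apply Lemma \ref{1.5} (clause $(H)$, requiring $\lambda'>2^\kappa$; otherwise Claim \ref{1.8} under an obedience hypothesis) to the initial segment $\bar f\restriction\lambda'$ to obtain a $\le_J$-e.u.b.\ $f$, and check that $\langle\cf(f(i)):i<\kappa\rangle$ still has $J$-limit $\mu$ and realizes $\lambda'$ mod $J$. As written, $(b)\Rightarrow(e)$ is not closed; either carry out this e.u.b.\ argument or cite a fixed-ideal form of the no-holes theorem explicitly.
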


\begin{definition}
\label{cv.3}
1) Assume $J$ is an ideal on
$\kappa,\sigma = \cf(\sigma) \le \kappa,
f \in {}^\kappa \Ord$ \then \, we let

\begin{equation*}
\begin{array}{clcr}
\bold W_{J,\sigma}(f^*,< \mu) =
\Min\{|{\cP}|: &{\cP}
\text{ is a family of subsets of }
\sup \Rang(f^*)+1 \\
  &\text{each of cardinality } <
\mu \text{ and for every } f \le f^*,   \\
  &\Rang(f) \text{ is the union of } < \sigma \\
  &\text{sets of the form} \\
  &\{i < \kappa:f(i) \in A\},A \in {\cP}\}.
\end{array}
\end{equation*}

\mn
2) If $f^*$ is constantly $\lambda$ we
write $\lambda$ if $\mu =
\lambda$ we can omit $< \mu$.
\end{definition}

\begin{remark}
\label{cv.4}
1) See $\cov(\lambda,\mu,\theta,\sigma) =
\bold W_{[\theta]^{< \sigma},\sigma}
(\langle \lambda:i < \theta \rangle,\mu)$.

\noindent
2) On the case of normal ideals, i.e. $\prc$
see \cite[\S1]{Sh:410} and
more generally $\prd$ see \cite{Sh:410}.
\end{remark}

\noindent
We may use several families of ideals.
\begin{definition}
\label{cv.5}
Let
\mn
\begin{enumerate}
\item[$(a)$]   $\com_{\theta,\sigma} = \{J:J$ is a
$\sigma$-complete ideal on $\theta\}$
\sn
\item[$(b)$]   $\nor_\kappa = \{J:J$ a
normal ideal on $\kappa\}$
\sn
\item[$(c)$]  $\com_{I,\sigma} =
\{J:J$ is a $\sigma$-complete ideal on
$\Dom(I)$ extending the ideal $I\}$
\sn
\item[$(d)$]  $\nor_I = \{J:J$ is a normal ideal on
$\Dom(I)$ extending the ideal $I\}$.
\end{enumerate}
\end{definition}

\begin{claim}
\label{cv.7}
The $(< \aleph_1)$-covering lemma.

Assume $\aleph_1 \le \sigma \le \,\cf(\mu) \le
\kappa < \mu$ and $I$ is a $\sigma$-complete
ideal on $\kappa$.

\Then
\mn
\begin{enumerate}
\item[$(a)$]  $\bold W_{I,\sigma}(\mu) =
\pp_{\com_\sigma(I)}(\mu)$
\sn
\item[$(b)$]  except when
$\circledast_{\mu,I,\sigma}$ below holds,
we can strengthen the equality in clause (a)
to: i.e., if $\pp_{\com_\sigma(I)}$ is a
regular cardinal (so $> \mu$) then
the $\sup$ in \ref{cv.1}(1) is obtained
\sn
\begin{enumerate}
\item[$\circledast_{\mu,I,\sigma}$]
$(a) \quad \lambda =:
\pp_{\com_\sigma(I)}(\mu)$ is (weakly)
inaccessible,
the $\sup$ is not obtained and for some
set ${\ga} \subseteq \Reg \cap \mu,|{\ga}| +
\kappa < \Min({\ga})$ and $\lambda =
\sup(\pcf_{I,\sigma}({\ga}))$; recalling
$\pcf_{\com_\sigma(I)}({\ga}) =
\{\prod\limits_{i < \kappa}
\lambda_i,<_J:J \in \com_\sigma(I),
\lambda_i \in {\ga}$ for $i < \kappa\}$.
\end{enumerate}
\end{enumerate}
\end{claim}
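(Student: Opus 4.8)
\textit{The inequality $\bold W_{I,\sigma}(\mu)\ge\pp_{\com_\sigma(I)}(\mu)$.}
Fix $J\in\com_\sigma(I)$ and $\langle\lambda_i:i<\kappa\rangle$ in $\Reg\cap\mu\setminus\kappa^+$ with $\mu=\lim_J\langle\lambda_i:i<\kappa\rangle$ and $\lambda:=\tcf(\prod_{i<\kappa}\lambda_i,<_J)$, witnessed by a $<_J$-increasing cofinal sequence $\langle f_\alpha:\alpha<\lambda\rangle$. Let $\cP$ realise $\bold W_{I,\sigma}(\mu)$; I will show $|\cP|\ge\lambda$. For $A\in\cP$ put $g_A(i)=\sup(A\cap\lambda_i)+1$ if $|A|<\lambda_i$ and $g_A(i)=0$ otherwise; as $|A|<\mu$, $\mu$ a limit cardinal with $\mu=\lim_J\langle\lambda_i\rangle$, the set $\{i:\lambda_i\le|A|\}$ lies in $J$, so $g_A\in\prod_{i<\kappa}\lambda_i$ modulo $J$. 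If $|\cP|<\lambda$ then, $\lambda$ being regular and $\langle f_\alpha\rangle$ cofinal, there is $\alpha^*<\lambda$ with $g_A<_J f_{\alpha^*}$ for all $A\in\cP$. By the covering property, modulo $I$ (hence modulo $J$) the set $\kappa$ is a union of $<\sigma$ sets $X_A=\{i<\kappa:f_{\alpha^*}(i)\in A\}$ with $A\in\cP$; by $\sigma$-completeness of $J$ some $X_A\notin J$. But on $X_A$ minus the $J$-null set $\{i:\lambda_i\le|A|\}$ one has $f_{\alpha^*}(i)\le\sup(A\cap\lambda_i)<g_A(i)$, so $X_A\subseteq\{i:f_{\alpha^*}(i)<g_A(i)\}\cup(J\text{-null})\in J$, a contradiction. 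Taking the supremum over admissible $\langle J,\langle\lambda_i\rangle\rangle$ gives the inequality. (This argument is insensitive to whether the covering in \ref{cv.3} is required exactly or only modulo $I$.)

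\textit{The inequality $\bold W_{I,\sigma}(\mu)\le\pp_{\com_\sigma(I)}(\mu)$, and clause (b).}
Set $\lambda=\pp_{\com_\sigma(I)}(\mu)$. I would prove both by induction on $\mu$ ($\kappa,\sigma,I$ fixed), using as inductive input that $\bold W_{I,\sigma}(\mu')\le\lambda$ for all $\mu'<\mu$: for singular $\mu'$ with $\cf(\mu')\le\kappa$ this is the inductive hypothesis together with $\pp_{\com_\sigma(I)}(\mu')\le\lambda$ in the non-exceptional situation below; for the remaining $\mu'<\mu$ it follows from the crude bound $\bold W_{I,\sigma}(\mu')\le\sum_{\zeta<\cf(\mu')}\bold W_{I,\sigma}(\nu_\zeta)$ (with $\nu_\zeta<\mu'$ regular, cofinal in $\mu'$) and $\mu<\mu^+\le\lambda$ (by \ref{cv.2}). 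First a reduction: if $\lambda\le\sup\{\pp^+_{\com_\sigma(I)}(\mu'):\kappa<\mu'<\mu,\ \cf(\mu')\le\kappa\}$, then by the pcf transitivity estimates (the analogues here of \cite[2.3]{Sh:355}) the witnessing products already live below $\mu$; one amalgamates at most $\mu$ of the families $\cP_{\mu'}$ produced by the inductive hypothesis, together with the initial segments $[0,\beta)$ ($\beta<\mu$), into one family of size $\le\lambda$ — here $\sigma\ge\aleph_1$ and $\sigma$-completeness of $I$ are what let an arbitrary $f\colon\kappa\to\mu+1$ be covered, mod $I$, by $<\sigma$ cells (finitely many ``blocks'' below a suitable $\mu'$, plus one top cell), and clause (b) in this case follows by tracing the inductive hypothesis applied to the relevant $\mu'<\mu$. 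So assume $\lambda>\sup\{\pp^+_{\com_\sigma(I)}(\mu'):\dots\}$ for some threshold $\mu_0\in(\kappa,\mu)$.

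Since every admissible witness $\langle\lambda_i\rangle$ has $\mu=\lim_J\langle\lambda_i\rangle$, we may (replacing $J$) assume all its values exceed a fixed $\mu_1\in(\mu_0,\mu)$ chosen with $\mu_1>\kappa$ and $\mu_1$ above the number of cardinals below $\mu$; then the single set $\ga:=\Reg\cap(\mu_1,\mu)$ has $\sup\ga=\mu$, $|\ga|+\kappa<\Min(\ga)$, and (using the reduction to discard the $<\mu$-contributions) $\sup\pcf_{I,\sigma}(\ga)=\lambda$, with $\lambda\in\pcf_{I,\sigma}(\ga)$ iff the supremum in \ref{cv.1}(1) is attained. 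Apply \ref{6.7A} and \ref{6.7B} along an elementary chain to obtain a smooth, closed generating sequence $\langle\gb_\theta[\ga']:\theta\in\ga'\rangle$ for a set $\ga'$ with $\ga\subseteq\ga'\subseteq\pcf(\ga)$, whose pcf is controlled by the localization \ref{6.7C.1}, and for $\theta\in\pcf(\ga')$ a $<_{J_{<\theta}[\ga']}$-increasing cofinal sequence $\langle f^\theta_\alpha:\alpha<\theta\rangle$ in $\prod\gb_\theta[\ga']$. The family $\cP$ is built from: the inductive families for $\mu'<\mu$ (to absorb $f$'s with bounded range); for each $\theta\in\pcf_{I,\sigma}(\ga)$ and $\alpha<\theta$, a set recording ``the ordinals $<\mu$ below $f^\theta_\alpha$'' together with an initial segment of $\mu$, closed under the finitely many pcf-combinations \ref{6.7C.1} forces; and finitely-indexed bookkeeping sets recording which $\gb_\theta$'s capture a given value-set. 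One verifies $|\cP|\le\lambda$ from $|\pcf_{I,\sigma}(\ga)|\le\lambda$, $\theta\le\lambda$, and the localization bound on the number of generating sets. Given $f\colon\kappa\to\mu+1$: if $\Rang(f)$ is bounded below $\mu$, use the inductive part; otherwise apply the pcf theorem to cover a cofinal-in-$\Rang(f)$ set of regulars by finitely many $\gb_\theta$'s, split $\kappa$ mod $I$ into finitely many ($<\sigma$) pieces accordingly, and on each piece locate $f$ inside the matching $\langle f^\theta_\alpha\rangle$; $\sigma$-completeness replaces the finite union of positive pieces by one. This yields $\bold W_{I,\sigma}(\mu)\le\lambda$, proving (a). For (b): if the supremum is attained, a single $\langle J,\langle\lambda_i\rangle\rangle$ reaches $\lambda$ and part (a) gives $\bold W_{I,\sigma}(\mu)=\lambda$ with the supremum obtained. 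If it is not attained but $\lambda$ is regular, then $\lambda$ cannot be a successor (the attained tcf's fill $\Reg\cap[\mu^+,\lambda)$ by \ref{cv.2}, and a successor $\rho^+$ is not the supremum of cardinals $\le\rho$), so $\lambda$ is weakly inaccessible; together with the $\ga$ exhibited above this is exactly $\circledast_{\mu,I,\sigma}$.

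\textit{Where the difficulty lies.}
The crux is the top level of the induction — bounding by $\lambda$ the number of ``new'' sets needed to cover functions with range cofinal in $\mu$. The nice (smooth, closed) generating sequences of \ref{6.7A}--\ref{6.7B} and the localization \ref{6.7C.1} are precisely the tools that make this possible: without a $\lambda$-bound on $\pcf(\ga')$ for $\ga\subseteq\ga'\subseteq\pcf(\ga)$ — equivalently on the number of generating sets and their iterated pcf-closures — the family $\cP$ need not have size $\le\lambda$. The secondary obstacles are the pcf-transitivity reduction (relating $\sup_{\mu'<\mu}\pp(\mu')$ to $\pp(\mu)$) and the amalgamation of up to $\mu$ covering families into one of size $\le\lambda$; the latter is the source of the hypothesis $\sigma\ge\aleph_1$ with $I$ $\sigma$-complete, and is also where the exceptional configuration $\circledast_{\mu,I,\sigma}$ — $\lambda$ weakly inaccessible with $\lambda=\sup\pcf_{I,\sigma}(\ga)$ for a ``thin'' $\ga$ — comes from.
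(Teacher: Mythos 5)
Your lower-bound argument ($\bold W_{I,\sigma}(\mu)\ge\pp_{\com_{\sigma}(I)}(\mu)$) is correct and is essentially the paper's: the same functions $g_A(i)=\sup(A\cap\lambda_i)$ and the same appeal to cofinality of $\langle f_\alpha\rangle$.

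The upper bound is where the content lies, and there your sketch has a genuine gap. The quantity $\bold W_{I,\sigma}(\mu)$ requires $\cP$ to capture the actual \emph{values} of an arbitrary $f_*:\kappa\to\mu$ --- up to an $I$-null set, $\Rang(f_*)$ must be covered by fewer than $\sigma$ sets of the form $\{i:f_*(i)\in A\}$, $A\in\cP$. Your construction only ever \emph{bounds} $f_*$ from above: locating $f_*$ mod $I$ below a member $f^\theta_\alpha$ of a cofinal sequence says nothing about which ordinals $f_*$ actually takes as values, and the proposed building blocks (``the ordinals $<\mu$ below $f^\theta_\alpha$'') either have cardinality $\mu$ (hence are not legal members of $\cP$) or, if read as $\Rang(f^\theta_\alpha)$, need not contain any value of $f_*$. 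The induction on $\mu$ does not close this gap, since a function with range cofinal in $\mu$ is exactly what the inductive families cannot handle. The missing idea is the descent: the paper takes $\cP=[\mu]^{<\mu}\cap\gB$ for an elementary submodel $\gB$ of cardinality $<\lambda$ with $\gB\cap\lambda$ an ordinal, fixes a club system $\langle e_\alpha:\alpha<\mu\rangle\in\gB$, and builds a tree of approximations $g_0\ge g_1\ge\cdots\ge f_*$ in which $g_{n+1}(i)$ is a member of $e_{g_n(i)}$ strictly below $g_n(i)$ whenever $g_n(i)>f_*(i)$; this step-down is obtained by bounding the position function $\theta\mapsto\sup\{\otp(e_{g_n(i)}\cap f_*(i)):\cf(g_n(i))=\theta\}$ by a member of a cofinal sequence lying in $\gB$. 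Well-foundedness of the ordinals forces $g_n(i)=f_*(i)$ at some finite stage, and then $f_*(i)$ lies in one of the sets $B_\eta\in\gB$. This $\omega$-level tree of width $<\sigma$, together with its $I$-null exceptional sets, is precisely where $\aleph_1\le\sigma$ and the $\sigma$-completeness of $I$ are used --- not, as you suggest, in amalgamating inductive families --- and $\neg\circledast_{\mu,I,\sigma}$ enters at this point to guarantee $\pcf_{I,\sigma}(\ga_\eta)$ is bounded below $\lambda$, hence contained in $\gB$, so that the needed cofinal sequences are available inside $\gB$. The Section-2 generating-sequence machinery on which your whole construction rests is invoked by the paper only in the residual case where $|\bigcup_\eta\gc_\eta|\ge\mu$.
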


\begin{remark}
1) This is \cite[6.13]{Sh:513}.

In a reasonable case the result
$\cov(|{\ga}|,\kappa^+,\kappa^+,\sigma)$.
\end{remark}

\begin{conclusion}
\label{cv.8}
In \ref{cv.7} if $\kappa < \mu_* \le \mu$ then
\mn
\begin{enumerate}
\item[$(a)$]  $\bold W_{I,\sigma}(\mu,< \mu_*)
= \sup\{\pp_{\com_\sigma(I)}(\mu)':
\mu_* \le \mu' \le \mu,\cf(\mu') \le \kappa\}$
\sn
\item[$(b)$]  if in (a) the left side is
a regular cardinal then the $\sup$ is obtained
for some sequence $\langle \lambda_i:i < \kappa
\rangle$ of regular cardinality and $J \in
\com_\sigma(I)$ such that $\lim_J\langle
\lambda_i:i < \kappa \rangle$ is well defined and
$\in [\mu_*,\mu]$ except possibly when
\sn
\begin{enumerate}
\item[$\circledast_{\mu,I,\sigma,\mu_*}$]  as in
$\circledast_{\mu,I,\sigma}$ above but $|{\ga}|
< \mu_*$.
\end{enumerate}
\end{enumerate}
\end{conclusion}

\begin{PROOF}{\ref{cv.7}}

\noindent
\underline{The inequality $\ge$}:

So assume $J$ is a $\sigma$-complete ideal
on $\kappa$ extending $I,\lambda_i \in \Reg
\cap \mu \backslash \kappa^+$ and $\mu =
\lim_J(\langle \lambda_i:i < \kappa \rangle)$
and $\lambda = \tcf(\prod\limits_{i < \kappa}
\lambda_i,<_J)$ is well
defined and we shall note that
$\bold W_{I,\sigma}(\mu) \ge \lambda$,
this clearly suffices, and let $\langle
f_\alpha:\alpha < \lambda
\rangle$ be $<_J$-increasing
cofinal in $(\prod\limits_{i < \kappa}
\lambda_i,<_J)$.  Now let $|{\cP}| <
\lambda,{\cP}$ be a family of sets of ordinals
each of cardinality $< \mu$.  For each $u \in
{\cP}$ let $g_u \in \prod\limits_{i < \kappa}
\lambda_i$ be defined
by $g_u(i) = \sup(u \cap \lambda_i)$ if
$|u| < \lambda_i$ and $g_u(i)= 0$ otherwise.

Hence for some $\alpha(u) < \lambda,g_u
<_J f_{\alpha(u)}$ and so
$\alpha(*) = \cup\{\alpha(u)+1:u \in
{\cP}\} < \lambda$ and $f_{\alpha(*)}$
exemplifies the failure of ${\cP}$ to exemplify
$\lambda > W_{I,\sigma}(\mu)$.
\medskip

\noindent
\underline{The inequality $\le$}:

Assume that $\lambda$ is regular $\ge
\pp^+_{I,\sigma}(\mu)$
and we shall prove that
$\bold W_{I,\sigma}(\mu) < \lambda$, this
clearly suffices.  Let $\chi$ be large
enough, and ${\gB}$ be an
elementary submodel of $({\cH}(\chi),
\in,<^*_\chi)$ of cardinality
$< \lambda$ such that $\{I,\sigma,
\mu,\lambda\} \subseteq {\gB}$
and $\lambda \cap {\gB}$ is an ordinal
which we shall call $\delta_{\gB}$.  Let
${\cP} =: [\mu]^{< \mu} \cap {\gB}$ so
$|{\cP}| < \lambda$.  Hence it is enough to
prove that $\bold W_{I,\sigma}(\mu) \le
|{\cP}|$ and for this it is enough to praove
that ${\cP}$ is as required in Definition
\ref{cv.2}(1).  Let
$\bar e = \langle e_\alpha:
\alpha < \mu \rangle \in {\gB}$ be such
that $e_\alpha$ is a club of
$\alpha$ of order type $\cf(\alpha)$ so
$e_{\alpha +1} = \{\alpha\},e_0 = \emptyset$.

So let $f_* \in {}^\kappa \mu$ and let $\langle
\mu_\varepsilon:\varepsilon < \cf
(\mu)\rangle \in {\gB}$ be an
increasing continuous sequence of
cardinals from $(\kappa,\mu)$ with
limit $\mu$.  Now by induction on
$n < \omega$ we choose
$\varepsilon_n,A_n,g_n,{\cT}_n,\bar S_n,
\bar B_n$ such that
\mn
\begin{enumerate}
\item[$\circledast_n$]  $(A)(a) \quad A_n
\in [\mu]^{\le \kappa},A_0 =
\{\mu_\varepsilon:\varepsilon < \cf(\mu)\}$
\sn
\item[${{}}$]  \hskip18pt $(b) \quad g_n$ is
a function from $\kappa$ to $A_n$
\sn
\item[${{}}$]  \hskip18pt $(c) \quad f_* \le g_n$
\sn
\item[${{}}$]  \hskip18pt $(d) \quad$ if
$n=m+1$ and $i <\kappa$ then
$g_m(i) > f_*(i) \Rightarrow g_n(i) > g_m(i)$
\sn
\item[${{}}$]  \hskip18pt $(e) \quad
{\cT}_n \subseteq {}^n \sigma$ has
cardinality $< \sigma$
\sn
\item[${{}}$]  \hskip18pt $(f) \quad {\cT}_0
= \{<>\}$
\sn
\item[${{}}$]  \hskip18pt $(g) \quad$ if
$n=m+1$ and $\eta \in {\cT}_n$
then $\eta \restriction m \in {\cT}_m$
\sn
\item[${{}}$]  \hskip18pt $(h) \quad
\bar S_n = \langle S_\eta:\eta \in {\cT}_n\rangle$
\sn
\item[${{}}$]  \hskip18pt $(i) \quad \bar B_n
= \langle B_\eta:\eta \in {\cT}_n\rangle$
\sn
\item[${{}}$]  \hskip18pt $(j) \quad
\varepsilon_n < \cf(\mu)$
and $n=m+1 \Rightarrow \varepsilon_n
\ge \varepsilon_m$
\sn
\item[${{}}$]  $(B) \qquad$ for each
$\eta \in {\cT}_n$:
\sn
\item[${{}}$]  \hskip18pt $(a) \quad
S_\eta \subseteq \kappa,S_\eta \notin {\cT}_n$
\sn
\item[${{}}$]  \hskip18pt $(b) \quad$ if
$n=m+1$ then
$S_{\eta \restriction m} \supseteq S_\eta$
\sn
\item[${{}}$]  \hskip18pt $(c) \quad B_\eta
\in {\gB}$ is a subset of $\mu$ of
cardinality $< \mu_{\varepsilon(n)}$
\sn
\item[${{}}$]  \hskip18pt $(d) \quad
\{g_n(i):i \in S_\eta\}$ is included in $B_\eta$
\sn
\item[${{}}$]  $(C)(a) \quad$ if $n=m+1$
and $\eta \in {\cT}_m$ then the set

\hskip30pt $S^*_\eta := \{i \in
S_\eta:g_m(i) > f_*(i)\}
\backslash \cup\{S_{\eta \char 94 <j>}:
\eta \char 94 \langle j\rangle \in {\cT}_n\}$

\hskip30pt belongs to $I$.
\end{enumerate}
\medskip

\noindent
\underline{It is enough to Carry the definition}:

Why?  As then $\{B_\eta:\eta \in {\cT}_n$ for
some $n < \omega\}$
is a family of members of ${\cP}$
(by (B)(c)), its cardinality is
$< \sigma$ (as $\sigma = \cf(\sigma) >
\aleph_0$ and for each
$n < \omega,|{\cT}_n| < \sigma$ by (A)(e)).

Similarly as $I$ is $\sigma$-complete the
set $S^* = \cup\{S^*_\eta:\eta \in {\cT}_n$
for some $n < \omega\}$ belongs to $I$.
Now for every $i \in \kappa \backslash S^*$,
we try to choose $\eta_n \in {\cT}_n$ by
induction on $n < \omega$ such that $i \in
S_{\eta_n}$ and $n = m+1 \Rightarrow \eta_m
= \eta_n \restriction m$
and $g_m(i) > f_*(i)$.  For $n=0$ let
$\eta = <>$ so $i \in \kappa = A_0$.
For $n=m+1$, as $i \notin S^*_{\eta_m}$,
see (C)(a) clearly $\eta_n$ as required
exists.  Now if $n=m+1$ again as $i \notin
S^*_{\eta_m}$ we get $g_m(i) > f_*(i)$ and
by (A)(d) we have $g_m(i) > g_n(i)$.  But there is no decreasing $\omega$-sequence of ordinals.
So for some $m,g_m(i) \le f_*(i)$ so by (A)(c), $g_m(i) = f_*(i)$ but $g_n(i) \in B_{\eta_n}$.
\medskip

\noindent
\underline{Carrying the induction}:
\smallskip

\noindent
\underline{Case $n=0$}:

Let ${\cT}_0 =\{<>\},A_{<>} =
\{\mu_\varepsilon:\varepsilon <
\text{\rm cf}(\mu)\}$ which has cardinality
$\le \kappa$ as $\cf(\mu) \le \kappa$ by
assumption.  Further, let $g_0$ be defined as the
function with domain $\kappa$ and $g_0(i) =
\min\{\mu_\varepsilon:\mu_\varepsilon >
f_*(i)\}$, let $S_{<>} =
\kappa$ and $B_{<>} =A_0$ which $\in {\gB}$
as $\langle \mu_\varepsilon:\varepsilon <
\cf(\mu)\rangle \in {\gB}$
(and has cardinality $|A_0| = \cf(\mu) \le
\kappa$).
\medskip

\noindent
\underline{Case $n=m+1$}:

Let $\eta \in {\cT}_m$ and define
$S'_\eta = \{i \in S_\eta:g_n(i) > f_*(i)\}$.
If $S'_\eta \in I$ then we decide that
$j < n \Rightarrow \eta {}^\frown \langle
j \rangle \notin {\cT}_n$, so we have nothing
more to do so assume $S'_\eta \notin I$.

Let ${\ga}_\eta = \{\cf(\alpha):\alpha \in
B_\eta$ and $\cf(\alpha) > |B_\eta| +
\kappa\}$ and let

\begin{equation*}
\begin{array}{clcr}
{\gc}_\eta = \{\tcf(\prod\limits_{i\in S'_\eta}
\cf(g_n(i)),<_J):&J \text{ is an }
\sigma\text{-complete ideal on} \\
  &S'_\eta \text{ extending } I
\restriction S'_\eta \text{ such that } \mu =
  \lim_J \langle \cf(g_n(i)):i \in
S'_\eta\rangle \\
  &\text{and } \prod\limits_{i \in S'_\eta}
\cf(g_n(i)),<_J) \text{   has true cofinality}\}
\end{array}
\end{equation*}

\mn
Clearly $\kappa + |{\ga}_\eta| <
\min({\ga}_\eta)$ and ${\gc}_\eta \subseteq
\pcf_{I,\sigma}({\ga}_\eta) \subseteq \lambda
\cap \Reg$ and by $\neg
\circledast_{\mu,I,\sigma}$ we know that
$\pcf_{I,\sigma}({\ga}_\eta)$ is a bounded
subset of $\lambda$.  But $B_\eta \in {\gB}$
hence ${\ga}_\eta \in {\gB}$ hence
$\pcf_{I,\sigma}(\ga_\eta) \in \gB$ so
as $\gB \cap \lambda = \delta_{\gB} <
\lambda$, clearly $\pcf_{I,\sigma}({\ga}_\eta)
\subseteq {\gB}$ hence $\theta \in {\gc}_\eta
\Rightarrow \theta < \delta_{\gB}$.
Using $\pcf$ basic properties
let $J_{\eta,\lambda}$ be
the $\sigma$-complete ideal on
${\ga}_\eta$ generated by
$J_{=\lambda}[{\ga}_\eta]$ and so
$\bar{\ga}_\eta,J_{\eta,\lambda} \in {\gB}$
and there is a $<_{J_{\eta,\lambda}}$-increasing
cofinal sequence $\bar f_{\eta,\lambda} =
\langle f_{\eta,\lambda,\zeta}:\zeta <
\lambda \rangle$ of members of
$\Pi {\ga}_\eta$ such that
$f_{\eta,\lambda,\zeta}$ is the
$<_{J_{\eta,\lambda}}$-e.u.b. of
$\bar f_{\eta,\lambda} \restriction
\zeta$ when there is such $<_{J_{\eta,
\lambda}}$-e.u.b.  Without loss of
generality $\bar f_{\eta,\lambda} \in \gB$ hence
$\{f_{\eta,\lambda,\zeta}:\zeta < \lambda\}
\subseteq {\gB}$.

Let ${\ga}_m = \cup\{{\ga}_\eta:\eta \in
{\cT}_m\}$ and define a
$h_m \in \Pi{\ga}_m$ by $h_m(\theta) =
\sup\{\otp(e_{g_m(i)} \cap f_*(i)):i < \kappa$
and $f_*(i) < g_m(i)\}$.
Clearly it is $< \theta$ as $\theta = \cf(\theta) >
\mu_{\varepsilon(m)} \ge |B_\eta| + \kappa$
when $\theta \in {\ga}_\eta$.
For each $\eta \in {\cT}_m$ and $\lambda \in
{\gc}_\eta$ let $\zeta_{\eta,\lambda} <
\lambda$ be such that $h_m \restriction \ga_\eta <
f_{\eta,\lambda,\zeta_{\eta,\lambda}}
\mod J_{\eta,\lambda}$, and let

\[
S^1_{\eta,\lambda} = \{i \in S_\eta:
h_m(\cf(g_i(\theta)) <
f_{\eta,\lambda,\zeta_{\eta,\lambda}}(\cf(g_m(i))\}
\]

\mn
\begin{enumerate}
\item[$\bigodot$]  for some subset $\gc'_\eta$
of ${\gc}_\eta$ of cardinality $< \sigma$ the
set $\{i \in S_\eta:i \notin S^1_{\eta,\lambda}$
for every $\lambda \in {\gC}'_\eta\}$ belongs
to $I$.
\end{enumerate}
\mn
[Why?  Otherwise, let $J$ be the
$\sigma$-complete ideal on $S_\eta$
generated by $I \cup \{S^1_{\eta,\lambda}:
\lambda \in {\gc}_\eta\}$, so $\kappa
\notin J$ hence for some $S^* \in J^+$ we know
that $(\prod\limits_{i \in S^*} \cf(g_m(i),
<_{J \restriction S^*})$ has true
cofinaltiy, call it $\lambda^*$.
Necessarily $\lambda^* \in {\gc}_\eta$ and
easily get a contradiction.]
\bigskip

\noindent
\underline{Case A}:   $|\cup\{\gc_\eta:\eta
\in \cT_m\}| < \mu$.

Let $\langle \lambda_{\eta,j}:j < j_\eta
\rangle$ list ${\gc}'_\eta$.
Let ${\ga}'_n = {\ga}_n \backslash
|\bigcup\limits_\eta {\gc}_\eta|^+$.  Now
by induction on $k< \omega$ we choose
$h_{n,k},\zeta_{\eta,j,k}$ for $j < j_\eta,
\eta \in \cT_m$ such that
\mn
\begin{enumerate}
\item[$\circledast$]  $(a) \quad h_{m,k} \in
\Pi{\ga}'_m$
\sn
\item[${{}}$]  $(b) \quad h_{m,k} < h_{m,k+1}$
\sn
\item[${{}}$]  $(c) \quad h_{m,0} = h_m$
\sn
\item[${{}}$]  $(d) \quad \zeta_{\eta,j,k}
< \lambda_{\eta,j}$
\sn
\item[${{}}$]  $(e) \quad \zeta_{\eta,j,k}
< \zeta_{\eta,j,k+1}$
\sn
\item[${{}}$]  $(f) \quad \zeta_{\eta,j,0}
= \zeta_{\eta,j}$
\sn
\item[${{}}$]  $(g) \quad h_{m,k+1}(\theta) =
\sup[\{f_{\eta,\lambda_{\eta,j,
\zeta_{\eta,j,k}}}(\theta):\eta \in
{\cT}_n,\theta \in {\ga}_\eta\} \cup
\{h_{m,k}(\theta)\}]$
\sn
\item[${{}}$]  $(h) \quad \zeta_{\eta,j,k+1} =
\Min\{\zeta < \lambda_{\eta,j}:\zeta >
\zeta_{\eta,j,k}$ and
$h_{m,k+1} \restriction {\ga}_\eta <
f_{\eta,\lambda_{\eta,j,\zeta}} \mod$

\hskip25pt $J_{\eta,\lambda_{\eta,j}}\}$.
\end{enumerate}
\mn
There is no problem to carry the induction.
Let $h_{m,\omega} \in \Pi{\ga}_m$ be defined
by $h_{m,\omega}(\theta) = \cup\{h_{m,k}(\theta):
k < \omega\}$.  Let $S'_{\eta,j} = \{i \in
S_\eta:f_*(i)$ is $<$ the
$h_{m,\omega}(\cf(g_m(i))$-ith member of
$e_{g_m(i)}\}$.

Now
\mn
\begin{enumerate}
\item[$\boxtimes$]  for some ${\gc}''_\eta
\subseteq {\gc}_\eta,|{\gc}''_\eta| < \sigma$
for $\eta \in {\cT}_m$ we have
$S_n \backslash \cup \{S_{\eta,j}:
\lambda_j \in {\gc}'_\eta\} \in I$.
\end{enumerate}
Now continue.
\end{PROOF}
\bigskip

\noindent
\underline{Case B}:  $C$ not Case A.

Use \S2.
\bigskip

\centerline{$* \qquad * \qquad *$}
\bigskip

\begin{discussion}
\label{cv.10}
Lemma \ref{cv.7} leaves us in a
strange situation: clause (a) is fine,
but concerning the exception in clause (b);
it may well be impossible and $\pcf({\ga})$ is
always not ``so large".  We do not know this,
we try to clarify the case for
reasonable $\bold J_i$, i.e.,
closed under products of two.
\end{discussion}

\begin{observation}
\label{cv.11}
1) There is $\mu_* < \mu$ such that
$(\forall \mu')(\mu_* < \mu' \le \mu
\wedge \cf(\mu') \le \kappa < \mu')
\Rightarrow \pp^+_{\bold J}(\mu') \le
\text{\rm pp}^+_{\bold J}(\mu)$ when:
\mn
\begin{enumerate}
\item[$\circledast$]  $(a) \quad \cf(\mu) \le
\kappa < \mu$
\sn
\item[${{}}$]  $(b) \quad \bold J$ is a set
of $\sigma$-complete ideals
\sn
\item[${{}}$]  $(c) \quad J \in \bold J
\Rightarrow |\Dom(J)| \le \kappa$
\sn
\item[${{}}$]  $(d) \quad$ if $J_\varepsilon
\in \bold J$ for $\varepsilon < \cf(\mu)$
then for some $\sigma$-complete ideal $I$ on
$\cf(\mu)$,

\hskip25pt  the ideal $J = \Sigma_I\langle
J_\varepsilon:\varepsilon < \cf(\mu)\rangle$
belongs to $\bold J$ (or is

\hskip25pt  just $\le_{\RK}$ from some
$J' \in \bold J$).
\end{enumerate}
\end{observation}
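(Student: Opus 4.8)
The plan is to derive a contradiction from a failure of the conclusion by amalgamating, through the ideal-sum operation supplied by clause (d), a $\cf(\mu)$-indexed family of witnesses into a single $\pp$-witness at $\mu$ of true cofinality $\ge\pp^+_{\bold J}(\mu)$. Write $\chi:=\pp^+_{\bold J}(\mu)$; we may assume $\pp_{\bold J}(\mu)>0$, so $\chi>\mu$, and also $\sigma\le\cf(\mu)$ (otherwise no proper $\sigma$-complete ideal on $\cf(\mu)$ extending $J^{\bd}_{\cf(\mu)}$ exists and the assertion is then vacuous). Throughout I use two facts, each following from \ref{cv.2}(e) together with the triviality that a supremum of cardinals is attained or is a limit: for every singular $\nu$ with $\cf(\nu)\le\kappa<\nu$ and $\pp_{\bold J}(\nu)>0$, (i) the regular cardinals arising as $\tcf(\prod_i\lambda_i,<_J)$ for some $J\in\bold J$ and some $\langle\lambda_i\rangle$ with $\nu=\lim_J\langle\lambda_i\rangle$ form exactly $\Reg\cap[\nu^+,\pp^+_{\bold J}(\nu))$; and (ii) $\pp^+_{\bold J}(\nu)$ is never the successor of a singular cardinal: if $\pp^+_{\bold J}(\nu)=\xi^+$ with $\xi$ singular then no $\pp^+_J(\nu)$ equals $\xi^+$ (by (i) it would make the $\tcf$'s at $(\nu,J)$ equal to $\Reg\cap[\nu^+,\xi^+)$, whose successors have supremum $\xi$, so $\pp^+_J(\nu)=\xi$), hence every $\pp^+_J(\nu)\ne\xi^+$ and their supremum cannot be $\xi^+$.

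Suppose no $\mu_*$ as required exists. Fixing an increasing cofinal $\langle\nu_\varepsilon:\varepsilon<\cf(\mu)\rangle$ in $\mu$, choose recursively $\mu_\varepsilon\in(\sup(\{\nu_\varepsilon\}\cup\{\mu_\zeta:\zeta<\varepsilon\}),\mu)$ with $\cf(\mu_\varepsilon)\le\kappa<\mu_\varepsilon$ and $\pp^+_{\bold J}(\mu_\varepsilon)>\chi$; this is possible since the displayed supremum is below $\mu$ and the cardinal supplied by the negation of the conclusion is below $\mu$ (for $\mu'=\mu$ the inequality $\pp^+_{\bold J}(\mu')>\pp^+_{\bold J}(\mu)$ fails). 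Then $\langle\mu_\varepsilon:\varepsilon<\cf(\mu)\rangle$ is strictly increasing and cofinal in $\mu$. Put $\rho:=\chi$ if $\chi$ is regular and $\rho:=\chi^+$ if $\chi$ is singular; then $\rho$ is a regular cardinal, $\mu_\varepsilon^+\le\mu<\rho$ for every $\varepsilon$, and $\rho<\pp^+_{\bold J}(\mu_\varepsilon)$ for every $\varepsilon$ (immediate when $\chi$ is regular; when $\chi$ is singular, $\pp^+_{\bold J}(\mu_\varepsilon)>\chi$ together with $\pp^+_{\bold J}(\mu_\varepsilon)\ne\chi^+$, by (ii) at $\mu_\varepsilon$ with $\xi=\chi$, forces $\pp^+_{\bold J}(\mu_\varepsilon)\ge\chi^{++}>\rho$). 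So by (i) at each $\mu_\varepsilon$ we may fix a $\sigma$-complete $J_\varepsilon\in\bold J$ and regular cardinals $\langle\lambda^\varepsilon_t:t\in\Dom(J_\varepsilon)\rangle$ in $\Reg\cap\mu_\varepsilon$ with $\mu_\varepsilon=\lim_{J_\varepsilon}\langle\lambda^\varepsilon_t:t\in\Dom(J_\varepsilon)\rangle$ and $\tcf(\prod_t\lambda^\varepsilon_t,<_{J_\varepsilon})=\rho$; raising the ($J_\varepsilon$-null) coordinates with $\lambda^\varepsilon_t\le\kappa$ to $\kappa^+\,(<\mu_\varepsilon)$ changes neither $\lim_{J_\varepsilon}$ nor the true cofinality, so we may assume $\lambda^\varepsilon_t>\kappa$ throughout.

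Amalgamate: put $I:=J^{\bd}_{\cf(\mu)}$ ($\sigma$-complete since $\sigma\le\cf(\mu)$), $\Dom(J):=\bigsqcup_{\varepsilon<\cf(\mu)}\Dom(J_\varepsilon)$, and $J:=\Sigma_I\langle J_\varepsilon:\varepsilon<\cf(\mu)\rangle$; then $|\Dom(J)|\le\cf(\mu)\cdot\kappa=\kappa<\mu$, $J$ is a proper $\sigma$-complete ideal, and by clause (d) either $J\in\bold J$ or $J\le_{\RK}J'$ for some $J'\in\bold J$. Write $\theta_{(\varepsilon,t)}:=\lambda^\varepsilon_t$. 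For each $\mu_1<\mu$ the set $\{(\varepsilon,t):\theta_{(\varepsilon,t)}\le\mu_1\}$ lies in $J$ — its $\varepsilon$-section is in $J_\varepsilon$ whenever $\mu_\varepsilon>\mu_1$, and the remaining $\varepsilon$ are bounded in $\cf(\mu)$ — so $\mu=\lim_J\langle\theta_{(\varepsilon,t)}:(\varepsilon,t)\in\Dom(J)\rangle$. Fixing a $<_{J_\varepsilon}$-increasing cofinal $\langle f^\varepsilon_\alpha:\alpha<\rho\rangle$ in $\prod_t\lambda^\varepsilon_t$ for each $\varepsilon$, and for $g\in\prod_{\varepsilon<\cf(\mu)}\rho$ setting $F_g(\varepsilon,t):=f^\varepsilon_{g(\varepsilon)}(t)$, one checks directly that $g\mapsto F_g$ carries a $<_I$-increasing cofinal sequence of $\prod_\varepsilon\rho$ to a $<_J$-increasing cofinal sequence of $\prod_{(\varepsilon,t)}\theta_{(\varepsilon,t)}$; since $\cf(\rho)=\rho>\kappa\ge\cf(\mu)=|\Dom(I)|$ we have $\tcf(\prod_\varepsilon\rho,<_I)=\rho$, whence $\tcf(\prod_{(\varepsilon,t)}\theta_{(\varepsilon,t)},<_J)=\rho$. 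If $J\in\bold J$, this is a $\pp$-witness at $\mu$ of true cofinality $\rho$ (the factors exceed $\kappa\ge|\Dom(J)|$), so $\pp^+_{\bold J}(\mu)\ge\rho^+>\rho\ge\chi=\pp^+_{\bold J}(\mu)$ — a contradiction. If $J\le_{\RK}J'$, pull the witness back along a projection $h:\Dom(J')\to\Dom(J)$: since each fibre $h^{-1}(s)$ has size $\le|\Dom(J')|\le\kappa<\theta_s$, the pullbacks $\langle F\circ h\rangle$ of a cofinal sequence remain $<_{J'}$-increasing and cofinal in $\prod_{s'}\theta_{h(s')}$, and $\mu=\lim_{J'}\langle\theta_{h(s')}\rangle$, so again we obtain a $\pp$-witness at $\mu$ (over $J'\in\bold J$, $\sigma$-complete, domain $\le\kappa$) of true cofinality $\rho$, and $\pp^+_{\bold J}(\mu)\ge\rho^+>\chi$, the same contradiction.

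The scheme is complete, modulo two routine verifications: that $g\mapsto F_g$ transfers true cofinality faithfully across the sum $\Sigma_I$, and that its analogue transfers true cofinality across the Rudin--Keisler projection in the second alternative (this last genuinely using $\theta_{(\varepsilon,t)}>\kappa\ge|\Dom(J')|$, so that fibres are smaller than factors). The one step I expect to be subtle is the uniform choice, in the second paragraph, of a single regular $\rho\ge\chi$ lying below every $\pp^+_{\bold J}(\mu_\varepsilon)$: this rests on fact (ii) — that $\pp^+_{\bold J}$ never equals the successor of a singular cardinal — which, while an easy consequence of \ref{cv.2}(e), is exactly what makes the two-case split on $\chi$ go through.
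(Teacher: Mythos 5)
Your proof takes the same route as the paper's: assume the counterexamples $\mu'$ are cofinal in $\mu$, extract a witness at each member of a cofinal sequence $\langle\mu_\varepsilon:\varepsilon<\cf(\mu)\rangle$, and amalgamate them through the ideal sum of clause (d). The paper's own argument in fact breaks off right after ``Let $I,J$ be as in clause (d)'', so the two steps you supply --- arranging a single regular $\rho\ge\pp^+_{\bold J}(\mu)$ to be realized as a true cofinality at every $\mu_\varepsilon$ (via the interval property of \ref{cv.2}(e) together with your remark that $\pp^+$ is never the successor of a singular cardinal), and the routine transfer of true cofinality across $\Sigma_I$ and across a Rudin--Keisler projection --- are exactly the missing content, and both are correct. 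One caveat: you declare $I:=J^{\bd}_{\cf(\mu)}$ and then appeal to clause (d), but (d) as literally written only provides \emph{some} $\sigma$-complete ideal $I$ on $\cf(\mu)$ for which the sum lands in $\bold J$; if that $I$ concentrated on a bounded set of indices, the amalgamated sequence would $J$-converge to some $\mu_{\varepsilon_0}<\mu$ and witness nothing at $\mu$. The argument needs every tail of $\cf(\mu)$ to be $I$-positive (i.e.\ $I\supseteq J^{\bd}_{\cf(\mu)}$); this is clearly the intended reading of (d), and the paper's truncated proof glosses over the same point, but you should state it as a requirement on the $I$ produced by (d) rather than choosing $I$ yourself. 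Similarly, your opening reduction to $\pp_{\bold J}(\mu)>0$ deserves a word, since the case split defining $\rho$ presupposes $\chi>\mu$.
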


\begin{proof}{\ref{cv.11}}
Let $\Lambda = \{\mu':\mu'$ is a
cardinal $< \mu$ but
$> \kappa$, of cofinality $\le \kappa$ such
that $\pp^+_{\bold J}(\mu') >
\pp_{\bold J}(\mu)\}$, and assume toward
contradiction that $\mu = \sup(\Lambda)$.  So
we can choose an increasing sequence
$\langle \mu_\varepsilon:\varepsilon <
\cf(\mu)\rangle$ of members of $\Lambda$
with limit $\mu$.  For each $\varepsilon <
\cf(\mu)$ let $J_\varepsilon \in \bold J$ witnesses
$\mu_\varepsilon \in \Lambda$.   Without
loss of generality $\kappa_\varepsilon =
\Dom(J) \le \kappa$ so we can find
$\langle \lambda_{\varepsilon,i}:i <
\kappa_\varepsilon\rangle$ witnessing this.
In particular
$(\prod\limits_{i < \kappa_\varepsilon}
\lambda_{\varepsilon,i},<_{J_\varepsilon}))$ has
true cofinality $\lambda_\varepsilon =
\cf(\lambda_\varepsilon) \ge
\pp^+_{\bold J}(\mu)$.  Let
$I,J$ be as in clause (d) of $\circledast$.
\end{proof}
\bigskip

\centerline{$ * \qquad * \qquad$}
\bigskip

A dual kind of measure to Definition \ref{cv.1} is
\begin{definition}
\label{cv.21}
1) Assume $J$ is an ideal say on
$\kappa$ and $f^*:\kappa \rightarrow \Ord$
and $\mu$ cardinal.  Then $\bold U_J(f^*,< \mu)
= \Min\{|\cP|:\cP$ a family of subsets of
$\sup \Rang(f) +1$ each of cardinality $< \mu$
such that for every $f \le f^*$ (i.e., $f \in
\prod\limits_{i < \kappa} (f^*(i)+1))$
there is $A \in {\cP}$ such that
$\{i < \kappa:f(i) \in A\} \notin J\}$.

\noindent
2) If above we write $\bold J$ instead of
$J$ this means $\bold J$ is
a family of ideals on $\kappa$ and the $\cP$
should serve all the $J \in \bold J$
simultaneously.
\end{definition}

\begin{claim}
\label{cv.22}
We have $\bold U_{J^{\bd}_\kappa}(\mu,< \mu) =
\lambda_*$ if we assume
\mn
\begin{enumerate}
\item[$\circledast$]  $(a) \quad \mu > \kappa
= \cf(\mu) > \aleph_0$
\sn
\item[${{}}$]  $(b) \quad ([\kappa]^\kappa,
\supseteq)$ satisfies the $\mu$-c.c. or just
$\mu^+$-c.c. which means that:

\hskip25pt if ${\cA} \subseteq
[\kappa]^\kappa$ and $A \ne B \in {\cA}
\Rightarrow |A \cap B| <
\kappa$ then $|{\cA}| \le \mu$
\sn
\item[${{}}$]  $(c) \quad \lambda_* =
\pp_{J^{\bd}_\kappa}(\mu) =
\sup\{\tcf(\prod\limits_{i < \kappa}
\lambda_i,<_{J^{\bd}_\kappa}):\lambda_i <\mu$ is
increasing with

\hskip25pt limit $\mu$ and
$(\prod\limits_{i < \kappa} \lambda_i,
<_{J^{\bd}_\kappa})$ has true cofinality$\}$.
\end{enumerate}
\end{claim}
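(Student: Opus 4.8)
The plan is to establish the two inequalities $\bold U_{J^{\bd}_\kappa}(\mu,<\mu)\ge\lambda_*$ and $\bold U_{J^{\bd}_\kappa}(\mu,<\mu)\le\lambda_*$ separately. The first is a ``scale pushing'' argument of the same flavour as the $\ge$-direction in the proof of \ref{cv.7}; the second I would route through the covering number $\cov(\mu,\mu,\kappa^+,2)$, and the place where the hypotheses really bite is the identification of that covering number with $\pp_{J^{\bd}_\kappa}(\mu)$ under assumption $(b)$.

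For $\ge\lambda_*$: fix an increasing sequence $\bar\lambda=\langle\lambda_i:i<\kappa\rangle$ of regular cardinals below $\mu$ with $\lim_i\lambda_i=\mu$ for which $\lambda:=\tcf(\prod_{i<\kappa}\lambda_i,<_{J^{\bd}_\kappa})$ is well defined, and let $\langle f_\alpha:\alpha<\lambda\rangle$ be $<_{J^{\bd}_\kappa}$-increasing and cofinal in it (so $\lambda$ is regular). Given any family $\cP$ of subsets of $\mu+1$ each of cardinality $<\mu$ with $|\cP|<\lambda$, for $A\in\cP$ set $g_A(i)=\sup(A\cap\lambda_i)$ when this is $<\lambda_i$ and $g_A(i)=0$ otherwise, so $g_A\in\prod_{i<\kappa}\lambda_i$; since each $\lambda_i$ is regular and $|A|<\mu$, the set $E_A=\{i<\kappa:\sup(A\cap\lambda_i)=\lambda_i\}$ must be bounded in $\kappa$ (otherwise $|A|\ge\sup_{i\in E_A}|A\cap\lambda_i|=\sup_{i\in E_A}\lambda_i=\mu$). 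Choose $\alpha(A)<\lambda$ with $g_A<_{J^{\bd}_\kappa}f_{\alpha(A)}$ and put $\alpha^*=\sup\{\alpha(A)+1:A\in\cP\}<\lambda$. Then for every $A\in\cP$ and every $i$ outside the bounded set $E_A\cup\{i:g_A(i)\ge f_{\alpha^*}(i)\}$ we have $\sup(A\cap\lambda_i)<f_{\alpha^*}(i)<\lambda_i$, hence $f_{\alpha^*}(i)\notin A$; thus $\{i<\kappa:f_{\alpha^*}(i)\in A\}\in J^{\bd}_\kappa$ for all $A\in\cP$, so $\cP$ fails to witness $\bold U_{J^{\bd}_\kappa}(\mu,<\mu)\le|\cP|$ (the function $f_{\alpha^*}$, which is $\le$ the constant function $\mu$, is uncaught). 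Hence $\bold U_{J^{\bd}_\kappa}(\mu,<\mu)\ge\lambda$, and taking the supremum over admissible $\bar\lambda$ yields $\bold U_{J^{\bd}_\kappa}(\mu,<\mu)\ge\lambda_*$.

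For $\le\lambda_*$: first I would note the trivial bound $\bold U_{J^{\bd}_\kappa}(\mu,<\mu)\le\cov(\mu,\mu,\kappa^+,2)$, since if $\cP\subseteq[\mu]^{<\mu}$ covers every member of $[\mu]^{\le\kappa}$ then for any $f:\kappa\to\mu$ one has $\Rang(f)\subseteq A$ for some $A\in\cP$, and then $\{i<\kappa:f(i)\in A\}=\kappa\notin J^{\bd}_\kappa$. It then remains to prove $\cov(\mu,\mu,\kappa^+,2)\le\pp_{J^{\bd}_\kappa}(\mu)=\lambda_*$ under $(a)+(b)$. One direction of the $\cov$-versus-$\pp$ relationship holds outright in ZFC, and the content here is the reverse inequality, $\cov(\mu,\mu,\kappa^+,2)=\cov(\mu,\kappa^+,\kappa^+,2)=\pp_{J^{\bd}_\kappa}(\mu)$; this is exactly the point at which one uses that $\kappa=\cf(\mu)$ is uncountable together with the $\mu^+$-c.c.\ of $([\kappa]^\kappa,\supseteq)$. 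The c.c.\ hypothesis forbids almost disjoint families in $[\kappa]^\kappa$ of size $>\mu$, hence (after thinning) long $\subseteq^*$-increasing towers, which is precisely what is needed to collapse the pcf analysis of sequences converging to $\mu$ onto the single ideal $J^{\bd}_\kappa$ and to force the true cofinality over $J^{\bd}_\kappa$ to attain the covering number. I would deduce this from the covering theorems of \cite[\S5,\S6]{Sh:513} and \cite{Sh:g}; alternatively one can argue directly with an elementary submodel $\gB\prec(\cH(\chi),\in,<^*_\chi)$ of size $\lambda_*$ with $\lambda_*\subseteq\gB$ and $\mu,\kappa$ together with a fixed increasing continuous $\langle\mu_\varepsilon:\varepsilon<\kappa\rangle$ (with $\mu_0>\kappa$) in $\gB$, setting $\cP=[\mu]^{<\mu}\cap\gB$: given $f:\kappa\to\mu$, if some $\{i<\kappa:f(i)<\mu_\varepsilon\}$ is unbounded take $A=\mu_\varepsilon$, and otherwise $f$ tends to $\mu$ and one runs the tree construction from the proof of \ref{cv.7} to extract $A\in\cP$ of cardinality $<\mu$ with $\{i<\kappa:f(i)\in A\}$ unbounded, using the $\mu^+$-c.c.\ to keep the tree of relevant subsets of $\kappa$ small enough to lie inside $\gB$.

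I expect the main obstacle to be this last step of the $\le$-direction: the equality $\cov(\mu,\mu,\kappa^+,2)=\pp_{J^{\bd}_\kappa}(\mu)$ is genuinely non-formal — in general one only has $\pp\le\cov$ and $\pp_{J^{\bd}_\kappa}\le\pp$ — and all the weight of hypothesis $(b)$ (and of $\kappa>\aleph_0$ in $(a)$) is concentrated there; everything else is bookkeeping patterned on \ref{cv.7} and on the scale argument above.
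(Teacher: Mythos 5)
Your lower bound $\bold U_{J^{\bd}_\kappa}(\mu,<\mu)\ge\lambda_*$ is correct, and it is exactly the scale-pushing argument the paper itself uses for the inequality $\ge$ in \ref{cv.7} (the paper does not even rewrite it for \ref{cv.22}); your treatment of the exceptional set $E_A$ is fine.

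The upper bound is where there is a genuine gap. Your first route asks for $\cov(\mu,\mu,\kappa^+,2)\le\pp_{J^{\bd}_\kappa}(\mu)$, but this is strictly stronger than the claim being proved: one only has $\bold U_{J^{\bd}_\kappa}(\mu,<\mu)\le\cov(\mu,\mu,\kappa^+,2)$, and the two invariants are not interchangeable, since $\bold U$ only requires catching $f$ on a $J^{\bd}_\kappa$-positive (i.e.\ unbounded) set while $\cov$ must swallow all of $\Rang(f)$ at once. The covering theorems you cite identify $\cov$ with $\pp_\kappa(\mu)$, the supremum over \emph{all} ($\sigma$-complete) ideals on $\kappa$; reducing that supremum to the single ideal $J^{\bd}_\kappa$ is precisely what is at stake, and the whole point of introducing $\bold U$ as a ``dual'' measure is that the $\cov$ identity is not available from $(a)+(b)$. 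Your second route is the paper's route, but you have not located where hypothesis $(b)$ actually does its work, and that is the heart of the proof. At stage $n=m+1$ of the induction one has a set $A_m\in[\mu]^{\le\kappa}$ and must control the set $\gc_n$ of all true cofinalities $\theta=\tcf(\prod\limits_{i<\kappa}\lambda_i,<_{J^{\bd}_\kappa})$ realized by increasing sequences drawn from $\{\cf(\alpha):\alpha\in A_m,\ \cf(\alpha)>\kappa\}$ with limit $\mu$. Distinct such $\theta$ are witnessed by sets that pairwise intersect in a set of size $<\kappa$ (if two witnessing sets had a common part of size $\kappa$, that common part, being unbounded in both enumerations, would compute both $\theta$'s as its true cofinality modulo the bounded ideal, forcing them to be equal); since these are $\kappa$-sized subsets of a set of size $\le\kappa$, clause $(b)$ yields $|\gc_n|\le\mu$, which is what allows the tree to be taken as $\cT_n\subseteq{}^n\mu$ and keeps every $B_\eta$ inside the submodel $\gB$ of size $\lambda_*$. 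Your gloss of $(b)$ as forbidding ``long $\subseteq^*$-increasing towers'' is not the mechanism. Finally, the endgame also differs from \ref{cv.7}: since one only needs $\{i:f^*(i)\in B_\eta\}$ unbounded, one uses that the $g_n(i)$ strictly decrease as long as they exceed $f^*(i)$, hence stabilize at $f^*(i)$, and then the regularity of $\kappa>\aleph_0$ to fix a single $n$ with $\{i:g_n(i)=f^*(i)\}$ of size $\kappa$ before descending through the tree. Without these two points the sketch does not close.
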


\begin{claim}
\label{cv.23}
We can in \ref{cv.22} replace
$J^{\bd}_\kappa$ by any $\aleph_1$-complete
filter $J$ (?) on $\kappa$ (so (b) becomes
``$(J^+,\supseteq)$ satisfies the $\mu^+$-c.c.''
\end{claim}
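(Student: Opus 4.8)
The plan is to re-run the proof of \ref{cv.22} with $J^{\bd}_\kappa$ replaced throughout by the given $\aleph_1$-complete filter $J$ (reading ``$\{i<\kappa:f(i)\in A\}\notin J$'' as ``$\in J^+$'', i.e.\ $J$-positive), with hypothesis (c) becoming $\lambda_*=\pp_J(\mu)$ and (b) becoming ``$(J^+,\supseteq)$ satisfies the $\mu^+$-c.c.''. The relevant observation is that for regular $\kappa$ one has $((J^{\bd}_\kappa)^+,\supseteq)=([\kappa]^\kappa,\supseteq)$, so the new (b) is literally the generalization of the old one; and the proof of \ref{cv.22} uses the special nature of $J^{\bd}_\kappa$ only through (i) its $\aleph_1$-completeness and (ii) the $\mu^+$-c.c.\ of this poset. (We tacitly assume $\pp_J(\mu)>0$, equivalently by \ref{cv.2}(1) that there is an increasing $\kappa$-sequence of $J$-null sets with union $\kappa$; otherwise both sides degenerate.)

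\textbf{The inequality $\bold U_J(\mu,<\mu)\ge\lambda_*$} uses nothing about $J$ beyond being a filter, so it transfers verbatim. Fix $\langle\lambda_i:i<\kappa\rangle$ with $\lambda_i\in\Reg\cap\mu\setminus\kappa^+$, $\mu=\lim_J\langle\lambda_i:i<\kappa\rangle$ and $(\prod_{i<\kappa}\lambda_i,<_J)$ of true cofinality $\lambda\le\lambda_*$, with $<_J$-increasing cofinal $\langle f_\alpha:\alpha<\lambda\rangle$; let $\cP\subseteq[\mu]^{<\mu}$ with $|\cP|<\lambda$. For $u\in\cP$ put $g_u(i)=\sup(u\cap\lambda_i)$ if $|u|<\lambda_i$ and $g_u(i)=0$ otherwise, so $g_u\in\prod_{i<\kappa}\lambda_i$; pick $\alpha(u)<\lambda$ with $g_u<_J f_{\alpha(u)}$ and set $\alpha(*)=\bigcup\{\alpha(u)+1:u\in\cP\}<\lambda$. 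For each $u\in\cP$, on the $J$-large set $\{i:g_u(i)<f_{\alpha(*)}(i)\}\cap\{i:\lambda_i>|u|\}$ we get $\sup(u\cap\lambda_i)<f_{\alpha(*)}(i)<\lambda_i$, hence $f_{\alpha(*)}(i)\notin u$; so $\{i<\kappa:f_{\alpha(*)}(i)\in u\}$ is $J$-null and $f_{\alpha(*)}\le f^*$ defeats $\cP$. Letting $\lambda$ run over the true cofinalities approaching $\lambda_*=\pp_J(\mu)$ gives the inequality.

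\textbf{The inequality $\bold U_J(\mu,<\mu)\le\lambda_*$.} Fix $\chi$ large, $\gB\prec(\cH(\chi),\in,<^*_\chi)$ with $|\gB|=\lambda_*$, $\lambda_*\subseteq\gB$, $\{J,\mu,\lambda_*\}\in\gB$, and put $\cP=[\mu]^{<\mu}\cap\gB$, so $|\cP|\le\lambda_*$; it suffices to see $\cP$ works. Given $f_*:\kappa\to\mu$, carry out the tree construction of the proof of \ref{cv.7}/\ref{cv.22}: the cases $n=0$ and $n=m+1$ produce $\cT_n\subseteq{}^n\omega$ with $|\cT_n|<\aleph_1$, functions $g_n:\kappa\to\mu$, and for $\eta\in\cT_n$ sets $S_\eta\in J^+$ and $B_\eta\in\cP$ with $\{g_n(i):i\in S_\eta\}\subseteq B_\eta$, such that the leftover $S^*_\eta=\{i\in S_\eta:g_m(i)>f_*(i)\}$ minus the union of the $S_\nu$ ($\nu\in\cT_{n+1}$ extending $\eta$) is $J$-null. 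Two points: first, as the tree has countably many, countable levels and $J$ is $\aleph_1$-complete, $S^*:=\bigcup_n\bigcup_{\eta\in\cT_n}S^*_\eta$ is $J$-null, and for $i\in\kappa\setminus S^*$ one descends the tree to a strictly decreasing $\omega$-sequence of ordinals $g_n(i)$, a contradiction, as in \ref{cv.7}; second, the steps where \ref{cv.22} extracts ``$\le\mu$'' from the $\mu^+$-c.c.\ of $([\kappa]^\kappa,\supseteq)$ --- where a family of the $S_\eta$, or of the sets $S^1_{\eta,\lambda}$, $S'_{\eta,j}$, is shown to be pairwise almost-disjoint and hence small --- are re-read with ``almost-disjoint'' meaning ``pairwise $J$-null intersection'': these sets lie in $J^+$, and $(J^+,\supseteq)$ has the $\mu^+$-c.c.\ by hypothesis, so such a family still has size $\le\mu$. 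The pcf inputs ($\pcf_J$-generating sequences via \ref{6.7A}, the $\pcf_{\theta\text{-complete}}$ theory of \ref{6.7F} and \ref{6.7G}, e.u.b.'s via \ref{1.5}) are stated for arbitrary ideals and unaffected.

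\textbf{The main obstacle} is the second point above: one must check case by case that \ref{cv.22} nowhere used ``a $J^{\bd}_\kappa$-positive set has order type $\ge\kappa$'' rather than merely its almost-disjointness behaviour, since a $J$-positive set may have cardinality $<\kappa$. Since $J$ is $\aleph_1$-complete one may throughout pass to a $J$-positive subset of size $\le\kappa$, so the poset genuinely at work is $(J^+\cap[\kappa]^{\le\kappa},\supseteq)$, which is what clause (b) controls; and the sets $S_\eta$, $S^1_{\eta,\lambda}$, $S'_{\eta,j}$ coming from pcf-genericity retain $J$-positivity for the same reasons they were formerly unbounded. Carrying out this verification is precisely what the ``(?)'' in the statement flags, and is the only content beyond routine bookkeeping.
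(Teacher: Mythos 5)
The paper contains no proof of \ref{cv.23} to compare against: the \emph{proof} environment that follows it is the proof of \ref{cv.22} (it works with $[\kappa]^\kappa$, $<_{J^{\bd}_\kappa}$ and ``$|A\cap B|<\kappa$'' throughout), and the ``(?)''\ in the statement of \ref{cv.23} is the author's own flag that the generalization is unverified. So your proposal has to stand on its own, and as written it is a plan rather than a proof: your final paragraph concedes that the case-by-case verification ``is precisely what the `(?)' flags,'' i.e.\ the substantive work is deferred.

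More importantly, the one place where that verification actually bites is not where you locate it (order types of positive sets), and it is not routine. In the proof of \ref{cv.22}, clause (b) is invoked to bound $|{\gc}_n|\le\mu$ via the family $\{\{\lambda^\theta_i:i<\kappa\}:\theta\in{\gc}_n\}$ of \emph{ranges} of the witnessing sequences: distinct true cofinalities modulo $J^{\bd}_\kappa$ force the ranges to intersect in a set of size $<\kappa$, because the bounded ideal is invariant under increasing re-indexing, so a common unbounded subset of the two ranges would compute both $\theta_1$ and $\theta_2$ as the same true cofinality. For a general $\aleph_1$-complete ideal $J$ this argument collapses: what you do get (correctly, by restricting to a $J$-positive set on which the two sequences agree) is that $\{i:\lambda^{\theta_1}_i=\lambda^{\theta_2}_i\}\in J$ for $\theta_1\ne\theta_2$, i.e.\ the \emph{graphs} of the functions $i\mapsto\lambda^\theta_i$ are pairwise almost disjoint modulo $J$. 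But these graphs live in $\kappa\times\mu$, not in $\kappa$, and a family of pairwise $J$-eventually-different functions does not obviously yield an antichain in $(J^+,\supseteq)$; so the hypothesis ``$(J^+,\supseteq)$ satisfies the $\mu^+$-c.c.''\ does not, as far as your argument shows, bound $|{\gc}_n|$ by $\mu$. Either a different (stronger, or differently phrased) chain condition is needed, or a genuinely new argument for this step; your reading of every use of clause (b) as ``pairwise $J$-null intersection of $J$-positive subsets of $\kappa$'' silently assumes this reduction. This is the gap hiding behind the author's ``(?)''\ and it is not closed by the proposal.
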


\begin{remark}
\label{cv.24}
If in clause (b) of $\otimes$ of
\ref{cv.22}, we use the $\mu$-c.c. the proof
is simpler, using ${\cT}_n \subseteq
{}^n(\mu_{\varepsilon_n}),\varepsilon_n \le
\varepsilon_{n+1}$.
\end{remark}

\begin{proof}
Let
\mn
\begin{enumerate}
\item[$(*)$]  $(a) \quad \bar\mu =
\langle \mu_i:i < \kappa \rangle$ is an
increasing continuous sequence of singular

\hskip25pt  cardinals $> \kappa$ with limit $\mu$.
\end{enumerate}
\mn
Let $\chi$ be large enough, $<^*_\chi$ a
well ordering of $({\cH}(\chi),\in)$ and
${\cB}$ an elementary submodel of
$({\cH}(\chi),\in,<^*_\chi)$ of cardinality
$\lambda_*$ such that $\lambda_*
+1 \subseteq {gB}$ and $\bar \mu \in {\gB}$
and let ${\cA} = [\mu]^{<\mu} \cap {\gB}$.

So ${\cA}$ is a family of sets of the right
form and has cardinality $\le \lambda_*$.
It remains to prove the major point:
assume $S$ is an unbounded subset of
$\kappa,f^* \in \prod\limits_{i \in S}
[\mu_i,\mu_{i+1}]$ we should prove that
$(\exists A \in {\cA})(\exists^\kappa i \in S)
(f(i) \in A)$.

Let $\bar e = \langle e_\alpha:\alpha < \mu
\rangle \in {\gB}$ be such that $e_\alpha$
is a club of $\alpha$ of order type $\cf(\alpha)$
so $e_{\alpha +1} = \{\alpha\},e_0 =
\emptyset$.  Let $\langle
\beta_{\alpha,\varepsilon}:
\varepsilon < \cf(\alpha)\rangle$ be
an increasing enumeration of $e_\alpha$.

We choose $\varepsilon_n,g_n,A_n,I_n,
\langle S_\eta,B_\eta:\eta \in {\cT}_n\rangle$
such that
\mn
\begin{enumerate}
\item[$\circledast_n$]  $(A)(a) \quad
{\cT}_n \subseteq {}^n \mu,{\cT}_0 =
\{<>\},[n = m+1 \wedge \eta \in
{\cT}_n \Rightarrow \eta
\restriction m \in {\cT}_n]$
\sn
\item[${{}}$]  \hskip18pt $(b) \quad A_n
\subseteq \mu$ has cardinality $\le \kappa$
\sn
\item[${{}}$]  \hskip18pt $(c) \quad g_n:
\kappa \rightarrow A_n$
\sn
\item[${{}}$]  \hskip18pt $(d) \quad i <
\kappa \Rightarrow f^*(i) \le g_n(i)$
\sn
\item[${{}}$]  \hskip18pt $(e) \quad n = m+1
\Rightarrow g_n \le g_m$
\sn
\item[${{}}$]  \hskip18pt $(f) \quad
\varepsilon_n < \kappa$ and $n=m+1
\Rightarrow \varepsilon_m < \varepsilon_n$
\sn
\item[${{}}$]  \hskip18pt $(g) \quad$ if
$n=m+1,i \in (\varepsilon_n,\kappa)$ and
$g_m(i) > f^*(i)$ then $g_m(i) > g_n(i)$
\sn
\item[${{}}$]  $(B)$ for $\eta \in {\cT}_n$
\sn
\begin{enumerate}
\item[${{}}$]  $(a) \quad S_\eta \subseteq
\kappa$ has cardinality $\kappa$
\sn
\item[${{}}$]  $(b) \quad S_\eta
\in [\kappa]^\kappa$ and $\nu
\triangleleft \eta \Rightarrow S_\eta
\subseteq S_\nu$
\sn
\item[${{}}$]  $(c) \quad B_\eta \in
{\gB}$ is a subset of $\mu$ of
cardinality $< \mu_{\varepsilon(n)}$ where
$\varepsilon(n) =$

\hskip45pt $\Min\{\varepsilon < \kappa:\eta
\in {}^n(\mu_\varepsilon)$ and
$\varepsilon \ge \varepsilon_n\}$
\sn
\item[${{}}$] \hskip18pt $(d) \quad \{g_n(i):
i \in S_\eta\} \subseteq B_\eta$.
\end{enumerate}
\end{enumerate}
\mn
For $n=0$ let $\varepsilon_0=0,A_{<>} =
\{\mu_i:i < \kappa\},{\cT}_0 =
\{<>\},S_{<>} = \kappa,g_m$ is the function
with domain $\kappa$ such that
$g_{<>} = \Min\{\alpha \in A_{<>}:f^*(i) <
\alpha\}$.
Assume $n=m+1$ and we have defined for $m$.

Let

\begin{equation*}
\begin{array}{clcr}
{\gc}_n = \bigl\{\theta:&\text{ there is an
increasing sequence }
\langle \lambda_i:i < \kappa \rangle \\
  &\text{ of regular cardinals } \in
(\kappa,\mu) \text{ with limit }
  \mu \text{ such that} \\
  &\theta = \tcf(\prod\limits_{i < \kappa}
  \lambda_i,<_{J^{\bd}_\kappa}) \text{ and} \\
  &\{\lambda_i:i < \kappa\} \subseteq
\{\cf(\alpha):\alpha \in
  A_m,\cf(\alpha) > \kappa\bigr\}.
\end{array}
\end{equation*}

\mn
Of course, ${\gc}_n \subseteq \Reg \backslash
\mu$.  Now for each $\theta \in {\gc}_n$ let
$\langle \lambda^\theta_i:i <
\kappa \rangle$ exemplifies it
so $\{\{\lambda^\theta_i:i <
\kappa\}:\theta \in {\gc}_n\}$ is a family
of subsets of $\{\cf(\alpha):\alpha \in A_m,
\cf(\alpha) > \kappa)\}$ each of cardinality
$\kappa$ and the intersection of any two has
cardinality $< \kappa$.

As $|A_m| \le \kappa$, by assumption (d) of
the claim we know that $|{\gc}_n| \le \mu$
and let $\langle \lambda_\beta:\beta \le
\mu \rangle$ list them.

For each $\eta \in {\cT}_m$ and
$\varepsilon < \kappa$ let

\[
{\ga}_{\eta,\varepsilon} = \{\cf(\delta):\delta
\in B_\eta \text{ and cf}(\delta) >
\mu_\varepsilon + |B_\eta|\}
\]

\mn
so

\[
|{\ga}_{\eta,\varepsilon}| \le |B_\eta| <
\min({\ga}_\eta).
\]

\mn
Let $W = \{(\eta,\varepsilon,\beta):\eta \in
{\cT}_m,\varepsilon <
\kappa,\beta < \mu_\varepsilon\}$.  Clearly
${\ga}_{\eta,\varepsilon} \in {\gB},
\lambda_\beta \in {\gB}$ hence
$J_{\eta,\varepsilon,\beta} =$ the
$\kappa$-complete ideal generated by
$J_{=\lambda_\beta}
[{\ga}_{\eta,\varepsilon}]$ belongs to ${\gB}$
and some $<_{J_{\eta,\varepsilon,\beta}}$-increasing and cofinal sequence
$\langle f_{\eta,\varepsilon,\beta,\zeta}:\zeta <
\lambda_\beta\rangle$ belongs to ${\gB}$ and
$f_{\eta,\varepsilon,\beta,\zeta}$ is an
$<_{J_{\eta,\varepsilon,\beta}}$-e.u.b. of $\langle
f_{\eta,\varepsilon,\beta,\xi}:\xi < \zeta \rangle$ when there is one.

We now define a function $h_m$

\[
\Dom(h_m) = \ga^*_m = \cup\{\ga_{\eta,\varepsilon}:
\eta \in {\cT}_m \text{ and } \varepsilon <
\kappa\}
\]

\mn
so

\[
\theta \in \Dom(h_m) \Rightarrow \kappa <
\theta < \mu \wedge \theta \in \Reg
\]

\mn
(in fact we do not exclude the case ${\ga}^*_m
= \Reg \cap \mu \backslash \kappa^+$) and

\[
h_m(\theta) = \sup\{e_{g_n(i)} \cap
f*(i):i < \kappa \text{ and } \cf(g_n(i))=\theta\}.
\]

\mn
As $\theta = \cf(\theta) > \kappa$ clearly

\[
\theta \in \Dom(h_m) \Rightarrow h_m(\theta)
< \theta.
\]

\mn
We choose now by induction on
$k<\omega,h_{m,k},\langle
\zeta^k_{\eta,\varepsilon,\beta}:
(\eta,\varepsilon,\beta) \in W \rangle$ such that
\mn
\begin{enumerate}
\item[$\boxtimes$]  $(a) \quad h_{m,k} \in
\Pi{\ga}^*_m$
\sn
\item[${{}}$]  $(b) \quad h_{m,0} = h_m$
\sn
\item[${{}}$]  $(c) \quad h_{m,k} \le h_{m,k+1}$
\sn
\item[${{}}$]  $(d) \quad
\zeta^k_{\eta,\varepsilon,\beta} =
\Min\{\zeta:h_{m,k} \restriction
\ga_{\eta,\varepsilon}
<_{J_{\eta,\varepsilon,\beta}}
f_{\eta,\varepsilon,\beta,\zeta}$
and $\ell < k \Rightarrow
\zeta^\ell_{\eta,\varepsilon,\beta} < \zeta\}$
\sn
\item[${{}}$]  $(e) \quad h_{m,k+1}(\theta) =
\sup[\{h_{m,k}(\theta)\} \cup
\{f^k_{\eta,\beta,\varepsilon,
\zeta^k_{\eta,\varepsilon,\eta}}(\theta)$:
the triple $(\eta,\beta,\varepsilon) \in W$

\hskip25pt  satisfies
$(\exists \varepsilon)(\beta < \mu_\varepsilon <
\theta)$ and $\theta \in
\ga_{\eta,\varepsilon}\}]$.
\end{enumerate}
\mn
Note that $h_{m,k+1}(\theta) < \theta$ as the
$\sup$ is over a set of $< \theta$ ordinals.

So we have carried the definition, and let
$h^*_{m,w} \in \Pi{\ga}_m$ be defined by
$h_{m,\omega}(\theta) = \sup\{h_{m,k}(\theta):k <
\omega\}$ and $\zeta_{\eta,\varepsilon,\beta} =
\zeta(\eta,\varepsilon,\beta) =
\sup\{\zeta^k_{\eta,\varepsilon,\beta}:
k < \omega\}$.  Now for each
$(\eta,\varepsilon,\beta) \in W$ we have
$k < \omega \Rightarrow h_{m,k}
\restriction {\ga}_{\eta,\varepsilon}
<_{J_{\eta,\varepsilon,\beta}}
f^k_{\eta,\varepsilon,\beta,
\zeta(\eta,\varepsilon,\beta)}) <
h_{m,k+1} \restriction
{\ga}_{\eta,\varepsilon}$.  By the choice
of $\bar f_{\eta,\varepsilon,\beta}$ as
$J_{\eta,\varepsilon,\beta}$
is $\aleph_1$-complete it follows that
$h_{m,w} \restriction {\ga}_{\eta,\varepsilon} =
f_{\eta,\varepsilon,\beta,
\zeta_{\eta,\varepsilon,\beta}} \mod
J_{\eta,\varepsilon,\beta}$.

Let

\begin{equation*}
\begin{array}{clcr}
A_n =: \{\alpha':& \text{for some } \alpha
\in A_n,\cf(\alpha)
\in {\ga}_n \text{ and } \alpha' \\
  &\text{ is the } h_{m,\omega}
(\cf(\alpha))\text{-th member of } e_\alpha\}.
\end{array}
\end{equation*}

\begin{equation*}
\begin{array}{clcr}
g_n(i) &\text{ is } \alpha' \text{ when }
\alpha' \text{ is the }
h_{m,\omega}(\cf(g_m(i))\text{-th member of} \\
  &e_{g_m(i)} \text{ and zero otherwise}.
\end{array}
\end{equation*}

\mn
The main point is why $\sigma_n \in
(\varepsilon_m,\kappa)$ exists.

To finish the induction step on $n$, let

\[
B_{\eta,\varepsilon,\beta} = \Rang
(f_{\eta,\varepsilon,\eta,\zeta_{\eta,
\varepsilon,\beta}})
\]

\[
B'_{\eta,\varepsilon} =
B_{\eta,\varepsilon,\beta} \cup
\{e_\alpha:\alpha \in B_{\eta,\varepsilon}
\text{ and } \cf(\alpha) \le
\mu_{\varepsilon(n)}\}
\]

\mn
and we choose $\langle B_\rho:\rho \in \cT_n,
\rho \restriction m \in B =
\eta$ to list them enumerates
$\{B_{\eta,\varepsilon,\beta}:
\varepsilon,\beta\}$ are such that
$(\eta,\varepsilon,\beta) \in W_m
\cup \{B'_{\eta,\varepsilon}\}$ in a
way consistent with the induction hypothesis.

Having carried the induction on $n$, note that
\mn
\begin{enumerate}
\item[$\circledast_1$]  for some
$n,u_n = \{i < \kappa:f^*(i) =
g_n(i)\} \in [\kappa]^\kappa$

\end{enumerate}
\mn
We now choose by induction on $m \le n$ a
sequence $\eta_m \in {\cT}_m$ such that
$\eta_0 = <>,m = \ell +1 \Rightarrow \eta_\ell
\triangleleft \eta_m$ and $S_\eta \cap u_n \in
[\kappa]^\kappa$.  For $m=n$ by
\mn
\begin{enumerate}
\item[$\circledast(*)$]  $u' = u \cap
S_{\eta_n} \in [\kappa]^\kappa$ and $\Rang(f^*
\cap u') \subseteq B_\eta \in {\cP}$
so we are done.
\end{enumerate}
\end{proof}

\begin{discussion}
\label{cv.27}
1) Can we consider ``$\bold c([\mu]^\mu,
\supseteq) \le \mu^+$"?  We should look
again at \S2.

\noindent
2) More hopeful is to replace
$\bold U_{J^{\bd}_\kappa}(\mu)$ by
$\bold U_{\text{non-stationary}_\kappa}(\mu)$.

\noindent
3) By \ref{cv.11} and \ref{cv.21} we should
have the $\prd$ version
(for which $\bold J$ and closure, see
\cite{Sh:410}.
\end{discussion}
\newpage

\bibliographystyle{alphacolon}
\bibliography{lista,listb,listx,listf,liste,listz}

\end{document}